\let\cal\mathcal
\def\AA{{\cal A}}
\def\BB{{\cal B}}
\def\CC{{\cal C}}
\def\DD{{\cal D}}
\def\EE{{\cal E}}
\def\FF{{\cal F}}
\def\GG{{\cal G}}
\def\HH{{\cal H}}
\def\II{{\cal I}}
\def\KK{{\cal K}}
\def\NN{{\cal N}}
\def\OO{{\cal O}}
\def\PP{{\cal P}}
\def\SS{{\cal S}}
\def\TT{{\cal T}}
\let\blb\mathbb
\def\bX{{\blb X}} 
\def\bY{{\blb Y}} 
\def\bQ{{\blb Q}}
\def\bP{{\blb P}}
\def\bZ{{\blb Z}}
\def\bS{{\blb S}}
\def\bN{{\blb N}}
\def\bZ{{\blb Z}}
\def\modulo\mod
\def\soc{\operatorname{soc}}
\def\mod{\operatorname{mod}}
\def\coh{\mathop{\text{\upshape{coh}}}}
\def\rad{\operatorname {rad}}
\def\rep{\operatorname{rep}}
\def\Ext{\operatorname {Ext}}
\def\Hom{\operatorname {Hom}}
\def\End{\operatorname {End}}
\def\RHom{\operatorname {RHom}}
\def\im{\operatorname {im}}
\def\coker{\operatorname {coker}}
\def\ker{\operatorname {ker}}
\def\End{\operatorname {End}}
\def\add{\operatorname {add}}
\def\rk{\operatorname {rk}}
\def\Num{\operatorname{Num}}
\def\Kred{\operatorname{\Num}}
\DeclareMathOperator{\ind}{ind}
\newcommand\Db{{D^b}}
\newcommand\tri[3]{#1\to #2\to #3\to #1[1]}
\renewcommand\t{\tau}
\newtheorem{lemma}{Lemma}[section]
\newtheorem{proposition}[lemma]{Proposition}
\newtheorem{theorem}[lemma]{Theorem}
\newtheorem{corollary}[lemma]{Corollary}
\newcounter{MyCounter}
\theoremstyle{definition}
\newtheorem{example}[lemma]{Example}
\newtheorem{definition}[lemma]{Definition}
\newtheorem{construction}[lemma]{Construction}
\theoremstyle{remark}
\newtheorem{remark}[lemma]{Remark}
\newdimen\uboxsep \uboxsep=1ex
\def\uboxn#1{\vtop to 0pt{\hrule height 0pt depth 0pt\vskip\uboxsep
\hbox to 0pt{\hss #1\hss}\vss}}
\def\uboxs#1{\vbox to 0pt{\vss\hbox to 0pt{\hss #1\hss}
\vskip\uboxsep\hrule height 0pt depth 0pt}}
\def\Ob{\operatorname{Ob}}
\newcommand\exa{\nopagebreak \begin{center}\smallskip \nopagebreak               \begin{minipage}[t]{6cm}\sloppy}
\newcommand\exb{\end{minipage}\kern 1cm\begin{minipage}[t]{8cm}\sloppy}
\newcommand\exc{\end{minipage}\kern -3cm \smallskip\end{center}}
\title{Numerically finite hereditary categories with Serre duality}
\author{Adam-Christiaan van Roosmalen}
\address{Charles University \\ Faculty of Mathematics and Physics \\ Department of Algebra \\ Sokolovsk\'{a} 83 \\ 186 75 Prague 8 \\ Czech Republic}
\email{vanroosmalen@karlin.mff.cuni.cz}
\begin{document}

\subjclass[2010]{18E10, 18E30, 18G20; 16G20, 14F05}

\bibliographystyle{amsplain}

\begin{abstract}
Let $\AA$ be an abelian hereditary category with Serre duality.  We provide a classification of such categories up to derived equivalence under the additional condition that the Grothendieck group modulo the radical of the Euler form is a free abelian group of finite rank.  Such categories are called numerically finite and this condition is satisfied by the category of coherent sheaves on a smooth projective variety.
\end{abstract}

\maketitle

\tableofcontents

\section{Introduction}

Let $k$ be an algebraically closed field; all categories will be assumed to be $k$-linear.  In this paper, we provide a classification of all numerically finite hereditary categories with Serre duality (see below for definitions).  In this way, we contribute to an ongoing project to classify hereditary categories.

The conditions we impose on our hereditary categories will be of geometrical nature. Recall that the following conditions are satisfied for the category $\coh \bX$ of coherent sheaves on a smooth projective variety $\bX$:
\begin{enumerate}
\item $\AA$ is \emph{Ext-finite}, meaning that for every two elements $A,B \in \AA$, we have $\dim_k \Ext^i(A,B) < \infty$ for each $i \geq 0$.
\item $\AA$ has \emph{finite global dimension}, meaning that $\Ext^n(-,-) = 0$ for some $n \gg 0$.
\item $\AA$ has \emph{Serre duality} \cite{BondalKapranov89}, meaning that there is an additive auto-equivalence $\bS : \Db \AA \to \Db \AA$ such that for every $A,B \in \Ob \Db \AA$ there are isomorphisms
$$\Hom(A,B) \cong \Hom(B,\bS A)^*$$
natural in $A$ and $B$, and where $(-)^*$ is the vector-space dual.
\item $\AA$ is \emph{numerically finite}, meaning that the rank of the free abelian group
$$\Kred \AA = K_0 \AA / \rad \chi(-,-)$$
is finite.  Here, $K_0 \AA$ is the Grothendieck group of $\AA$ and $\chi(-,-)$ is the Euler form.  Numerical finiteness is an important concept in the theory of stability conditions \cite{Brigdeland07}.
\end{enumerate}

Excepting finite global dimension, all of the other conditions are derived invariants.  Thus if $\AA$ satisfies (1),(3), or (4), and $\BB$ is an abelian category such that $\Db \AA \cong \Db \BB$, then $\BB$ satisfies the same condition.  It is known, however, that when $\AA$ has finite global dimension, the same does not need to hold for $\BB$ (\cite{BergVanRoosmalen13}).

The category $\AA$ is called \emph{hereditary} if $\Ext^2(-,-) = 0$ (note that although heredity is not kept under derived equivalence, there is a satisfactory notion of heredity that can be defined for Krull-Schmidt triangulated categories \cite{Ringel05}).  This condition is satisfied when $\AA$ is the category of coherent sheaves on a smooth projective curve.  The theorem below may thus be regarded as providing a classification of noncommutative curves.

\begin{restatable}{maintheorem}{TheoremNumFinite}\label{Theorem:NumFinite}
Let $\AA$ be a nonzero indecomposable hereditary category with Serre duality over an algebraically closed field.  If $\AA$ is numerically finite, then $\AA$ is derived equivalent to either
\begin{enumerate}
\item\label{enumerate:Tube} a tube,
\item\label{enumerate:Representations} the category of finite-dimensional representations of a finite acyclic quiver, or
\item\label{enumerate:WeightedCurve} the category of coherent sheaves of a hereditary $\OO_X$-order where $\bX$ is a smooth projective curve.
\end{enumerate}
\end{restatable}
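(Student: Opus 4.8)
The plan is to translate the hypotheses into Auslander--Reiten data, classify the possible shapes of the AR components, and then use numerical finiteness together with indecomposability as the finiteness constraints that isolate the three cases. Since $\AA$ has Serre duality, the bounded derived category $\Db\AA$ carries a Serre functor $\bS$ and therefore admits Auslander--Reiten triangles; writing $\tau = \bS[-1]$ for the induced translation, the indecomposable objects of $\Db\AA$ form a stable translation quiver $\Gamma$. Because $\AA$ is hereditary, every object of $\Db\AA$ is a direct sum of shifts of objects of $\AA$, so $\AA$ is determined up to derived equivalence by $\Gamma$ together with the AR sequences of its heart. First I would record the classification of the connected components of $\Gamma$: each is of type $\ZAi$, $\ZAii$, $\ZDi$, a tube $\ZAi/\langle\tau^{n}\rangle$, or $\bZ Q$ for a finite acyclic quiver $Q$.

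The organising dichotomy is whether $\AA$ has a nonzero projective object (equivalently, by Serre duality, a nonzero injective), and, failing that, whether every indecomposable is $\tau$-periodic. If $\AA$ has a nonzero projective, I would show that the projectives generate $\AA$ and that numerical finiteness forces them to be finite in number; a standard tilting argument then identifies $\Db\AA$ with $\Db(\rep Q)$ for a finite acyclic quiver $Q$, yielding case (\ref{enumerate:Representations}). This is where numerical finiteness is indispensable: an infinite family of pairwise non-isomorphic indecomposable projectives with independent classes, or an unbounded directing ray in $\Gamma$, would force $\rk\Kred\AA=\infty$, so the hypothesis directly bounds the size of the quiver.

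If $\AA$ has no projectives and every indecomposable is $\tau$-periodic, then $\AA$ is generated by its finite-length objects and the components of $\Gamma$ are tubes; here indecomposability is decisive, since distinct tubes form distinct blocks, so a connected such $\AA$ must be a single tube, giving case (\ref{enumerate:Tube}). The remaining case is $\AA$ without projectives but containing a non-$\tau$-periodic object. I would first argue that such an $\AA$ is noetherian and that its finite-length objects organise into a one-parameter family of tubes, and then invoke the reconstruction of noetherian hereditary categories with Serre duality to realise $\AA$ as the category of coherent sheaves of a hereditary $\OO_\bX$-order over a smooth projective curve $\bX$, giving case (\ref{enumerate:WeightedCurve}); numerical finiteness is what forces the non-periodic part to behave like the generic point of a single curve and excludes higher-dimensional or otherwise exotic behaviour.

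I expect this last case to be the principal obstacle. The difficulty is to upgrade the homological and combinatorial data to the geometric setting: one must show that the tubes are indexed by the points of a genuine integral scheme of dimension one, reconstruct the structure sheaf and the sheaf of hereditary orders, and match the local tube ranks to the ramification data, all while verifying that no non-noetherian examples survive the numerical-finiteness constraint. Checking that the reconstructed base is a smooth projective curve, rather than a singular or non-proper degeneration, and that the resulting order is genuinely hereditary, is the technical heart of the argument, and it is precisely where the finiteness of $\rk\Kred\AA$ must be converted into the geometric finiteness (properness and one-dimensionality) of $\bX$.
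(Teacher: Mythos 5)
Your first branch (nonzero projectives) is sound in outline and in fact matches the paper: nonisomorphic indecomposable projectives are linearly independent in $\Num \AA$, so numerical finiteness bounds their number, and Theorem \ref{theorem:Decomposition} then splits off the subcategory they generate as a direct summand $\mod\Lambda$, which equals $\AA$ by indecomposability. The other two branches, however, contain genuine gaps. The fatal one is in your second branch: you claim that if $\AA$ has no projectives and every indecomposable is $\tau$-periodic, then ``distinct tubes form distinct blocks,'' so indecomposability forces a single tube. The first half is Theorem \ref{theorem:TubeCriterium}, but the second half is false: the category $\coh\bX$ of coherent sheaves on a weighted projective line of \emph{tubular} type (say weights $(2,2,2,2)$ over $\bP^1$, where the dualizing sheaf is torsion in the Picard group, so $\tau$ has finite order as a functor) is indecomposable, numerically finite, has Serre duality, has no nonzero projectives, and every indecomposable lies in a tube; yet there are nonzero morphisms from tubes of smaller slope to tubes of larger slope, the category is connected but not a tube, and it belongs to case (\ref{enumerate:WeightedCurve}) of the theorem, not case (\ref{enumerate:Tube}). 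So your dichotomy collapses precisely on one of the families the theorem must produce. (Note also that in this example the periodic objects are not all of finite length, so ``generated by its finite-length objects'' fails as well.) This is why the paper, when it encounters a situation where all objects lie in tubes, only concludes via \cite{vanRoosmalen12} that $\AA$ is a direct sum of tubes \emph{and} hereditary categories with a tilting object (Proposition \ref{proposition:WhatIfTubes}), rather than a direct sum of tubes.

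In your third branch, the sentence ``I would first argue that such an $\AA$ is noetherian'' is not a step but essentially the whole theorem, and it is moreover too strong as stated: $\AA$ itself need not be noetherian (any non-noetherian heart inside $\Db\coh\bX$ satisfies all the hypotheses); what is true is only that $\AA$ is \emph{derived equivalent} to a noetherian category, and manufacturing such a derived-equivalent noetherian heart is what occupies nearly all of the paper. The missing mechanism is: existence of exceptional or 1-spherical objects (Theorem \ref{theorem:LowExt}); perpendicular induction along a finite maximal exceptional sequence, which is where numerical finiteness really enters (not in bounding an AR quiver); the fact that the exceptional objects lie in \emph{simple} tubes (Propositions \ref{proposition:LiesInASimpleTube} and \ref{proposition:EnoughSimpleTubes}); the tilt $\HH(Y)$ of Construction \ref{construction:Heart}, in which the spherical objects become simple; the torsion pair $(\TT,\FF)$, the boundedness statements (Proposition \ref{proposition:BoundedWhenExceptionals}), and the existence of a simple object in $\HH/\TT$ (Lemma \ref{lemma:QuotientHasSimple2}), which yields a noetherian object and then, via saturation, the identification with $\coh$ of a curve or order (Proposition \ref{proposition:NoExceptionals} and \cite{ReVdB02}). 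Conversely, the difficulty you single out --- reconstructing the scheme, the structure sheaf, the hereditary order and the ramification data from the tubes --- is misplaced: that reconstruction is exactly what the classification of \cite{ReVdB02} already supplies once one has a noetherian hereditary category with Serre duality, and the proof never needs to redo it.
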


It is worth noting that these three classes are not mutually disjoint.  Indeed, it is known (see \cite[Appendix A]{ReitenVandenBergh01}) that a category of coherent sheaves on a weighted projective line (in the sense of \cite{GeigleLenzing87}) is equivalent to the category of coherent sheaves of a hereditary $\OO_{\bP^1}$-order, and thus falls into (\ref{enumerate:WeightedCurve}) where the smooth projective curve $\bX$ is a projective line.  However, when this weighted projective line is of domestic type, then the category of coherent sheaves also fits in (\ref{enumerate:Representations}), up to derived equivalence.

The proof of Theorem \ref{Theorem:NumFinite} is involved and consists of three major steps.  One main idea is that when $\AA$ has an exceptional object $E$, one could consider the perpendicular category $E^\perp$ which is, in some sense, a smaller and easier category (this is called \emph{perpendicular induction} \cite{Lenzing07}).  If $\AA$ is numerically finite, then this procedure stops after finitely many steps.  

The first theorem we prove states that if $\AA$ does not contain any exceptional objects, then it contains 1-spherical objects (defined in \cite{Seidel01}).  Since we do not require $\AA$ to be numerically finite, this theorem may be of independent interest.

\begin{restatable}{maintheorem}{theoremLowExt}\label{theorem:LowExt}
Let $\AA$ be a nonzero abelian hereditary Ext-finite category with Serre duality over an algebraically closed field. Then $\AA$ has an object which is either exceptional or 1-spherical.
\end{restatable}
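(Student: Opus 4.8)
The plan is to translate the statement into the Auslander--Reiten language supplied by Serre duality and then locate the required object among the ``simplest'' indecomposables. Ext-finiteness gives a Serre functor $\bS$ on $\Db\AA$, hence Auslander--Reiten triangles with translation $\tau=\bS[-1]$. Since $\AA$ is hereditary, every indecomposable of $\Db\AA$ is a shift of an object of $\AA$, so for an indecomposable $E\in\AA$ we may write $\bS E\cong A[n]$ with $A\in\AA$ indecomposable; applying Serre duality to the pair $(E,\bS E)$ gives $\Hom(E,\bS E)\cong(\End E)^*\neq0$, i.e. $\Ext^n(E,A)\neq0$, which by heredity forces $n\in\{0,1\}$. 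Moreover Serre duality yields $\Ext^1(E,E)\cong\Hom_{\Db\AA}(E,\tau E)^*$. Because $k$ is algebraically closed and $\End E$ is finite-dimensional local, the whole theorem reduces to the following: \emph{produce a brick $E$ (an object with $\End E=k$) for which either $\Hom(E,\tau E)=0$ or $\tau E\cong E$.} Indeed, the first condition makes $E$ exceptional, while the second forces $\Ext^1(E,E)\cong(\End E)^*=k$ and $\bS E\cong E[1]$, making $E$ $1$-spherical.

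There is one immediate instance of the first condition. If some brick $E$ satisfies $n=0$, i.e. $\bS E\in\AA$, then $\tau E=\bS E[-1]\in\AA[-1]$, so $\Hom_{\Db\AA}(E,\tau E)=\Ext^{-1}(E,A)=0$ and $E$ is already exceptional. Such ``projective-type'' bricks sit at the sources of their Auslander--Reiten components and account for the exceptional objects of, say, representations of a quiver. Hence I would argue by contradiction and assume from now on that $\AA$ has \emph{no} exceptional object. Under this hypothesis every brick $E$ has $n=1$, so $\tau E\in\AA$, and $\Hom_\AA(E,\tau E)\neq0$. In particular $\tau$ sends bricks of $\AA$ to bricks of $\AA$, and for any brick $E$ we obtain an infinite chain of nonzero maps $E\to\tau E\to\tau^2E\to\cdots$ between bricks, all lying in a single connected component of the Auslander--Reiten quiver.

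The endgame is to examine this component through the $\tau$-action and show the only surviving possibility is a homogeneous tube, whose quasi-simple object is then $1$-spherical. If the component is a tube of rank $p$, its quasi-simple objects are pairwise orthogonal bricks; for $p\geq2$ a quasi-simple $E$ has $\Hom(E,\tau E)=0$ and is exceptional (contradiction), while for $p=1$ it satisfies $\tau E\cong E$ and is $1$-spherical, which finishes the proof. The point is therefore to force periodicity of $\tau$ on the component. The engine I would use is an extremal argument: choose a brick $E$ minimizing $\dim_k\Hom_\AA(E,\tau E)\ (\geq1)$ and a nonzero $f\colon E\to\tau E$. If $f$ is an isomorphism we are done; otherwise one analyzes $\ker f$ and $\coker f$, using heredity and the Auslander--Reiten formula, to manufacture a brick in the same component with a strictly smaller value of the invariant, contradicting minimality. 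This both forces $f$ to be invertible (hence $\tau E\cong E$) and rules out an infinite strictly ``moving'' chain, i.e. pins the component down to a homogeneous tube.

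The main obstacle is precisely the control of morphisms inside an Auslander--Reiten component that makes the extremal step and the tube dichotomy rigorous. One must show that the $\Hom$-spaces between indecomposables of a component are governed by its mesh structure, so that components carrying a source-type object contribute exceptional objects while homogeneous tubes contribute $1$-spherical ones, and one must exclude an infinite non-periodic chain of bricks $E\to\tau E\to\tau^2E\to\cdots$ surviving under Ext-finiteness. Phrased invariantly, the delicate technical core is to prove that the hypothesis ``$\AA$ has no exceptional object'' forces every Auslander--Reiten component to be a homogeneous tube; wild-regular behaviour, where a quasi-simple brick can have nonzero self-extensions without being $\tau$-fixed, is exactly the phenomenon that must be ruled out by the combination of heredity and Serre duality, and this is where the bulk of the work will lie.
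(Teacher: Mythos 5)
Your reduction of the theorem (find a brick $E$ with either $\Hom(E,\tau E)=0$ or $\tau E\cong E$) and your choice of engine (minimize $\dim\Hom(E,\tau E)$ over bricks, take a nonzero $f\colon E\to\tau E$, and extract from $\ker f$ and $\coker f$ something with strictly smaller invariant) coincide with the paper's strategy. But your proposal stops exactly where the proof starts: the step ``otherwise one analyzes $\ker f$ and $\coker f$ \ldots\ to manufacture a brick with a strictly smaller value of the invariant'' is asserted, not carried out, and it is the entire mathematical content of the theorem. In the paper this is the bulk of the argument: first one proves that $\Hom(A,B)\neq 0$ forces $\dim\Hom(A,\tau B)\geq d$ (Proposition \ref{theProposition}), then that any nonzero $f\colon X\to\tau X$ is a monomorphism or an epimorphism (Lemma \ref{MonoLemma}), and then, writing $Q=\coker f$ in the mono case, a delicate case analysis on $\dim\Hom(Q,Q)$. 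The difficulty is that $Q$ need not be a brick and $\dim\Ext^1(Q,Q)$ can be as large as $2d-2$; when $\dim\Hom(Q,Q)\geq 3$ one cannot simply pass to an endo-simple subquotient (Proposition \ref{extraExtension} only drops the dimension by one), and the paper instead takes images $I,J$ of two linearly independent non-invertible endomorphisms of $Q$ and runs a subspace-intersection argument inside $\Hom(Q,\tau Q)$ to reach a contradiction. Nothing in your outline anticipates or replaces this.

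Moreover, the route you propose for filling the gap would fail. Your stated ``technical core''---that absence of exceptional objects forces every Auslander--Reiten component to be a homogeneous tube---is false: $\coh\bX$ for a smooth projective curve $\bX$ of genus $g\geq 2$ has no exceptional objects (every indecomposable sheaf $\FF$ admits a nonzero map $\FF\to\FF\otimes\omega_\bX$, so $\Ext^1(\FF,\FF)\neq 0$), yet its vector bundles are not $\tau$-periodic (tensoring with $\omega_\bX$ changes degree), so their components are of type $\bZ A_\infty$ rather than tubes; the $1$-spherical objects there are only the simple torsion sheaves. For the same reason, your claim that the chain $E\to\tau E\to\tau^2E\to\cdots$ stays in a single Auslander--Reiten component is false in general (nonzero maps do not confine objects to one component), and Hom-spaces between indecomposables of an abstract hereditary category are not ``governed by mesh structure.'' The paper's proof deliberately avoids all component and mesh considerations: it is pure dimension counting using heredity (right exactness of $\Ext^1(X,-)$ and $\Ext^1(-,X)$) and Serre duality, and it only needs the extremal object itself to be $\tau$-fixed, never any global statement about its component.
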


For the proof of Theorem \ref{theorem:LowExt}, we may assume that $\AA$ does not have any exceptional objects.  Since $\AA$ has Serre duality, we know that $\AA$ has Auslander-Reiten sequences and the Auslander-Reiten translation induces an auto-equivalence $\t: \AA \to \AA$.  In the proof of Theorem \ref{theorem:LowExt}, we start with an indecomposable object $X \in \AA$ such that $\dim_k \Ext^1(X,X)$ is minimal.  If $X$ were not 1-spherical, and we can use a nonzero map $X \to \t X$ to construct an object $Y \in \AA$ such that $\dim \Ext^1(Y,Y) < \dim \Ext^1(X,X)$, which contradicts the minimality of $\dim \Ext^1(X,X)$.

Having proven Theorem \ref{theorem:LowExt}, the next step is to consider hereditary categories with Serre duality and without exceptional objects: we know that such a category will have 1-spherical objects.  With each such 1-spherical object, we can associate a twist functor which is an auto-equivalences of the category $\Db \AA$.  Definitions and results about (simple) 1-spherical objects and twist functors are recalled in \S\ref{section:TwistAndTubes}.

In order to prove Theorem \ref{theorem:NoExceptionals} below, we will use these twist functors to construct a \emph{noetherian} hereditary category $\HH$ derived equivalent to the original category $\AA$.  We can then use the classification in \cite{ReVdB02} to obtain the required classification.

\begin{restatable}{maintheorem}{theoremNoExceptionals}\label{theorem:NoExceptionals}
Let $\AA$ be a nonzero indecomposable hereditary category with Serre duality over an algebraically closed field.  If $\AA$ does not have any exceptional objects, then $\AA$ is derived equivalent to either
\begin{itemize}
\item a homogeneous tube, or
\item the category of coherent sheaves on a smooth projective curve of genus at least one.
\end{itemize}
\end{restatable}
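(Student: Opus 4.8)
The plan is to reduce to the Reiten--Van den Bergh classification of \emph{noetherian} hereditary categories with Serre duality \cite{ReVdB02} by producing a noetherian heart inside $\Db \AA$, and then to read off which categories on their list can fail to contain an exceptional object. First I would invoke Theorem \ref{theorem:LowExt}: since $\AA$ is nonzero and has no exceptional object, it contains a $1$-spherical, and in fact a \emph{simple} $1$-spherical, object $S$ in the sense recalled in \S\ref{section:TwistAndTubes}. The hypothesis is very restrictive on the Auslander--Reiten components afforded by Serre duality, because exceptional objects are exactly what populate the ``boundary'': the quasi-simple of a tube of rank $\geq 2$ has vanishing self-extensions and is exceptional, as are the directing objects of a $\bZ A_\infty$ component. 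Hence every component of $\AA$ must be one without exceptional objects, i.e.\ a homogeneous tube or a component of type $\bZ A_\infty^\infty$; in particular each simple $1$-spherical object $S$ satisfies $\t S \cong S$ and generates, by self-extension, a homogeneous tube $\TT_S \subseteq \AA$.

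The essential step is to manufacture a noetherian hereditary heart $\HH$ with $\Db \HH \simeq \Db \AA$. When every object of $\AA$ has finite length this is immediate: such an $\AA$ is already noetherian, its simple objects are simple $1$-spherical, and pairwise orthogonality of distinct homogeneous tubes together with the indecomposability of $\AA$ forces $\AA$ to be a single homogeneous tube. In general, however, $\AA$ contains objects of infinite length and need not itself be noetherian, and here I would use the twist functors $T_S$ attached to the simple $1$-spherical objects. Each $T_S$ is an autoequivalence of $\Db \AA$, and the point is to apply a suitable (locally finite) family of such twists so as to rectify the $t$-structure: the simple $1$-spherical objects should become the simple objects of the new heart $\HH$, with every object of $\HH$ of finite length away from a ``generic'' part, and $\HH$ noetherian. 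Checking that such a family of twists converges to a genuine bounded $t$-structure with noetherian heart---that boundedness is preserved and the heart is closed under the relevant colimits---is the main obstacle, and is precisely where the interaction between Serre duality, the homogeneous tubes $\TT_S$, and the action of the $T_S$ on components has to be controlled.

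With $\HH$ constructed I would apply \cite{ReVdB02} to it. The absence of exceptional objects is a derived invariant---in a hereditary category every indecomposable of $\Db$ is a shift $A[n]$ with $A$ in the heart, and $A[n]$ is exceptional iff $A$ is---so $\HH$ likewise has no exceptional object. This eliminates every entry of the Reiten--Van den Bergh list that carries an exceptional (equivalently, tilting) object: the module categories of finite-dimensional hereditary algebras and the representation categories of the infinite quivers on their list, all of which possess a simple projective or injective object that is exceptional; the coherent sheaves on a weighted projective line and on $\bP^1$, where the genus is $0$ and the line bundles $\OO(n)$ are exceptional; and the hereditary orders with nontrivial ramification, whose ramified quasi-simples are exceptional. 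What survives is exactly $\coh \bX$ for $\bX$ a smooth projective curve of genus $\geq 1$. Finally, any residual order is now unramified, hence Azumaya, hence Morita-trivial since the Brauer group of a smooth projective curve over an algebraically closed field vanishes (Tsen); thus $\HH \simeq \coh \bX$, which gives the stated alternative and completes the proof.
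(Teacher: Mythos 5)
Your overall strategy---reduce to the Reiten--Van den Bergh classification---is the paper's strategy too, and your final elimination step (no exceptional objects rules out genus zero, ramified orders, and every finite-length category except homogeneous tubes, with Tsen's theorem disposing of Azumaya orders) matches how the paper reads off the answer. But there is a genuine gap precisely at the point you yourself flag as ``the main obstacle'': you never actually produce a noetherian heart, or even a single noetherian object, and this is where essentially all of the work in the paper's proof lies. Your proposed mechanism---applying a ``suitable (locally finite) family'' of spherical twists and checking that it ``converges'' to a bounded $t$-structure with noetherian heart---is not an argument; no construction of such a family is given, no reason is offered why the limit would be a $t$-structure at all, and the paper never does anything of this kind. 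What the paper does instead is: (i) tilt \emph{once}, via the path-defined $t$-structure of Construction \ref{construction:Heart}, to a heart $\HH$ in which a maximal orthogonal family $\SS$ of $1$-spherical objects becomes simple and sincere (Proposition \ref{proposition:SincereWhenNoExceptionals}); (ii) form the torsion pair $(\TT,\FF)$ with $\TT$ the finite-length objects and prove the quotient $\HH/\TT$ is semi-simple (Lemma \ref{lemma:QuotientSemiSimple}); (iii) establish the key quantitative input, namely a torsion-free $F$ with $\dim\Hom(F,S)$ uniformly bounded over $S\in\SS$: a minimizer of $\dim\Ext^1(F,F)$ has every nonzero $F\to\t F$ injective (Lemma \ref{lemma:AtLeastTwo}, with the delicate $d=1$ case handled by Lemma \ref{lemma:EllipticLemma}), and then a B\'ezout argument in $\bP(\End(\Hom(F,S)))$ gives $\dim\Hom(F,S)\leq 2e-2$ (Proposition \ref{proposition:BoundForTwo}); (iv) deduce that $\HH/\TT$ has a simple object (Lemma \ref{lemma:QuotientHasSimple}), whose lift to $\FF$ is shown to be a \emph{noetherian} object of $\HH$. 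Only then is \cite{ReVdB02} invoked---and not for $\HH$ itself, but for a connected component $\NN'$ of the subcategory of noetherian objects; the derived equivalence $\Db\HH\cong\Db\coh\bX$ follows because $\Db\coh\bX$ is saturated and Serre-stable, hence a genuine direct summand of the block $\Db\HH$ (Corollary \ref{corollary:SemiOrthogonal} and Proposition \ref{proposition:NoExceptionals}). So noetherianity of the whole heart, which your plan requires and cannot supply, is never needed.

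Two further inaccuracies are worth noting. First, Theorem \ref{theorem:LowExt} does \emph{not} give a $1$-spherical object that is simple in $\AA$; simplicity is exactly what the tilt to $\HH$ is designed to achieve (Proposition \ref{proposition:SemiSimpleInHeart}), so your opening claim assumes part of what must be constructed. Second, your component-theoretic shortcut is unsound: while peripheral objects of tubes of rank $\geq 2$ are indeed exceptional, it is false in general that $\bZ A_\infty$ components must contain exceptional objects (regular components of wild hereditary algebras are $\bZ A_\infty$ and typically contain none), so ``every component is a homogeneous tube or $\bZ A_\infty^\infty$'' does not follow as stated. Finally, your sketch is silent on the $1$-Calabi-Yau borderline case $\dim\Ext^1(F,F)=1$ with all indecomposable torsion-free objects $\t$-fixed, for which the paper has to invoke the classification of \cite{vanRoosmalen08} separately in the proof of Theorem \ref{theorem:NoExceptionals}.
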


We will construct this hereditary noetherian category $\HH$ in \S\ref{subsection:Tilt} using a 1-spherical object $S \in \AA$.  By our construction, $S$ will be a simple object in $\HH$ and, in fact, we will show that $\HH$ has a ``large enough'' set $\SS$ of simple 1-spherical objects such that $\Hom(X,\SS) \not= 0$ for any $X \in \SS$.  Moreover, all length modules of $\HH$ are obtained by taking successive extensions of objects in $\SS$.
 
We then define a torsion pair $(\TT, \FF)$ by taking $\TT \subseteq \HH$ to be all length modules.  As in \cite{ReVdB02}, we will consider the category $\HH / \TT$ in \S\ref{subsection:Quotient} and show that it has a simple object.  This object corresponds to a noetherian object in $\HH$ and has to lie in a noetherian direct summand of $\HH$.

Note that Theorem \ref{theorem:NoExceptionals} requires no conditions on the numerical Grothendieck group.  In order to find other and ``larger'' categories, we need to allow exceptional objects.  If $\AA$ has an exceptional object $E$, we will consider the perpendicular category $E^\perp$.  This category is then another hereditary category with Serre duality which may or may not have other exceptional objects.  This is where the condition on the size of the numerical Grothendieck group enters: if the numerical Grothendieck group has finite rank, then this process stops after finitely many steps.  In other words, the category $\Db \AA$ has a finite maximal exceptional sequence $\EE = E_1, E_2, \ldots, E_n$.

Here, we need to distinguish between two main cases.  The first case is that where the resulting category $\EE^\perp$ is the zero category.  In this case, we know that $\AA$ has a tilting object and we may invoke \cite{Happel01} to obtain the classification.  We can thus restrict our attention to the case where the resulting category is nonzero.

The first step in the proof of Theorem \ref{Theorem:NumFinite} is to consider the case where this process stops after one step, namely the case where $E^\perp$ does not have any exceptional objects anymore.  In this case, we show in Proposition \ref{proposition:LiesInASimpleTube} that $E$ is simple in $\AA$ and that $E \oplus \t E$ is a generalized 1-spherical object.

If the maximal exceptional sequence $\EE = E_1, E_2, \ldots, E_n$ consists of more than one element, we will show in Proposition \ref{proposition:EnoughSimpleTubes} that every exceptional object has a finite $\t$-orbit.  This implies the existence of more generalized 1-spherical objects, and we can again use these to find a derived equivalent hereditary category $\HH$ with a ``large enough'' set $\SS$ of (semi-simple) generalized 1-spherical objects.  The proof of Theorem \ref{Theorem:NumFinite} then proceeds in a similar way as the proof of Theorem \ref{theorem:NoExceptionals}.

We wish to remark that although it is easy to recover the classification of all abelian 1-Calabi-Yau categories (\cite{vanRoosmalen08}) from Theorem \ref{theorem:NoExceptionals} and the classification of hereditary categories with a tilting object (\cite{Happel01}) from Theorem \ref{Theorem:NumFinite}, both of these classification results are used in proving the results in this paper.

There are examples known of hereditary categories with Serre duality which are not numerically finite.  In fact, some of these categories are noetherian (see \cite{ReVdB02}).  For all of the three classes in Theorem \ref{Theorem:NumFinite}, infinite versions have been constructed in \cite{BergVanRoosmalen13, Ringel02, vanRoosmalen12b}.

\textbf{Acknowledgments.}  The author wishes to thank Michel Van den Bergh for many useful discussions and ideas, and Sarah Witherspoon for useful comments on an early version of this paper.  The author also gratefully acknowledges the hospitality and support of the Max-Planck-Instit\"ut f\"ur Mathematik in Bonn.  This material is based upon work supported by the National Science Foundation under Grant No. 0932078 000, while the author was in residence at the Mathematical Science Research Institute (MSRI) in Berkeley, California, during the spring semester of 2013.  This work was partially funded by the Eduard \v{C}ech Institute under grant GA CR P201/12/G028.
\section{Preliminaries and first results}

We fix an algebraically closed field $k$.  All vector spaces, algebras, and categories will be $k$-linear.  Furthermore, we will always work with essentially small categories.  A $k$-linear category $\CC$ is said to be Hom-finite if $\dim_k \Hom_\CC(A,B) < \infty$ for every two objects $A,B \in \CC$.

For a Krull-Schmidt category $\CC$, let $\ind \CC$ be the full subcategory of $\CC$ whose objects are representatives of isomorphism classes of $\CC$.  If $\CC'$ is a full Krull-Schmidt subcategory of $\CC$ closed under isomorphisms, we will choose $\ind \CC'$ to be a subcategory of $\ind \CC$.  Thus for each isomorphism class of indecomposable objects in $\CC'$, we will choose the same representative in $\ind \CC'$ as we had chosen in $\CC$.

\subsection{Hereditary categories}

An abelian category $\AA$ is called hereditary if $\Ext_\AA^2(-,-) = 0$, or equivalently, if and only if $\Ext^1(A,-)$ and $\Ext^1(-,A)$ are right exact for all objects $A \in \AA$ (\cite[Lemma A.1]{ReVdB02}).

We will follow standard notations and conventions about derived categories (see for example \cite{GelfandManin03, Hartshorne66,Keller07}).  Let $\Db \AA$ be the bounded derived category of an abelian category $\AA$.  We will write $X[n]$ for the $n$-fold suspension of $X \in \Db \AA$.

There is a fully faithful function $\AA \to \Db (\AA)$, mapping $A$ to a complex which is the stalk complex of $A$ concentrated in degree 0; we denote this complex by $A[0]$.  Also, we will write $(A[0])[n]$ by $A[n]$.  We will write $\AA[0]$ for the full subcategory of $\Db \AA$ given by the stalk complexes concentrated in degree $0$, and write $\AA[n]$ for $(\AA[0])[n]$.  A quasi-inverse to the functor $\AA \to \AA[0]$ is given by taking the zeroth cohomology $H^0: \AA[0] \to \AA$.  Furthermore, note that $\AA[n] \cong \AA[m]$ as categories, but only as subcategories of $\Db \AA$ if $n=m$.  For $A,B \in \AA$, we have $\Hom_{\Db \AA}(A[m],B[n]) \cong \Ext_{\AA}^{n-m}(A,B)$.

For a hereditary category $\AA$ it is well-known (see for example \cite[5.2 Lemma]{Happel88}) that every object $A \in \Db \AA$ can be written as
$$\prod_{k \in \bZ} H^{-k}A[k] \cong A \cong \coprod_{k \in \bZ} H^{-k}A[k].$$
In particular, $\Hom_{\Db (\AA)}(A,B) \cong \prod_{k,l} \Ext^{l-k}_{\AA}(H^{-k}(A),H^{-l}(B)).$

\subsection{Serre duality and saturation}\label{section:SerreDuality}
Let $\CC$ be a Hom-finite triangulated $k$-linear category.  A \emph{Serre functor} \cite{BondalKapranov89} on $\CC$ is an additive autoequivalence $\bS : \CC \to \CC$ such that for every $X,Y \in \Ob \CC$ there are isomorphisms
$$\Hom(X,Y) \cong \Hom(Y,\bS X)^*,$$
natural in $X$ and $Y$, and where $(-)^*$ is the vector-space dual.

An abelian category $\AA$ is said to satisfy \emph{Serre duality} when $\AA$ is Ext-finite and the bounded derived category $\Db \AA$ admits a Serre functor.

It has been shown in \cite{ReVdB02} that $\CC$ has Serre duality if and only if $\CC$ has Auslander-Reiten triangles.  If we denote the Auslander-Reiten shift by $\t$, then $\bS \cong \t[1]$.

For easy reference, we recall the following result from \cite{ReVdB02}.

\begin{proposition}\label{proposition:SerreDuality}
If $\AA$ is an Ext-finite hereditary category, then $\AA$ satisfies Serre duality if and only if $\AA$ has almost split sequences and there is a one-to-one correspondence between the indecomposable projective objects $P$ and the indecomposable injective objects $I$, such that the simple top of $P$ is isomorphic to the simple socle of $I$.

If $\AA$ is an Ext-finite hereditary category with no nonzero projectives or injective objects, then $\AA$ has Serre duality if and only if $\AA$ has almost split sequences.  In this case, the autoequivalence $\t: \Db \AA \to \Db \AA$ restricts to an autoequivalence $\t: \AA \to \AA$.
\end{proposition}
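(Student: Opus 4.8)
The plan is to reduce everything to the characterization recalled just above, namely that a Hom-finite triangulated category has a Serre functor if and only if it has Auslander--Reiten triangles; applied to $\Db\AA$ this lets me replace ``$\AA$ satisfies Serre duality'' by ``$\Db\AA$ has AR triangles.'' Since $\AA$ is hereditary, every object of $\Db\AA$ is a direct sum of shifts of objects of $\AA$, and the indecomposables are exactly the $X[n]$ with $X \in \ind\AA$ and $n \in \bZ$. Because $[1]$ is a triangle autoequivalence, an AR triangle ending at $X[n]$ is the $[n]$-shift of one ending at $X[0]$, so it suffices to analyse AR triangles ending at objects $X \in \AA$ placed in degree $0$, and to match these against AR sequences in $\AA$ degree by degree.

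For the main (forward) direction, suppose $\Db\AA$ has AR triangles, with Serre functor $\bS \cong \t[1]$. For $C \in \ind\AA$ the AR triangle ending at $C$ has the form $\t C \to B \to C \to (\t C)[1]$, the last map lying in $\Hom(C,(\t C)[1]) = \Ext^1(C,\t C)$. One checks that $\t C \in \AA[0]$ precisely when $C$ is non-projective; in that case heredity forces $B \in \AA[0]$ and the triangle restricts to the almost split sequence $0 \to \t C \to B \to C \to 0$, so $\AA$ has almost split sequences (the dual argument handles non-injective starting terms). The projectives are the genuine exceptions, and here the clean computation is the key point: for an indecomposable projective $P$ and any $Y \in \AA$, Serre duality gives $\Hom(Y[n],\bS P) \cong \Ext^n(P,Y)^*$, which vanishes for $n \neq 0$ and equals $\Hom(P,Y)^*$ for $n=0$. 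This simultaneously shows $\bS P \in \AA[0]$, that $\bS P$ is injective, and, evaluating on simples via $\Hom(S,\bS P) \cong \Hom(P,S)^*$, that $\soc(\bS P) \cong \top P$. As $\bS$ is an equivalence (its quasi-inverse sending injectives to projectives by the dual computation), $P \mapsto \bS P$ is the required bijection between indecomposable projectives and indecomposable injectives matching simple top with simple socle.

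For the converse direction I would build all AR triangles in $\Db\AA$ from the given data and again invoke the recalled characterization. A non-projective indecomposable $X$ gets its AR triangle directly from the AR sequence ending at $X$. For an indecomposable projective $P$, I would use the correspondence: letting $I$ be the injective with $\soc I \cong \top P$, the composite $P \twoheadrightarrow \top P \cong \soc I \hookrightarrow I$ is a morphism $P \to I$ in $\AA$ whose cone yields the triangle $I[-1] \to B \to P \to I$, which has no incarnation as a short exact sequence in $\AA$. Verifying the almost split property of this triangle is where the minimality built into the top/socle matching is used, and this—together with the standard but not purely formal fact that, in an Ext-finite hereditary setting, indecomposable projectives have simple top and indecomposable injectives simple socle—is the step I expect to be the main obstacle.

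Finally, for the second assertion I would specialize: if $\AA$ has no nonzero projectives or injectives, the projective--injective correspondence is vacuous, so Serre duality is equivalent to the existence of almost split sequences alone. Moreover every indecomposable $X$ is then non-projective, so the AR sequence $0 \to \t X \to B \to X \to 0$ keeps $\t X$ in $\AA$; dually, the absence of injectives keeps $\tm X$ in $\AA$. Hence $\t$ and $\tm$ preserve $\AA[0]$, and $\t \cong \bS[-1]$ restricts to the claimed autoequivalence $\t : \AA \to \AA$.
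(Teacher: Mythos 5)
Before anything else, note what you are being compared against: the paper does \emph{not} prove this proposition. It is recalled verbatim, without proof, from \cite{ReVdB02}, so the only meaningful comparison is with Reiten--Van den Bergh's argument. Your overall strategy is in fact theirs: replace Serre duality by the existence of Auslander--Reiten triangles in $\Db \AA$, use heredity to split objects of $\Db \AA$ into shifts of objects of $\AA$ so that only triangles ending at objects of $\AA[0]$ need analysis, match AR triangles at non-projectives with AR sequences, and treat an indecomposable projective $P$ via a triangle $I[-1] \to B \to P \xrightarrow{w} I$. The parts you actually carry out (the degree bookkeeping, $\t C \in \AA[0]$ iff $C$ non-projective, $\bS P$ injective and in $\AA[0]$, the bijectivity of $P \mapsto \bS P$, and the second assertion about the restriction of $\t$) are correct.

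The problem is that the two steps you defer are not peripheral ``standard facts'' but the actual content of the proposition, so both directions have genuine gaps as written. In the forward direction, the isomorphism $\Hom(S,\bS P) \cong \Hom(P,S)^*$ for simple $S$ only matches simple quotients of $P$ with simple subobjects of $\bS P$; it does not produce any simple quotient of $P$, nor uniqueness of one (in a general abelian category an object need have no maximal subobjects at all), so the phrase $\soc(\bS P) \cong \top P$ is not yet meaningful. The missing idea is that simplicity of the top must itself be extracted from the AR triangle ending at $P$: writing it as $\bS P[-1] \to B \to P \xrightarrow{w} \bS P$, heredity gives $B \cong \ker w \oplus (\coker w)[-1]$, and since $\Hom(Z,(\coker w)[-1])=0$ for $Z \in \AA$, the right-almost-split property forces every non-isomorphism $Z \to P$ with $Z$ indecomposable to factor through $\ker w \hookrightarrow P$; hence $\ker w$ is the unique maximal subobject of $P$, so $\top P \cong \im w$ is simple, and dually $\soc(\bS P) \cong \im w$. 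In the converse direction, to verify that your triangle is almost split you must show $wf = 0$ for every non-isomorphism $f: Z \to P$ with $Z$ indecomposable. Simplicity of $\soc I = \im w$ only yields $\im f + \ker w = P$ when $wf \neq 0$; you still need that $\ker w$ is superfluous in $P$, and this is where Hom-finiteness genuinely enters: lift $\id_P$ along the epimorphism $\im f \oplus \ker w \twoheadrightarrow P$ and use that $\End(P)$ is local to conclude that either $\ker w = P$ (impossible, as $w \neq 0$) or $\im f = P$, making $f$ a split epimorphism onto the projective $P$ and hence an isomorphism, a contradiction. The dual fact, that $\soc I$ is essential in $I$, is needed for the left-almost-split half. (By contrast, the factorization of maps $Z[-1] \to P$, i.e.\ of classes in $\Ext^1(Z,P)$, through $B$ is automatic because $\Ext^1(Z,I)=0$.) With these arguments supplied your outline closes up into essentially the proof of \cite{ReVdB02}; without them it remains a plan rather than a proof.
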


Let $\CC$ be a Hom-finite triangulated category.  Recall that a (co)homo\-log\-i\-cal functor $H: \CC \to \mod k$ is of \emph{finite type} if $\sum_{i \in \bZ} \dim H(A[n])$ is finite.  We will say that $\CC$ is of \emph{finite type} if the functors $\Hom_\CC(C,-)$ and $\Hom(-,C)$ are of finite type.  When $\AA$ is an Ext-finite abelian category of finite global dimension, then $\Db \AA$ is of finite type.

Following \cite{BondalKapranov89}, we will say that a Hom-finite triangulated category $\CC$ of finite type is \emph{saturated} if every (co)homological functor of finite type is representable.  We will say that an abelian category $\AA$ is saturated if $\Db \AA$ is of finite type and $\Db \AA$ is saturated.  We will use the following proposition from \cite{BondalKapranov89}.

\begin{proposition}\label{proposition:Saturated}
Let $\CC$ and $\DD$ be triangulated categories of finite type.  If $\CC$ is saturated, then every exact functor $\CC \to \DD$ admits a left and a right adjoint.
\end{proposition}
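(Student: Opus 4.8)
The plan is to build the right adjoint object-by-object using the representability supplied by saturation, and then to obtain the left adjoint by a formally dual argument. Write $F : \CC \to \DD$ for the given exact functor. To produce a right adjoint $G$, I would fix an object $D \in \DD$ and consider the contravariant (cohomological) functor
$$h_D = \Hom_\DD(F(-), D) : \CC \to \mod k.$$
Since $F$ is exact and $\Hom_\DD(-, D)$ sends triangles to long exact sequences, $h_D$ is cohomological. The first thing to check is that $h_D$ is of finite type. Because $F$ commutes with the shift, one has $h_D(C[i]) \cong \Hom_\DD(F(C), D[-i])$, so that $\sum_{i} \dim h_D(C[i]) = \sum_{i} \dim \Hom_\DD(F(C), D[i])$, and this sum is finite precisely because $\DD$ is of finite type (apply the finite-type hypothesis to the object $F(C) \in \DD$). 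Thus $h_D$ is a cohomological functor of finite type.

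Because $\CC$ is saturated, $h_D$ is representable: there is an object $G(D) \in \CC$ together with a natural isomorphism
$$\Hom_\CC(-, G(D)) \cong \Hom_\DD(F(-), D).$$
By the Yoneda lemma the object $G(D)$ is determined up to canonical isomorphism, and for each morphism $D \to D'$ in $\DD$ the induced natural transformation $h_D \to h_{D'}$ corresponds to a unique morphism $G(D) \to G(D')$; this makes $G$ into a functor $\DD \to \CC$ and promotes the displayed isomorphism to an adjunction isomorphism natural in both variables. Hence $G$ is a right adjoint of $F$. These last verifications are the routine formal consequences of representability, so the genuine input is the finite-type estimate together with the saturation hypothesis.

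For the left adjoint I would exploit the self-duality of the hypotheses. The finite-type condition is manifestly invariant under passing to opposite categories, and so is saturation, since its definition requires the representability of both homological and cohomological functors (a cohomological functor on $\CC^{\mathrm{op}}$ being the same as a homological functor on $\CC$). Consequently $\CC^{\mathrm{op}}$ is a saturated triangulated category of finite type and $\DD^{\mathrm{op}}$ is of finite type, while $F^{\mathrm{op}} : \CC^{\mathrm{op}} \to \DD^{\mathrm{op}}$ is again exact. Applying the construction of the previous paragraph to $F^{\mathrm{op}}$ yields a right adjoint of $F^{\mathrm{op}}$, which is exactly a left adjoint of $F$. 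Alternatively, one can run the argument directly with the covariant homological functor $\Hom_\DD(D, F(-)) : \CC \to \mod k$, which is of finite type by the same estimate and is representable as $\Hom_\CC(G'(D), -)$ by the homological half of saturation.

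The only step that requires any care is the finite-type verification, and that is exactly where the hypothesis that $\DD$ — and not merely $\CC$ — is of finite type is used; everything else is formal. The heavy lifting is done entirely by the saturation property, which converts the finite-type functors $\Hom_\DD(F(-), D)$ and $\Hom_\DD(D, F(-))$ into honest representing objects.
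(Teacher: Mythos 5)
Your proof is correct. The paper does not actually prove this proposition --- it is quoted directly from Bondal--Kapranov \cite{BondalKapranov89} --- and your argument is exactly the standard one behind that citation: the finite-type hypothesis on $\DD$ guarantees that $\Hom_\DD(F(-),D)$ and $\Hom_\DD(D,F(-))$ are (co)homological functors of finite type on $\CC$, saturation makes them representable, and Yoneda upgrades the pointwise representing objects to adjoint functors; the left adjoint follows either by your opposite-category argument (saturation and finite type are indeed self-dual notions) or, more directly, from the homological half of the saturation hypothesis.
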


It was shown in \cite[Theorem C]{ReVdB02} (see also \cite{BondalKapranov89}) that the following categories are saturated:
\begin{enumerate}
\item the category $\mod \Lambda$ of finite-dimensional modules over a finite-dimensional algebra $\Lambda$ of finite global dimension, and
\item the category $\coh \OO$ where $\OO$ is a sheaf of hereditary $\OO_\bX$-orders over a smooth projective curve $\bX$.
\end{enumerate}

\subsection{(Numerical) Grothendieck group}

Let $\AA$ be an (essentially small) Ext-finite abelian category of finite global dimension.  The Euler form $\chi(-,-): \Ob \AA \times \Ob \AA \to \bZ$ is defined to be
$$\chi(X,Y) = \sum_{n \in \bN} (-1)^n \dim_k \Ext^n(X,Y).$$
Note that $\chi(-,-)$ is additive in the following sense: if $0 \to X \to Y \to Z \to 0$ is a short exact sequence in $\AA$, then $\chi(A,X) + \chi(A,Z) = \chi(A,Y)$ and $\chi(X,A) + \chi(Z,A) = \chi(Y,A)$, for any $A \in \AA$.

The Grothendieck group $K_0 (\AA)$ of $\AA$ is the abelian group generated by the isomorphism classes of $\Ob \AA$ and with relations $[X] + [Z] = [Y]$ for each short exact sequence $0 \to X \to Y \to Z \to 0$ in $\AA$.  Here we have denoted by $[X]$ the isomorphism class of $[X]$ in $\AA$.

Similar definitions hold for an (essentially small) triangulated category $\CC$ of finite type. The Euler form $\chi(-,-): \Ob \CC \times \Ob \CC \to \bZ$ is given by
$$\chi(X,Y) = \sum_{n \in \bZ} (-1)^n \dim_k \Hom(X,Y[n]),$$
and is additive in the following sense: if $X \to Y \to Z \to X[1]$ is a triangle in $\CC$, then $\chi(A,X) + \chi(A,Z) = \chi(A,Y)$ and $\chi(X,A) + \chi(Z,A) = \chi(Y,A)$, for any $A \in \CC$.

The Grothendieck group $K_0 (\CC)$ of $\CC$ is the abelian group generated by the isomorphism classes of $\Ob \CC$ and with relations $[X] + [Z] = [Y]$ for each triangle $X \to Y \to Z \to X[1]$ in $\CC$, where $[X]$ the isomorphism class of $[X]$ in $\CC$.

We have the following correspondence between $\AA$ and $\Db \AA$ (see \cite[Chapter III.1]{Happel88}).

\begin{proposition}\label{proposition:GrothendieckGroupHappel}
The embedding $\AA \to \Db \AA$ induces an isomorphism $K_0 (\AA) \to K_0 (\Db \AA)$, compatible with the Euler form. 
\end{proposition}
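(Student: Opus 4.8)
The plan is to construct an explicit inverse to the map induced by the embedding and then to check that both maps respect the Euler form. Write $\phi \colon K_0(\AA) \to K_0(\Db \AA)$ for the homomorphism induced by $A \mapsto A[0]$; it is well defined because a short exact sequence $0 \to X \to Y \to Z \to 0$ in $\AA$ gives rise to a distinguished triangle $X[0] \to Y[0] \to Z[0] \to X[1]$, so the defining relations of $K_0(\AA)$ are carried to relations of $K_0(\Db \AA)$. In the other direction I would define $\psi \colon K_0(\Db \AA) \to K_0(\AA)$ on objects by
$$[X] \longmapsto \sum_{n \in \bZ} (-1)^n [H^n(X)],$$
a finite sum since $X$ has bounded cohomology.

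First I would check that $\psi$ is well defined. Given a distinguished triangle $X \to Y \to Z \to X[1]$, the associated long exact cohomology sequence
$$\cdots \to H^n(X) \to H^n(Y) \to H^n(Z) \to H^{n+1}(X) \to \cdots$$
is a bounded exact sequence in $\AA$, and the alternating sum of the classes of the terms of any bounded exact sequence vanishes in $K_0(\AA)$; rearranging this identity yields $\psi([Y]) = \psi([X]) + \psi([Z])$, so $\psi$ factors through $K_0(\Db \AA)$. To see that $\phi$ and $\psi$ are mutually inverse, note that for $A \in \AA$ the complex $A[0]$ has cohomology only in degree $0$, so $\psi(\phi([A])) = [H^0(A[0])] = [A]$, giving $\psi \circ \phi = \id$. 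For the reverse composite, the canonical truncation triangles exhibit, for any $X \in \Db \AA$, the identity $[X] = \sum_n [H^n(X)[-n]]$ in $K_0(\Db \AA)$; using $[W[1]] = -[W]$ this reads $[X] = \sum_n (-1)^n [H^n(X)[0]]$, whence $\phi(\psi([X])) = [X]$. Thus $\phi$ is an isomorphism.

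Finally I would establish compatibility with the Euler form. For $A, B \in \AA$ the identification $\Hom_{\Db \AA}(A[0], B[n]) \cong \Ext^n_\AA(A,B)$ gives
$$\chi_{\Db \AA}(A[0], B[0]) = \sum_{n} (-1)^n \dim_k \Ext^n(A,B) = \chi_\AA(A,B),$$
where finiteness of both sides is guaranteed by finite global dimension (which also makes $\Db \AA$ of finite type, so that $\chi_{\Db \AA}$ is defined). Since both Euler forms are bilinear and descend to the respective Grothendieck groups, and $\phi$ carries each generator $[A]$ to the generator $[A[0]]$, the two forms agree on all of $K_0(\AA)$ under $\phi$.

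The argument is classical and largely routine; the only point that genuinely requires attention is the well-definedness of $\psi$, that is, the vanishing of the alternating sum of classes along the long exact cohomology sequence of a triangle. This is precisely what forces the cohomological Euler characteristic to descend to $K_0(\Db \AA)$, and hence to provide the inverse to $\phi$; everything else then follows formally.
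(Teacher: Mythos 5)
Your proof is correct. Note that the paper itself offers no argument for this proposition: it is quoted from Happel's book (Chapter III.1), and your construction is precisely the classical argument found there --- the inverse map $[X] \mapsto \sum_n (-1)^n [H^n(X)]$, well defined by the vanishing of alternating sums along the long exact cohomology sequence of a triangle, with $\phi \circ \psi = \id$ coming from the truncation triangles and the Euler-form compatibility from $\Hom_{\Db \AA}(A[0],B[n]) \cong \Ext^n_{\AA}(A,B)$. So your write-up fills in, in essentially the standard way, exactly what the paper outsources to the citation; there is no substantive difference in approach.
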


Assume now that $\AA$ has Serre duality, thus there is an exact autoequivalence $\bS: \Db \AA \to \Db \AA$ such that $\chi(X,Y) = \chi(Y,\bS X)$.  This implies that $\chi(v,-) = 0$ if and only if $\chi(-,v)$ = 0, for all $v \in K_0 (\Db \AA)$.  Moreover, there is a $\bZ$-linear transformation $\Phi: K_0 (\Db \AA) \to K_0 (\Db \AA)$, satisfying the property $\chi(v,-) = -\chi(-, \Phi(v))$ for all $v \in K_0 (\Db \AA)$.  Indeed, one can take $\Phi([X]) = [\t X]$ where $\t X \cong \bS X [-1]$.

Using the isomorphism in Proposition \ref{proposition:GrothendieckGroupHappel}, we find that similar properties hold for $K_0 (\AA)$: $\chi(v,-) = 0$ if and only if $\chi(-,v)$ = 0 for all $v \in \AA$, and there is a $\bZ$-linear transformation $\Phi: K_0 (\AA) \to K_0 (\AA)$, satisfying the property $\chi(v,-) = -\chi(-, \Phi(v))$ for all $v \in K_0 (\AA)$.

\begin{remark}\label{remark:Coxeter}
In general, for an object $X \in \AA$, there is no object $Y \in \AA$ such that $\Phi([X]) = [Y]$.  If $\AA$ is a hereditary category and has no nonzero projective and injective objects, then the functor $\t: \Db \AA \to \Db \AA$ restricts to a functor $\t: \AA \to \AA$ and we do have $\Phi([X]) = [\t X]$, for all $X \in \AA$.
\end{remark}

We write 
$$\rad \chi = \{v \in K_0(\AA) \mid \chi(v,-) = 0\} = \{v \in K_0(\AA) \mid \chi(-,v) = 0\}$$
(the last equality is due to Serre duality).  The quotient $K_0(\AA) / \rad \chi$ is called the \emph{numerical Grothendieck group} of $\AA$, and is denoted by $\Num \AA$.  The induced bilinear form $\chi(-,-): \Num \AA \times \Num \AA \to \bZ$ is nondegenerate.  Since $\Phi(\rad \chi) = \rad \chi$, the transformation $\Phi: K_0(\AA) \to K_0(\AA)$ induces a transformation $\Phi: \Num \AA \to \Num \AA$, also satisfying $\chi(v,-) = -\chi(-,\Phi(v))$ for all $v \in \Num \AA$.

It is easy to see that $\Num \AA$ is a torsion-free abelian group.  We say that $\AA$ is \emph{numerically finite} if $\Num \AA$ is finitely generated.  In this case, $\Num \AA$ is a free abelian group of finite rank.

Assume now that $\AA$ is numerically finite and choose a basis ${v_1, \ldots, v_n}$ of $\Num \AA$.  We may now associate matrices to $\chi(-,-)$ and $\Phi$ with respect to this basis, called the \emph{Cartan matrix} and the \emph{Coxeter matrix}, respectively.  We have the following relation (see for example \cite[Proposition 2.1]{Lenzing96}).

\begin{proposition}\label{proposition:CartanCoxeter}
Let $\AA$ be an Ext-finite abelian category of finite global dimension.  Assume furthermore that $\AA$ has Serre duality and is numerically finite.  Let $A$ be the Cartan matrix and $C$ be the Coxeter matrix with respect to a chosen basis of $\Num \AA$.  We have $C = -A^{-1} A^T$.
\end{proposition}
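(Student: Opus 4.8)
The plan is to reduce the identity to the single defining relation $\chi(v,-) = -\chi(-,\Phi(v))$ established just above the statement, and to read off $C = -A^{-1}A^T$ by a transpose manipulation. First I would fix coordinate conventions. Writing $[v] \in \bZ^n$ for the column vector of coordinates of $v \in \Num \AA$ in the chosen basis $v_1, \ldots, v_n$, the Cartan matrix is $A = (\chi(v_i,v_j))_{i,j}$, so that by bilinearity $\chi(v,w) = [v]^T A\, [w]$ for all $v,w \in \Num \AA$. Similarly, the Coxeter matrix $C$ is by definition the matrix of the transformation $\Phi \colon \Num \AA \to \Num \AA$, so that $[\Phi(v)] = C\,[v]$.

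Next I would substitute these expressions into the relation $\chi(v,w) = -\chi(w,\Phi(v))$, which holds for all $v,w$. The left-hand side is $[v]^T A\, [w]$, while the right-hand side is $-[w]^T A\, [\Phi(v)] = -[w]^T A C\, [v]$. Since $[v]^T A\, [w]$ is a scalar it equals its own transpose, that is, $[v]^T A\, [w] = [w]^T A^T [v]$, so the relation becomes $[w]^T A^T [v] = -[w]^T A C\, [v]$ for all coordinate vectors $[v],[w]$. This forces the matrix identity $A^T = -A C$.

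Finally, I would invoke the nondegeneracy of the induced form $\chi(-,-)$ on $\Num \AA$ recorded in the discussion preceding the statement; this is precisely the assertion that $A$ is nonsingular. Hence $A$ is invertible over $\bQ$, and $A^T = -A C$ rearranges to $C = -A^{-1} A^T$, the right-hand side being automatically integral because it equals the integer matrix $C$.

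The argument involves no serious obstacle: the only point requiring care is the bookkeeping of conventions. Alternative choices for the Cartan matrix (using $\chi(v_j,v_i)$ in place of $\chi(v_i,v_j)$) or for whether $\Phi$ is represented by left or right multiplication would relocate the transpose and could instead yield an expression such as $-A^{-T} A$. One must therefore fix the conventions so that $\chi(v,w) = [v]^T A\, [w]$ and $[\Phi(v)] = C\,[v]$, which is exactly what makes the stated form $-A^{-1}A^T$ come out correctly.
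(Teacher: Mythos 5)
Your proof is correct and is essentially the paper's own argument: the paper likewise passes to $\Num \AA \otimes_\bZ \bQ$, where nondegeneracy of $\chi$ makes $A$ invertible, and checks that the transformation with matrix $-A^{-1}A^T$ satisfies the defining relation $\chi(v,-) = -\chi(-,\Phi(v))$ — which, written in coordinates, is exactly your transpose manipulation $A^T = -AC$. The only cosmetic difference is that the paper verifies the candidate matrix and concludes $\Phi = \Psi$ by uniqueness, whereas you solve for $C$ directly; the content, including the role of nondegeneracy, is the same.
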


\begin{proof}
Let $V = (\Num \AA) \otimes_\bZ \bQ$.  Writing $v_1, \ldots v_n$ for the chosen basis of $\Num \AA$, we find a basis $v'_1, \ldots, v'_n$ for $V$ by $v'_i = v_i \otimes 1$.  Furthermore, we well consider the bilinear form $\chi'(-,-)$ on $V$ via
$$\chi'(v \otimes q, w \otimes r) = \chi(v,w) \cdot qr$$
and the linear transformation $\Phi': V \to V$ via
$$\Phi'(v \otimes q) = \Phi'(v) \cdot q.$$
Note that $\chi'(-,-)$ is nondegenerate and that $\Phi'$ satisfies $\chi'(v \otimes q, -) = -\chi'(-, \Phi'(v \otimes q))$ for all $v \in \Num \AA$ and $q \in \bQ$.

Let $\Psi$ be the linear transformation of $V$ whose matrix is $C = -A^{-1} A^T$.  It is then easily verified that $\chi'(v \otimes q, -) = -\chi'(-, \Psi(v \otimes q))$ for all $v \in \Num \AA$ and $q \in \bQ$.  Since then $\chi'(-, \Phi'(v \otimes q)) = \chi'(-, \Psi(v \otimes q))$, we can use that $\chi'(-,-)$ is nondegenerate to conclude that $\Phi' = \Psi$.  This concludes the proof.
\end{proof}

\begin{remark}
Since $\chi(-,-)$ is nondegenerate, we know that $\det A \not= 0$, but we do not know whether $\det A$ is invertible in $\bZ$.  Thus, the matrix $A^{-1}$ should be considered as a matrix over $\bQ$.  However, Proposition \ref{proposition:CartanCoxeter} does imply that the entries of $C$ lie in $\bZ$.
\end{remark}

\begin{example}\label{example:CartanCoxeterCurve}
Let $\bX$ be a smooth projective curve and let $\FF, \GG \in \coh \bX$.  The Riemann-Roch formula gives
$$\chi(\FF,\GG) = \begin{pmatrix} \deg \FF & \rk \FF \end{pmatrix}
\begin{pmatrix} 0 & -1 \\1 & 1-g \end{pmatrix}
\begin{pmatrix} \deg \GG \\ \rk \GG \end{pmatrix}.$$
We see that $\Num (\coh \bX) \cong \bZ^2$ and that $[k(P)]$ and $[\OO_\bX]$ form a basis of $\Num (\coh \bX)$ (here, $P \in \bX$ is a closed point, and $k(P)$ is the associated simple sheaf); indeed, for any $\FF \in \coh \bX$, we have $[\FF] = \deg \FF \cdot [k(P)] + \rk \FF \cdot [\OO_\bX]$.  With respect to this basis, the Cartan matrix is
$$\begin{pmatrix} 0 & -1 \\1 & 1-g \end{pmatrix}$$
and the Coxeter matrix is
$$\begin{pmatrix} 1 & 2g-2 \\ 0 & 1 \end{pmatrix}.$$
\end{example}

\subsection{Tilting objects and exceptional sequences}

Let $\AA$ be a Hom-finite abelian category.  We say that an object $E \in \AA$ is \emph{exceptional} if $\Ext^i(E,E) = 0$ whenever $i \not= 0$ and $\Hom(E,E) \cong k$. If $\AA$ is hereditary, then $\Ext^1(E,E) = 0$ implies $\Hom(E,E) \cong k$ when $E$ is indecomposable (\cite[Lemma 4.1]{HappelRingel82}, see also \cite[Proposition 5.1]{Lenzing07}).

Similar notions will be used when working in the bounded derived category $\Db \AA$ instead of $\AA$: an object $E \in \AA$ is \emph{exceptional} if $\Hom(E,E[i]) = 0$ whenever $i \not= 0$ and $\Hom(E,E) \cong k$.

A sequence $\EE = (E_i)_{i=1, \ldots, n}$ of exceptional objects in $\Db \AA$ is said to be \emph{exceptional} if $\Hom_{\Db \AA}(E_i,E_j[l]) = 0$ whenever $i > j$ and all $l \in \bZ$.  An exceptional sequence is called a \emph{strong exceptional sequence} if and only if $\Ext^l(E_i,E_j) = 0$ for all $l \not= 0$. An exceptional sequence $\EE$ is said to be \emph{full} if the smallest triangulated subcategory of $\Db \AA$ which contains $\EE$ is $\Db \AA$ itself.

Let $\AA$ be a hereditary Ext-finite abelian category.  An object $F \in \AA$ is called a \emph{partial tilting object} if $\Ext^1(F,F) = 0$, and $F$ is called a \emph{tilting object} if additionally $\Hom(F,X) = 0 = \Ext^1(F,X)$ implies that $X = 0$.

We will use the following classification (see \cite[Theorem 3.1]{Happel01}).

\begin{theorem}\label{theorem:Happel}
Let $\AA$ be an indecomposable Hom-finite hereditary abelian category.  If $\AA$ has a tilting object, then $\AA$ is derived equivalent to either
\begin{enumerate}
\item the category $\mod \Lambda$ of finite-dimensional modules over a finite-dimensional hereditary algebra $\Lambda$, or
\item the category $\coh \bX$ of coherent sheaves on a weighted projective line $\bX$.
\end{enumerate}
\end{theorem}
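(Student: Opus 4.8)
The plan is to pass from the category $\AA$ to the finite-dimensional algebra $\Lambda = \End_\AA(T)$, where $T$ is the given tilting object, and then to classify $\Lambda$ up to derived equivalence. The first step is the tilting theorem: since $T$ is a tilting object in the hereditary category $\AA$ (so $\Ext^1(T,T) = 0$ and $T$ generates in the appropriate sense), the derived functor $\RHom_\AA(T,-) : \Db \AA \to \Db(\mod \Lambda)$ is a triangle equivalence. Thus it suffices to identify $\Lambda$ up to derived equivalence, and the indecomposability of $\AA$ guarantees that $\Lambda$ is a connected algebra.

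The second step is to pin down the homological nature of $\Lambda$. Being the endomorphism algebra of a tilting object in a hereditary category, $\Lambda$ is by definition a \emph{quasitilted} algebra, and I would verify the Happel--Reiten--Smal\o\ homological characterization of these: $\gldim \Lambda \le 2$, and every indecomposable $\Lambda$-module has projective dimension at most one or injective dimension at most one. Under the equivalence above, the hereditary category $\AA$ is realized as a (generally nonstandard) heart of $\Db(\mod \Lambda)$, obtained from $\mod \Lambda$ by tilting at a torsion pair; tracking the cohomological amplitude of $\RHom_\AA(T,-)$ on objects of $\AA$ (which lands in degrees $0$ and $1$) against the heredity of $\AA$ yields precisely these dimension bounds.

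The third and decisive step is the structural dichotomy for connected quasitilted algebras: such an algebra is either \emph{tilted}, i.e. isomorphic to $\End_H(M)$ for a tilting module $M$ over a connected finite-dimensional hereditary algebra $H$, or \emph{quasitilted of canonical type}, i.e. derived equivalent to a canonical algebra. In the tilted case, Happel's tilting theorem again produces a triangle equivalence $\Db(\mod \Lambda) \cong \Db(\mod H)$, placing $\AA$ in case (1). In the canonical-type case, one invokes the result of Geigle--Lenzing that $\coh \bX$ for a weighted projective line $\bX$ carries a tilting bundle whose endomorphism algebra is the corresponding canonical algebra, so that $\Db(\mod \Lambda) \cong \Db(\coh \bX)$, placing $\AA$ in case (2). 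Composing with the equivalence of the first step yields the theorem.

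The main obstacle is this last dichotomy. Proving that a connected quasitilted algebra which is \emph{not} tilted must be of canonical type is the hard, structural heart of the argument: it requires analysing the Auslander--Reiten structure of $\mod \Lambda$ --- locating the section, and understanding the regular components and separating tubular families --- and controlling the Coxeter transformation and its spectral radius in order to separate the ``module'' case from the ``sheaf'' case. This is exactly the content supplied by the Happel--Reiten--Smal\o\ theory, and it is where all the genuine work lies; the tilting reductions flanking it are formal by comparison.
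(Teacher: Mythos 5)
The paper never proves this statement: it is Happel's theorem, quoted verbatim from \cite{Happel01} (Theorem 3.1 there) and used as external input, so the only real question is whether your argument would stand as an independent proof. It would not. Your Steps 1 and 2 are fine and standard: the tilting equivalence $\RHom_\AA(T,-)\colon \Db \AA \to \Db(\mod \Lambda)$ and the homological characterization of quasitilted algebras ($\gldim \Lambda \leq 2$, every indecomposable of projective or injective dimension at most one) are genuinely due to Happel--Reiten--Smal\o{}. The gap is Step 3. The dichotomy ``a connected quasitilted algebra is either tilted or quasitilted of canonical type'' is \emph{not} a theorem of Happel--Reiten--Smal\o{}; in their memoir it was an open problem, and in the literature it is obtained as a \emph{consequence} of the theorem you are trying to prove: one first establishes that $\Db \AA \simeq \Db(\mod H)$ or $\Db(\coh \bX)$, and only then identifies which quasitilted algebras occur on each side. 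Invoking the dichotomy here is therefore circular --- it is exactly the statement to be proven, transported into algebra language --- and you concede as much when you write that ``all the genuine work lies'' in that step.

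What a real proof (Happel's) does instead is a direct analysis of the hereditary category itself: a case division according to whether $\AA$ contains nonzero projective objects, a study of the Euler form and the Coxeter transformation on the finite-rank Grothendieck group $K_0(\AA) \cong K_0(\Lambda)$ (spectral-radius arguments yielding a rank function), and the construction of line-bundle-like exceptional objects together with perpendicular-category induction to manufacture the weighted projective line. None of this machinery appears in your proposal, so once the circular citation is removed, nothing of substance remains. To salvage your architecture you would need an independent, purely module-theoretic proof of the quasitilted dichotomy, which is precisely as hard as the theorem itself and is not supplied by the Happel--Reiten--Smal\o{} theory you cite.
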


The following proposition is well-known to the experts (see \cite[Lemma 5]{CrawleyBoevey93} for the case of the category of modules over a finite-dimensional hereditary algebra).  It will allow us to use the classification (up to derived equivalence) of hereditary categories with a tilting object from Theorem \ref{theorem:Happel} (see for example \cite[Theorem 6.3]{Lenzing07}).

\begin{proposition}\label{proposition:SequenceMeansObject}
Let $\AA$ be a hereditary category, and let $\EE = E_1, E_2, \ldots, E_n$ be an exceptional sequence in $\Db \AA$.  The abelian subcategory $\BB$ generated by $\EE$ is a hereditary category with a tilting object $F$ which is a direct sum of $n$ pairwise nonisomorphic indecomposable objects.  Furthermore, we may order the indecomposable direct summands of $F$ to form an exceptional sequence.
\end{proposition}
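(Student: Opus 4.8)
The plan is to reduce to an exceptional sequence lying in $\AA$ itself, to identify $\BB$ as an extension-closed abelian subcategory of the hereditary category $\AA$, and then to build the tilting object by an induction on $n$ using perpendicular categories together with a universal-extension (Bongartz-type) completion.

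First I would reduce to the case $E_i \in \AA$. Each $E_i$ is exceptional, so $\End E_i \cong k$ is local and $E_i$ is indecomposable in $\Db\AA$; since $\AA$ is hereditary, the decomposition $E_i \cong \coprod_k H^{-k}(E_i)[k]$ recalled above forces $E_i \cong A_i[m_i]$ for a unique indecomposable $A_i \in \AA$ and some $m_i \in \bZ$. From $\Hom(E_i,E_i[l]) = \Ext^l(A_i,A_i)$ one sees each $A_i$ is exceptional in $\AA$, and from $\Hom(E_i,E_j[l]) = \Ext^{m_j-m_i+l}(A_i,A_j)$, which vanishes for all $l$ when $i>j$, one sees that $\Ext^p(A_i,A_j)=0$ for all $p$ whenever $i>j$; hence $A_1,\dots,A_n$ is an exceptional sequence whose objects all lie in $\AA$. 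Replacing $E_i$ by $A_i$, I take $\BB$ to be the smallest full subcategory of $\AA$ containing the $E_i$ and closed under kernels, cokernels and extensions. Such a subcategory is abelian with the exact structure inherited from $\AA$, and because $\AA$ is hereditary and $\BB$ is closed under extensions, $\BB$ is again hereditary (the $\Ext^1$-groups computed in $\BB$ and in $\AA$ coincide). Since $\AA$ is Ext-finite, $\BB$ is Hom-finite, and as the $E_i$ generate $\BB$ they form a full exceptional sequence in $\Db\BB$; in particular $K_0(\BB)\cong\bZ^n$.

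Next I would produce a tilting object by induction on $n$. For $n=1$, the closure of a single exceptional object $E_1$ is $\add E_1$, which is semisimple with tilting object $E_1$. For the inductive step I would peel off the last object: the ordering condition gives $E_1,\dots,E_{n-1}\in E_n^\perp=\{X:\Hom(E_n,X)=\Ext^1(E_n,X)=0\}$, which by Geigle--Lenzing perpendicular-category theory (perpendicular induction) is again a hereditary abelian category whose derived category embeds fully faithfully into $\Db\AA$; there $E_1,\dots,E_{n-1}$ is an exceptional sequence generating a subcategory $\BB'\subseteq E_n^\perp$, and by induction $\BB'$ has a tilting object $T'=\bigoplus_{i=1}^{n-1}T'_i$. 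Since $T'_i\in E_n^\perp$ we have $\Ext^l(E_n,T'_i)=0$ for all $l$, but $\Ext^1(T'_i,E_n)$ may be nonzero; I would kill it by the universal extension $0\to E_n^{d_i}\to\tilde T_i\to T'_i\to 0$ with $d_i=\dim_k\Ext^1(T'_i,E_n)$, which lies in $\BB$ and satisfies $\Ext^1(\tilde T_i,E_n)=0$. A short diagram chase through these sequences, using $\Ext^{\geq 2}=0$, the rigidity $\Ext^1(T'_i,T'_j)=0$, and $T'_j\in E_n^\perp$, then shows that $T:=E_n\oplus\bigoplus_i\tilde T_i$ is rigid, i.e. $\Ext^1_\BB(T,T)=0$. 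From the defining sequences, $E_n$ and the $T'_i$ — hence all of $E_1,\dots,E_n$ — lie in the thick subcategory generated by $T$, so $T$ generates $\Db\BB$; since $\BB$ is hereditary, any $X\in\BB$ with $\Hom(T,X)=\Ext^1(T,X)=0$ has $\Hom(T,X[l])=0$ for all $l$ and thus $X=0$, so $T$ is a tilting object. Passing to a basic version and using $\Db\BB\simeq\Db(\mod\End T)$ together with $K_0(\BB)\cong\bZ^n$, the object $T$ has exactly $n$ pairwise nonisomorphic indecomposable summands, giving the desired $F$. Finally, to order the summands into an exceptional sequence I would invoke that the indecomposable summands of a rigid object in a hereditary category are directed: topologically sorting the Hom-quiver yields an ordering $F_1,\dots,F_n$ with $\Hom(F_i,F_j)=0$ for $i>j$, and since $\Ext^{\geq 1}(F_i,F_j)=0$ for all $i,j$ this is an exceptional sequence.

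The main obstacle is the construction of the tilting object: one must arrange the perpendicular-category induction so that the newly added object $E_n$ is right-perpendicular to the previously built tilting object, and then verify — via the universal-extension completion — that the result is not merely rigid but actually a tilting object of the correct rank $n$. The supporting facts that $E_n^\perp$ is hereditary with $\Db(E_n^\perp)\hookrightarrow\Db\AA$ fully faithful, and that a rigid generator of a hereditary category is a tilting object with $\rk K_0$ indecomposable summands, carry most of the weight.
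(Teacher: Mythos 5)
Your proof is correct, and its skeleton matches the paper's: shift the $E_i$ into $\AA$, note that $\BB$ is hereditary because it is extension-closed in $\AA$, and induct on $n$ using a universal-extension (Bongartz-type) completion. The inductive step, however, is implemented in a mirror-image way. The paper keeps the tilting object $F = \oplus_{i=1}^k F_i$ of $\BB_k$ untouched and adjoins a \emph{single} new summand $F_{k+1}$, the middle term of the universal extension $0 \to E_{k+1} \to F_{k+1} \to \Ext^1(F,E_{k+1}) \otimes_{\End F} F \to 0$; rigidity then follows from a few applications of Hom functors, and no perpendicular-category theory is needed, since the exceptional-sequence vanishing $\Hom(E_{k+1},\BB_k[l])=0$ already supplies all the orthogonality you extract from $E_n^\perp$. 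You instead run the induction inside the Geigle--Lenzing category $E_n^\perp$, keep $E_n$ as a pure summand, and correct \emph{every} old summand $T'_i$ by the extension $0 \to E_n^{d_i} \to \tilde T_i \to T'_i \to 0$; your rigidity chase is valid. The trade-off: the paper's construction is leaner and delegates the count and the ordering of summands to the cited theorem of Assem--Souto Salorio--Trepode, whereas you make the count explicit via $K_0(\BB)\cong \bZ^n$ and the equivalence $\Db \BB \cong \Db(\mod \End T)$, and the ordering via directedness of rigid objects --- external inputs of comparable weight. Two small patches you should make: your induction hypothesis must explicitly record that $E_1,\ldots,E_{n-1}$ lie in the thick subcategory generated by $T'$ (your construction gives this, but you use it when showing $T$ generates $\Db\BB$, and it is not part of the definition of a tilting object); and, since the $\tilde T_i$ need not be indecomposable, the passage to a basic version is genuinely needed --- a caveat that applies, silently, to the paper's $F_{k+1}$ as well.
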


\begin{proof}
By possibly taking suspensions, we may assume that $\EE \subseteq \AA$.  Let $\BB$ be the smallest abelian subcategory of $\AA$ containing $\EE$.  Since $\BB$ is closed under extensions, we see that $\BB$ is hereditary.  We will work by induction on the length of the exceptional sequence $\EE$.

Assume that the abelian category $\BB_k$ generated by $E_1, E_2, \ldots, E_k$ has a tilting object given by $F \cong \oplus_{i=1}^k F_i$ (here, the $F_i$'s are pairwise nonisomorphic indecomposable objects).  Let $F_{k+1}$ be the middle term of the universal extension:
$$0 \to E_{k+1} \to F_{k+1} \to \Ext^1(F,E_{k+1}) \otimes_{\End F} F \to 0.$$
Applying the functors $\Hom(F,-)$ and $\Hom(-,F)$ shows that $\Ext^1(F,F_{k+1}) = 0$ and $\Hom(F_{k+1},F)=0$, respectively.  From the long exact sequence obtained by applying $\Hom(E_{k+1},-)$, we find $\Ext^1(E_{k+1},F_{k+1})=0$.  Applying now $\Hom(-,F_{k+1})$ yields $\Ext^1(F_{k+1}, F_{k+1})=0$.

We find that the object $\oplus_{i=1}^{k+1} F_i$ is a tilting object for the category $\BB_{k+1}$ generated by $E_1, E_2, \ldots, E_k, E_{k+1}$.

The last statement then follows from \cite[Theorem A]{AssemSoutoTrepode08}.
\end{proof}

We will use the following proposition about exceptional objects (compare with \cite[Lemma 4.2]{Hubner96}, \cite[Proposition 4.1.1]{Meltzer04}, and \cite[Proposition 5.3]{Lenzing07}).

\begin{proposition}\label{Proposition:ExceptionalInK}
Let $\AA$ be an Ext-finite hereditary category with Serre duality.  An exceptional object in $\AA$ is determined, up to isomorphism, by its image in $\Kred \AA$.
\end{proposition}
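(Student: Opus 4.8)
The plan is to show that any two nonzero exceptional objects $E,E'\in\AA$ with $[E]=[E']$ in $\Kred\AA$ must be isomorphic. First I would exploit that the Euler form factors through $\Kred\AA=\Num\AA$. Since $E$ is exceptional and $\AA$ is hereditary, $\chi(E,E)=\dim_k\Hom(E,E)-\dim_k\Ext^1(E,E)=1$, and likewise $\chi(E',E')=1$. Because $\chi(-,-)$ descends to $\Num\AA$ and $[E]=[E']$, this gives $\chi(E,E')=\chi(E',E)=\chi(E,E)=1$. In particular $\Hom(E,E')\ne0$ and $\Hom(E',E)\ne0$, so there exist nonzero maps $f\colon E\to E'$ and $g\colon E'\to E$.

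Next I would try to produce an isomorphism from a nonzero composite. Both $E$ and $E'$ are indecomposable, since their endomorphism rings are the field $k$ and hence have no nontrivial idempotents. If some pair $f,g$ satisfies $g\circ f\ne0$, then $g\circ f$ is a nonzero scalar in $\End(E)=k$, so $f$ is a split monomorphism; as $E\neq0$ and $E'$ is indecomposable, the image of $f$ is all of $E'$ and $f$ is an isomorphism. Thus it remains to exclude the degenerate situation in which every composite $g\circ f$ (and, by the same token, every $f\circ g$) vanishes.

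Under this degenerate hypothesis I would first show that no nonzero $f\colon E\to E'$ can be a monomorphism or an epimorphism. If $f$ were mono, applying $\Hom(E',-)$ to $0\to E\xrightarrow{f}E'\to\coker f\to0$ yields an injection $g\mapsto f\circ g$ from $\Hom(E',E)$ into $\End(E')=k$; since $\Hom(E',E)\ne0$ this produces a nonzero composite $f\circ g$, contradicting degeneracy. Dually, applying $\Hom(-,E)$ to $0\to\ker f\to E\xrightarrow{f}E'\to0$ rules out $f$ being epi, and the same applies to maps $E'\to E$. Now the Happel--Ringel lemma \cite{HappelRingel82} states that in a hereditary category a nonzero morphism between indecomposables $X\to Y$ is a monomorphism or an epimorphism as soon as $\Ext^1(Y,X)=0$. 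Applying this to $f$ and to $g$ forces $\Ext^1(E',E)\ne0$ and $\Ext^1(E,E')\ne0$; combined with $\chi(E,E')=\chi(E',E)=1$ this yields $\dim_k\Hom(E,E')\ge2$ and $\dim_k\Hom(E',E)\ge2$.

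The main obstacle is to derive a contradiction from this last configuration, where $E\not\cong E'$ admits nonzero maps and nonzero extensions in both directions while all composites vanish. Here I would analyse a fixed nonzero $f$ through the factorisation $E\twoheadrightarrow\im f\hookrightarrow E'$: degeneracy forces $\Hom(\im f,E)=0$ and identifies $\Hom(E',E)$ with $\Hom(\coker f,E)$, and symmetrically on the other side. I expect to turn the nonsplit extensions produced by $\Ext^1(E,E')\ne0\ne\Ext^1(E',E)$, together with Serre duality (which rewrites these $\Ext$ groups as $\Hom$ groups involving $\t E$ and $\t E'$) and the additivity of $\chi$ along the above short exact sequences, into an impossible inequality for the Euler form, contradicting $\chi(E,E)=1$ and thereby completing the proof that $f$ must be an isomorphism.
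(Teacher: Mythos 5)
Your argument is incomplete: the final paragraph, which is where the real difficulty lies, is only a statement of intent (``I expect to turn \dots\ into an impossible inequality''), not a proof. Everything before that point is correct — the reduction to the degenerate case where all composites vanish, the observation that then no nonzero $f\colon E\to E'$ or $g\colon E'\to E$ can be a monomorphism or an epimorphism, and the conclusion via the Happel--Ringel lemma that $\Ext^1(E,E')\neq 0\neq \Ext^1(E',E)$, hence $\dim_k\Hom(E,E')\geq 2$ and $\dim_k\Hom(E',E)\geq 2$. But no contradiction is ever derived from this configuration, so the proposition is not proved.

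The missing idea (and the heart of the paper's proof) is to apply the Euler-form positivity argument not to the pair $(E,E')$ but to the pair $(E',\im f)$. Fix a nonzero $f\colon E\to E'$ and set $I=\im f$. Since $I$ is a quotient of $E$ and $\Ext^1(E,-)$ is right exact by heredity, the surjection $\Ext^1(E,E)\twoheadrightarrow\Ext^1(E,I)$ gives $\Ext^1(E,I)=0$, so $\chi(E,I)=\dim_k\Hom(E,I)>0$. Now $[E]=[E']$ in $\Kred\AA$ yields $\chi(E',I)=\chi(E,I)>0$, hence there is a nonzero map $h\colon E'\to I$; the composite $E'\xrightarrow{h} I\hookrightarrow E'$ is a nonzero element of $\End(E')\cong k$, hence invertible, forcing $I\cong E'$, i.e.\ $f$ is an epimorphism. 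This flatly contradicts your own deduction that in the degenerate case $f$ cannot be epi — and in fact, run symmetrically as in the paper ($\Ext^1(I,E')=0$ gives $\Hom(I,E)\neq 0$, and composing with the epimorphism $E\twoheadrightarrow I$ gives a nonzero scalar in $\End(E)$, so $I\cong E$), this argument proves $E\cong E'$ directly, with no case distinction on composites at all. So your plan can only be completed with this extra input: positivity of $\chi$ against the image object, which is exactly what heredity buys and what your sketch does not supply.
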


\begin{proof}
Let $A$ and $B$ be two exceptional objects in $\AA$ such that $[A]=[B]$ in $\Kred \AA$.  Since $A$ is exceptional, we have $\chi(A,A) = 1$ and since $[A]=[B]$ in $\Kred \AA$, we know that $\chi(A,B) = 1$.  Hence there is a nonzero morphism $f:A \to B$.  Let $I$ be the image of $f$.

Since $I$ is a quotient object of $A$ and $\Ext^1(A,-)$ is right exact (due to heredity), we know there is an epimorphism $\Ext^1(A,A) \to \Ext^1(A,I)$.  Using that $A$ is exceptional, we find that $\Ext^1(A,I) = 0$ and thus $\chi(A,I) > 0$.  It follows from $[A]=[B]$ in $\Kred \AA$ that $\chi(B,I) = \chi(A,I) > 0$ and hence there is a nonzero morphism $B \to I$.  Since $I$ is a subobject of $B$ and $\End(B) \cong k$, we conclude that $B \cong I$.

Likewise, we obtain from $\Ext^1(I,B) = 0$ that $\Hom(I,A) \not= 0$ and thus that $I \cong A$.  We conclude that $A \cong B$.
\end{proof}

\subsection{Paths in Krull-Schmidt categories}

Let $\AA$ and $\BB$ be (essentially small) additive categories.  The coproduct $\AA \oplus \BB$ (as additive categories) has as objects pairs $(A,B)$ with $A \in \AA$ and $B \in \BB$, and a morphism $(A_1, B_1) \to (A_2,B_2)$ is given by a pair $(f,g)$ with $f \in \Hom_\AA(A_1,A_2)$ and $g \in \Hom_\BB(B_1,B_2)$.  The composition is pointwise.  It is clear that $\AA \oplus \BB$ is again an additive category.

An additive category $\AA$ is called \emph{indecomposable} if it is nonzero and it is not the coproduct of two nonzero additive categories.  When $\AA \cong \AA_1 \oplus \AA_2$, where $\AA_1$ is indecomposable, we will call $\AA_1$ a \emph{connected component} of $\AA$.

Although a triangulated category is additive, this notion of indecomposability is too strong for our purposes.  Instead, we will call a triangulated category a \emph{block} if and only if it is nonzero and not the coproduct of two nonzero triangulated categories, thus if it is indecomposable in the category of (essentially small) triangulated categories.  Note that an indecomposable triangulated category is a block, but not necessarily the other way around.  For example, $\Db \mod k$ is a block, but not indecomposable (as additive category).

An additive category is called \emph{Krull-Schmidt} (or Krull-Remark-Schmidt) if every object is a finite direct sum of objects with local endomorphism rings.  This decomposition is then unique up to permuation and isomorphisms of the direct summands.  It is well-known that a Hom-finite additive category is Krull-Schmidt if and only if idempotents split (see for example \cite[Corollary A.2]{ChenYeZhang08}).  If $\AA$ is an abelian Ext-finite category, then $\AA$ and $\Db \AA$ are Krull-Schmidt categories.  Indeed, both $\AA$ and $\Db \AA$ are Hom-finite and idempotents split (idempotents split in $\AA$ because $\AA$ is abelian; that they split in $\Db \AA$ has been shown in \cite[2.10. Corollary]{BalmerSchlichting01}).

Let $\AA$ be an additive Krull-Schmidt category.  An \emph{unoriented path of length $n$} in $\AA$ or $\Db \AA$ is a sequence $X_0, X_1, \ldots, X_n$ of indecomposable objects such that for all $i=0, 1, \ldots, n-1$ we have that either $\Hom(X_i,X_{i+1}) \not= 0$ or $\Hom(X_{i+1},X_i) \not= 0$.  The sequence $X_0, X_1, \ldots, X_n$ of indecomposable objects is called an \emph{(oriented) path of length $n$} if $\Hom(X_i,X_{i+1}) \not= 0$ for all $i=0, 1, \ldots, n-1$.

Let $\CC$ be a triangulated Krull-Schmidt category.  A \emph{suspended path} in $\CC$ is a sequence $X_0, X_1, \ldots, X_n$ of indecomposable objects such that for all $i=0, 1, \ldots, n-1$ we have either $\Hom(X_i,X_{i+1}) \not= 0$ or $X_{i+1} \cong X_i[1]$.

We recall the following result from \cite{vanRoosmalen12b} (based on \cite{HappelZacharia08}).

\begin{theorem}\label{theorem:Paths}
Let $\AA$ be an indecomposable Ext-finite hereditary category, and let $A,B \in \Ob \AA$ be indecomposable objects.  There is an unoriented path of length at most two from $A$ to $B$.  If there is an oriented path from $A$ to $B$, then there is an oriented path from $A$ to $B$ of length at most two.
\end{theorem}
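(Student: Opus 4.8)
The plan is to phrase everything in terms of the \emph{Hom-graph} $G$ whose vertices are the isomorphism classes of indecomposable objects of $\AA$ and which has an edge between distinct $X$ and $Y$ whenever $\Hom(X,Y)\neq 0$ or $\Hom(Y,X)\neq 0$. Since $\AA$ is Krull-Schmidt, indecomposability of $\AA$ as an additive category is equivalent to connectedness of $G$; hence some unoriented path from $A$ to $B$ always exists, and the entire content of the first assertion is the bound on its length. I would therefore argue by contradiction: pick a \emph{shortest} unoriented path $A=X_0,X_1,\dots,X_n=B$, suppose $n\geq 3$, and aim to produce a shortcut contradicting minimality.

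The key homological input is that heredity converts vanishing of compositions into a bridge. Minimality forces $\Hom(X_0,X_2)=0=\Hom(X_2,X_0)$, so whenever the segment $X_0,X_1,X_2$ is locally oriented, say $X_0\xrightarrow{f}X_1\xrightarrow{g}X_2$, the composite $gf$ lies in $\Hom(X_0,X_2)=0$ and hence vanishes; thus $\im f\subseteq\ker g$, and I would analyse this configuration using that kernels, images and cokernels of maps between objects of $\AA$ again lie in $\AA$ and satisfy $\Ext^{\geq 2}=0$. The complementary tool is the \emph{extension bridge}: if $\Ext^1(X,Y)\neq 0$ there is a non-split sequence $0\to Y\to E\to X\to 0$, and an indecomposable summand of $E$ provides an (in fact oriented) path of length at most two between $X$ and $Y$. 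Combining the image/kernel analysis of the vanishing composite with this extension bridge is what should manufacture either a direct edge $X_0$--$X_2$ or a common neighbour, contradicting minimality and forcing $n\leq 2$.

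For the oriented refinement I would run the same scheme but track compositions along a shortest \emph{oriented} path $A=X_0\to X_1\to\cdots\to X_n=B$, where minimality now gives $\Hom(X_i,X_j)=0$ for all $j\geq i+2$. If any partial composite $f_{j}\cdots f_{i}$ is nonzero we obtain a strictly shorter oriented path and are done; otherwise every consecutive composite vanishes, the factorisations $\im f_i\subseteq\ker f_{i+1}$ must be leveraged together with the extension bridge applied in the correct orientation, to produce an oriented length-two route $X_0\to M\to X_n$. Throughout, one passes to indecomposable summands of the constructed middle objects while checking that the relevant components of the structural maps remain nonzero, which is where the Krull-Schmidt hypothesis is used.

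The main obstacle is the local configuration analysis. The vanishing-composite argument only applies to locally oriented segments; at a \emph{sink} $X_0\to X_1\leftarrow X_2$ or a \emph{source} $X_0\leftarrow X_1\to X_2$ there is no composite to exploit, so one must instead build the connecting object directly from the maps into or out of $X_1$ and from extension data, verifying that indecomposable summands inherit the required nonzero morphisms. Making this construction uniform across all orientation patterns is the delicate point, and it is precisely here that I would lean on the structural results of \cite{HappelZacharia08} and \cite{vanRoosmalen12b} from which the theorem is recalled.
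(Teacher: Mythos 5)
Your proposal has a genuine gap, and at its only hard point it is circular. Note first that the paper does not prove Theorem \ref{theorem:Paths} at all: it recalls it from \cite{vanRoosmalen12b} (which builds on \cite{HappelZacharia08}), so any blind proof must stand on its own. Your preliminary reductions are correct but are the easy part: connectedness of the Hom-graph is indeed equivalent to indecomposability of $\AA$ (this is the argument of Lemma \ref{lemma:Ringel3}), and your ``extension bridge'' is a true, provable fact (in a non-split extension $0 \to Y \to E \to X \to 0$ with $X,Y$ indecomposable, every indecomposable summand $E'$ of $E$ receives a nonzero map from $Y$ and maps nonzero to $X$, since otherwise the sequence would split). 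However, for the configurations you yourself identify as ``the delicate point'' --- the sink $X_0 \to X_1 \leftarrow X_2$ and source $X_0 \leftarrow X_1 \to X_2$ patterns --- your plan is to ``lean on the structural results of \cite{HappelZacharia08} and \cite{vanRoosmalen12b} from which the theorem is recalled''. Those structural results \emph{are} the theorem being proved, so this step establishes nothing.

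Independently of the circularity, the shortening scheme is logically flawed. If $A = X_0, X_1, \ldots, X_n = B$ is a shortest unoriented path with $n \geq 3$, then producing ``a common neighbour'' of $X_0$ and $X_2$, or any length-two route between them, does \emph{not} contradict minimality: it replaces the segment $X_0, X_1, X_2$ by another segment of length two and leaves the total length unchanged. Only an actual edge between some $X_i$ and $X_j$ with $|i-j| \geq 2$ shortens the walk. This is fatal for the tools you propose, because they inherently output length-two bridges rather than edges. Concretely, in an oriented segment with $f \colon X_0 \to X_1$ and $g \colon X_1 \to X_2$ nonzero and $gf = 0$: since $X_1$ is indecomposable and $0 \neq \im f \subseteq \ker g$, the sequence $0 \to \ker g \to X_1 \to \im g \to 0$ is non-split; heredity (applied to the mono $\im g \hookrightarrow X_2$) gives $\Ext^1(X_2, \ker g) \neq 0$, and the extension bridge converts this into yet another length-two path ending in $X_2$ --- exactly the kind of output that cannot feed the induction. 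Nor can one hope to force a direct edge $X_0$--$X_2$ in general: already for the quiver $1 \to 2$ the two simples are Hom-orthogonal in both directions and are joined only through the middle term of a non-split extension, so the bound ``two'' is sharp and direct edges need not exist. A correct proof therefore needs a different mechanism (in \cite{HappelZacharia08} and \cite{vanRoosmalen12b} one works with paths in $\Db \AA$, where suspensions allow segments to be reoriented before heredity is brought to bear), not a refinement of the local triple analysis you describe.
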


We will also use the following results (see \cite[Lemmas 3 and 6]{Ringel05}).

\begin{lemma}\label{lemma:Ringel}
Let $\CC$ be a triangulated Krull-Schmidt category, and let $X,Z \in \CC$ be indecomposable objects.  If $\Hom_\CC(Z,X[1])$ is nonzero, then there is a path from $X$ to $Z$.
\end{lemma}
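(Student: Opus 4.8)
The plan is to realize the nonzero class in $\Hom_\CC(Z,X[1])$ as the connecting morphism of a distinguished triangle and then read a path off the middle term. Concretely, I would pick a nonzero $f \colon Z \to X[1]$ and complete it to a distinguished triangle $X \xrightarrow{u} Y \xrightarrow{v} Z \xrightarrow{f} X[1]$. The key structural input is the standard fact that in such a triangle $u$ is a split monomorphism if and only if the connecting map $f$ vanishes; since $f \neq 0$, the morphism $u$ is \emph{not} a split monomorphism. If $Y = 0$, the triangle degenerates and $f$ is forced to be an isomorphism $Z \cong X[1]$, so that $X, Z$ is a suspended path of length one. I may therefore assume $Y \neq 0$.

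Since $\CC$ is Krull-Schmidt, I decompose $Y \cong \bigoplus_j Y_j$ into indecomposables and write $u = (u_j)_j$, $v = (v_j)_j$ with $u_j \colon X \to Y_j$ and $v_j \colon Y_j \to Z$. The goal then reduces to producing a single index $j$ with $u_j \neq 0$ and $v_j \neq 0$: for such a $j$ the sequence $X, Y_j, Z$ is an oriented path of length two (hence in particular a suspended path), and the lemma follows.

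The heart of the argument, and the step I expect to be the main obstacle, is the extraction of one summand carrying both maps. I would argue by contradiction: suppose for every $j$ at least one of $u_j, v_j$ vanishes, and split $Y = Y' \oplus Y''$ where $Y'$ collects the summands with $u_j \neq 0$ (so $v_j = 0$ there by assumption) and $Y''$ the rest (where $u_j = 0$). Then $u$ factors as $u = \iota' u'$ through the inclusion $\iota' \colon Y' \hookrightarrow Y$, while $v \iota' = 0$. Applying $\Hom(Y',-)$ to the triangle gives the exact sequence $\Hom(Y',X) \xrightarrow{u_*} \Hom(Y',Y) \xrightarrow{v_*} \Hom(Y',Z)$, and since $\iota' \in \ker v_*$, it factors as $\iota' = u \circ g$ for some $g \colon Y' \to X$. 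Composing with the retraction $\pi' \colon Y \to Y'$ and using $u = \iota' u'$ yields $u' g = \id_{Y'}$, so $Y'$ is a direct summand of $X$. As $X$ is indecomposable, $Y' \cong X$ or $Y' = 0$; the latter would force $u = 0$, hence $Z \cong Y \oplus X[1]$, and indecomposability of $Z$ together with $X[1] \neq 0$ would give $Y = 0$, contradicting $Y \neq 0$. In the former case, locality of $\End(X)$ forces $u'$ to be an isomorphism, whence $u$ is a split monomorphism --- contradicting $f \neq 0$. This contradiction produces the required index $j$ and completes the proof.

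The only genuine subtlety, beyond this homological bookkeeping, is the degenerate case $Y = 0$: it yields $Z \cong X[1]$, which is a suspended path but not an oriented one, so the triangulated notion of path appropriate to the conclusion is the suspended path defined just above. Every other case produces an honest oriented path of length at most two.
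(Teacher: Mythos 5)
The paper offers no proof of this lemma to compare against: it is quoted directly from Ringel's \emph{Hereditary triangulated categories} (\cite[Lemmas 3 and 6]{Ringel05}). Your argument is correct, and it is essentially the standard argument behind Ringel's Lemma 3: complete a nonzero $f\colon Z \to X[1]$ to a triangle $X \xrightarrow{u} Y \xrightarrow{v} Z \xrightarrow{f} X[1]$, use that $f \neq 0$ forbids $u$ from being a split monomorphism, and extract from the Krull--Schmidt decomposition of $Y$ an indecomposable summand on which both components are nonzero; your bookkeeping with $Y = Y' \oplus Y''$, the factorization $\iota' = u g$, and the retraction identity $u'g = \id_{Y'}$ is exactly right. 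The caveat you flag about the degenerate case is not pedantry but genuinely necessary: with the paper's own definition of (oriented) path the statement is literally false, since in $\Db(\mod k)$ with $Z = X[1]$ one has $\Hom(Z,X[1]) \neq 0$ yet there is no oriented path from $X$ to $X[1]$ (nonzero maps between indecomposables exist only within a single shift). So the conclusion must be read as ``suspended path'' --- the notion the paper defines just above the lemma, and the one under which the result is used --- and your proof delivers precisely that, giving an honest oriented path of length at most two whenever $Z \not\cong X[1]$.
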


\begin{lemma}\label{lemma:Ringel2}
Let $\CC$ be a Krull-Schmidt triangulated category.  If $\CC$ is a block, then either
\begin{enumerate}
\item all nonzero maps between indecomposable objects are invertible, or
\item for every indecomposable object $X \in \CC$, there is a path from $X$ to $X[1]$.
\end{enumerate} 
\end{lemma}

\begin{lemma}\label{lemma:Ringel3}
Let $\AA$ be an Ext-finite abelian category.  The additive category $\AA$ is indecomposable if and only if $\Db \AA$ is a block.
\end{lemma}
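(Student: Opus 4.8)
The plan is to establish the two implications separately: the direction ``$\AA$ decomposable $\Rightarrow \Db \AA$ not a block'' is routine, while the content lies in the converse, which I would handle by contraposition.

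For the easy direction, suppose $\AA \cong \AA_1 \oplus \AA_2$ with both $\AA_i$ nonzero. Since there are no nonzero morphisms between $\AA_1$ and $\AA_2$, each $\AA_i$ is closed under the kernels and cokernels formed in $\AA$ and is itself abelian. A bounded complex over $\AA_1 \oplus \AA_2$ is just a pair of bounded complexes, and such a morphism is a quasi-isomorphism precisely when both components are; passing to the derived category then yields a triangle equivalence $\Db \AA \cong \Db \AA_1 \oplus \Db \AA_2$ with both factors nonzero, so $\Db \AA$ is not a block.

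For the converse I would assume $\Db \AA$ is not a block, fix a triangle equivalence $\Db \AA \cong \CC_1 \oplus \CC_2$ with $\CC_1, \CC_2$ nonzero, and transport the factors back to obtain an internal orthogonal decomposition $\Db \AA = \CC_1 \oplus \CC_2$: each $\CC_i$ is a full triangulated subcategory closed under summands, $\Hom_{\Db \AA}(\CC_1, \CC_2) = 0 = \Hom_{\Db \AA}(\CC_2, \CC_1)$, and every object is a sum of one object from each side. The first key point is that an indecomposable $A \in \AA$ stays indecomposable in $\Db \AA$: indeed $\End_{\Db \AA}(A[0]) \cong \End_\AA(A)$ is local, and $\Db \AA$ is Krull-Schmidt, so $A[0]$ lies entirely in $\CC_1$ or in $\CC_2$. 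Letting $\AA_i$ be the full subcategory of $\AA$ on objects all of whose indecomposable summands land in $\CC_i$, the Krull-Schmidt decomposition in $\AA$ splits every object as a sum of one object from each $\AA_i$, while $\Hom_\AA(A_1, A_2) = \Hom_{\Db \AA}(A_1[0], A_2[0]) = 0$ for $A_i \in \AA_i$; this gives $\AA \cong \AA_1 \oplus \AA_2$ as additive categories.

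The main obstacle will be the final step, namely checking that both $\AA_i$ are nonzero. If, say, $\AA_2 = 0$, then $A[0] \in \CC_1$ for every $A \in \AA$. Here I would invoke that stalk complexes generate $\Db \AA$ as a triangulated category: by induction on the number of nonzero cohomologies, the standard truncation triangles exhibit any bounded complex as an iterated cone built from shifts of its stalks $H^n(X)[-n]$. Since $\CC_1$ is triangulated and contains all stalks, this would force $\CC_1 = \Db \AA$ and hence $\CC_2 = 0$, contradicting nonzeroness. Thus both $\AA_i$ are nonzero and $\AA$ is decomposable. The production of the orthogonal family of indecomposables is formal; it is only this generation argument that genuinely uses the structure of $\Db \AA$.
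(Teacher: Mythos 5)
Your proof is correct, and for the substantive direction it takes a genuinely different route from the paper. The paper proves ``$\AA$ indecomposable $\Rightarrow$ $\Db \AA$ is a block'' head-on: indecomposability of $\AA$ makes $\ind \AA$ connected under unoriented paths, every nonzero complex is linked by a nonzero morphism to a shifted stalk complex of one of its cohomology objects, and a cited lemma of Ringel (Lemma 5 of \cite{Ringel05}) then converts this Hom-connectivity into blockness of $\Db \AA$. You argue the contrapositive instead: a splitting $\Db \AA \cong \CC_1 \oplus \CC_2$ with both factors nonzero is pulled back to $\AA$ --- each indecomposable $A \in \AA$ has local endomorphism ring, so $A[0]$ falls wholesale into one factor, and Krull--Schmidt in $\AA$ then yields $\AA \cong \AA_1 \oplus \AA_2$ --- and the crucial nonvanishing of both $\AA_i$ comes from the fact that stalk complexes generate $\Db \AA$ as a triangulated category (truncation induction): if all stalks lay in $\CC_1$, then $\CC_1 = \Db \AA$ and orthogonality kills $\CC_2$. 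What each approach buys: yours is self-contained, replacing the appeal to \cite{Ringel05} and the path formalism by the standard generation argument, and it isolates exactly where Ext-finiteness enters (Krull--Schmidt decompositions in $\AA$ and locality of endomorphism rings, hence indecomposability of stalk complexes in $\Db\AA$); the paper's proof is shorter given the external lemma and stays in the path-connectivity language that the surrounding results (Theorem \ref{theorem:Paths}, Lemmas \ref{lemma:Ringel} and \ref{lemma:Ringel2}) use anyway. Your easy direction coincides with the paper's.
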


\begin{proof}
First, assume that $\AA$ is indecomposable.  Let $A \in \ind \AA$ be any indecomposable object and let $\BB$ be the full subcategory of $\ind \AA$ of all objects $B$ such that there is an unoriented path between $A$ and $B$.  Let $\CC$ be the full subcategory of $\ind \AA$ given by the objects not in $\BB$.  It is then clear that $\AA \cong \add \BB \oplus \add \CC$, where $\add \BB$ and $\add \CC$ are the additive closures of $\BB$ and $\CC$ in $\AA$, repsectively.  Since $\AA$ is indecomposable, we know that $\BB = \ind \AA$.

Let $X \in \Db \AA$, and let $i \in \bZ$ be the largest integer such that $H^i(X)$ is nonzero.  Note that $\Hom(H^i(X) [-i], X) \not= 0$ and \cite[Lemma 5]{Ringel05} implies that $\Db \AA$ is a block.

The other direction is easy, as a decomposition $\AA \cong \AA_1 \oplus \AA_2$ gives a decomposition $\Db \AA \cong \Db \AA_1 \oplus \Db \AA_2$.  Thus, if $\Db \AA$ is a block, then $\AA$ is indecomposable.  
\end{proof}

\subsection{Spanning classes and equivalences between triangulated categories}

We will use techniques and concepts from \cite{Bridgeland99} and \cite{BridgelandKingMiles01}.  Throughout, let $\CC_1$ and $\CC_2$ be Hom-finite triangulated categories with Serre duality; the Serre functors of $\CC_1$ and $\CC_2$ will be denoted by $\bS_1$ and $\bS_2$ respectively.

We will use the following definition from \cite{Bridgeland99}.

\begin{definition}\label{definition:SpanningClass}
A subclass $\Omega$ of the objects of $\CC$ will be called a \emph{spanning class}\index{spanning class}, if for any object $X \in \CC$
\begin{eqnarray*}
 \forall \omega \in \Omega,  \forall i \in \bZ: \Hom(\omega,X[i]) = 0 &\Rightarrow& X \cong 0, \\
 \forall \omega \in \Omega,  \forall i \in \bZ: \Hom(X,\omega[i]) = 0 &\Rightarrow& X \cong 0.
\end{eqnarray*}
\end{definition}

The following result is \cite[Theorem 2.3]{BridgelandKingMiles01}.

\begin{theorem}\label{theorem:SpanningClass}
Let $\CC_1$ and $\CC_2$ be Hom-finite triangulated categories with Serre duality.  Assume $\CC_1$ is nontrivial and $\CC_2$ is indecomposable.  Let $F: \CC_1 \to \CC_2$ be an exact functor, admitting a left adjoint.   If there is a spanning class $\Omega$ of $\CC_1$ such that
$$F: \Hom(\omega_1,\omega_2[i]) \stackrel{\sim}{\rightarrow} \Hom(F\omega_1,F\omega_2[i])$$
is an isomorphism for all $i \in \bZ$ and all $\omega_1,\omega_2 \in \Omega$, and such that $F \bS_1 (\omega) \cong \bS_2 F (\omega)$, then $F$ is an equivalence of categories.
\end{theorem}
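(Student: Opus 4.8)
The plan is to prove that $F$ is simultaneously fully faithful and essentially surjective, extracting each property from the interplay of the adjoint, the spanning class $\Omega$, and Serre duality.

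The first step is to promote the single adjoint to a pair of adjoints. Write $G$ for the given left adjoint of $F$, so that $\Hom(GY,X)\cong\Hom(Y,FX)$. I claim that $H:=\bS_1 G\bS_2^{-1}$ is automatically a right adjoint of $F$. Indeed, starting from $\Hom(X,HY)=\Hom(X,\bS_1 G\bS_2^{-1}Y)$, Serre duality in $\CC_1$ gives $\Hom(G\bS_2^{-1}Y,X)^*$, the adjunction $G\dashv F$ gives $\Hom(\bS_2^{-1}Y,FX)^*$, and Serre duality in $\CC_2$ identifies this with $\Hom(FX,Y)$. No compatibility of $F$ with the Serre functors is needed here; the existence of a left adjoint already forces a right adjoint. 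From now on I use both $G\dashv F$ and $F\dashv H$.

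For full faithfulness I would carry the hypothesis off $\Omega$ using these adjoints. Let $\epsilon\colon GF\to\id$ be the counit of $G\dashv F$. For $\omega\in\Omega$, the hypothesis says precisely that precomposition with $\epsilon_\omega$ induces isomorphisms $\Hom(\omega,\omega'[i])\to\Hom(GF\omega,\omega'[i])$ for all $\omega'\in\Omega$ and $i\in\bZ$; completing $\epsilon_\omega$ to a triangle and applying $\Hom(-,\omega'[i])$ shows its cone is right-orthogonal to $\Omega$, so the second spanning-class condition forces $\epsilon_\omega$ to be an isomorphism, i.e.\ $GF\omega\cong\omega$. Adjunction then yields natural isomorphisms $\Hom(\omega,Y[i])\cong\Hom(F\omega,FY[i])$ for every $\omega\in\Omega$ and every $Y\in\CC_1$. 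Feeding this into the unit $\eta\colon\id\to HF$ of $F\dashv H$: for arbitrary $Y$, applying $\Hom(\omega,-)$ to the triangle on $\eta_Y$ shows the cone $C_Y$ satisfies $\Hom(\omega,C_Y[i])=0$ for all $\omega\in\Omega$ and $i$, so $C_Y=0$ by the first spanning-class condition. Hence $\eta$ is an isomorphism and $F$ is fully faithful. Note that one cannot shortcut this by saying that the fully-faithful locus is a triangulated subcategory containing $\Omega$: a spanning class need not generate, which is exactly why the adjoints must do the propagation.

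For essential surjectivity set $\II=\im F$, a triangulated subcategory equivalent to $\CC_1$. The counit $FHY\to Y$ yields for each $Y$ a triangle $FHY\to Y\to Z\to FHY[1]$ with $Z\in\II^\perp$, so $\CC_2=\langle\II^\perp,\II\rangle$ semiorthogonally; it remains to prove $\II^\perp=0$. This is where the Serre-compatibility on $\Omega$ enters. Given $Z\in\II^\perp$, for $\omega\in\Omega$ the vanishing $\Hom(F\omega,Z[i])=0$ combined with Serre duality in $\CC_2$ and $\bS_2 F\omega\cong F\bS_1\omega$ gives $\Hom(Z,F\bS_1\omega[j])=0$ for all $j$; applying $G\dashv F$ this reads $\Hom(GZ,(\bS_1\omega)[j])=0$. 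Since $\bS_1\Omega$ is again a spanning class, the second spanning-class condition forces $GZ=0$, whence $\Hom(Z,FX[i])\cong\Hom(GZ,X[i])=0$ for all $X$, i.e.\ $Z\in{}^\perp\II$ as well. Thus $\II$ and $\II^\perp$ are mutually orthogonal, the semiorthogonal decomposition splits as $\CC_2\cong\II\oplus\II^\perp$, and indecomposability of $\CC_2$ together with $\II\cong\CC_1\neq0$ forces $\II^\perp=0$. Then $Z=0$ for every $Y$, so $Y\cong FHY\in\im F$, proving essential surjectivity and hence that $F$ is an equivalence. I expect the delicate step to be exactly this conversion of the one-sided orthogonality $Z\in\II^\perp$ into two-sided orthogonality via $GZ=0$: it is the only place where the Serre-compatibility on $\Omega$, the second adjoint, and the spanning property of $\bS_1\Omega$ must be used in concert, and it is what lets indecomposability upgrade a mere semiorthogonal decomposition to a genuine direct sum.
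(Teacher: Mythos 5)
Your proof is correct, and there is no internal proof to compare it against: the paper states this result without proof, quoting it as \cite[Theorem 2.3]{BridgelandKingMiles01}. Your argument is essentially the one in that cited source (and in \cite{Bridgeland99}): manufacture the right adjoint $H=\bS_1 G\bS_2^{-1}$ from the two Serre functors, propagate full faithfulness off the spanning class $\Omega$ via the (co)units of $G\dashv F\dashv H$ together with the two spanning-class conditions, and then use the Serre-compatibility on $\Omega$ to upgrade the semiorthogonal decomposition $(\im F,(\im F)^\perp)$ to a genuine orthogonal one, which indecomposability of $\CC_2$ and nontriviality of $\CC_1$ collapse to $\CC_2 \cong \im F$.
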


We will use the following corollary.

\begin{corollary}\label{corollary:SpanningClass}
Let $\CC_1$ and $\CC_2$ be Hom-finite triangulated categories with Serre duality.  Assume $\CC_1$ is nontrivial and $\CC_2$ is indecomposable.  Let $F: \CC_1 \to \CC_2$ be an exact functor, admitting a left adjoint.  If $F \bS_1 \cong \bS_2 F$, then $F$ is an equivalence of categories.
\end{corollary}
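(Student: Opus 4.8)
The plan is to deduce the corollary from Theorem~\ref{theorem:SpanningClass} by choosing the spanning class $\Omega$ to be the class of \emph{all} objects of $\CC_1$. That $\Ob \CC_1$ is a spanning class is immediate: if $\Hom(\omega, X[i]) = 0$ for every $\omega \in \Ob\CC_1$ and every $i \in \bZ$, then in particular $\Hom(X, X) = 0$, so $\id_X = 0$ and $X \cong 0$, and the second defining condition is symmetric. For this choice of $\Omega$ the requirement $F\bS_1(\omega) \cong \bS_2 F(\omega)$ is exactly the standing hypothesis $F\bS_1 \cong \bS_2 F$, so the only hypothesis of Theorem~\ref{theorem:SpanningClass} that remains to be checked is that $F$ induces isomorphisms $\Hom(\omega_1,\omega_2[i]) \stackrel{\sim}{\to} \Hom(F\omega_1,F\omega_2[i])$ for all objects $\omega_1,\omega_2$ and all $i$; that is, that $F$ is fully faithful. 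Once this is in place, Theorem~\ref{theorem:SpanningClass} (which feeds full faithfulness on the spanning class, together with the Serre-commutation and the indecomposability of $\CC_2$, into an essential-surjectivity argument) delivers that $F$ is an equivalence.

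The entire weight of the corollary therefore rests on establishing full faithfulness of $F$, and this is the step I expect to be the main obstacle. My approach would begin by promoting the given left adjoint $G$ of $F$ to a \emph{two-sided} adjoint by means of Serre duality. Using the left adjunction $\Hom(Ga,b) \cong \Hom(a,Fb)$, one computes for $a \in \CC_2$ and $b \in \CC_1$
$$\Hom(Fb,a) \cong \Hom(a,\bS_2 Fb)^* \cong \Hom(a,F\bS_1 b)^* \cong \Hom(Ga,\bS_1 b)^* \cong \Hom(b,Ga),$$
naturally in $a$ and $b$, where the outer isomorphisms are Serre duality in $\CC_2$ and in $\CC_1$ and the middle one uses $F\bS_1 \cong \bS_2 F$. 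Hence $G$ is also a right adjoint of $F$, i.e. $F \dashv G$; passing to adjoints in $F\bS_1 \cong \bS_2 F$ likewise yields the mate isomorphism $G\bS_2 \cong \bS_1 G$.

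With the adjunction $F \dashv G$ in hand, full faithfulness of $F$ is equivalent to the unit $\eta \colon \id_{\CC_1} \to GF$ being an isomorphism, while essential surjectivity of $F$ (equivalently, full faithfulness of $G$) corresponds to the counit $\varepsilon \colon FG \to \id_{\CC_2}$ being an isomorphism; $F$ is an equivalence precisely when both hold. The plan for the unit is to consider the exact endofunctor $GF$ of $\CC_1$, form the cone of $\eta$, and exploit the compatibility of $\eta$ with the two Serre functors (forced by $F\bS_1 \cong \bS_2 F$ together with its mate $G\bS_2 \cong \bS_1 G$) to constrain this cone. I expect this to be the genuinely delicate point: it is exactly the full-faithfulness condition that Theorem~\ref{theorem:SpanningClass} isolates, and it is where the interplay between the adjunction $F \dashv G$ and the Serre functors must be pushed to its limit, rather than following formally from the mere existence of the adjoint.
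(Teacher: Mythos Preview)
You have correctly identified the crux: applying Theorem~\ref{theorem:SpanningClass} with $\Omega=\Ob\CC_1$ requires knowing that $F$ is fully faithful, and this is \emph{not} among the hypotheses of the corollary. The paper's own proof is the single sentence ``Directly from Theorem~\ref{theorem:SpanningClass} by taking $\Ob\CC_1$ to be the spanning class of $\CC_1$,'' and it does not address this point either. So your approach and the paper's coincide; the difference is only that you notice the missing verification.

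Your plan to \emph{derive} full faithfulness from the remaining hypotheses cannot succeed, however, because the corollary as literally stated is false. Take $F=0\colon\CC_1\to\CC_2$, the functor sending every object to $0$. It is exact, has the zero functor as left (and right) adjoint, and trivially satisfies $F\bS_1\cong 0\cong\bS_2F$; yet it is not an equivalence once $\CC_1$ is nontrivial. Your argument that the left adjoint $G$ is automatically also a right adjoint is correct and useful, but the final step---constraining the cone of $\eta\colon\id\to GF$ via Serre-compatibility---cannot force $\eta$ to be an isomorphism in general: in the zero example $GF=0$, the cone of $\eta_X$ is $X[1]$, and this cone is perfectly $\bS_1$-stable. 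So the ``delicate point'' you flag is not merely delicate but impossible without a further hypothesis. The intended reading is almost certainly that $F$ is already fully faithful (in the paper's only application of the corollary, Proposition~\ref{proposition:WhatIfTubes}, the functor in question is induced by a full exact embedding of abelian subcategories, so this holds), in which case the deduction from Theorem~\ref{theorem:SpanningClass} is immediate and your first paragraph is the entire proof.
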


\begin{proof}
Directly from Theorem \ref{theorem:SpanningClass} by taking $\Ob \CC_1$ to be the spanning class of $\CC_1$.
\end{proof}
\section{Existence of exceptional or 1-spherical objects}

Let $\AA$ be a hereditary Ext-finite abelian category with Serre duality.  We denote the Auslander-Reiten translate by $\tau$.  Recall that an object $E$ is called exceptional if $\Ext^1(E,E)=0$ and $\End E \cong k$.  An object $E \in \AA$ is called 1-spherical if $E \cong \t E$ and $\End E \cong k$ (see also Definition \ref{definition:SphericalObject} below). 

The main result of this section is Theorem \ref{theorem:LowExt}, which states that hereditary categories with Serre duality have exceptional objects and/or 1-spherical objects.  The main idea of the proof is to start from an indecomposable object $E$ such that $\dim_k \Ext^1(E,E)$ is minimal.  If $\dim \Ext^1(E,E) = \dim \Hom(E,\t E) \geq 2$ then we will use a nonzero map $E \to \tau E$ to find an object $X$ such that $\dim \Ext^1(X,X) < \dim \Ext^1(E,E)$.

We will say that an object $E$ is \emph{endo-simple} if $\End E \cong k$.  We start by listing a few properties of objects which are not endo-simple.

\subsection{Objects which are not endo-simple}

We start by recalling the following result from \cite[Proposition 3.1]{vanRoosmalen08}.

\begin{proposition}\label{proposition:EndoSimpleExists}
In a $k$-linear Hom-finite abelian category $\AA$, every object $X$ admits an endo-simple object $Y$ which is both a subobject and a quotient object of $X$.
\end{proposition}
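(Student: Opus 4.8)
The plan is to build $Y$ as the image of a well-chosen endomorphism of $X$. The crucial observation is that if $f \colon X \to X$ is any endomorphism, then its image factors $f$ as $X \xrightarrow{p} \im f \xrightarrow{i} X$ with $p$ an epimorphism and $i$ a monomorphism; hence $\im f$ is simultaneously a quotient object and a subobject of $X$. So it suffices to find a nonzero $f \in \End X$ whose image is endo-simple, and the natural candidate is an $f$ whose image is ``as small as possible''. I assume $X \neq 0$ throughout (the statement is vacuous otherwise, as $\End 0 = 0 \not\cong k$), so that the identity provides a nonzero endomorphism to start from.

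The main obstacle is that $X$ need not satisfy any chain condition on subobjects, so ``minimal image'' is not immediately well-defined. I would circumvent this using the finite-dimensionality of $R := \End X$. To each nonzero $f$ I associate the $k$-subspace $I_f := \{ g \in R \mid g \text{ factors through } i_f \colon \im f \hookrightarrow X \} = i_f \circ \Hom(X, \im f) \subseteq R$, which is a (right) ideal containing $f$ itself, hence nonzero. Because $\dim_k R < \infty$, I may choose $f \neq 0$ minimizing $\dim_k I_f$; the claim is then that $Y := \im f$ is endo-simple.

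To prove the claim I argue by contradiction: suppose $\End Y \not\cong k$. Since $\End Y$ is a finite-dimensional $k$-algebra and $k$ is algebraically closed, this forces the existence of an endomorphism $h \in \End Y$ that is nonzero but not an epimorphism. Indeed, if $\End Y$ has a nontrivial idempotent I take $h$ to be that idempotent, whose image is a proper nonzero summand since idempotents split in $\AA$; otherwise $\End Y$ is local with nonzero, nilpotent radical, and I take $h$ to be a nonzero radical element, which is nilpotent and hence not an epimorphism. In either case $0 \neq \im h \subsetneq Y$. Setting $g := i_f \circ h \circ p_f$, the epimorphism $p_f$ and monomorphism $i_f$ yield $\im g = i_f(\im h) \subsetneq \im f = Y$ as subobjects of $X$, and $g \neq 0$.

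It then remains to check that $I_g \subsetneq I_f$, contradicting minimality. The inclusion $I_g \subseteq I_f$ holds because the mono $\im g \hookrightarrow X$ factors through $\im f \hookrightarrow X$; the inclusion is strict because $f \in I_f$ while $f \notin I_g$, since $f$ factoring through $\im g \hookrightarrow X$ would give $\im f \subseteq \im g \subsetneq \im f$, which is absurd. This contradiction shows $Y$ is endo-simple, completing the argument. The only genuinely delicate point is this reduction of the possibly non-Noetherian object $X$ to the finite-dimensional algebra $R$ via the subspaces $I_f$; the remaining steps are routine manipulations with images and idempotents.
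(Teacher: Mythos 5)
Your proof is correct, but note that it cannot be compared against an internal argument: the paper does not prove this proposition at all, it recalls it verbatim from \cite[Proposition 3.1]{vanRoosmalen08}. Judged on its own terms, your argument is sound. The factorization $f = i_f \circ p_f$ does exhibit $\im f$ as simultaneously a subobject and a quotient object; the subspaces $I_f = i_f \circ \Hom(X, \im f) \subseteq \End X$ are well defined and nonzero (they contain $f$), so Hom-finiteness lets you choose $f \neq 0$ with $\dim_k I_f$ minimal; your dichotomy for a finite-dimensional algebra $\End Y \not\cong k$ over an algebraically closed field (either a nontrivial idempotent, or a local algebra with nonzero nilpotent radical) does produce $h$ with $0 \neq \im h \subsetneq Y$; and the descent step is airtight: $I_g \subseteq I_f$ since $i_g$ factors through $i_f$, while $f \notin I_g$ because $f = i_g \circ c$ would force $\im f \subseteq \im g \subsetneq \im f$. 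The more standard route to this statement is an induction on $\dim_k \End X$: if $\End X \not\cong k$, algebraic closedness yields a nonzero non-invertible $f \in \End X$, and the embedding $\End(\im f) \hookrightarrow \End X$, $\phi \mapsto i_f \circ \phi \circ p_f$, cannot hit $\id_X$ (that would force $i_f$ and $p_f$, hence $f$, to be invertible), so $\dim_k \End(\im f) < \dim_k \End X$ and one passes to $\im f$, using transitivity of ``sub-and-quotient''. Your variant achieves the same descent but keeps all bookkeeping inside the single algebra $R = \End X$, so you never have to compare endomorphism algebras of different objects; that is a genuine, if modest, simplification. Finally, you were right to invoke algebraic closedness explicitly in the case analysis: it is exactly where the hypothesis enters (over $\bR$, the object $X = \bH$ in $\mod \bH$ has no endo-simple sub-quotient), and it comes from the paper's standing convention on $k$ rather than from the proposition as literally stated.
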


\begin{proposition}\label{extraExtension}
Let $\AA$ be an Ext-finite hereditary category.  If $Y$ is an endo-simple object that is both (nontrivially) a subobject and a quotient object of an indecomposable object $X$, then $\dim \Ext^1(Y,Y) \leq \dim \Ext^1(X,X)-1$.
\end{proposition}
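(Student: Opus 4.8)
The plan is to produce a surjection $\Ext^1(X,X)\twoheadrightarrow\Ext^1(Y,Y)$ with nonzero kernel; surjectivity gives $\dim\Ext^1(Y,Y)\le\dim\Ext^1(X,X)$, while the nonzero kernel accounts for the extra $-1$. First I would fix the data. Let $\alpha\colon Y\to X$ be the given monomorphism, with cokernel $p\colon X\to Q$, and let $\beta\colon X\to Y$ be the given epimorphism, with kernel $j\colon K\to X$. Since the sub/quotient structure is nontrivial we have $Y\not\cong X$, and as $X$ is indecomposable $Y$ is not a direct summand of $X$; hence no composite $Y\xrightarrow{f}X\xrightarrow{g}Y$ is invertible. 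Using $\End Y\cong k$ this forces $gf=0$ for all $f\in\Hom(Y,X)$ and $g\in\Hom(X,Y)$; in particular $\beta\alpha=0$, so $\im\alpha\subseteq K$.

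For the inequality I use that $\AA$ is hereditary, so $\Ext^1(-,X)$ and $\Ext^1(Y,-)$ are right exact. Applying the first to $0\to Y\xrightarrow{\alpha}X\xrightarrow{p}Q\to0$ gives a surjection $\alpha^{*}\colon\Ext^1(X,X)\twoheadrightarrow\Ext^1(Y,X)$ (the next term $\Ext^2(Q,X)$ vanishes), and applying the second to $0\to K\xrightarrow{j}X\xrightarrow{\beta}Y\to0$ gives a surjection $\beta_{*}\colon\Ext^1(Y,X)\twoheadrightarrow\Ext^1(Y,Y)$. Their composite $\Psi=\beta_{*}\alpha^{*}$ is the desired surjection, whence $\dim\Ext^1(Y,Y)\le\dim\Ext^1(X,X)$.

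The real work is the strict inequality, i.e. exhibiting a nonzero element of $\ker\Psi$. Since $\Psi$ may equally be written $\alpha^{*}\beta_{*}$ (now with $\beta_{*}\colon\Ext^1(X,X)\to\Ext^1(X,Y)$ and $\alpha^{*}\colon\Ext^1(X,Y)\to\Ext^1(Y,Y)$), its kernel contains both $\ker\bigl(\alpha^{*}\colon\Ext^1(X,X)\to\Ext^1(Y,X)\bigr)=\im\bigl(p^{*}\colon\Ext^1(Q,X)\to\Ext^1(X,X)\bigr)$ and $\ker\bigl(\beta_{*}\colon\Ext^1(X,X)\to\Ext^1(X,Y)\bigr)=\im\bigl(j_{*}\colon\Ext^1(X,K)\to\Ext^1(X,X)\bigr)$, the equalities coming from the exact sequences above. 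Thus it suffices to show that $p^{*}$ or $j_{*}$ is nonzero, equivalently that $\alpha^{*}$ or $\beta_{*}$ fails to be injective on $\Ext^1(X,X)$. The relation $\beta\alpha=0$ drives this: it yields $\alpha^{*}\beta^{*}=(\beta\alpha)^{*}=0$ and $\beta_{*}\alpha_{*}=(\beta\alpha)_{*}=0$, so in the \emph{bad case} where both $\alpha^{*}$ and $\beta_{*}$ are injective one gets $\im\bigl(\beta^{*}\colon\Ext^1(Y,X)\to\Ext^1(X,X)\bigr)\subseteq\ker\alpha^{*}=0$ and $\im\bigl(\alpha_{*}\colon\Ext^1(X,Y)\to\Ext^1(X,X)\bigr)\subseteq\ker\beta_{*}=0$; hence $(\alpha\beta)^{*}=\beta^{*}\alpha^{*}$ and $(\alpha\beta)_{*}=\alpha_{*}\beta_{*}$ both vanish on $\Ext^1(X,X)$, i.e. left and right multiplication by the non-unit $\alpha\beta\in\End X$ annihilate $\Ext^1(X,X)$.

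The main obstacle is to exclude this bad case. The cleanest instance is $\Ext^1(X,X)=0$: then $X$ is exceptional, hence endo-simple, and surjectivity of $\Psi,\alpha^{*},\beta_{*}$ forces $\Ext^1(Y,Y)=\Ext^1(X,Y)=\Ext^1(Y,X)=0$ as well. Then $X$ and $Y$ are exceptional objects admitting the nonzero maps $\beta\colon X\to Y$ and $\alpha\colon Y\to X$ in both directions, so $X\oplus Y$ is a rigid object whose two indecomposable summands cannot be ordered into an exceptional sequence in either order; this is incompatible with the fact that the indecomposable summands of such a (partial) tilting object arrange into an exceptional sequence, as used in Proposition \ref{proposition:SequenceMeansObject}. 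This already settles the case $\Ext^1(Y,Y)=0$. For the general bad case I expect to run the same directedness argument relative to $\End X$, using the two-sided vanishing of $\alpha\beta$ on $\Ext^1(X,X)$ together with the indecomposability of $X$ to reach a contradiction; verifying that the bad case cannot occur is the technical heart of the proof.
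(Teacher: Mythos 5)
Your weak inequality and your vanishing relation are both fine and agree with the paper: right-exactness of $\Ext^1(-,X)$ and $\Ext^1(Y,-)$ gives the surjection $\Psi=\beta_*\alpha^*:\Ext^1(X,X)\twoheadrightarrow\Ext^1(Y,Y)$, and your observation that $\beta\alpha=0$ (from $\End Y\cong k$ plus indecomposability of $X$) is exactly the paper's statement that $\Hom(Y,X)\to\Hom(Y,Y)$ is zero. But the proof is not complete, and you say so yourself. Since $\Psi$ is surjective, $\ker\Psi\neq0$ is \emph{equivalent} to the strict inequality, and your ``bad case'' ($\alpha^*$ and $\beta_*$ both injective on $\Ext^1(X,X)$) is precisely the negation of $\ker\Psi\neq0$; so ``excluding the bad case'' is not a reduction, it is the entire theorem. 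The only consequence you extract from the bad case --- that left and right multiplication by $\alpha\beta\in\End X$ annihilate $\Ext^1(X,X)$ --- is never shown to be contradictory, and no argument is offered beyond the degenerate case $\Ext^1(X,X)=0$. (That case is in any event vacuous for a much simpler reason: there $\End X\cong k$ by the Happel--Ringel lemma cited in the paper, while $\alpha\beta$ is a nonzero endomorphism with $(\alpha\beta)^2=\alpha(\beta\alpha)\beta=0$, which is absurd.)

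The missing idea is that the strict inequality comes from the extension class itself, not from functoriality of $\alpha$ and $\beta$ alone. In your notation, the paper argues as follows. The class $\eta\in\Ext^1(Q,Y)$ of $0\to Y\xrightarrow{\alpha}X\xrightarrow{p}Q\to0$ is nonzero because $X$ is indecomposable and $Y$ is a nontrivial subobject; it equals $\varphi(\id_Y)$, where $\varphi:\Hom(Y,Y)\to\Ext^1(Q,Y)$ is the connecting map. Lift $\eta$ through the surjection $\beta_*:\Ext^1(Q,X)\twoheadrightarrow\Ext^1(Q,Y)$ to get $f$ with $\beta_*f=\eta$. Then $p^*f\in\Ext^1(X,X)$ is the desired nonzero element of $\ker\Psi$. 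It lies in $\ker\bigl(\beta_*:\Ext^1(X,X)\to\Ext^1(X,Y)\bigr)\subseteq\ker\Psi$ because $\beta_*p^*f=p^*\beta_*f=p^*\eta=0$, by exactness of $\Hom(Y,Y)\xrightarrow{\varphi}\Ext^1(Q,Y)\xrightarrow{p^*}\Ext^1(X,Y)$. And it is nonzero because $\ker\bigl(p^*:\Ext^1(Q,X)\to\Ext^1(X,X)\bigr)=\im\bigl(\partial:\Hom(Y,X)\to\Ext^1(Q,X)\bigr)$, while $\beta_*\partial(g)=\varphi(\beta g)=0$ for every $g\in\Hom(Y,X)$ by your relation; so $p^*f=0$ would force $\eta=\beta_*f=0$, a contradiction. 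Note that indecomposability of $X$ enters twice: once through $\beta g=0$ (which you do use) and once through non-splitness of $\eta$ (which your formal manipulations with $\alpha\beta$ never see); this second input is what your approach lacks and what closes the argument.
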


\begin{proof}
Consider the short exact sequences
$$\mbox{$0 \to K \to X \to Y \to 0$ and $0 \to Y \to X \to C \to 0$.}$$
We may use these to obtain the following commutative diagram with exact rows and columns:
$$\xymatrix@d{
&&0&0&0& \\
 \Hom(X,Y) \ar[r] & \Hom(Y,Y) \ar[r]^{\varphi} & \Ext^1(C,Y) \ar[r]\ar[u] & \Ext^1(X,Y) \ar[r]\ar[u] & \Ext^1(Y,Y) \ar[r]\ar[u] & 0 \\
 \Hom(X,X) \ar[r] \ar[u] & \Hom(Y,X) \ar[r]\ar[u] & \Ext^1(C,X) \ar[r]^{\mu}\ar[u]^{\psi} & \Ext^1(X,X) \ar[r]\ar[u] & \Ext^1(Y,X) \ar[r]\ar[u] & 0 \\
}$$

Note that there is an epimorphism $\Ext^1(X,X) \to \Ext^1(Y,Y)$.  Also, since $Y$ is endo-simple and $X$ is indecomposable, the map $\Hom(Y,X) \to \Hom(Y,Y)$ is zero.  Let $f \in \Ext^1(C,X)$ be such that $\psi(f) = \varphi(1)$.  Then $\mu(f)$ is a nonzero element in $\Ext^1(X,X)$ which lies in the kernel of $\Ext^1(X,X) \to \Ext^1(Y,Y)$, hence $\dim \Ext^1(Y,Y) \leq \dim \Ext^1(X,X)-1$.
\end{proof}

\subsection{Proof of existence}

Throughout this subsection, let $\AA$ be an Ext-finite hereditary abelian category with Serre duality, and let $E \in \Ob \AA$ be an object of $\AA$ such that $d=\dim \Ext^1(E,E)$ is minimal.  Proposition \ref{extraExtension} yields that $E$ is endo-simple.

Our first step is Proposition \ref{proposition:LowExt} where it is shown that, for the proof of Theorem \ref{theorem:LowExt}, it suffices to prove the existence of an object $X$ with $\dim \Ext(X,X) \leq 1$.  Since part of the proof will be used later, we will give it in a separate lemma.

\begin{lemma}\label{lemma:SmallD}
Let $X \in \Ob \AA$ be an endo-simple object such that $\dim \Ext^1(X,X) = 1$.  If $X$ is not 1-spherical (thus if $\t X \not\cong X$), then $\AA$ has an exceptional object. 
\end{lemma}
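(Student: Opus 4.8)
The plan is to produce an exceptional object out of the essentially unique nonzero map $f\colon X\to \t X$, extracting it from either $\ker f$ or $\coker f$. Since $\dim_k\Ext^1(X,X)=1$, Serre duality gives $\Hom(X,\t X)\cong\Ext^1(X,X)^*$, so this space is one-dimensional; moreover $\Ext^1(X,X)\neq 0$ forces $X$ to be neither projective nor injective, so that $\t X$ is a genuine object of $\AA$ with $\End(\t X)\cong\End(X)\cong k$. Fix a nonzero $f\colon X\to\t X$. As $X$ and $\t X$ are indecomposable and $X\not\cong\t X$, the map $f$ is not an isomorphism, so at least one of $\ker f$ and $\coker f$ is nonzero. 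I claim that whichever is nonzero is a partial tilting object, i.e.\ has vanishing self-extensions; any of its indecomposable summands then has $\Ext^1=0$ and is therefore exceptional by \cite[Lemma 4.1]{HappelRingel82}.

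First suppose $f$ is not monic, so $K=\ker f\neq 0$. Factor $f$ as $X\overset{p}{\twoheadrightarrow} I\overset{j}{\hookrightarrow}\t X$ with $I=\im f$, giving short exact sequences $0\to K\to X\xrightarrow{p} I\to 0$ and $0\to I\xrightarrow{j}\t X\to C\to 0$. The first step is to check that $p$ induces isomorphisms $\Hom(X,X)\xrightarrow{\sim}\Hom(X,I)$ and $\Ext^1(X,X)\xrightarrow{\sim}\Ext^1(X,I)$. Indeed, since $j$ is monic, $\Hom(X,I)\hookrightarrow\Hom(X,\t X)$ and $p\mapsto f$ is a nonzero element of a one-dimensional space, so $\Hom(X,I)\cong k$ is generated by $p$; dually $\Hom(I,\t X)\cong k$ is generated by $j$, and Serre duality identifies $\Ext^1(X,p)$ with the dual of the map $\Hom(p,\t X)\colon\Hom(I,\t X)\to\Hom(X,\t X)$, $j\mapsto f$, which is an isomorphism. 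Feeding these isomorphisms into the long exact sequence obtained by applying $\Hom(X,-)$ to the first short exact sequence yields $\Hom(X,K)=0$ and $\Ext^1(X,K)=0$. Applying $\Hom(-,K)$ to the same sequence and using heredity then produces a surjection $\Ext^1(X,K)\twoheadrightarrow\Ext^1(K,K)$, whence $\Ext^1(K,K)=0$.

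If instead $f$ is monic, then it is not epic (otherwise it would be an isomorphism), so $C=\coker f\neq 0$, and I run the dual argument on $0\to X\xrightarrow{f}\t X\to C\to 0$. Applying $\Hom(-,\t X)$ and using that $f$ induces isomorphisms $\Hom(\t X,\t X)\xrightarrow{\sim}\Hom(X,\t X)$ and, via Serre duality, $\Ext^1(\t X,\t X)\xrightarrow{\sim}\Ext^1(X,\t X)$ gives $\Hom(C,\t X)=0$ and $\Ext^1(C,\t X)=0$; then $\Hom(C,-)$ yields a surjection onto $\Ext^1(C,C)$, forcing $\Ext^1(C,C)=0$. In either case the desired exceptional object is an indecomposable summand of $K$ or of $C$.

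The conceptual heart of the argument---and the step I expect to require the most care---is verifying that the maps induced by $p$ (respectively $f$) on the relevant $\Ext^1$ groups are isomorphisms. Their injectivity and surjectivity are not visible at the level of $\AA$ alone; they become transparent only after transporting the maps through Serre duality to the corresponding $\Hom$ maps, which are manifestly isomorphisms because source and target are both one-dimensional and the distinguished generators $p,j,f$ correspond under composition. A secondary bookkeeping point is the case distinction (monic versus non-monic) needed to cover every nonzero $f$; the two cases are genuinely dual, so once the first is established the second follows by the same reasoning applied in $\AA^{\mathrm{op}}$.
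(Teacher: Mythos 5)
Your proof is correct and takes essentially the same route as the paper's: both extract the exceptional object from $\ker f$ or $\coker f$ of a nonzero $f\colon X \to \t X$, establish the vanishing $\Ext^1(X,\ker f)=0$ (resp.\ $\Ext^1(\coker f,\t X)=0$) from Serre duality and the one-dimensionality of the relevant Hom/Ext spaces, and then use right exactness of $\Ext^1$ (heredity) to conclude $\Ext^1(\ker f,\ker f)=0$ (resp.\ $\Ext^1(\coker f,\coker f)=0$), so that every indecomposable summand is exceptional. The only difference is bookkeeping: the paper runs the dimension count through additivity of the Euler form (using $\Hom(X,\ker f)=0$ from endo-simplicity), whereas you exhibit the induced maps on $\Hom$ and $\Ext^1$ as isomorphisms and read the vanishing off the long exact sequence --- the same computation in different notation.
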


\begin{proof}
Consider a nonzero morphism $f : X \to \tau X$; we know that either $\ker f$ or $\coker f$ is nonzero.

Using the epimorphism $X \twoheadrightarrow \im f$, the monomorphism $\im f \hookrightarrow \t X$, and $\dim \Hom(X,\t X) = \dim \Ext^1(X,X) = 1$, we find
\begin{align*}
1 \leq \dim \Hom(X,\im f) &\leq \dim \Hom(X,\t X)  = 1 \\
1 \leq \dim \Hom(\im f, \t X) &\leq \dim \Hom(X,\t X) = 1
\end{align*}
so that $\dim  \Hom(X,\im f) = 1 = \dim \Ext^1(X, \im f)$.

Assume first that $\ker f$ is nonzero.  By applying the functor $\Hom(X,-)$ to the short exact sequence
$$0 \to \ker f \to X \to \im f \to 0,$$
we find:
$$\chi(X,\ker f) = \chi(X,X)-\chi(X,\im f).$$
Since $X$ is endo-simple and $\ker f$ is a subobject of $X$, we know that $\Hom(X,\ker f) = 0$.  This yields 
\begin{eqnarray*}
\dim \Ext^1(X,\ker f) &=& -\dim \Hom(X,X) + \dim \Ext^1(X,X) \\
& & {} + \dim \Hom(X,\im f) - \dim \Ext^1(X,\im f) \\
&=& -1 + 1 + 1 - 1 = 0.
\end{eqnarray*}
Since $\Ext^1(-, \ker f)$ is right exact, we know that $\dim \Ext^1(\ker f , \ker f) \leq \dim \Ext^1(X, \ker f)$, and hence every indecomposable direct summand of $\ker f$ is exceptional.

A dual reasoning shows that, if $\coker f \not\cong 0$, every direct summand of $\coker f$ is exceptional.  This completes the proof.
\end{proof}

\begin{proposition}\label{proposition:LowExt}
If there is an object $X \in \AA$ with $\dim \Ext^1(X,X) \leq 1$, then there is an object $Y \in \Ob \AA$ such that $Y$ is either exceptional or 1-spherical.
\end{proposition}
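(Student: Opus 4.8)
The plan is to reduce to a single indecomposable object and then feed it into the preceding results, branching on whether that object is endo-simple and, in the endo-simple case, on whether it is 1-spherical.

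First I would decompose $X \cong \bigoplus_i X_i$ into indecomposables, so that $\Ext^1(X,X) \cong \bigoplus_{i,j} \Ext^1(X_i,X_j)$ is a space of dimension at most one. If every diagonal term $\Ext^1(X_i,X_i)$ vanishes --- in particular whenever $\dim \Ext^1(X,X) = 0$ --- then each indecomposable summand satisfies $\Ext^1(X_i,X_i) = 0$, and since $\AA$ is hereditary this forces $\End X_i \cong k$ (\cite[Lemma 4.1]{HappelRingel82}); thus each $X_i$ is exceptional and we are done. Otherwise exactly one diagonal term is nonzero, say $\dim \Ext^1(X',X') = 1$ for a single indecomposable summand $X'$, and I would work with this $X'$ for the remainder. (Note that if the unique nonzero term of $\Ext^1(X,X)$ were off-diagonal, every diagonal term would vanish and we would already be in the exceptional case.)

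Now I would split on whether $X'$ is endo-simple. If $X'$ is \emph{not} endo-simple, I apply Proposition \ref{proposition:EndoSimpleExists} to obtain an endo-simple object $Y$ that is simultaneously a subobject and a quotient object of $X'$. Since $X'$ is not endo-simple, neither the monomorphism $Y \hookrightarrow X'$ nor the epimorphism $X' \twoheadrightarrow Y$ can be an isomorphism (otherwise $X' \cong Y$ would be endo-simple); hence $Y$ is nontrivially both a subobject and a quotient object of the indecomposable object $X'$. Proposition \ref{extraExtension} then gives $\dim \Ext^1(Y,Y) \leq \dim \Ext^1(X',X') - 1 = 0$, so $Y$ is endo-simple with vanishing self-extensions, i.e. exceptional, and we are done.

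If instead $X'$ is endo-simple, then $X'$ meets exactly the hypotheses of Lemma \ref{lemma:SmallD}: an endo-simple object with $\dim \Ext^1(X',X') = 1$. If $X'$ is not 1-spherical, that lemma directly produces an exceptional object in $\AA$; and if $X'$ is 1-spherical, then $X'$ itself --- being endo-simple with $\t X' \cong X'$ --- is the desired 1-spherical object. In every branch we have produced an object that is exceptional or 1-spherical, which completes the proof. The one point requiring care, and the main obstacle, is guaranteeing that the endo-simple sub/quotient $Y$ is a \emph{proper} subobject and quotient so that Proposition \ref{extraExtension} applies; this is precisely why I first peel off the case in which $X'$ is already endo-simple and route it through Lemma \ref{lemma:SmallD} instead.
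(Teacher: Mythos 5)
Your proof is correct and takes essentially the same route as the paper: the case $\dim \Ext^1(X,X) = 0$ produces an exceptional object, and the case $\dim \Ext^1(X,X) = 1$ is handled by feeding the object into Lemma \ref{lemma:SmallD}. The only difference is that you explicitly carry out the reduction to an indecomposable, endo-simple object --- splitting off the non-endo-simple case via Propositions \ref{proposition:EndoSimpleExists} and \ref{extraExtension} --- whereas the paper's two-line proof leaves this reduction implicit (Lemma \ref{lemma:SmallD} does require endo-simplicity), so your extra care closes a small gap in the paper's wording rather than diverging from its method.
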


\begin{proof}
If $\dim \Ext^1(X,X) = 0$, then $X$ is exceptional and we are done.  The case $\dim \Ext^1(X,X) = 1$ follows from Lemma \ref{lemma:SmallD}.
\end{proof}

Recall that $d \geq 0$ is the minimum of $\{\dim \Ext(X,X)\}_{X \in \AA}$.  The following result shows that $d$ is a lower bound for the dimension of certain Hom-spaces.

\begin{proposition}\label{theProposition}
Let $A,B \in \Ob \AA$ and let $d \geq 2$.  If $\Hom(A,B) \not= 0$, then $\dim \Hom(A,\t B) \geq d$.
\end{proposition}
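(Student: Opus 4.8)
The plan is to convert the claim into a statement about $\Ext^1$ inside $\AA$ and then to exploit the right-exactness of $\Ext^1$ coming from heredity. First I would apply Serre duality to the quantity we must bound. Since $\bS \cong \t[1]$ and $\t$ is a triangulated autoequivalence of $\Db\AA$, the chain of natural isomorphisms $\Hom(A,\t B) \cong \Hom(\t B, \bS A)^* = \Hom(\t B, (\t A)[1])^* \cong \Hom(B, A[1])^* = \Ext^1(B,A)^*$ shows that $\dim \Hom(A,\t B) = \dim \Ext^1(B,A)$. Thus it suffices to prove $\dim \Ext^1(B,A) \geq d$, a statement living entirely in $\AA$.

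Since $\Hom(A,B) \neq 0$, I would fix a nonzero morphism $f \colon A \to B$ and factor it through its image as $A \xrightarrow{\pi} I \xrightarrow{\iota} B$, with $\pi$ an epimorphism, $\iota$ a monomorphism, and $I \neq 0$. The point of introducing $I$ is that it is a genuine nonzero object of $\AA$, so the minimality defining $d$ gives $\dim \Ext^1(I,I) \geq d$ for free; the rest of the argument simply transports this lower bound along $f$.

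The transport uses heredity twice. Applying $\Hom(-,A)$ to the short exact sequence $0 \to I \xrightarrow{\iota} B \to \coker \iota \to 0$ and using $\Ext^2 = 0$ produces a surjection $\Ext^1(B,A) \twoheadrightarrow \Ext^1(I,A)$. Applying $\Hom(I,-)$ to $0 \to \ker f \to A \xrightarrow{\pi} I \to 0$ and again using $\Ext^2 = 0$ produces a surjection $\Ext^1(I,A) \twoheadrightarrow \Ext^1(I,I)$. Composing, I obtain $\dim \Ext^1(B,A) \geq \dim \Ext^1(I,A) \geq \dim \Ext^1(I,I) \geq d$, which is exactly the desired bound.

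There is no genuinely hard step; the only care needed is in the bookkeeping of the two long exact sequences, so that one surjection is obtained from a contravariant and the other from a covariant application (each rendered right-exact by $\Ext^2 = 0$), together with the observation that $I \neq 0$ so that the minimality of $d$ applies to it. I would remark in passing that the chain of inequalities never actually uses $d \geq 2$: only the minimality of $d$ among nonzero objects is invoked, and the hypothesis $d \geq 2$ merely records the regime in which the proposition will subsequently be used. (If one prefers to stay on the $\Hom$-side and $\t$ restricts to $\AA$, the same argument can be phrased directly: $g \mapsto \t\iota \circ g \circ \pi$ defines an injection $\Hom(I,\t I) \hookrightarrow \Hom(A,\t B)$, since precomposition with the epimorphism $\pi$ and postcomposition with the monomorphism $\t\iota$ are each injective.)
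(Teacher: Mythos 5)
Your proof is correct, and at its core it uses the same decomposition as the paper: factor a nonzero $f\colon A \to B$ through its image $I$ and apply the minimality of $d$ to $I$. Where you differ is the transport mechanism. The paper stays on the $\Hom$ side: it first observes that $d \geq 2$ forces $\AA$ to have no nonzero projectives, so that $\t$ restricts to an autoequivalence of $\AA$ (Proposition \ref{proposition:SerreDuality}); then $g \mapsto \t(i)\circ g \circ p$ gives an injection $\Hom(I,\t I) \hookrightarrow \Hom(A,\t B)$, exactly the argument you sketch in your closing parenthesis. You instead convert the target to $\Ext^1(B,A)$ by Serre duality in $\Db\AA$ and obtain two surjections $\Ext^1(B,A) \twoheadrightarrow \Ext^1(I,A) \twoheadrightarrow \Ext^1(I,I)$ from heredity ($\Ext^2 = 0$). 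What your version buys is a small gain in economy and generality: it never needs $\t$ to restrict to $\AA$, and hence, as you correctly observe, never uses $d \geq 2$ --- only the minimality of $d$ among nonzero objects. What the paper's version buys is that it works directly with the quantity $\dim\Hom(A,\t B)$ in the form in which it is invoked later (e.g.\ in the proof of Theorem \ref{theorem:LowExt}), and the injectivity of the comparison map is immediate from $p$ being epi and $\t(i)$ being mono. Both are complete proofs; yours is a dual bookkeeping of the same idea, and your final remark makes that equivalence explicit.
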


\begin{proof}
Let $f \in \Hom(A,B)$ be nonzero, and consider the epi-mono factorization
$f: A \stackrel{p}{\twoheadrightarrow} \im f \stackrel{i}{\hookrightarrow} B$.  Since for every object $X \in \AA$, we have $\Ext^1(X,X) \geq d$, the category $\AA$ does not have projective objects.  Hence, the autoequivalence $\t: \Db \AA \to \Db \AA$ restricts to an autoequivalence $\t: \AA \to \AA$.  In particular, $\t (i): \t (\im f) \to \t B$ is a monomorphism.  There is then a monomorphism
\begin{eqnarray*}
\Hom(\im f, \t \im f) &\to& \Hom(A,\t B) \\
g &\mapsto& \t (i) \circ g \circ p
\end{eqnarray*}
Because $\dim \Hom(\im f,\t \im f) \geq d$, we find $\dim \Hom(A, \t B) \geq d$.
\end{proof}

\begin{corollary}\label{theCorollary}
Let $d \geq 2$.  If $Y$ is endo-simple, then $\Hom(\t^n Y, Y) = 0$, for all $n > 0$.
\end{corollary}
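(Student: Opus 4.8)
The plan is to deduce this directly from Proposition \ref{theProposition} by an inductive descent, using the hypothesis $d \geq 2$ together with endo-simplicity to manufacture a contradiction at the bottom of the induction.

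First I would record the setup. Since $d \geq 2$, every object $X \in \AA$ satisfies $\dim \Ext^1(X,X) \geq d \geq 2 > 0$, so $\AA$ has no projective objects and the Auslander--Reiten translate $\t$ restricts to an autoequivalence $\t: \AA \to \AA$, exactly as recalled in the proof of Proposition \ref{theProposition}. In particular $\t^n Y \in \Ob \AA$ for every $n \in \bZ$, and applying the autoequivalence $\t^{\pm 1}$ produces natural isomorphisms $\Hom(\t^a Y, \t^b Y) \cong \Hom(\t^{a-1} Y, \t^{b-1} Y)$, which I will use freely to shift exponents.

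Suppose, for contradiction, that $\Hom(\t^n Y, Y) \neq 0$ for some $n > 0$. Applying Proposition \ref{theProposition} with $A = \t^n Y$ and $B = Y$ yields $\dim \Hom(\t^n Y, \t Y) \geq d \geq 2 > 0$, and shifting by $\t^{-1}$ gives $\Hom(\t^{n-1} Y, Y) \neq 0$. Iterating this descent $n-1$ times, I arrive at $\Hom(\t Y, Y) \neq 0$.

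It then remains to rule out this base case. Applying Proposition \ref{theProposition} once more, now with $A = \t Y$ and $B = Y$, gives $\dim \Hom(\t Y, \t Y) \geq d \geq 2$. But $\t$ is an autoequivalence and $Y$ is endo-simple, so $\dim \Hom(\t Y, \t Y) = \dim \End Y = 1$, contradicting $d \geq 2$. Hence no such $n$ exists, which is the claim. The argument is essentially routine once Proposition \ref{theProposition} is available; the only point requiring care is keeping the direction of the $\t$-shifts straight, so that each application of the proposition strictly lowers the exponent on the first argument while leaving the second argument controlled.
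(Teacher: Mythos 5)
Your proof is correct and takes the approach the paper intends: the corollary is stated without a separate proof, as an immediate consequence of Proposition \ref{theProposition}. Your inductive descent — using that $\t$ is an autoequivalence of $\AA$ when $d \geq 2$ to shift exponents, and terminating in the contradiction $\dim \Hom(\t Y, \t Y) = \dim \End Y = 1 < d$ — is exactly the iteration of Proposition \ref{theProposition} that the paper leaves to the reader.
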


In the proof of the main theorem, the following lemma will be important.

\begin{lemma}\label{MonoLemma}
Assume $d \geq 2$ and $X$ such that $\dim \Ext^1(X,X) = d$.  Every nonzero map $X \to \t X$ is either a monomorphism or an epimorphism (but not an isomorphism).
\end{lemma}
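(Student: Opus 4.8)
The plan is to argue by contradiction, producing incompatible sign constraints on a single Euler-form value. First, note that since $d \geq 2 > 0$ every nonzero object has nonzero self-extensions, so $\AA$ has no projectives and $\t$ restricts to an autoequivalence of $\AA$ (as in the proof of Proposition \ref{theProposition}); moreover $X$ is indecomposable and endo-simple (if $X$ decomposed, a summand would already have self-extension space of dimension $< d$, and endo-simplicity then follows from Propositions \ref{proposition:EndoSimpleExists} and \ref{extraExtension} exactly as for the minimal object $E$), and hence so is $\t X$. By Serre duality $\dim \Hom(X, \t X) = \dim \Ext^1(X,X) = d$. A nonzero $g \colon X \to \t X$ cannot be an isomorphism: otherwise $X \cong \t X$ and $d = \dim \Hom(X, \t X) = \dim \End X = 1$, contradicting $d \geq 2$. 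So it remains to exclude the case that $g$ is neither a monomorphism nor an epimorphism, i.e. that both $K = \ker g$ and $C = \coker g$ are nonzero. Writing $I = \im g$, this gives short exact sequences $0 \to K \to X \to I \to 0$ and $0 \to I \to \t X \to C \to 0$ with $K$, $C$, $I$ all nonzero.

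Next I would extract a lower bound from the kernel side. Since $X$ is endo-simple and $K$ is a proper nonzero subobject, $\Hom(X, K) = 0$ (any composite $X \to K \hookrightarrow X$ is a non-invertible, hence zero, endomorphism). Additivity of the Euler form on the first sequence then gives $\chi(X, I) = \chi(X,X) - \chi(X,K) = (1-d) + \dim \Ext^1(X, K)$. Right-exactness of $\Ext^1(-,K)$ (heredity) yields a surjection $\Ext^1(X,K) \twoheadrightarrow \Ext^1(K,K)$, so $\dim \Ext^1(X,K) \geq \dim \Ext^1(K,K) \geq d$, the last inequality because every indecomposable summand of the nonzero object $K$ has self-extension dimension at least $d$. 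Hence $\chi(X, I) \geq 1$.

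The symmetric computation on the cokernel side uses that $\t X$ is endo-simple and $C$ is a proper nonzero quotient, whence $\Hom(C, \t X) = 0$; additivity on the second sequence gives $\chi(I, \t X) = (1-d) + \dim \Ext^1(C, \t X)$, and right-exactness of $\Ext^1(C,-)$ together with $\dim \Ext^1(C,C) \geq d$ forces $\chi(I, \t X) \geq 1$. The key step is to observe that these two quantities are negatives of one another: Serre duality in the form $\chi(A,B) = -\chi(B, \t A)$ combined with the $\t$-invariance of the Euler form gives $\chi(I, \t X) = -\chi(\t X, \t I) = -\chi(X, I)$. Thus $\chi(X,I) \geq 1$ and $-\chi(X,I) \geq 1$ hold simultaneously, which is absurd, and $g$ must be a monomorphism or an epimorphism. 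I expect the main obstacle to be locating this cancellation: individually each side only yields the harmless bound $\chi(X,I) \geq 1$, and the contradiction emerges solely from recognizing that the kernel and cokernel estimates constrain the same invariant with opposite signs.
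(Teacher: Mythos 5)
Your proof is correct and follows essentially the same route as the paper: both arguments rest on the two short exact sequences through $\im f$, the vanishing of $\Hom(X,\ker f)$ and $\Hom(\coker f,\t X)$ via endo-simplicity, right exactness of $\Ext^1(-,\ker f)$ and $\Ext^1(\coker f,-)$, and the Serre-duality identity $\chi(\im f,\t X)=-\chi(X,\im f)$, combined with the minimality of $d$. The paper merely packages the sign cancellation you highlight as the statement that the two quantities $d-1+\chi(X,\im f)$ and $d-1-\chi(X,\im f)$ sum to $2d-2$, so at least one kernel/cokernel self-extension space has dimension at most $d-1$ and hence vanishes.
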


\begin{proof}
Using the minimality of $d$, Proposition \ref{extraExtension} implies that $X$ is endo-simple.  As before, let $f \in \Hom(X, \t X)$ be nonzero and consider the following two exact sequences
$$0 \to \ker f \to X \to \im f \to 0, \qquad 0 \to \im f \to \t X \to \coker f \to 0.$$
By applying $\Hom(X,-)$ to the first sequence, and $\Hom(-,\t X)$ to the second, we find
\begin{eqnarray*}
\dim \Ext^1(X,\ker f) &=& d-1+\chi(X, \im f) \\
\dim \Ext^1(\coker f, \t X) &=& d-1 - \chi(X,\im f)
\end{eqnarray*}
Since  $\Ext^1(-,\ker f)$ and $\Ext^1(\coker f,-)$ are right exact, we find $\dim \Ext^1(X,\ker f) \geq \dim \Ext^1(\ker f, \ker f)$ and $\dim \Ext^1(\coker f, \t X) \geq \dim \Ext^1(\coker f, \coker f)$.  The above equations then show that either $\dim \Ext^1(\ker f,\ker f) \leq d-1$ or $\Ext^1(\coker f, \coker f) \leq d-1$.  The minimality of $d$ then implies that either $\ker f \cong 0$ or $\coker f \cong 0$, thus $f$ is either a monomorphism or an epimorphism.
\end{proof}

\theoremLowExt*

\begin{proof}
As before, let $X$ be an object such that $d=\dim \Ext^1(X,X)$ is minimal.  By Proposition \ref{extraExtension}, we know that $X$ is endo-simple.  Proposition \ref{proposition:LowExt} yields that it is sufficient to show that $d \leq 1$.  We will assume $d \geq 2$ and obtain a contradiction.  Since in this case, the category $\AA$ cannot have any nonzero projective objects, we know that the autoequivalence $\tau: \Db \to \Db \AA$ restricts to an autoequivalence $\t: \AA \to \AA$.

Let $f \in \Hom(X, \t X)$ be any nonzero morphism.  By Lemma \ref{MonoLemma}, it is either a monomorphism or an epimorphism.  We will assume the former, the latter case is dual.

To ease notations, we will write $Q$ for $\coker f$.  By applying $\Hom(Q,-)$ to the short exact sequence
$$0 \to X \to \t X \to Q \to 0$$
and using $\Hom(Q,\t X) = 0$ because $\t X$ is endo-simple, we find the exact sequence
$$0 \to \Hom(Q,Q) \to \Ext^1(Q,X) \to \Ext^1(Q,\t X) \to \Ext^1(Q,Q) \to 0.$$
From the proof of Lemma \ref{MonoLemma} it follows that
$\dim \Ext^1(Q,\t X) = d-1-\chi(X,X) = 2d-2,$
and hence also $\dim \Ext^1(Q,Q) \leq 2d-2$.  This shows that $Q$ is indecomposable.

By applying $\Hom(X,-)$ to the above sequence, we find $\dim \Hom(X,Q) \not= 0$ such that Proposition \ref{theProposition} yields $\dim \Ext^1(Q,X) = \dim \Hom(X,\t Q) \geq d$.

This gives
\begin{eqnarray*}
\dim \Ext^1(Q,Q) &=& \dim\Ext^1(Q,\t X) - \dim\Ext^1(Q,X) + \dim \Hom(Q,Q)\\
&\leq& (2d-2) - d + \dim \Hom(Q,Q) \\
&=& d-2 + \dim \Hom(Q,Q).
\end{eqnarray*}

If $Q$ is endo-simple, then $\dim\Ext^1(Q,Q) \leq d-1$, and we are done.  If $\dim\Hom(Q,Q) = 2$, then $\dim \Ext^1(Q,Q)=d$ and Propositions \ref{proposition:EndoSimpleExists} and \ref{extraExtension} imply the existence of an object $I$ with $\dim\Ext^1(I,I) \leq d-1$.

Hence, assume that $\dim\Hom(Q,Q)\geq 3$.  Since $Q$ is indecomposable and we are working over an algebraically closed field, we know that $\dim \rad^1(Q,Q) \geq 2$.  Let $I,J$ be the images of two linearly independent noninvertible endomorphisms of $Q$, where $I$ is chosen to be endo-simple.  It follows from Lemma \ref{Different} below that $I$ and $J$ are nonisomorphic either as subobjects of $Q$ or as quotient objects of $Q$.  We will assume the former; if $I \cong J$ as subobjects, then we may apply a dual argument.

If $J$ is endo-simple, we may furthermore assume (possibly by exchanging $I$ and $J$) that the embedding $J \to Q$ does not factor through $I \to Q$.  If $J$ is not endo-simple, then we let $I$ be an endo-simple subobject and quotient object of $J$ (see Proposition \ref{proposition:EndoSimpleExists}).  We may now assume the embedding $J \to Q$ does not factor through $I \to Q$.

Applying $\Hom(J,-)$ to the short exact sequence $0 \to \t I \to \t Q \to \t C \to 0$ yields the exact sequence
\begin{equation}\label{Sequence}
0 \to \Hom(J,\t I) \to \Hom(J,\t Q) \to \Hom(J, \t C) \to \Ext^1(J, \t I).
\end{equation}

We know, due to the choice of $I$, that $\Hom(J,C) \not= 0$, so that Proposition \ref{theProposition} implies that $\dim \Hom(J,\t C) \geq d$.

Since there is an epimorphism $Q \to J$, we may interpret $\Hom(J, \t Q)$ as a subspace of $\Hom(Q, \t Q)$.  Likewise, we may interpret $\Hom(I, \t I)$ as a subspace of $\Hom(Q,\t Q)$.  We can then consider the intersection $V = \Hom(J,\t Q) \cap \Hom(I,\t I)$ as subspaces of $\Hom(Q,\t Q)$, thus a morphism $f \in \Hom(Q, \t Q)$ lies in $V$ if and only if it can be factored as
$$\mbox{$Q \twoheadrightarrow I \to \t I \hookrightarrow \t Q$ and $Q \twoheadrightarrow J \to Q$},$$
for some maps $I \to \t I$ and $J \to Q$.  Note that the image of such a map $f:Q \to \t Q$ in $V$ lies in $\t I$ and thus every map in $V$ can be decomposed as
$$Q \twoheadrightarrow J \to \t I \hookrightarrow \t Q$$
for a certain $J \to \t I$.  Hence, $\dim \Hom(J, \t I) \geq \dim V$. 

Since $\dim \Hom(I,\t I) \geq d$ and $\dim \Hom(Q,\t Q) \leq 2d-2$, we find
\begin{align*}
\dim \Hom(J, \t I) &\geq \dim V \\
& \geq \dim \Hom(J,\t Q) + \dim \Hom(I,\t I) - \dim \Hom(Q,\t Q) \\
& \geq \dim \Hom(J, \t Q)-d+2.
\end{align*}

If $\dim \Hom(I,J) = \dim \Ext^1(J, \t I) \leq 1$, then it follows from the above exact sequence (\ref{Sequence}) that $\dim \Hom(J,\t C) \leq d-1$, contradicting the earlier statement that $\dim \Hom(J,\t C) \geq d$.

Hence, we know that $\dim \Hom(I,J) \geq 2$, and thus also that $\dim \Hom(I,Q) \geq 2$.  In particular, $\Hom(I,C) \not= 0$ so that Proposition \ref{theProposition} implies that $\dim \Hom(I,\t C) \geq d$.

However, applying $\Hom(I,-)$ to the short exact sequence $0 \to \t I \to \t Q \to \t C \to 0$ shows, using that $\Hom(I, \t Q) \leq \Hom(Q, \t Q) \leq 2d-2$:
\begin{eqnarray*}
\dim \Hom(I,\t C) &=& \dim \Hom(I, \t Q) + \dim \Ext^1(I,\t I)  - \dim \Hom(I,\t I) \\
&& {}  + \dim \Ext^1(I,\t C) - \dim \Ext^1(I, \t Q) \\
&\leq& (2d-2) + 1 - d + \dim \Ext^1(I,\t C) - \dim \Ext^1(I, \t Q) \\
&\leq& d-1
\end{eqnarray*}
where we have used that $\dim \Ext^1(I,\t C) \leq \dim \Ext^1(I, \t Q)$ since $\t C$ is a quotient object of $\t Q$.  We obtain a contradiction.
\end{proof}

\begin{lemma}\label{Different}
Let $f,g \in \Hom(Q,Q)$ be nonzero.  Denote $I= \im f$ and $J=\im g$, and assume $I$ is endo-simple.  If $f$ and $g$ are linearly independent, then $I$ and $J$ are not isomorphic as both subobjects and quotient objects of $Q$.
\end{lemma}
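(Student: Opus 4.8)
The plan is to argue by contradiction: suppose $I$ and $J$ are isomorphic both as subobjects of $Q$ and as quotient objects of $Q$, and then deduce that $f$ and $g$ are linearly dependent. The key structural fact I want to exploit is that $I$ is endo-simple, so $\End(I) \cong k$, and that the epi-mono factorizations of $f$ and $g$ must be ``glued'' by these isomorphisms. Writing the factorizations as $f: Q \stackrel{p}{\twoheadrightarrow} I \stackrel{\iota}{\hookrightarrow} Q$ and $g: Q \stackrel{q}{\twoheadrightarrow} J \stackrel{\kappa}{\hookrightarrow} Q$, the hypothesis gives an isomorphism $\alpha: I \to J$ making $\kappa \circ \alpha = \iota$ (agreement as subobjects) and an isomorphism $\beta: I \to J$ making $\beta \circ p = q$ (agreement as quotient objects, using that $J \cong I$ is also endo-simple).

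First I would reduce everything to $\End(I)$. Since $I$ is endo-simple and $\alpha, \beta: I \to J$ are both isomorphisms, the composite $\beta^{-1}\alpha \in \End(I) \cong k$ is a nonzero scalar, say $\lambda$. The idea is then to compute $g$ in terms of $f$ through these identifications: from $\kappa \circ \alpha = \iota$ and $\beta \circ p = q$ we get
\[
g = \kappa \circ q = \kappa \circ \beta \circ p = (\kappa \circ \alpha)\circ(\alpha^{-1}\beta)\circ p = \iota \circ (\alpha^{-1}\beta) \circ p.
\]
Here $\alpha^{-1}\beta \in \End(I) \cong k$ is again a nonzero scalar $\mu = \lambda^{-1}$, so $g = \mu\,(\iota \circ p) = \mu f$. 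This exhibits $f$ and $g$ as scalar multiples of each other, contradicting their linear independence.

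The main obstacle I anticipate is being careful about what ``isomorphic as subobjects'' and ``isomorphic as quotient objects'' precisely mean and ensuring the isomorphisms $\alpha, \beta$ are set up so that the two compatibility squares commute on the nose; this is where the endo-simplicity of $I$ does the real work, since it forces the relevant endomorphisms to be scalars and hence lets me compare $f$ and $g$ directly rather than merely up to an automorphism of $Q$. A secondary point to check is that $J$ is endo-simple as well — this follows immediately from $J \cong I$ under either identification, so no separate hypothesis on $J$ is needed. Once these identifications are in place, the computation above is routine, so the entire content of the lemma is really the translation of the two isomorphism hypotheses into a single scalar relation in $\End(I)$.
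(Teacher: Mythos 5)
Your proof is correct and is essentially identical to the paper's: both argue by contradiction, encode the two isomorphism hypotheses as commuting triangles with the epi-mono factorizations of $f$ and $g$, and use $\End(I) \cong k$ to conclude that the composite isomorphism $I \to J \to I$ is a nonzero scalar, forcing $g$ to be a scalar multiple of $f$. The only differences are notational (your $p,q,\iota,\kappa$ versus the paper's $a,b,c,d$, with the roles of $\alpha$ and $\beta$ swapped), and your side remark that $J$ must be endo-simple is not actually needed, since isomorphism of quotient objects is defined directly by an isomorphism compatible with the projections.
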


\begin{proof}
Seeking a contradiction, assume there are isomorphisms $\alpha,\beta: I \to J$ such that the rightmost and the leftmost triangles commute
$$\xymatrix@R=5pt{
&I \ar@{=}[r]\ar[dd]^{\alpha}&I \ar[rd]^{c}\ar[dd]_{\beta} \\
Q \ar[ur]^{a}\ar[dr]_{b}&&&Q \\
&J \ar@{=}[r]&J \ar[ru]^{d}
}$$
Here, we have $f = c \circ a$ and $g = d \circ b$.  We see that $d \circ b = (c \circ \beta^{-1}) \circ (\alpha \circ a)$.  Using that $I$ is endo-simple (and hence $\beta^{-1} \circ \alpha$ is multiplication by a scalar), we see that the composition of the lower arm is a scalar multiple of the composition of the upper arm.  This contradicts the assumption that $f$ and $g$ are linearly independent.
\end{proof}
\section{Twist functors and tubes}\label{section:TwistAndTubes}

Twist functors and Calabi-Yau objects are essential to the proof presented in this paper. In this section, we recall some relevant definitions and results, and we will prove some additional results that will be used further in the paper.

\subsection{Twist functors and generalized 1-spherical objects}\label{subsection:SphericalObjects}

Twist functors have appeared in the literature under different names, for example \emph{tubular mutations} \cite{Meltzer97}, \emph{shrinking functors} \cite{Ringel84}, and \emph{twist functors} \cite{Seidel01}.  Similar ideas were the mutations used in \cite{GorodentsevRudakov87} in the context of exceptional bundles on projective spaces and, more generally, in \cite{Bondal89}.  We will use a small generalization defined in \cite{vanRoosmalen12} (a similar generalization has been considered in \cite{BurbanBurban05} in a geometric setting).

Let $\AA$ be an Ext-finite abelian category of finite global dimension.  Let $X \in \Db \AA$ be any object and write $A = \End X$.  There is an associated twist functor $T_X: \Db \AA \to \Db \AA$ and a natural transformation $id_{\Db \AA} \to T_X$ such that for every object $Y \in \Db \AA$ there is a triangle
$$\RHom(X,Y) \otimes_A X \to Y \to T_X Y \to (\RHom(X,Y) \otimes_A X)[1].$$

Likewise, there is an associated twist functor $T^*_X: \Db \AA \to \Db \AA$ and a natural transformation $T_X^* \to id_{\Db \AA}$ such that for every object $Y \in \Db \AA$ there is a triangle
$$T^*_X Y \to Y \to \RHom(Y,X)^* \otimes_A X \to T^*_X Y [1].$$

\begin{definition}\label{definition:SphericalObject}
Let $\AA$ be an Ext-finite abelian category with Serre duality.  An object $Y \in \Db \AA$ is called \emph{generalized $n$-spherical} if
\begin{enumerate}
\item $\Hom(Y,Y[i]) = 0$ for all $i \not= 0$ and $i \not= n$,
\item $\Hom(Y,Y)$ is semi-simple, and
\item $\bS Y \cong Y[n]$.
\end{enumerate}
A \emph{minimal $n$-spherical object} is a generalized $n$-spherical object such that no nontrivial direct summands are generalized $n$-spherical objects.

We will say that an object $Y \in \AA$ is minimal or generalized $n$-spherical if the corresponding stalk complex $Y[0] \in \Db \AA$ is minimal or generalized $n$-spherical, respectively.
\end{definition}

\begin{remark}
Note that $\bS X \cong X[n]$ implies that $\Hom(X,X[n]) \cong \Hom(X,X)^*$.
\end{remark}

\begin{remark}\label{remark:TwistWithSelf}
Let $X$ be generalized $n$-spherical.  It is easily checked that $T_X X \cong X[n-1] \cong T^*_X X$.  However, when $n=1$, the natural transformations $id \to T_X$ and $T^*_X \to id$ yield the zero morphisms $X \to T_X X$ and $T^*_X X \to X$.
\end{remark}

\begin{remark}
The difference between a 1-spherical object in the sense of \cite{Seidel01} and a generalized 1-spherical object is that we do not require that $\Hom(X,X) \cong k$.  Generalized 1-spherical object and minimal 1-spherical objects need not be indecomposable.
\end{remark}

\begin{remark}
Minimal 1-spherical objects are minimal Calabi-Yau objects in the sense of \cite{CibilsZhang09}.
\end{remark}

We recall the following result (\cite[Theorem 3.10]{vanRoosmalen12}).

\begin{theorem}\label{theorem:TwistDerived}
Let $\AA$ be an Ext-finite abelian category with Serre duality, and let $E \in \Ob \AA$ be a generalized $n$-spherical object, then the twist functors $T_E$ and $T^*_E$ are quasi-inverses.
\end{theorem}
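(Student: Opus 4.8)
The plan is to show that $T^*_E$ is a left adjoint of $T_E$ and that $T_E$ commutes with the Serre functor; Corollary \ref{corollary:SpanningClass} then gives that $T_E$ is an equivalence, and since the left adjoint of an equivalence is automatically its quasi-inverse, we conclude $T^*_E \cong T_E^{-1}$.

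\textbf{Setup.} Write $A = \End E$. By hypothesis $A$ is semisimple, and since $k$ is algebraically closed $A \cong \prod_i M_{n_i}(k)$; in particular $A$ is a symmetric algebra, so $A^* \cong A$ as $A$-bimodules and $\Db(A)$ is a semisimple triangulated category with trivial Serre functor, i.e.\ $\Hom_{\Db(A)}(M,N)^* \cong \Hom_{\Db(A)}(N,M)$ naturally. Put $l = (-)\otimes_A E \colon \Db(A)\to\Db\AA$ and $r = \RHom(E,-)\colon \Db\AA\to\Db(A)$; the tensor--Hom adjunction makes $(l,r)$ an adjoint pair with counit $\varepsilon\colon lr\to\id$, and the first defining triangle exhibits $T_E Y \cong \cone(\varepsilon_Y)$. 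Using Serre duality together with $\bS E\cong E[n]$, one rewrites $\RHom(Y,E)^*\cong \RHom(E,\bS Y)$, so the second defining triangle becomes $T^*_E Y \to Y \stackrel{\eta'}{\to} lr\bS Y \to T^*_E Y[1]$, i.e.\ $T^*_E Y \cong \cone(\eta'_Y)[-1]$ for a natural map $\eta'_Y\colon Y\to lr\bS Y$.

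\textbf{The adjunction $T^*_E \dashv T_E$.} Applying $\Hom(-,Y)$ to the triangle for $T^*_E X$ and $\Hom(X,-)$ to the triangle for $T_E Y$ yields two long exact sequences, both interleaving $\Hom(X,Y)$ and $\Hom(X,Y[1])$; it suffices to identify their remaining terms compatibly. The two key isomorphisms are $\Hom(lr\bS X,Y)\cong \Hom_A(r\bS X, rY)$, coming directly from $l\dashv r$, and $\Hom(X, lrY)\cong \Hom_A(rY,r\bS X)^*\cong \Hom_A(r\bS X, rY)$, coming from Serre duality on $\Db\AA$ followed by $A^*\cong A$ (which supplies the ``wrong-way'' comparison for $l$). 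The analogous isomorphisms with $Y$ replaced by $Y[1]$ hold since $r(Y[1])=rY[1]$. Matching all four terms and invoking the five lemma produces a natural isomorphism $\Hom(T^*_E X,Y)\cong \Hom(X,T_E Y)$.

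\textbf{Commuting with $\bS$ and conclusion.} Applying $\bS$ to the triangle $lrY\to Y\to T_E Y\to$ and replacing $Y$ by $\bS^{-1}Y$, I would use $\RHom(E,\bS^{-1}Y)\cong \RHom(E,Y)[-n]$ and $\bS(M\otimes_A E)\cong M\otimes_A E[n]$ to identify $\bS\,lr\,\bS^{-1}\cong lr$; comparing with the original triangle gives $\bS T_E\bS^{-1}\cong T_E$, i.e.\ $T_E\bS\cong\bS T_E$. Finally, because $T_E$ is built from $\RHom(E,-)\otimes_A E$ it preserves the block decomposition of $\Db\AA$, restricting on each block to the twist by the corresponding generalized $n$-spherical summand of $E$ (and to the identity on blocks not meeting $E$), so Corollary \ref{corollary:SpanningClass} applies block by block and shows $T_E$ is an equivalence; its left adjoint $T^*_E$ is then a quasi-inverse. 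I expect the main difficulty to lie in the adjunction step: one must check that the identifications of the outer terms are compatible with the connecting maps $\varepsilon$ and $\eta'$ (so that the five lemma genuinely applies and the resulting isomorphism is natural), and carefully track the left/right $A$-module structures in both the identification $\RHom(Y,E)^*\otimes_A E\cong lr\bS Y$ and the use of $A^*\cong A$.
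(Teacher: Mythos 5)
There is no internal proof to compare against: the paper states Theorem \ref{theorem:TwistDerived} as a recalled result, citing \cite[Theorem 3.10]{vanRoosmalen12}, so your attempt must be measured against the Seidel--Thomas-type argument of that source. Measured that way, your proof has a genuine gap, and it is not one of the two technical points you flag at the end. The fatal step is the appeal to Corollary \ref{corollary:SpanningClass}. That corollary is deduced from Theorem \ref{theorem:SpanningClass} by taking the spanning class $\Omega = \Ob \CC_1$, and for this choice the hypothesis of Theorem \ref{theorem:SpanningClass} that $F$ induce isomorphisms $\Hom(\omega_1,\omega_2[i]) \stackrel{\sim}{\rightarrow} \Hom(F\omega_1,F\omega_2[i])$ is precisely full faithfulness of $F$; the corollary's printed hypotheses (exact, left adjoint, $F\bS_1 \cong \bS_2 F$) silently omit it, and with them alone the statement is false: the functor $X \mapsto X \oplus X$ on $\Db \coh \bP^1$, or the zero functor, is exact, has a left adjoint, commutes with the Serre functors, and is no equivalence. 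Everywhere the paper itself invokes this corollary the functor in question is a fully faithful embedding, so the omission is harmless there; you, however, apply it to $T_E$, for which full faithfulness is exactly the content of the theorem being proved, and nothing in your argument supplies it.

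Concretely: even granting your two sketched steps --- the adjunction $T^*_E \dashv T_E$ and the isomorphism $T_E \bS \cong \bS T_E$ --- ``has a left adjoint and commutes with $\bS$'' does not imply ``equivalence''. What is missing is the core of any proof of this theorem: verification of the Hom-isomorphism hypothesis of Theorem \ref{theorem:SpanningClass} on an actual spanning class, for instance $\Omega = \{E\} \cup E^{\perp}$ (a spanning class thanks to Serre duality), where one computes $T_E E$ (a shift of $E$, cf.\ Remark \ref{remark:TwistWithSelf}), shows that $T_E$ restricts to the identity on $E^{\perp}$, and checks that $T_E$ preserves all Hom's among these objects; alternatively one proves $T^*_E T_E \cong \id \cong T_E T^*_E$ directly by octahedron arguments. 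I would also note that your five-lemma step is weaker than you present it: the five lemma needs a commuting ladder, hence an actual natural map $\Hom(T^*_E X,Y) \to \Hom(X,T_E Y)$ constructed from the defining triangles (a unit or counit), not merely abstract isomorphisms of the four outer terms; that defect is repairable, but the reduction to Corollary \ref{corollary:SpanningClass} is not.
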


\begin{remark}\label{remark:TwistNotation}
Due to the previous theorem, we will also write $T^{-n}_Y$ for the functor $(T^*_Y)^n$ when $Y$ is a generalized $n$-spherical object (for $n > 0$).
\end{remark}

\subsection{Perpendicular subcategories}

In this subsection, we will recall some results about perpendicular subcategories and their relation with twist functors.  Let $\AA$ be an abelian Ext-finite hereditary category and let $\SS \subseteq \Ob \AA$.  We will denote by $\SS^\perp$ the full subcategory of $\AA$ given by
$$\Ob \SS^\perp = \{A \in \AA \mid \Ext^i(\SS,X) = 0, \text{ for all } i \geq 0) \}.$$
This subcategory is called the \emph{right perpendicular subcategory to $\SS$}.  It follows from \cite[Proposition 1.1]{GeigleLenzing91} that $\SS^\perp$ is again an abelian hereditary category and that the embedding $\SS^\perp \to \AA$ is exact.  If $\SS = \{E\}$ consists of a single object $E \in \Ob \AA$, then we will also write $E^\perp$ for $\SS^\perp$.

The embedding $\SS^\perp \to \AA$ induces an embedding $\Db_{\SS^\perp} (\AA) \to \Db \AA$.  The essential image of this embedding is given by (see \cite[Lemma 3.6]{ReitenVandenBergh01})
$$\{X \in \Db \AA \mid \Hom(\SS[0],X[n])=0, {\text{ for all } n \in \bZ}\}.$$
We will often identify $\Db (\SS^\perp)$ with this full subcategory of $\Db \AA$.

Assume that $\AA$ has Serre duality.  Let $E \in \Ob \AA$ be an exceptional object (i.e. $\Ext^i(E,E) = 0$ for $i \not= 0$ and $\Hom(E,E) \cong k$).  In this case, the embedding $\Db (E^\perp) \to \AA$ has a left and a right adjoint given by the twist functors $T_E: \Db \AA \to \Db (E^\perp)$ and $T_{\bS E}^{*}: \Db \AA \to \Db (E^\perp)$ respectively.  When $\AA$ is hereditary, these adjoint functors induce adjoint functors $\AA \to E^\perp$ to the embedding $E^\perp \to \AA$ (see \cite{StanleyvanRoomalen12}).

\begin{proposition}\label{proposition:NumAndPerpendicular}
Let $\AA$ be an abelian Ext-finite hereditary category with Serre duality, and let $E \in \AA$ be an exceptional object.  The category $E^\perp$ has Serre duality and $\Num \AA \cong \Num (E^\perp) \oplus \bZ[E]$.
\end{proposition}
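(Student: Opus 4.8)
The statement has two independent parts: that $E^\perp$ again carries Serre duality, and the decomposition of the numerical Grothendieck group. I would treat them in that order, since the second part uses the first.

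For the Serre duality of $E^\perp$, I would build its Serre functor directly from the adjoints recalled above. Write $j \colon \Db(E^\perp) \to \Db \AA$ for the (fully faithful) embedding, with left adjoint $T_E$ and right adjoint $T^*_{\bS E}$. As $E^\perp$ is a full exact subcategory of the Ext-finite category $\AA$ it is itself Ext-finite, so it suffices to exhibit a Serre functor on $\Db(E^\perp)$. I would set $\bS' = T^*_{\bS E} \circ \bS \circ j$ and verify, for $X,Y \in \Db(E^\perp)$, the chain of natural isomorphisms $\Hom(X,Y) \cong \Hom(jX,jY) \cong \Hom(jY,\bS jX)^* \cong \Hom(Y, T^*_{\bS E}\bS jX)^* = \Hom(Y,\bS' X)^*$, where the middle isomorphism is Serre duality on $\Db \AA$ and the last is the adjunction for the right adjoint $T^*_{\bS E}$. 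Being a composite of exact functors, $\bS'$ is exact, and an exact functor satisfying this relation is necessarily an equivalence and hence a Serre functor (cf.\ \cite{BondalKapranov89}); alternatively one checks that the left-adjoint composite $T_E \circ \bS^{-1} \circ j$ provides a quasi-inverse. Thus $E^\perp$ has Serre duality.

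For the group-theoretic statement I would first record two facts: since $j$ is fully faithful the Euler form $\chi_{E^\perp}$ is the restriction of $\chi_\AA$ to $K_0(E^\perp)$, and $\chi(E,X)=0$ for every $X \in E^\perp$ while $\chi(E,E)=1$. The defining triangle $\RHom(E,Y)\otimes_k E \to Y \to T_E Y \to (\cdots)[1]$ yields in $K_0$ the identity $[Y] = \chi(E,Y)[E] + [T_E Y]$ with $[T_E Y] \in K_0(E^\perp)$. Because $T_E j \cong \id$, the assignment $[Y] \mapsto [T_E Y]$ is a retraction of the inclusion $K_0(E^\perp) \hookrightarrow K_0 \AA$, and its kernel is $\bZ[E]$ (which is infinite cyclic since $\chi(E,nE)=n$). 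This gives the splitting $K_0 \AA \cong K_0(E^\perp) \oplus \bZ[E]$.

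The heart of the matter is then to show that $\rad \chi$ agrees whether computed in $\AA$ or in $E^\perp$. First, if $v = X' + m[E] \in \rad \chi_\AA$ with $X' \in K_0(E^\perp)$, then $0 = \chi(E,v) = \chi(E,X') + m\,\chi(E,E) = m$, so $\rad\chi_\AA \subseteq K_0(E^\perp)$ and therefore $\rad\chi_\AA \subseteq \rad\chi_{E^\perp}$. Conversely, let $w \in \rad\chi_{E^\perp}$. By Serre duality on $E^\perp$ (just established) $w$ lies in both the left and the right radical of $\chi_{E^\perp}$, so $\chi(X',w)=0$ for all $X' \in K_0(E^\perp)$; combined with $\chi(E,w)=0$ (as $w$ is represented by objects of $E^\perp$), this gives $\chi(v,w)=0$ for every $v = X'+m[E] \in K_0\AA$, whence $w \in \rad\chi_\AA$. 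Hence $\rad\chi_\AA = \rad\chi_{E^\perp}$ as subgroups of $K_0(E^\perp)$, and since this radical sits inside the $K_0(E^\perp)$-summand, passing to quotients gives $\Num \AA = K_0\AA/\rad\chi_\AA \cong \bigl(K_0(E^\perp)/\rad\chi_{E^\perp}\bigr)\oplus\bZ[E] = \Num(E^\perp)\oplus\bZ[E]$. The one delicate point is precisely this equality of radicals, namely that a radical vector of $E^\perp$ stays radical in $\AA$; it is resolved by using Serre duality on $E^\perp$ to force the left–right symmetry of its radical, which together with the automatic vanishing $\chi(E,w)=0$ closes the argument. I expect no difficulty elsewhere, as the remaining steps are formal consequences of the adjunctions and of Proposition~\ref{proposition:GrothendieckGroupHappel}.
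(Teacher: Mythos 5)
Your proposal is correct and follows essentially the same route as the paper: Serre duality on $E^\perp$ is deduced from the two adjoints to the embedding $\Db(E^\perp)\to\Db\AA$ (the paper simply cites \cite[Lemma 1]{Keller05}, whose proof is precisely your computation), the splitting $K_0(\AA)\cong K_0(E^\perp)\oplus\bZ[E]$ is the fact the paper quotes from \cite{ReitenVandenBergh01,Lenzing07} (which you reprove via the twist triangle), and your two-sided comparison of radicals is word-for-word the paper's argument. The only caveat is your parenthetical claim that an exact functor satisfying the Serre-duality isomorphism is automatically an equivalence --- this is false in general, since a right Serre functor need not be essentially surjective --- but your alternative argument, that $T_E\circ\bS^{-1}\circ j$ furnishes a quasi-inverse, is correct and closes that point.
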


\begin{proof}
Since the embedding $\Db (E^\perp) \to \Db \AA$ has a left and a right adjoint, the statement about Serre duality follows from \cite[Lemma 1]{Keller05}.

We know (see \cite[Corollary 4.3]{Lenzing07} or \cite[Corollary 3.8]{ReitenVandenBergh01}) that $K_0 (\AA) \cong K_0 (E^\perp) \oplus \bZ[E]$; the embeddings $K_0 (E^\perp) \to K_0(\AA)$ and $\bZ[E] \to K_0(\AA)$ are given by the natural maps and we will use these maps to consider $K_0 (E^\perp)$ and $\bZ[E]$ as subgroups of $K_0(\AA)$.  We claim that $\rad \chi_{E^\perp} = \rad \chi_{\AA}$.

Let $u \in \rad \chi_{E^\perp}$, and $v \in K_0(E^\perp)$ and $w \in \bZ[E]$.  We have
$$\chi_\AA (v+w, u) = \chi_\AA (v,u) + \chi_\AA (w,u) = \chi_{E^\perp} (v,u) = 0.$$
This shows that $\chi_\AA(-,u) = 0$ and thus $u \in \rad \chi_\AA$.

Next, let $v \in K_0(E^\perp)$ and $w \in \bZ[E]$ such that $v+w \in \rad \chi_\AA$.  Since
$$0 = \chi_\AA([E], v+w) = \chi_\AA([E], w)$$
and $\chi([E], [E]) = 1$, we have $w = 0$.  It then follows from $v \in K_0(E^\perp)$ and  $v \in \rad \chi_\AA$ that $v \in \rad \chi_{E^\perp}$.  We have shown that $\rad \chi_{E^\perp} = \rad \chi_{\AA}$.

The required isomorphism $\Num \AA \cong \Num (E^\perp) \oplus \bZ[E]$ now follows easily.
\end{proof}

\begin{proposition}\label{proposition:PerpendicularOnSpherical}
Let $\AA$ be an abelian Ext-finite hereditary category with Serre duality and without nonzero projective objects.  Let $\SS \subseteq \Ob \AA$ be a set of objects.  If $\SS = \t \SS$, then $\t (\SS^\perp) = \SS^\perp$ and $\t ({}^\perp \SS) = {}^\perp \SS$.   The Serre functor $\bS: \Db \AA \to \Db \AA$ restricts to Serre functors for the triangulated subcategories $\Db_{\SS^\perp}(\AA) = \Db(\SS^\perp) \subseteq \AA$ and $\Db_{{}^\perp \SS}(\AA) = \Db ({}^\perp \SS)$ of $\Db \AA$.
\end{proposition}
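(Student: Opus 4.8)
The plan is to reduce everything to the single fact that $\t$ is an autoequivalence of $\AA$, combined with the elementary identity $\Hom(S,\t A)\cong\Hom(\tm S,A)$ and its higher-$\Ext$ analogues, and then to lift the conclusion to the derived category using $\bS\cong\t[1]$. First I would observe that, since $\AA$ has no nonzero projective objects, Serre duality forces $\AA$ to have no nonzero injective objects either (by the correspondence between indecomposable projectives and injectives in Proposition~\ref{proposition:SerreDuality}); that same proposition then lets $\t$ restrict to an autoequivalence $\t:\AA\to\AA$ with $\bS\cong\t[1]$. In particular $\t$ is an exact autoequivalence of the abelian category $\AA$ and a fully faithful autoequivalence of $\Db\AA$, and from $\t\SS=\SS$ I also obtain $\tm\SS=\SS$.

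Next I would prove $\t(\SS^\perp)=\SS^\perp$. For $A\in\SS^\perp$, any $S\in\SS$, and any $i\ge 0$, full faithfulness of $\t$ on $\Db\AA$ gives
\[
\Ext^i(S,\t A)=\Hom_{\Db\AA}(S,\t A[i])=\Hom_{\Db\AA}(\tm S,A[i])=\Ext^i(\tm S,A).
\]
Since $\tm S\in\tm\SS=\SS$ and $A\in\SS^\perp$, this vanishes, so $\t A\in\SS^\perp$ and hence $\t(\SS^\perp)\subseteq\SS^\perp$. Running the identical computation with $\tm$ in place of $\t$ (now using $\t S\in\SS$) yields $\tm(\SS^\perp)\subseteq\SS^\perp$, i.e.\ the reverse inclusion, so $\t(\SS^\perp)=\SS^\perp$. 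The equality $\t({}^\perp\SS)={}^\perp\SS$ is proved dually, replacing the display by $\Ext^i(\t A,S)=\Ext^i(A,\tm S)$.

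Finally I would pass to the bounded derived category. Identifying $\Db(\SS^\perp)$ with its essential image $\{X\in\Db\AA\mid\Hom(\SS[0],X[n])=0\ \forall n\}$ as in the text, I claim $\bS$ and $\bS^{-1}$ preserve this subcategory: for such an $X$ and all $S\in\SS$, $n\in\bZ$,
\[
\Hom(S,(\bS X)[n])=\Hom(S,\t X[n+1])=\Hom(\tm S,X[n+1])=0,
\]
and likewise for $\bS^{-1}=\tm[-1]$ using $\t S\in\SS$; hence $\bS$ restricts to an autoequivalence of $\Db_{\SS^\perp}(\AA)$. Because this subcategory is \emph{full}, for $X,Y\in\Db_{\SS^\perp}(\AA)$ the Serre duality isomorphism of $\Db\AA$ reads $\Hom(X,Y)\cong\Hom(Y,\bS X)^{*}$ with $\bS X$ again lying in the subcategory, so $\bS$ restricts to a Serre functor there, naturality being inherited from $\Db\AA$. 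The same argument applied to the essential image $\{X\in\Db\AA\mid\Hom(X,\SS[n])=0\ \forall n\}$ of $\Db({}^\perp\SS)$ handles the left perpendicular case.

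The only step that requires genuine care is this last passage from ``$\t$ preserves $\SS^\perp$ inside $\AA$'' to ``$\bS$ is a Serre functor on the triangulated subcategory'': one must verify that $\bS$ actually sends the essential image of $\Db(\SS^\perp)$ into itself, and then use fullness to transport the Serre duality isomorphism of $\Db\AA$ verbatim. Everything else is a direct consequence of $\t$ being an autoequivalence of $\AA$ satisfying $\t\SS=\SS$.
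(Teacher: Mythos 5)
Your proposal is correct and takes essentially the same route as the paper: both reduce everything to the fact that $\t$ restricts to an autoequivalence of $\AA$ (via Proposition \ref{proposition:SerreDuality} and the absence of projectives), apply the identity $\Ext^i(\SS,\t^{\pm 1}X)\cong\Ext^i(\t^{\mp 1}\SS,X)$ to get $\t(\SS^\perp)=\SS^\perp$ and $\t({}^\perp\SS)={}^\perp\SS$, and then conclude the derived statement from $\bS\cong\t[1]$. Your final step merely spells out, via the Hom-vanishing description of the essential image, what the paper compresses into ``the last claim follows''; this is a welcome elaboration, not a different argument.
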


\begin{proof}
Since $\AA$ has no nonzero projective objects, Proposition \ref{proposition:SerreDuality} shows that $\t: \AA \to \AA$ is an autoequivalence.

Let $X \in \SS^\perp \subseteq \AA$, thus $\Ext^i(\SS, X) = 0$ for all $i \geq 0$.  We find
$$\Ext^i(\SS, \t^{\pm 1}X) \cong \Ext^i(\t^{\mp 1}\SS, X) = \Ext^i(\SS, X) = 0.$$
This shows that $\t (\SS^\perp) = \SS^\perp$; the equality $\t ({}^\perp \SS) = {}^\perp \SS$ is shown analogously.

Since the Serre functor $\bS$ is given by $\t [1]$, the last claim follows.
\end{proof}

Let $\DD$ be a triangulated category, and let $\BB$ and $\CC$ be full subcategories of $\DD$.  We recall the following proposition (\cite[Lemma 3.1]{Bondal89})

\begin{proposition}\label{proposition:Semiorthogonal}
Let $\DD$ be a triangulated category, and let $\BB$ and $\CC$ be full replete (= closed under isomorphism) triangulated subcategories of $\DD$.  Assume that $\Hom_\DD(\BB, \CC)= 0$.  The following are equivalent:
\begin{enumerate}
\item $\BB$ and $\CC$ generate $\DD$ as triangulated category,
\item every object $D \in \DD$ lies in a triangle $B \to D \to C \to B[1]$ with $B \in \BB$ and $C \in \CC$,
\item $\CC = \BB^\perp$ and the embedding $\BB \to \DD$ has a right adjoint, and
\item $\BB = {}^\perp \CC$ and the embedding $\CC \to \DD$ has a left adjoint.
\end{enumerate}
\end{proposition}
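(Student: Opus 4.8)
The plan is to run through the four conditions by first establishing the \emph{functoriality} of the decomposition asserted in (2), and then reading off the adjoints of (3) and (4) from it. The first observation I would record is that since $\BB$ and $\CC$ are triangulated subcategories, they are closed under all shifts, so the standing hypothesis $\Hom_\DD(\BB,\CC)=0$ automatically upgrades to $\Hom_\DD(B,C[n])=0$ for every $B\in\BB$, $C\in\CC$, and $n\in\bZ$. This all-degrees orthogonality is what rigidifies the decompositions. Assuming (2), given a morphism $D\to D'$ and decomposition triangles $B\to D\to C\to B[1]$ and $B'\to D'\to C'\to B'[1]$, I would apply $\Hom(B,-)$ to the triangle for $D'$: the vanishing of $\Hom(B,C'[-1])$ and $\Hom(B,C')$ shows that $B\to D\to D'$ factors \emph{uniquely} through $B'\to D'$, giving a canonical map $B\to B'$, and the octahedral axiom then produces a compatible map $C\to C'$ (unique, again by the all-degrees vanishing). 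Thus $D\mapsto B$ and $D\mapsto C$ are functors equipped with natural transformations to and from the identity.

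For the equivalence of (1) and (2): the implication (2)$\Rightarrow$(1) is immediate. For (1)$\Rightarrow$(2) I would let $\DD'$ be the full subcategory of objects admitting a decomposition triangle. It contains $\BB$ (take $C=0$) and $\CC$ (take $B=0$) and is plainly closed under shifts, so by (1) it suffices to show $\DD'$ is closed under cones. Given a triangle $D_1\to D_2\to D_3\to D_1[1]$ with $D_1,D_2\in\DD'$, the functoriality just proved lifts $D_1\to D_2$ to $B_1\to B_2$ and $C_1\to C_2$; completing these to triangles with cones $B_3\in\BB$ and $C_3\in\CC$ and applying the $3\times 3$ (nine) lemma yields a decomposition triangle $B_3\to D_3\to C_3\to B_3[1]$, so $D_3\in\DD'$ and hence $\DD'=\DD$.

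For (2)$\Leftrightarrow$(3): assuming (2), the functor $D\mapsto B$ is right adjoint to the inclusion $\BB\hookrightarrow\DD$, since applying $\Hom(B_0,-)$ for $B_0\in\BB$ to the decomposition triangle and using the vanishing in degrees $0$ and $-1$ gives $\Hom(B_0,B)\cong\Hom(B_0,D)$. The inclusion $\CC\subseteq\BB^\perp$ is the standing hypothesis $\Hom(\BB,\CC)=0$; for the reverse, if $X\in\BB^\perp$ then in its decomposition the map $B\to X$ is zero, so $X$ is a direct summand of $C$, and $\Hom(B[1],C)=0$ forces $B=0$, whence $X\cong C\in\CC$. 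Conversely, (3)$\Rightarrow$(2): writing $i\colon\BB\hookrightarrow\DD$ with right adjoint $R$, I would complete the counit $iR(D)\to D$ to a triangle and test against $B_0\in\BB$; the adjunction makes $\Hom(B_0,iR(D))\to\Hom(B_0,D)$ an isomorphism in every degree, so the long exact sequence places the third term in $\BB^\perp=\CC$. The equivalence (2)$\Leftrightarrow$(4) is entirely dual, replacing the right adjoint and $\BB^\perp$ by the left adjoint and ${}^\perp\CC$.

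The main obstacle I expect is the closure of $\DD'$ under cones in the step (1)$\Rightarrow$(2): this is the one place where the octahedral axiom is genuinely needed, and some care is required to verify that the lifted maps $B_1\to B_2$ and $C_1\to C_2$ truly assemble into a morphism of triangles (so that the nine lemma applies) before extracting the triangle on the cones. Every other step reduces to a long exact sequence combined with the all-degrees orthogonality.
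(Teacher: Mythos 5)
Your proof is correct, but there is nothing in the paper to compare it against: the paper does not prove Proposition \ref{proposition:Semiorthogonal} at all, it simply recalls the statement from Bondal (\cite[Lemma 3.1]{Bondal89}). Your argument is a complete and accurate rendition of the standard proof of that lemma: the all-degrees orthogonality, the unique lifting of morphisms to the decomposition triangles (which makes $D \mapsto B$ and $D \mapsto C$ functorial and immediately yields the adjoints in (3) and (4)), the closure of the class of decomposable objects under shifts and cones via the $3\times 3$ lemma for (1)$\Rightarrow$(2), and the splitting argument showing $\BB^\perp \subseteq \CC$. Two small remarks: for producing the map $C \to C'$ from the commuting square, axiom TR3 (completion of a commuting square to a morphism of triangles) already suffices --- the octahedral axiom is only genuinely invoked inside the $3\times 3$ lemma; and your functoriality step, though phrased as ``assuming (2)'', only uses that the two particular objects $D$ and $D'$ admit decomposition triangles, which is exactly why it can be legitimately reused inside the proof of (1)$\Rightarrow$(2) for objects of the subcategory $\DD'$ without circularity. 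It would be worth making that locality explicit, but the argument as given is sound.
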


If the pair $(\BB, \CC)$ satisfies the conditions of the previous proposition, we say that $(\BB, \CC)$ is a \emph{semi-orthogonal decomposition} of $\DD$.

\begin{corollary}\label{corollary:SemiOrthogonal}
Let $\AA$ be an abelian Ext-finite category of finite global dimension.  Let $\CC \subseteq \Db \AA$ be a full triangulated subcategory.  If $\CC$ is saturated, then $(\CC, \CC^\perp)$ and $({}^\perp \CC, \CC)$ are semi-orthogonal decompositions of $\Db \AA$.  If, additionally, $\Db \AA$ has a Serre functor $\bS: \Db \AA \to \Db \AA$ and $\bS(\CC) = \CC$, then $\Db \AA = {}^\perp \CC \oplus \CC = \CC \oplus \CC^\perp$.
\end{corollary}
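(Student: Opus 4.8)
The plan is to derive both assertions from the two results just recalled, Proposition~\ref{proposition:Saturated} and Proposition~\ref{proposition:Semiorthogonal}. First I would observe that the inclusion $\CC \hookrightarrow \Db \AA$ is an exact functor between triangulated categories of finite type: $\Db \AA$ is of finite type because $\AA$ is Ext-finite of finite global dimension, and $\CC$ is of finite type because it is saturated. Since $\CC$ is saturated, Proposition~\ref{proposition:Saturated} then produces both a left and a right adjoint to this inclusion. The subcategories $\CC^\perp$ and ${}^\perp \CC$ are full replete triangulated subcategories (closure under shifts and cones follows from the long exact sequences), and the vanishings $\Hom(\CC, \CC^\perp) = 0$ and $\Hom({}^\perp \CC, \CC) = 0$ hold by definition. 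I would then feed these data into Proposition~\ref{proposition:Semiorthogonal}: applying it with $\BB = \CC$ and second factor $\CC^\perp$, condition (3) is met because the inclusion of $\CC$ has a right adjoint, so $(\CC, \CC^\perp)$ is a semi-orthogonal decomposition; applying it with $\BB = {}^\perp \CC$ and second factor $\CC$, condition (4) is met because the inclusion of $\CC$ has a left adjoint, so $({}^\perp \CC, \CC)$ is a semi-orthogonal decomposition. This settles the first claim.

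For the second claim the key new input is Serre duality. I would first show that $\CC^\perp = {}^\perp \CC$: for $X \in \Db \AA$ and $C \in \CC$ the natural isomorphism $\Hom(C, X) \cong \Hom(X, \bS C)^*$, together with the hypothesis $\bS(\CC) = \CC$, shows that $X$ is right-orthogonal to $\CC$ if and only if it is left-orthogonal to $\CC$. Writing $\CC' = \CC^\perp = {}^\perp \CC$, the first part supplies two semi-orthogonal decompositions $(\CC, \CC')$ and $(\CC', \CC)$, so in particular both $\Hom(\CC, \CC') = 0$ and $\Hom(\CC', \CC) = 0$; that is, the two subcategories are completely orthogonal. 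Finally I would take, for an arbitrary object $D \in \Db \AA$, the decomposition triangle $C \to D \to C' \to C[1]$ provided by $(\CC, \CC')$ (condition (2) of Proposition~\ref{proposition:Semiorthogonal}); the connecting morphism lies in $\Hom(C', C[1]) = 0$, since $C[1] \in \CC$ and $\Hom(\CC', \CC) = 0$, so the triangle splits and $D \cong C \oplus C'$. This exhibits $\Db \AA = \CC \oplus \CC' = \CC \oplus \CC^\perp = {}^\perp \CC \oplus \CC$ as a direct sum of triangulated categories.

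The routine parts are the verification that $\CC^\perp$ and ${}^\perp \CC$ are triangulated and the bookkeeping needed to match the hypotheses of Proposition~\ref{proposition:Semiorthogonal}. The only genuinely delicate point I anticipate is checking that the hypotheses of Proposition~\ref{proposition:Saturated} really hold---that both categories are of finite type and that saturation of $\CC$ is precisely what yields the two adjoints to the inclusion; once those adjoints are in hand, everything else reduces to the identity $\CC^\perp = {}^\perp \CC$ coming from Serre duality and the resulting splitting of the decomposition triangle.
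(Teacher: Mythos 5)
Your proposal is correct and follows essentially the same route as the paper: finite type of $\Db \AA$ and $\CC$ plus Proposition~\ref{proposition:Saturated} give the two adjoints, Proposition~\ref{proposition:Semiorthogonal} (via conditions (3) and (4)) gives the two semi-orthogonal decompositions, and Serre duality with $\bS(\CC)=\CC$ identifies ${}^\perp\CC$ with $\CC^\perp$. The only difference is that you spell out the final step---splitting the decomposition triangle because the connecting morphism lies in $\Hom(\CC^\perp,\CC)=0$---which the paper leaves implicit in ``it now follows.''
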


\begin{proof}
Since $\AA$ is an abelian Ext-finite category of finite global dimension, we know that $\Db \AA$, and hence also $\CC$, are of finite type.  By Proposition \ref{proposition:Saturated} we know that the embedding $\CC \to \Db \AA$ has a left and a right adjoint, so that $(\CC, \CC^\perp)$ and $({}^\perp \CC, \CC)$ are semi-orthogonal decompositions of $\Db \AA$.

If furthermore $\Db \AA$ has a Serre functor $\bS: \Db \AA \to \Db \AA$ and $\bS(\CC) = \CC$, then ${}^\perp \CC =\CC^\perp $.  It now follows that $\Db \AA = {}^\perp \CC \oplus \CC = \CC \oplus \CC^\perp$.
\end{proof}

\subsection{Simple tubes}

In this subsection, let $\AA$ be an indecomposable Ext-finite hereditary abelian category with Serre duality.  We will investigate stable components of the Auslander-Reiten quiver of $\AA$ of the form $\bZ A_\infty / \langle \tau^r \rangle$.  Such a stable component $\KK$ is called a \emph{tube}, and we will refer to $r$ as the \emph{rank} of the tube (see Figure \ref{fig:LargerTube2}).  If $r=1$, then $\KK$ is called a \emph{homogeneous tube}.

\begin{figure}
	\centering
		\includegraphics[width=.30\textheight]{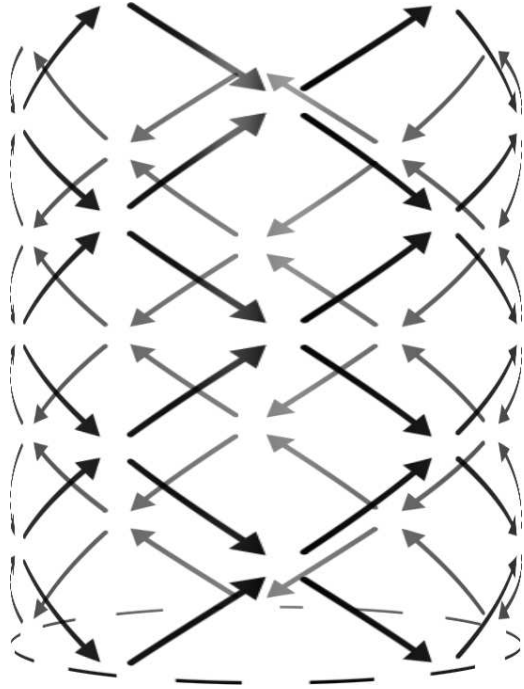}
	\caption{The Auslander-Reiten quiver of a tube of rank 6.}
	\label{fig:LargerTube2}
\end{figure}

For us, a \emph{tube} in $\AA$ will be the additive closure of the indecomposable objects lying in a stable component of the Auslander-Reiten quiver of $\AA$ of the form $\bZ A_\infty / \langle \tau^r \rangle$.

An indecomposable (nonprojective) object $X \in \AA$ is called \emph{peripheral} if the middle term $M$ in the almost split sequence $0 \to \t X \to M \to X \to 0$ is indecomposable (the peripheral objects in the tube in Figure \ref{fig:LargerTube2} are drawn at the bottom).  The number of isomorphism classes of peripheral objects in a tube is given by its rank.  A tube in $\AA$ is called a \emph{simple tube} if all its peripheral objects are simple objects in $\AA$.

We will use the following description from \cite[Proposition 4.4]{vanRoosmalen12} (together with \cite[Theorem 4.2]{vanRoosmalen12}).

\begin{proposition}\label{proposition:AssociatedTubes}\label{proposition:SphericalsInTubes}
A tube in $\AA$ is equivalent to the category of finite-dimensional nilpotent representations of an $\tilde{A}_n$-quiver with cyclic orientation.  The direct sum of a set of representatives of the isomorphism classes of the peripheral objects of a tube is a minimal  1-spherical object, and conversely, every minimal  1-spherical object occurs in this way.
\end{proposition}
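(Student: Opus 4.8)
The first assertion is the substantial structural one, and I would obtain it from the standardness of stable tubes. A stable component of the form $\bZ A_\infty / \langle \tau^r \rangle$ is a \emph{standard} component: the full subcategory it spans is equivalent to its mesh category, every morphism between indecomposables being a $k$-linear combination of compositions of irreducible maps subject only to the mesh relations. The mesh category of $\bZ A_\infty / \langle \tau^r \rangle$ is in turn identified with the category of finite-dimensional nilpotent representations of the cyclic quiver $\tilde A_{r-1}$: the $r$ peripheral objects correspond to the $r$ simple representations, the quasi-length-$\ell$ indecomposables to the uniserial representations of length $\ell$, and the mesh relations reproduce the relations of the cyclic quiver. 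This is exactly the content of \cite[Theorem 4.2]{vanRoosmalen12}, which I would cite rather than reprove standardness.

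For the second assertion, write $P_1, \ldots, P_r$ for representatives of the peripheral objects and set $P = \bigoplus_{i=1}^r P_i$. The translate $\tau$ permutes the $r$ peripheral objects in a single $r$-cycle, so $\tau P \cong P$ and hence $\bS P \cong \tau P[1] \cong P[1]$, which is condition (3) of a generalized $1$-spherical object. Condition (1) is automatic, since $\AA$ is hereditary forces $\Hom(P,P[i]) = \Ext^i(P,P) = 0$ for $i \neq 0,1$. For condition (2) I would use the first part: distinct peripheral objects are distinct simple objects of the tube, so $\Hom(P_i,P_j) = \delta_{ij}\,k$ inside the tube, and standardness guarantees these are \emph{all} the maps in $\AA$; thus $\End P \cong k^r$ is semisimple. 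Hence $P$ is generalized $1$-spherical. Minimality is then immediate: a proper nonzero direct summand of $P$ has the form $\bigoplus_{i \in I} P_i$ for some proper nonempty $I \subsetneq \{1,\ldots,r\}$, and since $\tau$ acts as an $r$-cycle no such $I$ is $\tau$-invariant, so no proper summand can satisfy condition (3).

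For the converse, let $Y$ be a minimal $1$-spherical object and decompose it into indecomposables. The isomorphism $\bS Y \cong Y[1]$ gives $\tau Y \cong Y$, so $\tau$ permutes the indecomposable summands and $Y$ splits as a sum over $\tau$-orbits. Each orbit-summand again satisfies the three defining conditions, namely (3) because the orbit is $\tau$-stable, (2) because a corner $e\,(\End Y)\,e$ of a semisimple ring is semisimple, and (1) because its self-$\Hom$ spaces are direct summands of those of $Y$; so minimality forces $Y$ to consist of a single $\tau$-orbit $\bigoplus_{j\ge 0}\tau^j Y_0$. The summand $Y_0$ is then $\tau$-periodic, hence lies in a $\tau$-periodic stable component, which by the structure theory of periodic components is a tube $\bZ A_\infty/\langle \tau^r\rangle$; every object of such a component has $\tau$-period exactly $r$, so the orbit has length $r$. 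Finally, if $Y_0$ had quasi-length at least two, then the tube would supply a nonzero element of the radical of $\End Y$ (a nilpotent self-map when $r=1$, or a nonzero noninvertible map between adjacent translates when $r\ge 2$), contradicting condition (2); hence $Y_0$ is peripheral and $Y = \bigoplus_{j=0}^{r-1}\tau^j Y_0$ is the sum of all peripheral objects of the tube.

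The main obstacle is the first assertion: establishing standardness of the tube and the resulting equivalence with nilpotent representations of the cyclic quiver is where the genuine structure theory lives, and I would lean on \cite[Theorem 4.2]{vanRoosmalen12} for it. Once standardness provides control over the $\Hom$-spaces inside the tube, the remaining steps reduce to the elementary orbit-counting and semisimplicity arguments above.
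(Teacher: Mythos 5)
Your proof is correct, but note that the paper itself gives no proof of this proposition: it is recalled wholesale from \cite[Proposition 4.4]{vanRoosmalen12} (together with \cite[Theorem 4.2]{vanRoosmalen12}), the very source you cite for the first assertion. So the comparison is one of scope rather than of method: the paper imports the entire statement, including the correspondence between peripheral objects and minimal 1-spherical objects, whereas you import only the hard structural input (standardness of the tube, i.e.\ the equivalence with nilpotent representations of the cyclic quiver) and rederive the correspondence by elementary means --- the $\tau$-orbit decomposition with the corner-ring argument for semisimplicity, the periodicity criterion of Theorem \ref{theorem:TubeCriterium} to place the orbit in a tube, and the observation that quasi-length at least two produces a nonzero radical element of the endomorphism ring. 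That rederivation is sound and makes the logical dependencies more transparent, at the cost of redoing work the cited reference already contains. Two points deserve tightening. First, your opening claim that a stable component of the form $\bZ A_\infty/\langle \tau^r \rangle$ is automatically standard is not true for arbitrary Auslander--Reiten quivers (non-standard stable tubes exist in general); it is a theorem in the present setting of hereditary categories with Serre duality, which is precisely what the cited result provides, so those hypotheses should stay visible. Second, in the converse direction your phrase ``a single $\tau$-orbit $\bigoplus_{j\geq 0}\tau^j Y_0$'' tacitly assumes each indecomposable summand occurs with multiplicity one; if some summand occurred with multiplicity at least two, the multiplicity-one sum over the orbit would again be generalized 1-spherical by your own corner-ring argument, so minimality also forces all multiplicities to be one --- one extra sentence closes that small gap.
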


\begin{remark}
The previous proposition implies that there is no difference between giving a minimal 1-spherical object or a tube.  We will often change between these two notions.  Given a minimal 1-spherical object $Y$, the associated tube $\TT$ is the abelian subcategory generated by $Y$, thus the full subcategory of $\AA$ of which have a composition series given by direct summands of $Y$.  Conversely, given a tube $\TT$, the associated minimal 1-spherical object is the direct sum of a set of representatives of the isomorphism classes of the peripheral objects.
\end{remark}

This gives another way of looking at a tube: let $\CC$ be the category of finite-dimensional nilpotent representations of an $\tilde{A}_n$-quiver with cyclic orientation; a tube $\TT$ in $\AA$ is the essential image of an embedding $F: \CC \to \AA$ where $F \circ \t_{\CC} \cong \t_\AA \circ F$.

We will use similar definitions when working in $\Db \AA$, thus a tube in $\Db \AA$ is a stable component of the Auslander-Reiten quiver of $\Db \AA$ of the form $\bZ A_\infty / \langle \tau^r \rangle$, or more accurately, the additive closure of the indecomposable objects lying in such a stable component.  The embedding $\AA \to \Db \AA$ maps a tube of $\AA$ to a tube of $\Db \AA$.

The following results are shown in \cite{vanRoosmalen12}.

\begin{theorem}\label{theorem:TubeCriterium}
Let $\AA$ be a hereditary abelian category with Serre duality.  A connected component of the Auslander-Reiten quiver in $\Db \AA$ is a tube if and only if it contains an indecomposable object $X$ such that $\t^r X \cong X$, for $r \geq 1$.
\end{theorem}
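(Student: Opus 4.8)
The forward implication is immediate: if $\Gamma \cong \bZ A_\infty / \langle \t^r \rangle$ as a translation quiver, then $\t^r$ acts as the identity on the quotient, so every indecomposable $X \in \Gamma$ satisfies $\t^r X \cong X$. For the converse, suppose $\Gamma$ contains an indecomposable $X$ with $\t^r X \cong X$ and take $r \geq 1$ minimal. Since $\AA$ has Serre duality, $\Db \AA$ has Auslander--Reiten triangles, so $\Gamma$ is a connected stable translation quiver on which $\t$ acts as an automorphism. Replacing $\Gamma$ by a shift, I may assume $X = A[0]$ for an indecomposable $A \in \AA$; as the whole $\t$-orbit of $A$ is finite it stays in $\AA$, so $A$ is neither projective nor injective, and the Auslander--Reiten triangle of each stalk object of $\Gamma$ is the image of an almost split sequence in $\AA$. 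Thus $\Gamma$ is the Auslander--Reiten component of $A$ inside $\AA$, and the orbit sum $Y = \bigoplus_{j=0}^{r-1} \t^j A$ is $\t$-invariant, so $\bS Y \cong \t Y[1] \cong Y[1]$; this self-duality of $\Gamma$ under $\bS$ will be used below.

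The plan is to invoke the structure theory of stable translation quivers. By Riedtmann's structure theorem, $\Gamma \cong \bZ T / \Pi$ for a directed tree $T$, unique up to isomorphism, and an admissible group $\Pi \subseteq \Aut(\bZ T)$; the relation $\t^r X \cong X$ forces $\t^r \in \Pi$, so $\Pi$ is nontrivial. To determine $T$, I would use the function $d(Z) = \dim_k \Hom_{\Db \AA}(Y, Z)$ on the vertices of $\Gamma$. Applying $\Hom(Y,-)$ to an Auslander--Reiten triangle $\t Z \to \bigoplus_i Z_i \to Z \to \t Z[1]$ and counting dimensions in the long exact sequence gives the subadditivity inequality $d(\t Z) + d(Z) \geq \sum_i d(Z_i)$; moreover $d$ is constant on $\t$-orbits because $\t Y \cong Y$, so it descends to a positive subadditive function on the valued tree $T$. (That $d$ is everywhere positive is a technical point, which holds once $X$ is chosen quasi-simple, i.e.\ of minimal length in its orbit, so that $Y$ meets every $\t$-orbit of $\Gamma$ through a nonzero map.)

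By the Happel--Preiser--Ringel theorem (a consequence of Vinberg's characterization of Dynkin diagrams by subadditive functions), a connected stable translation quiver carrying a positive subadditive function together with a $\t$-periodic vertex has tree class either a finite Dynkin diagram or one of $A_\infty$, $D_\infty$, $A_\infty^\infty$. The heart of the matter, and the step I expect to be the main obstacle, is to exclude everything but $A_\infty$. A finite Dynkin tree class would make $\Gamma$ finite, whereas a hereditary category has no finite Auslander--Reiten components (the Dynkin case produces the infinite component $\bZ\Delta$, on which $\t$ acts freely, so no $\t$-periodic vertex occurs). To rule out $A_\infty^\infty$ and $D_\infty$ I would compute the defect of additivity of $d$: the connecting map is $\Hom(Y,Z) \to \Hom(Y, \bS Z) \cong \Hom(Z,Y)^*$ by Serre duality, and a direct analysis shows $d$ is additive except along the single $\t$-orbit of the quasi-simple $A$. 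On $A_\infty^\infty$, however, a positive $\t$-invariant subadditive function is concave on $\bZ$, hence constant and therefore additive everywhere, so it admits no defect orbit at all; an analogous valence count at the trivalent vertex of $D_\infty$ is incompatible with a single defect orbit. Only $A_\infty$ survives, and its one leaf carries exactly the observed defect.

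Finally, $\Aut(\bZ A_\infty) = \langle \t \rangle$, so every admissible subgroup has the form $\langle \t^s \rangle$; since $\t^r \in \Pi$ with $r$ the minimal period of $X$, we obtain $\Pi = \langle \t^r \rangle$ and hence $\Gamma \cong \bZ A_\infty / \langle \t^r \rangle$, a tube. The decisive difficulty throughout is the exclusion of the tree classes $D_\infty$ and $A_\infty^\infty$, which is where the interplay between heredity, Serre duality, and the precise shape of the subadditive function must be exploited.
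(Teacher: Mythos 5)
First, a point of comparison you could not have known: the paper contains no proof of Theorem \ref{theorem:TubeCriterium} at all --- it is quoted from \cite{vanRoosmalen12} ("The following results are shown in...''), so your argument has to stand entirely on its own. The skeleton you propose (Riedtmann's structure theorem plus a Happel--Preiser--Ringel subadditive function) is a legitimate classical route, and your central mechanism is sound: for $Y=\bigoplus_{j=0}^{r-1}\t^j A$ the function $d(Z)=\dim_k\Hom(Y,Z)$ is subadditive and $\t$-invariant, and by the almost-split property a map $Y\to Z$ fails to factor through the middle term of the Auslander--Reiten triangle ending at $Z$ only if it is a retraction, so $d$ fails to be additive exactly on the $\t$-orbit of $A$, where the defect is strictly positive. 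Note that once this is in place, Happel--Preiser--Ringel already restricts a positive, subadditive, non-additive function to tree class Dynkin or $A_\infty$; your separate concavity argument for $A_\infty^\infty$ and valence count for $D_\infty$ are then redundant.

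That said, three of your supporting steps have real gaps. (1) You assert that $\Gamma$ ``is the Auslander--Reiten component of $A$ inside $\AA$,'' but you only check that the orbit of $A$ stays in $\AA[0]$; the defect computation above needs the whole component in one shift (it uses $\Hom(Y,Z[-1])=0$, i.e.\ vanishing of negative Ext groups). This is fillable --- $\t^r$ permutes the finitely many neighbours of a periodic vertex, so periodicity propagates through $\Gamma$; a periodic vertex can never be a shifted projective, since $\t$ sends $P[n]$ to $(\bS P)[n-1]$ and would drift the degree downward; hence every Auslander--Reiten triangle in $\Gamma$ is a shifted almost split sequence and connectedness keeps $\Gamma$ in $\AA[0]$ --- but you do not give it. (2) Your fix for positivity of $d$ is circular: ``quasi-simple'' only makes sense once you already know the component has the shape $\bZ A_\infty/\Pi$. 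A non-circular fix exists and is easy: by subadditivity and $\t$-invariance the zero set of $d$ is closed under passing to neighbours, and it misses the orbit of $A$, so connectedness of $\Gamma$ forces it to be empty. (3) The serious gap is the exclusion of Dynkin tree classes. Your parenthetical --- ``the Dynkin case produces the infinite component $\bZ\Delta$, on which $\t$ acts freely'' --- describes the Auslander--Reiten quiver of $\Db(\mod k\Delta)$; it does not show that an arbitrary $\Db\AA$ cannot contain a finite component $\bZ\Delta/\Pi$. That statement is true but needs a genuine argument, for instance an Auslander-style factorization argument showing that a finite component admits no nonzero maps to or from indecomposables outside its shifts, so that $\add\bigl(\bigcup_n\Gamma[n]\bigr)$ splits off $\Db\AA$ as a triangulated direct factor whose heart $\add\Gamma$ would be a Hom-finite hereditary abelian category with finitely many indecomposables, all $\t$-periodic; such a category is $\mod k\Delta$ for some Dynkin quiver $\Delta$, and Dynkin path algebras have no $\t$-periodic modules. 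As written, the Dynkin case --- the case your function-theoretic machinery cannot see --- is simply not ruled out.
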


\begin{theorem}\label{theorem:TubeMain}\label{theorem:SimpleTube}\label{theorem:DirectingTubes}
Let $\AA$ be a hereditary abelian category with Serre duality.  A tube $\TT$ in $\Db \AA$ is convex in the sense that if there is a path $X_0 \to \cdots \to X_n$ in $\Db \AA$ with $X_0, X_n \in \TT$, then $X_i \in \TT$ for all $i$.
\end{theorem}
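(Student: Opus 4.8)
The plan is to reduce the statement to a convexity question entirely inside $\AA$, reduce further to a single ``excursion'' out of the tube, and then rule that out using Serre duality together with the standard (mesh) structure of a tube.

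First I would reduce to the case that $\AA$ is indecomposable. A nonzero morphism connects only objects in the same block of $\Db\AA$, so the whole path lies in the block containing $\TT$; by Lemma~\ref{lemma:Ringel3} this block is $\Db\AA_0$ for an indecomposable summand $\AA_0$ of $\AA$, and we may replace $\AA$ by $\AA_0$. Next I would descend from $\Db\AA$ to $\AA$. As $\TT$ is a tube, it is a stable translation component $\bZ A_\infty/\langle\t^r\rangle$ and so contains no (shift of a) projective or injective object of $\AA$; since an irreducible morphism in $\Db\AA$ changes cohomological degree only next to such objects, $\TT$ lies in a single degree, and after a shift we may assume $\TT=\TT_0\subseteq\AA[0]$ for a stable tube $\TT_0$ of $\AA$ (here $\t=\bS[-1]$ restricts to $\AA$ on the tube by Proposition~\ref{proposition:SerreDuality}). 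Writing $X_i=A_i[m_i]$ with $A_i\in\ind\AA$, heredity gives $\Hom_{\Db\AA}(X_i,X_{i+1})=\Ext^{m_{i+1}-m_i}(A_i,A_{i+1})$, which is nonzero only when $m_{i+1}-m_i\in\{0,1\}$; hence $m_0\le m_1\le\cdots\le m_n$ is nondecreasing. Since $m_0=m_n=0$, every $m_i=0$ and the entire path lies in $\AA$. It therefore suffices to prove convexity for an oriented path inside $\AA$ with $X_0,X_n\in\TT$.

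Assume for contradiction that some $X_i\notin\TT$. Trimming the path to its first excursion, I may assume $X_0,X_n\in\TT$, $X_1,\dots,X_{n-1}\notin\TT$ and $n\ge 2$. By Proposition~\ref{proposition:SphericalsInTubes} the tube corresponds to a minimal $1$-spherical object $S=\bigoplus_i S_i$ with $\t S\cong S$, whose peripheral summands $S_i$ are the simple objects of the abelian hull $\bar{\TT}\subseteq\AA$ (the finite-length objects all of whose composition factors lie among the $S_i$); the indecomposables of $\bar{\TT}$ are precisely the objects of $\TT$. The idea is to push the two boundary maps of the excursion through Serre duality. From $\Hom(X_{n-1},X_n)\ne 0$ we obtain $\Ext^1(X_n,\t X_{n-1})\cong\Hom(X_{n-1},X_n)^*\ne 0$, and since $\TT$ is $\t$-stable while $X_{n-1}\notin\TT$, also $\t X_{n-1}\notin\TT$; Lemma~\ref{lemma:Ringel} then produces a path from $\t X_{n-1}$ back into the tube to $X_n$, and dually the map $X_0\to X_1$ yields a nontrivial extension escaping the tube. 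Theorem~\ref{theorem:Paths} may be used to keep the auxiliary paths short (length at most two), and Theorem~\ref{theorem:TubeCriterium} guarantees the periodicity $\t^r X\cong X$ for the endpoints, which is the numerical fingerprint distinguishing tube objects from the rest.

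The crux is to turn these two Serre-dual extensions into a contradiction, and this is where the combinatorics of the tube must be used in earnest. I would analyze the maps emanating from $X_0$ and arriving at $X_n$ through the \emph{mouth} of the tube: each object of $\TT$ is an iterated extension of the peripheral simples $S_i$, and the quasi-length filtration together with the mesh relations of the standard component $\bar{\TT}$ constrains which sub- and quotient objects of $X_0$ and $X_n$ can map to, or be hit by, the outside objects $X_1,\dots,X_{n-1}$. Tracking these constraints against the extension leaving the tube at $X_0\to X_1$ and the one re-entering it near $X_{n-1}\to X_n$ should force the interior of the path to factor through $\bar{\TT}$ and hence to lie in $\TT$, contradicting the choice of the excursion.

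The genuine obstacle throughout is the control of morphisms between a tube object and an \emph{arbitrary} object outside the tube: neither quasi-length nor the Euler form is monotone along a general path, so one cannot simply argue by a height function. The way past this is the standardness of $\TT$ established in Proposition~\ref{proposition:SphericalsInTubes}, namely its identification with the finite-dimensional nilpotent representations of a cyclically oriented $\tilde A_n$-quiver, which makes every morphism within $\bar{\TT}$ a combination of mesh paths and thereby pins down exactly how $\bar{\TT}$ can interact with the remainder of $\Db\AA$. I expect the careful bookkeeping of these mouth-level maps, rather than any of the reductions above, to be the technically demanding part of the argument.
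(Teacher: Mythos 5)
Your preliminary reductions are fine as far as they go: passing to an indecomposable summand of $\AA$, observing that a tube contains no shift of a projective or injective object and therefore lies in a single cohomological degree, and the degree-monotonicity argument (each $\Hom_{\Db\AA}(X_i,X_{i+1})=\Ext^{m_{i+1}-m_i}(A_i,A_{i+1})$ forces $m_{i+1}-m_i\in\{0,1\}$, so $m_0=m_n=0$ gives $m_i=0$ for all $i$) correctly reduce the statement to oriented paths inside $\AA$. But these reductions are the routine part. The actual content of the theorem --- that a path in $\AA$ cannot leave the tube and re-enter it --- is precisely the step you never carry out. After producing the two Serre-dual extensions at the ends of the excursion, you write that tracking mesh relations and the quasi-length filtration ``should force the interior of the path to factor through $\bar{\TT}$'' and that you ``expect'' this bookkeeping to be the demanding part; no such bookkeeping is performed and no contradiction is derived. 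Moreover, the intended endgame is not even well-posed: a composite of maps between objects outside the tube can perfectly well factor through objects of $\bar{\TT}$ without any of the intermediate indecomposables $X_i$ lying in $\TT$, so ``factoring through $\bar{\TT}$'' cannot by itself place the $X_i$ in the tube. As it stands, the proposal is a plan with a hole exactly where the theorem is.

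For comparison, note that this paper does not prove the statement at all: it is quoted, together with Theorem \ref{theorem:TubeCriterium}, from \cite{vanRoosmalen12}, so there is no internal proof to measure your argument against. Judged on its own terms, however, your attempt is incomplete in the essential step, and completing it would amount to reproving the convexity result of \cite{vanRoosmalen12} rather than filling a minor gap.
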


As a consequence, we have the following results.

\begin{corollary}\label{corollary:TubesDirecting}
Let $\AA$ be a hereditary category with Serre duality and let $\TT$ be a tube in $\Db \AA$.  Let $A,B \in \Db \AA$ be indecomposable objects.  If $A$ lies in $\TT$ and $B[n]$ does not lie in $\TT$ for all $n \in \bZ$, then:
\begin{enumerate}
\item $\Hom(A,B) \not= 0$ implies that $\Hom(A,B[n]) = 0$ for all $n \not= 0$, and
\item $\Hom(B,A) \not= 0$ implies that $\Hom(B,A[n]) = 0$ for all $n \not= 0$.
\end{enumerate}
\end{corollary}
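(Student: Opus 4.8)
The plan is to reduce the statement, via heredity, to a single length-two path argument and then invoke the convexity of tubes (Theorem \ref{theorem:TubeMain}). First I would write $A \cong \bar A[a]$ and $B \cong \bar B[b]$ with $\bar A, \bar B \in \ind \AA$ and $a,b \in \bZ$, which is possible because $\AA$ is hereditary. Then $\Hom_{\Db \AA}(A, B[n]) \cong \Ext_\AA^{\,b+n-a}(\bar A, \bar B)$, and since $\AA$ is hereditary this is nonzero only when $b+n-a \in \{0,1\}$. Hence the hypothesis $\Hom(A,B) \neq 0$ forces $b-a \in \{0,1\}$, and if in addition $\Hom(A,B[n]) \neq 0$ for some $n \neq 0$, the only possibilities are $b-a=0,\ n=1$ or $b-a=1,\ n=-1$. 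In either case one reads off that both $\Hom_\AA(\bar A,\bar B) \neq 0$ and $\Ext^1_\AA(\bar A,\bar B) \neq 0$. After applying the global shift $[-a]$ (which preserves all hypotheses and the conclusion, and sends $\TT$ to the tube $\TT[-a]$) I may assume $a=0$, so that $\bar A = A \in \TT$.

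The heart of the argument is then to turn the surviving $\Ext^1$ into a backward $\Hom$. By Serre duality, $\Ext^1_\AA(A,\bar B) \cong \Hom_{\Db \AA}(A,\bar B[1]) \cong \Hom(\bar B[1], \bS A)^* \cong \Hom(\bar B, \t A)^*$, using $\bS \cong \t[1]$. Thus $\Hom(\bar B, \t A) \neq 0$, and together with $\Hom_\AA(A,\bar B) \neq 0$ this produces an oriented path $A \to \bar B \to \t A$ of length two in $\Db \AA$. Since a tube is a $\t$-stable component of the Auslander-Reiten quiver, both endpoints $A$ and $\t A$ lie in $\TT$; convexity of the tube (Theorem \ref{theorem:TubeMain}) then forces the middle term $\bar B$ into $\TT$ as well. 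This contradicts the hypothesis that no shift of $B$ lies in $\TT$ (equivalently, after the reduction, that $\bar B \notin \TT$), establishing (1).

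Part (2) is entirely dual. Suppose $\Hom(B,A) \neq 0$ and $\Hom(B,A[n]) \neq 0$ with $n \neq 0$; the same heredity bookkeeping reduces to $\Hom_\AA(\bar B, A) \neq 0$ and $\Ext^1_\AA(\bar B, A) \neq 0$. Applying Serre duality in the form $\Ext^1_\AA(\bar B, A) \cong \Hom(\t^{-1}A, \bar B)^*$ yields the length-two path $\t^{-1}A \to \bar B \to A$ with both endpoints in $\TT$, and convexity again forces $\bar B \in \TT$, a contradiction. I expect the only delicate point to be purely organizational: keeping track of the shifts so that, after the heredity reduction, the two surviving maps really do assemble into a single oriented path whose two endpoints lie in one and the same tube component. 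Once that is arranged, Theorem \ref{theorem:TubeMain} does all the work.
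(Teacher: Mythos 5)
Your proof is correct, and while it ends at the same place as the paper's --- convexity of tubes, Theorem \ref{theorem:TubeMain} --- the way you manufacture the path is genuinely different. The paper does not use the heredity decomposition at all: for $n>0$ it invokes Lemma \ref{lemma:Ringel} (from $\Hom(A,B[n])\neq 0$, a path from $B[n-1]$ to $A$) together with Lemma \ref{lemma:Ringel2} (a path from $B$ to $B[n-1]$ inside a block) to build a cycle $A \to B \to \cdots \to B[n-1] \to \cdots \to A$, applies convexity to conclude $B \in \TT$, and then disposes of $n<0$ by the relabeling $B' = B[n]$; part (2) is left as dual. You instead use heredity to write $A,B$ as shifts of objects of $\AA$ and observe that nonvanishing of both $\Hom(A,B)$ and $\Hom(A,B[n])$ forces exactly the pair $\Hom_\AA(\bar A,\bar B)\neq 0$ and $\Ext^1_\AA(\bar A,\bar B)\neq 0$, then convert the $\Ext^1$ by Serre duality into $\Hom(\bar B,\t A)\neq 0$, producing the two-step path $A \to \bar B \to \t A$ whose endpoints lie in $\TT$ by $\t$-stability of the tube. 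Each route has its advantages: the paper's argument never needs the objects to decompose as shifts of heart objects, so it is the shape of proof that survives outside the hereditary setting; yours trades Ringel's two lemmas for a direct Serre-duality computation, which is cleaner here and, as a bonus, sidesteps the block hypothesis in Lemma \ref{lemma:Ringel2} --- a point the paper's proof quietly elides, since the corollary does not assume $\Db \AA$ is a block and one must implicitly restrict to the block containing $A$ and $B$. Your bookkeeping of shifts and the $\t$-stability of $\TT[-a]$ are both sound, so the "delicate point" you flag is indeed only organizational.
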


\begin{proof}
We will only show the first claim.  Seeking a contradiction, assume that $\Hom(A,B) \not= 0$ and that $\Hom(A,B[n]) \not= 0$ for some $n \not= 0$.  We first prove the claim for $n > 0$.

It follows from Lemma \ref{lemma:Ringel} that $\Hom(A,B[n]) \not= 0$ implies that there is a path from $B[n-1]$ to $A$, and it follows from Lemma \ref{lemma:Ringel2} that there is a path from $B$ to $B[n-1]$.  Since $\Hom(A,B) \not= 0$, we find a path from $A$ to $B$ and back to $A$, so that Theorem \ref{theorem:DirectingTubes} yields that $A$ and $B$ lie in the same tube.  However, we have assumed that $B$ does not lie in $\TT$; this gives the required contradiction.

In the case where $n < 0$, we consider the object $B' = B[n]$.  If $\Hom(A,B') \not= 0$, then the first part of the proof yields that $\Hom(A,B'[-n]) = 0$.  Hence $\Hom(A,B) = \Hom(A,B'[-n]) \not= 0$ implies $\Hom(A,B[n]) = \Hom(A,B') = 0$.
\end{proof}

\begin{lemma}\label{lemma:WhenInTube}
Let $\AA$ be a hereditary category with Serre duality, and let $Y \in \Db \AA$ be a minimal 1-spherical object.  An indecomposable $A \in \Db \AA$ lies in the tube containing $Y$ if and only if $\Hom(A,Y) \not= 0$ and $\Hom(Y,A) \not= 0$. 
\end{lemma}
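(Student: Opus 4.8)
The plan is to exploit the dictionary between minimal $1$-spherical objects and tubes recorded in Proposition~\ref{proposition:SphericalsInTubes}. Writing $\TT$ for the tube containing $Y$, that proposition tells us that $Y$ decomposes as a direct sum $Y \cong \bigoplus_i Y_i$ of representatives of the isomorphism classes of the peripheral objects of $\TT$, and each $Y_i$ is an indecomposable object of $\TT$. After applying a suitable shift I may assume $Y \in \AA$, so that $\TT$ is the image in $\Db \AA$ of a tube of $\AA$. Throughout I will use the splitting $\Hom(A,Y) = \bigoplus_i \Hom(A,Y_i)$ and $\Hom(Y,A) = \bigoplus_i \Hom(Y_i,A)$, so that non-vanishing of either space is detected on an individual peripheral summand $Y_i \in \TT$.

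For the implication ($\Leftarrow$), suppose $\Hom(A,Y) \neq 0$ and $\Hom(Y,A) \neq 0$. By the splitting above there are peripheral objects $Y_i, Y_j$ with $\Hom(A,Y_i) \neq 0$ and $\Hom(Y_j,A) \neq 0$, which assemble into an oriented path $Y_j \to A \to Y_i$ in $\Db \AA$ whose endpoints $Y_i, Y_j$ lie in $\TT$. Since a tube is convex (Theorem~\ref{theorem:DirectingTubes}), every intermediate term of such a path lies in $\TT$; in particular $A \in \TT$. This direction is essentially immediate once convexity is invoked.

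For the implication ($\Rightarrow$), suppose $A \in \TT$. Here I would appeal to the explicit description of $\TT$ from Proposition~\ref{proposition:AssociatedTubes} as being equivalent to the category of finite-dimensional nilpotent representations of a cyclically oriented $\tilde{A}_n$-quiver. That category is uniserial, so the indecomposable $A$, viewed inside $\TT$, has a simple socle and a simple top, each of which is a quasi-simple object of $\TT$, i.e.\ one of the peripheral summands $Y_j, Y_i$ of $Y$. The inclusion of the socle gives a nonzero map $Y_j \hookrightarrow A$ and the projection onto the top gives a nonzero map $A \twoheadrightarrow Y_i$; since the embedding $\TT \to \Db \AA$ is faithful these remain nonzero, yielding $\Hom(Y,A) \neq 0$ and $\Hom(A,Y) \neq 0$.

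The only delicate point is this forward direction: I must be certain that every indecomposable object of the tube admits a peripheral object as both a sub- and a quotient object, rather than merely an arbitrary object of $\TT$. This is exactly where the uniserial structure supplied by Proposition~\ref{proposition:AssociatedTubes} enters, guaranteeing that the socle and top are single peripheral objects and hence genuine summands of $Y$. With that structural input secured, both directions reduce to the two tools already at hand, namely the decomposition of $Y$ into its peripheral summands and the convexity of tubes.
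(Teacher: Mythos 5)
Your proof is correct and follows essentially the same route as the paper: the backward implication is exactly the paper's appeal to convexity of tubes (Theorem \ref{theorem:DirectingTubes}), and the forward implication unpacks the citation the paper makes to Proposition \ref{proposition:AssociatedTubes} (i.e.\ \cite[Proposition 4.4]{vanRoosmalen12}), using the uniserial structure of the tube to produce a peripheral socle and top. The extra detail you supply about the splitting of $\Hom(A,Y)$ and $\Hom(Y,A)$ over the peripheral summands is a faithful elaboration of what the paper leaves implicit, not a different argument.
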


\begin{proof}
If $\Hom(A,Y) \not= 0$ and $\Hom(Y,A) \not= 0$, then it follows Theorem \ref{theorem:DirectingTubes} that $A$ lies in the tube containing $Y$.  The other implication follows from \cite[Proposition 4.4]{vanRoosmalen12}. 
\end{proof}

We will now turn our attention to simple tubes.  Our goal is to prove Proposition \ref{proposition:Summary} which gives some useful criteria for a tube to be simple.  It states that simple tubes lie either ``at the beginning'' or ``at the end'' of the abelian category (meaning that there are either no nonzero maps going into the tube, or no nonzero maps going out of the tube).  Furthermore, to check whether a tube is simple or not, it suffices to see whether the twist functors associated to that tube restrict from autoequivalences of the derived category to autoequivalences of the abelian category.  Even more strongly, after excluding the trivial cases, one only needs to check that the orbit of a single object under the twist functors lies in the abelian category.

Recall that $\AA[0]$ is the full subcategory of $\AA$ consisting of all stalk complexes concentrated in degree zero.  There is a fully faithful functor $\AA \cong \Db \AA$ whose image is given by $\AA[0]$, mapping an object $A \in \AA$ to a stalk complex $A[0] \in \AA[0] \subset \Db \AA$.  A quasi-inverse is given by taking the zeroth cohomology $H^0: \AA[0] \to \AA$.

We start with the following lemmas.  Recall from Remark \ref{remark:TwistNotation} that we write $T_Y^n$ for $(T^*_Y)^{-n}$ when $n < 0$.

\begin{lemma}\label{key}
Let $\AA$ be an Ext-finite abelian hereditary category with Serre duality.  Let $Y$ be a minimal 1-spherical object in $\Db \AA$, and $A, B \in \Db \AA$ be indecomposable objects such that
\begin{enumerate}
\item $\Hom(A,Y[z]) \not= 0$ if and only if $z = 0$, and
\item $\Hom(B,Y[z]) \not= 0$ if and only if $z = 0$,
\end{enumerate}
then $\Hom(T_Y^{i} A,B) \not= 0$ for $i \ll 0$.
\end{lemma}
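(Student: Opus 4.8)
The plan is to twist $A$ rather than $B$, so that the spherical object $Y$ is left fixed and controls the position of the twisted objects. Since $T_Y$ is an autoequivalence with quasi-inverse $T_Y^{\ast}$ (Theorem~\ref{theorem:TwistDerived}) and $T_Y^{i} = (T_Y^{\ast})^{-i}$ for $i<0$ (Remark~\ref{remark:TwistNotation}), putting $C_j := (T_Y^{\ast})^{j} A = T_Y^{-j}A$ the assertion becomes: $\Hom(C_j, B)\neq 0$ for $j \gg 0$. We may assume $Y \in \AA$, its indecomposable summands being the peripheral objects of a tube (Proposition~\ref{proposition:AssociatedTubes}); as $Y$ is $1$-spherical, $\bS Y \cong Y[1]$. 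First I would record, from the hypotheses and Serre duality, that $\chi(A,Y) = \dim\Hom(A,Y) > 0$, that $\chi(B,Y) = \dim\Hom(B,Y) > 0$, and hence that $\chi(Y,B) = -\chi(B,Y) < 0$.

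The crucial observation is that $T_Y Y \cong Y$ (Remark~\ref{remark:TwistWithSelf}, $n=1$), so $T_Y^{\,j}Y \cong Y$ and, applying the autoequivalence $T_Y^{\,j}$,
$$\Hom(C_j, Y[z]) = \Hom\bigl(T_Y^{-j}A,\, Y[z]\bigr) \cong \Hom\bigl(A,\, T_Y^{\,j}(Y[z])\bigr) \cong \Hom(A, Y[z]),$$
which by hypothesis~(1) is nonzero precisely when $z=0$. Because $\AA$ is hereditary, the indecomposable $C_j$ is a shift $A_j[p_j]$ of an indecomposable $A_j \in \AA$, and the display rewrites as $\Hom(A_j, Y[z-p_j]) = \Ext^{z-p_j}(A_j,Y)$. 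By heredity this can be nonzero only for $z-p_j \in \{0,1\}$; since it is in fact nonzero only at $z=0$, we must have $-p_j \in \{0,1\}$, i.e. $p_j \in \{-1,0\}$ for every $j$.

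Next I would show $\chi(C_j,B) \to +\infty$. When $\End Y \cong k$ this is one line: the defining triangle of $T_Y^{\ast}$ gives, in $\Num\AA$, the identity $[C_j] = [A] - j\,\chi(A,Y)\,[Y]$ (here $\chi(C_{m-1},Y)=\chi(A,Y)$ because $T_Y^{\,m-1}Y\cong Y$), whence $\chi(C_j,B) = \chi(A,B) + j\,\chi(A,Y)\,\chi(B,Y) \to +\infty$. For a general tube $Y = \bigoplus_l Y_l$ the same triangle yields $[C_m] = [C_{m-1}] - \sum_l \chi(C_{m-1},Y_l)\,[Y_l]$; using the $\widetilde{A}$-shape of the tube, i.e. $\chi(Y_l,Y_{l'}) = \delta_{l,l'} - \delta_{l',l-1}$, one checks that $\chi(C_m,Y_l) = \chi(A,Y_{l+m})$ is periodic in $m$, so that over each period $\chi(C_j,B)$ increases by exactly $\chi(A,Y)\,\chi(B,Y) > 0$ and again tends to $+\infty$. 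I expect this periodicity bookkeeping to be the main technical obstacle; the rest is formal.

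Finally I would combine the two ingredients. From $C_j = A_j[p_j]$ we get $\Hom(C_j, B) = \Ext^{-p_j}(A_j,B)$ and $\chi(C_j,B) = (-1)^{p_j}\chi(A_j,B)$. Fix $j$ large enough that $\chi(C_j,B) > 0$. If $p_j = 0$ then $\chi(A_j,B) = \chi(C_j,B) > 0$ forces $\Hom(A_j,B) \neq 0$; if $p_j = -1$ then $\chi(A_j,B) = -\chi(C_j,B) < 0$ forces $\Ext^1(A_j,B) = \Ext^{-p_j}(A_j,B) \neq 0$. In either case $\Hom(C_j,B) \neq 0$, which is the claim. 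The conceptual crux is the choice to twist $A$: it keeps $Y$ fixed, pins $C_j$ to cohomological degree $0$ or $-1$, and lets the sign of the Euler form select the correct $\Hom$- or $\Ext^{1}$-group automatically.
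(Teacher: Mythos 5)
Your reduction to Euler forms is a genuinely different mechanism from the paper's: the paper never passes to $K_0$, but instead applies $\Hom(-,B)$ to the defining triangles of $T_Y^*$, uses $\Hom(B,Y[-1])=0$ and Serre duality to see that $\dim \Hom(T_Y^{i}A,B[1])$ is non-increasing as $i$ decreases and hence stabilizes, after which each further twist strictly increases $\dim\Hom(T_Y^{i}A,B)$. Your $K$-theoretic bookkeeping is correct as far as it goes: $T_YY\cong Y$ does pin $\Hom(C_j,Y[z])\cong\Hom(A,Y[z])$, the recursion $\chi(C_m,Y_{l})=\chi(C_{m-1},Y_{l+1})$ follows from $\chi(Y_l,Y_{l'})=\delta_{l,l'}-\delta_{l',l-1}$, and $\chi(C_j,B)$ does grow by $\chi(A,Y)\chi(B,Y)>0$ per period, so $\chi(C_j,B)\to+\infty$.

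The gap is in the last step, where you convert $\chi(C_j,B)>0$ into $\Hom(C_j,B)\neq 0$: you pin down $C_j=A_j[p_j]$ with $p_j\in\{-1,0\}$, but you tacitly treat $B$ as an object of $\AA[0]$. Hypothesis (2) only forces $B=B_0[q]$ with $q\in\{-1,0\}$, and the case $q=-1$ (i.e.\ $\Hom(B_0,Y)=0$ and $\Ext^1(B_0,Y)\cong\Hom(Y,B_0)^*\neq 0$) is a real possibility; indeed it is exactly the configuration in which the paper later applies this lemma, namely in the proof of $(3)\Rightarrow(2)$ of Proposition \ref{proposition:Summary}, where the lemma is invoked with second argument $B[-1]$ for $B\in\AA$. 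In the mixed case $q=-1$, $p_j=0$ one has $\Hom(C_j,B)=\Hom(A_j,B_0[-1])=0$ for degree reasons, while $\chi(C_j,B)>0$ only yields $\Ext^1(A_j,B_0)=\Hom(C_j,B[2])\neq 0$ --- nonvanishing in the wrong degree. The lemma is still true there, but only because that configuration is impossible, and your proposal contains no argument for this. A repair along your lines would need two additions: first, that the orbit $(C_j)$ can drop from $\AA[0]$ to $\AA[-1]$ at most once and then stays there (Lemma \ref{lemma:ClosedUnderTwist} together with the invariance $\Hom(C_j,Y[z])\cong\Hom(A,Y[z])$ gives this), so $p_j$ is eventually constant; second, that the persistent mixed case is contradictory: there $\Ext^1(A_j,B_0)\neq 0$ gives $\Hom(B_0,\tau A_j)\neq 0$ by Serre duality, and combining with $\Hom(Y,B_0)\neq 0$ and $\Hom(\tau A_j,Y)\cong\Hom(A_j,Y)\neq 0$ (using $\tau Y\cong Y$) produces a cyclic path through the tube, so Theorem \ref{theorem:DirectingTubes} forces $B_0$ into the tube of $Y$, contradicting $\Hom(B_0,Y)=0$ via Lemma \ref{lemma:WhenInTube}. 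Note you cannot instead quote Proposition \ref{proposition:Summary} to dispose of this case: in the paper its proof depends on the present lemma, so that would be circular.
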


\begin{proof}
Since $Y$ is generalized $1$-spherical, we have $\Hom(Y,-) \cong \Hom(-,Y[1])^*$.  Let $n < 0$.  We start with the triangle
$$T^{n-1}_Y A \to T^{n}_Y A \to \RHom(T_Y^{n} A, Y )^* \otimes Y \to T^{n-1}_Y A[1].$$
By Corollary \ref{corollary:TubesDirecting}, we see that $\RHom(T_Y^{n-1} A, Y )$ only has nonzero homology in degree zero, and by Remark \ref{remark:TwistWithSelf} we know that $\Hom(T_Y^{n-1} A, Y )^* \cong \Hom(A, T_Y^{-n+1} Y )^* \cong \Hom(A,Y)^*$.

By applying $\Hom(-,B)$ to $T^{n-1}_Y A \to T^{n}_Y A \to \Hom(A,Y)^* \otimes Y \to T^{n-1}_Y A[1]$ and using that $\Hom(Y[-2],B)\cong \Hom(B,Y[-1])^* = 0$, we see that the map $\Hom(T^{n}_Y A,B[1]) \to \Hom(T^{n-1}_Y A,B[1])$ is an epimorphism, for all $n \in \bZ$.  This gives the following descending sequence
$$\dim \Hom(A, B[1]) \geq \dim \Hom(T^{-1}_Y A, B[1]) \geq \dim \Hom(T^{-2}_Y A, B[1]) \geq \cdots$$

Hence, there is an $N < 0$ such that $\dim \Hom(T^{i}_Y A,B[1]) = \dim \Hom(T^{i-1}_Y A,B[1])$ for all $i \leq N$.  For every such $i$, we have an exact sequence
$$0 \to \Hom(T^{i}_Y A,B) \to \Hom(T^{i-1}_Y A,B) \to \Hom(\Hom(A,Y)^* \otimes Y,B[1]) \to 0.$$
Since $\Hom(Y,B[1]) \cong \Hom(B,Y)^* \not= 0$ and $\Hom(A,Y) \not= 0$, the required property follows immediately from this short exact sequence.
\end{proof}

\begin{lemma}\label{lemma:ClosedUnderTwist}
Let $\AA$ be a hereditary abelian category with Serre duality and let $Y \in \AA[0] \subset \Db \AA$ be a minimal 1-spherical object.  Let $A \in \AA[0]$ be indecomposable.
\begin{enumerate}
\item If $\Hom(A,Y) \not= 0$, then $T_Y A \in \AA[0]$.  Furthermore, $T^*_Y A \in \AA[0]$ or $T^*_Y A \in \AA[-1]$.
\item If $\Hom(Y,A) \not= 0$, then $T^*_Y A \in \AA[0]$.  Furthermore, $T_Y A \in \AA[0]$ or $T_Y A \in \AA[1]$.
\end{enumerate}
\end{lemma}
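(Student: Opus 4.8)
The plan is to read off both statements from the defining triangles of the twist functors: heredity confines the cohomology of $T_Y A$ and $T^*_Y A$ to two adjacent degrees, and the fact that $T_Y$ and $T^*_Y$ are autoequivalences (Theorem \ref{theorem:TwistDerived}) forces concentration in a single degree, which I then pin down with a Serre-duality nonvanishing.

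First I would record two inputs. Writing $\End Y$ for the endomorphism ring, condition (2) of Definition \ref{definition:SphericalObject} says $\End Y$ is semisimple, so every complex of $\End Y$-modules is formal; in particular $\RHom(Y,A) \otimes_{\End Y} Y$ has cohomology $\Ext^i(Y,A) \otimes_{\End Y} Y$, which by heredity is concentrated in degrees $0$ and $1$. Second, since $\bS Y \cong Y[1]$ by condition (3), Serre duality gives
$$\Ext^1(Y,A) \cong \Hom(A,Y)^* \quad\text{and}\quad \Ext^1(A,Y) \cong \Hom(Y,A)^*.$$

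For part (1), I apply cohomology to the defining triangle
$$\RHom(Y,A) \otimes_{\End Y} Y \to A \to T_Y A \to (\RHom(Y,A) \otimes_{\End Y} Y)[1].$$
As the first term sits in degrees $0,1$ and $A$ in degree $0$, the long exact sequence places the cohomology of $T_Y A$ in degrees $-1$ and $0$, and (using $H^1(A)=0$) exhibits a surjection $H^0(T_Y A) \twoheadrightarrow \Ext^1(Y,A) \otimes_{\End Y} Y$. By the first displayed isomorphism and the hypothesis $\Hom(A,Y) \neq 0$ this target is nonzero (semisimplicity of $\End Y$ ensures the tensor product of a nonzero module with $Y$ is nonzero), so $H^0(T_Y A) \neq 0$. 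Now $T_Y A$ is indecomposable, since $T_Y$ is an equivalence, and because $\AA$ is hereditary we have $T_Y A \cong \coprod_k H^{-k}(T_Y A)[k]$; an indecomposable object has at most one nonzero cohomology, which must therefore be $H^0$. Hence $T_Y A \in \AA[0]$. For the ``furthermore'', the same analysis on the triangle
$$T^*_Y A \to A \to \RHom(A,Y)^* \otimes_{\End Y} Y \to T^*_Y A[1]$$
shows, after dualizing (so the right-hand term sits in degrees $-1,0$), that $T^*_Y A$ has cohomology only in degrees $0$ and $1$; indecomposability then forces $T^*_Y A \in \AA[0]$ or $T^*_Y A \in \AA[-1]$.

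Part (2) is the dual argument: from the $T^*_Y$-triangle one gets $H^{-1}(\RHom(A,Y)^* \otimes_{\End Y} Y) \cong \Ext^1(A,Y)^* \otimes_{\End Y} Y$ as a subobject of $H^0(T^*_Y A)$, which is nonzero now because $\Hom(Y,A) \neq 0$, so $T^*_Y A \in \AA[0]$; and the $T_Y$-triangle confines $T_Y A$ to degrees $-1,0$, whence $T_Y A \in \AA[0]$ or $\AA[1]$. The only real subtlety, and the step I would be most careful about, is pinning down the exact degree rather than a two-term window: the triangle alone still permits $T_Y A$ to sit in degree $-1$, and it is precisely the Serre-duality nonvanishing $\Ext^1(Y,A) \cong \Hom(A,Y)^* \neq 0$, together with the semisimplicity of $\End Y$, that rules this out. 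I would also make sure the formality used to compute the cohomology of $\RHom(Y,A) \otimes_{\End Y} Y$ is legitimate, i.e. that the derived tensor product is computed by naive tensoring because $\End Y$ is semisimple; this is exactly where condition (2) of Definition \ref{definition:SphericalObject} enters.
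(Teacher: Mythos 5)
Your proof is correct and follows essentially the same route as the paper's: the defining triangles of $T_Y$ and $T^*_Y$, the long exact cohomology sequences, Serre duality in the form $\Ext^1(Y,A) \cong \Hom(A,Y)^*$ (resp. $\Ext^1(A,Y)\cong\Hom(Y,A)^*$), and indecomposability plus heredity to force the cohomology of $T_Y A$, $T^*_Y A$ into a single degree. The only differences are cosmetic: the paper first disposes of the case $\Hom(Y,A)\neq 0$ separately via the tube containing $Y$ (Lemma \ref{lemma:WhenInTube}), whereas your long-exact-sequence argument treats both cases uniformly, and you are more explicit about the formality of $\RHom(Y,A)$ over the semisimple ring $\End Y$, which the paper leaves implicit.
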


\begin{proof}
Since $T_Y$ and $T_Y^*$ are autoequivalences of $\Db \AA$ (Theorem \ref{theorem:TwistDerived}), we know that $T_Y A$ and $T_Y^* A$ are indecomposable as well.  We only consider the first statement, the other statement can be proven in a similar fashion.  Thus, assume that $\Hom(A,Y) \not= 0$.

If additionally $\Hom(Y,A) \not= 0$, then Lemma \ref{lemma:WhenInTube} shows that $A$ lies in the tube containing $Y$.  Since this tube corresponds to an abelian subcategory of $\AA = H^0(\Db \AA)$, both $H^0 (T_Y A)$ and $H^0 (T^*_Y A)$ are again nonzero objects in the same tube and hence $T_Y A, T^*_Y A \in \AA[0]$.

We may thus assume that $\Hom(Y,A) = 0$.  Taking the cohomologies of the triangle
$$\RHom(Y,A) \otimes Y \to A \to T_Y A \to (\RHom(Y,A) \otimes Y)[1]$$
yields the exact sequence
$$H^0 A \to H^0 (T_Y A) \to \Hom(Y,A[1]) \otimes H^0 Y \to 0.$$
Using that $Y$ is generalized 1-spherical, we find $\Hom(Y,A[1]) \cong \Hom(A,Y)^* \not= 0$ and hence $H^0 (T_Y A) \not= 0$.  Furthermore, since $\AA$ is hereditary and $T_Y A$ is indecomposable, the cohomology can only be nonzero in a single degree, and thus $T_Y A \in \AA[0]$.

Consider now the triangle
$$T^*_Y A \to A \to \RHom(A,Y)^* \otimes Y \to T^*_Y A [1].$$
Using that $H^i A = 0$ and $H^i Y = 0$ for all $i \not= 0$, the long exact sequence obtained from taking cohomologies of the above triangle shows that $H^i(T^*_Y A) = 0$ for all $i \not= 0$ or $i \not= -1$.  Since $T^*_Y A$ is indecomposable and $\AA$ is hereditary, the cohomology of $T^*_Y A$ can only be nonzero in a single degree.  If $H^0(T^*_Y A) = 0$, then $T^*_Y A \in \AA[0]$; if $H^{-1}(T^*_Y A) = 0$, then $T^*_Y A \in \AA[-1]$.
\end{proof}

\begin{proposition}\label{proposition:Summary}
 The following are equivalent for a minimal 1-spherical object $Y \in \AA[0] \subset \Db \AA$.
\begin{enumerate}
  \item The object $H^0 Y \in \AA$ is semi-simple.
  \item If there are indecomposables $A,B \in \AA$ with $\Hom(A[0],Y) \not= 0$ and $\Hom(Y,B[0]) \not= 0$, then $A,B$ lie in the tube containing $H^0 Y$.
  \item For every $A \in \AA[0]$ with $\Hom(A,Y) \not= 0$ and $\Hom(Y,A) = 0$, we have that $T^{i}_Y A \in \AA[0]$, for all $i \in \bZ$.
    \item For every $A \in \AA[0]$ with $\Hom(Y,A) \not= 0$ and $\Hom(A,Y) = 0$, we have that $T^{i}_Y A \in \AA[0]$, for all $i \in \bZ$.
\end{enumerate}
Furthermore, these properties hold if one of the following two properties hold:
\begin{enumerate}
\item[$(a)$] there is an object $A \in \AA[0]$  with $\Hom(A,Y) \not= 0$ and $\Hom(Y,A) = 0$, and additionally $T^{i}_Y A \in \AA[0]$, for all $i \in \bZ$,
\item[$(b)$] there is an object $A \in \AA[0]$  with $\Hom(Y,A) \not= 0$ and $\Hom(A,Y) = 0$, and additionally $T^{i}_Y A \in \AA[0]$, for all $i \in \bZ$.
\end{enumerate}
\end{proposition}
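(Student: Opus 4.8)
The plan is to make condition (1) concrete and then to treat the twist functor as the main tool. By Proposition \ref{proposition:AssociatedTubes} the indecomposable summands of $Y$ are exactly the peripheral objects $S_1,\dots,S_r$ of the tube $\TT$ containing $Y$, and these are precisely the simple objects of the abelian subcategory $\TT$; hence (1) says exactly that each $S_m$ is already simple in $\AA$. The guiding idea (as in the paragraph preceding the statement) is that this happens precisely when $\TT$ sits at an extreme end of $\AA$, and that extremity is detected by the twist functors preserving the heart $\AA[0]$. I would establish $(1)\Rightarrow(3),(4)$ first, then the converses together with $(1)\Leftrightarrow(2)$, and finally the ``furthermore'' clause.

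For $(1)\Rightarrow(3),(4)$ I would prove the stronger assertion that, when $Y$ is semisimple, both $T_Y$ and $T^*_Y$ carry $\AA[0]$ into $\AA[0]$; since these are mutually inverse autoequivalences (Theorem \ref{theorem:TwistDerived}) this yields $T^i_Y A\in\AA[0]$ for all $A\in\AA[0]$ and all $i\in\bZ$. Fix an indecomposable $B\in\AA[0]$. If $B\in\TT$ both twists keep it in $\TT\subseteq\AA[0]$, and if $\Hom(B,Y)\neq0$ and $\Hom(Y,B)\neq0$ then $B\in\TT$ by Lemma \ref{lemma:WhenInTube}. In the remaining cases I examine the defining twist triangles. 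For $T^*_Y$ the relevant map is the coevaluation $B\to\Hom(B,Y)^*\otimes Y=\bigoplus_m\Hom(B,S_m)^*\otimes S_m$, and because each $S_m$ is simple in $\AA$ a basis of $\Hom(B,S_m)$ exhibits $B\twoheadrightarrow S_m^{\oplus d_m}$ as an epimorphism (a nonzero functional vanishing on the image would be a linear relation among the basis maps). Dually, the evaluation $\Hom(Y,B)\otimes Y\to B$ is a monomorphism. Feeding this into the long exact cohomology sequence of the triangle, and using Lemma \ref{lemma:ClosedUnderTwist} to know a priori that the output sits in a single degree, forces the offending cohomology ($H^1$ for $T^*_Y B$, $H^{-1}$ for $T_Y B$) to vanish, so both twists land in $\AA[0]$; the higher $\Ext$ contributions are controlled via Serre duality and Corollary \ref{corollary:TubesDirecting}.

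The converses $(3)\Rightarrow(1)$ and $(4)\Rightarrow(1)$ I would prove by contraposition, and this is where I expect the main difficulty. If some peripheral $S=S_{m_0}$ is not simple in $\AA$, choose a proper nonzero subobject $K\hookrightarrow S$; since $S$ is simple inside $\TT$, neither $K$ nor $S/K$ lies in $\TT$, and a short computation with $\Hom(S_m,S)$ (using $\End S\cong k$) shows $\Hom(Y,K)=0$, so $K$ is ``purely incoming''. The inclusion $K\hookrightarrow S\subseteq Y$ makes the coevaluation $K\to\Hom(K,Y)^*\otimes Y$ a monomorphism that is \emph{not} an isomorphism (its target is semisimple on the mouth while $K$ is not), whence the indecomposability of $T^*_Y K$ pushes its cohomology into degree one, i.e.\ $T^*_Y K\in\AA[-1]\setminus\AA[0]$, contradicting (3); a dual proper quotient contradicts (4). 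For $(1)\Leftrightarrow(2)$ I would run the same sub/quotient dichotomy: a non-simple peripheral object simultaneously produces an incoming map (from a proper subobject) and an outgoing map (to a proper quotient), both with source/target outside $\TT$, which is exactly the failure of (2); conversely, when all $S_m$ are simple, the directedness of tubes (Theorem \ref{theorem:DirectingTubes}, Corollary \ref{corollary:TubesDirecting}) together with Serre duality rules out incoming and outgoing maps from/to outside occurring at once.

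Finally, for the ``furthermore'' clause I would show that one witness suffices. Assume (a), with $A\in\AA[0]$ purely incoming and $T^i_Y A\in\AA[0]$ for all $i$. Lemma \ref{key} supplies, for any other purely incoming $B$, a nonzero map $T^i_Y A\to B$ for $i\ll0$; since $T^i_Y A$ stays in the heart, this propagates heart-preservation along the tube and forces every peripheral object to be simple, giving (1); hypothesis (b) is dual. The crux of the whole argument, and the step I would budget the most care for, is the cohomological bookkeeping in the converse: turning ``$S$ not simple'' into an explicit object provably ejected from $\AA[0]$ by an iterate of $T_Y$, uniformly enough to also drive the propagation in the furthermore clause.
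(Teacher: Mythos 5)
Most of your route is sound and it is genuinely different from the paper's. The paper obtains $(1)\Rightarrow(2)$ by citing \cite[Proposition 4.7]{vanRoosmalen12} and then routes everything through $(2)$: it proves $(2)\Leftrightarrow(3)$, $(2)\Leftrightarrow(4)$, and $(a)\Rightarrow(3)$, $(b)\Rightarrow(4)$. You instead prove $(1)\Rightarrow(3),(4)$ directly, by showing that when the mouth objects are simple the coevaluation $B \to \Hom(B,Y)^*\otimes Y$ is an epimorphism and the evaluation $\Hom(Y,B)\otimes Y \to B$ is a monomorphism (with Serre duality and Corollary \ref{corollary:TubesDirecting} killing the $\Ext^1$-contributions), so that both twists preserve $\AA[0]$; and you prove $(3)\Rightarrow(1)$, $(4)\Rightarrow(1)$ by an explicit contrapositive, exhibiting a proper subobject $K$ of a non-simple peripheral $S$ with $\Hom(K,Y)\not=0$, $\Hom(Y,K)=0$ and $T_Y^{-1}K \in \AA[-1]$. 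Both arguments are correct --- in the ejection step you do not even need indecomposability of $T_Y^{-1}K$, since the long exact sequence gives $H^0 = \ker(\mathrm{coev}) = 0$ and $H^1 = \coker(\mathrm{coev}) \not= 0$ outright --- and they make the proposition self-contained where the paper leans on external work. Your plan for $(a)$, $(b)$ (Lemma \ref{key} applied against a hypothetical ejected object, using that the twist orbit of the witness $A$ stays in the heart) is essentially the paper's argument and works.

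The genuine gap is the mechanism you propose for $(1)\Rightarrow(2)$: that ``directedness of tubes together with Serre duality rules out incoming and outgoing maps from/to outside occurring at once.'' This cannot work. Convexity (Theorem \ref{theorem:DirectingTubes}) only forbids paths that leave the tube and return, and here all paths produced by Serre duality run one way: $\Hom(A,Y)\not=0$ and $\Ext^1(Y,A)\cong\Hom(A,Y)^*\not=0$ both yield paths $A \to \TT$, while $\Hom(Y,B)\not=0$ and $\Ext^1(B,Y)\cong\Hom(Y,B)^*\not=0$ both yield paths $\TT \to B$; there is no cycle and hence no contradiction. (Elementary arguments only show that such a $B$ carries an infinite ascending chain of finite-length subobjects, which is not absurd without a noetherian hypothesis.) What is genuinely needed is the limiting argument of Lemma \ref{key}, exactly as in the paper's proof of $(3)\Rightarrow(2)$: if $A,B \notin \TT$ with $\Hom(A,Y)\not=0$ and $\Hom(Y,B)\not=0$, then $\Hom(B,Y)=0$ (else Lemma \ref{lemma:WhenInTube} puts $B$ in the tube), so $B[-1]$ satisfies the second hypothesis of Lemma \ref{key}, whence $\Hom(T_Y^i A, B[-1]) \not= 0$ for $i \ll 0$; since your $(1)\Rightarrow(3)$ keeps $T_Y^i A$ in $\AA[0]$, this is a nonzero map from $\AA[0]$ to $\AA[-1]$, a contradiction. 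So the fix uses only tools you already deploy, but the step as you describe it would fail. A final caveat on reading: $(2)$ must be interpreted as ``$A$ and $B$ cannot both lie outside the tube''; the literal ``both lie in the tube'' is too strong (for $\coh\bX$ with $\bX$ elliptic and $Y$ a simple skyscraper $k(P)$, take $A = \OO_\bX$, $B = k(P)$), and your usage, like the paper's own proof of $(3)\Rightarrow(2)$, correctly works with the weaker reading.
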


\begin{proof}
The implication $(1) \Rightarrow (2)$ has been shown in \cite[Proposition 4.7]{vanRoosmalen12}.  For the implication $(2) \Rightarrow (1)$, assume that $H^0 Y$ is not semi-simple, thus there is an indecomposable direct summand $Y'$ of $H^0 Y$ which is not simple.  Let $A$ be a (nontrivial) indecomposable subobject of $Y'$.  Since $Y'$ is a peripheral object in a tube, $Y'$ is simple in the additive category generated by this tube (recall from Proposition \ref{proposition:SphericalsInTubes} that this additive subcategory is an abelian subcategory of $\AA$ which is equivalent to the category of finite-dimensional nilpotent representations of a cyclic quiver).  In particular, we know that $A$ cannot lie in that tube.  Let $B$ be an indecomposable direct summand of $Y' / B$.  Similarly, $B$ does not lie in the tube.  These choices of $A$ and $B$ contradict the statement in $(2)$.

We will prove the implication $(2) \Rightarrow (3)$.  Let $A \in \AA[0]$ with $\Hom(A,Y) \not= 0$ and $\Hom(Y,A) = 0$.  We wish to show that $T^{i}_Y A \in \AA[0]$ for all $i \in \bZ$.  We may assume that $A$ is indecomposable.  It follows from Lemma \ref{lemma:ClosedUnderTwist} that $T_Y^i A \in \AA[0]$ for $i \geq 0$.  

For $i < 0$, we consider the triangle
$$T_Y^{-1} A \to A \to \RHom(A,Y)^* \otimes Y \to T_Y^{-1} A[1].$$
It follows from Lemma \ref{lemma:ClosedUnderTwist} that either $T^{-1}_Y A \in \AA[0]$ or $T^{-1}_Y A[1] \in \AA[0]$.  We want to show the former.  Seeking a contradiction, assume the latter.

Corollary \ref{corollary:TubesDirecting} shows that $\RHom(A,Y)^* \otimes Y \cong \Hom(A,Y)^* \otimes Y$, and hence $\Hom(Y,A) = 0$ implies that the map $A \to \RHom(A,Y)^* \otimes Y$ is not a split epimorphism.  Thus, the map $\Hom(A,Y)^* \otimes Y \to T_Y^{-1} A[1]$ is nonzero.  Since $H^0 A$ does not lie in the tube containing $H^0 Y$ (see Lemma \ref{lemma:WhenInTube}), neither does $H^0 (T^{-1}_Y A[1])$.  This contradicts $(2)$.  Hence, $T^{-1}_Y A \in \AA[0]$, and thus also $T^{i}_Y A \in \AA[0]$ for all $i < 0$, as required.

We will now show $(3) \Rightarrow (2)$.  Let $A, B \in \AA$ such that $\Hom(A[0],Y) \not= 0$ and $\Hom(Y,B[0]) \not= 0$.  We wish to show that either $A$ or $B$ lie in the tube containing $H^0 Y$.  Without loss of generality, we may assume that $A$ does not lie in the tube containing $Y$.

Note that $\Hom(A[z], Y) = 0$ for all $z \not= 0$.  Indeed, when $z \not= 1$ this follows from the heredity of $\AA$ while the case $z = 1$ is covered by Corollary \ref{corollary:TubesDirecting}.

Since $\bS Y \cong Y[1]$, we know that $\Hom(B[-1], Y) \not= 0$.  Note that $B[-1] \in \AA[-1]$ so that the heredity of $\AA$ implies that $\Hom(B[z], Y) = 0$ for $z \not\in \{0,-1\}$.

If we assume that $\Hom(B[0], Y) = 0$, then Lemma \ref{key} would imply $\Hom(T^i_Y (A[0]), B[-1]) \not= 0$ for $i \ll 0$.  Since we have assumed that $T^i_Y (A[0]) \in \AA[0]$ for all $i \in \bZ$, this would give a nonzero morphism from $\AA[0]$ to $\AA[-1]$, a contradiction.  Hence, $\Hom(B[0], Y) \not= 0$.  Lemma \ref{lemma:WhenInTube} shows that $B$ lies in the tube containing $H^0 Y$.

Showing $(2) \Leftrightarrow (4)$ is dual to $(2) \Leftrightarrow (3)$.

Next, we show that $(a) \Rightarrow (3)$.  Let $A \in \AA[0]$ be as in the statement of $(a)$.  Let $A' \in \AA[0]$ be such that $\Hom(A',Y) \not= 0$ and $\Hom(Y,A') = 0$.  We want to show that $T^{i}_Y A' \in \AA[0]$ for all $i \in \bZ$.

Lemma \ref{lemma:ClosedUnderTwist} shows that $T^{i}_Y A' \in \AA[0]$ for all $i \geq 0$.  We thus only need to consider the case $i < 0$.  Seeking a contradiction, let $i < 0$ be the largest $i$ such that $T^{i}_Y A' \not\in \AA[0]$, thus (by Lemma \ref{lemma:ClosedUnderTwist}) $T^{i}_Y A' \not\in \AA[-1]$.

As before, it follows from Corollary \ref{corollary:TubesDirecting} and Lemma \ref{lemma:WhenInTube} that $\Hom(A,Y[z]) = \Hom(A',Y[z]) = 0$ if and only if $z \not= 0$.  Lemma \ref{key} implies that $\Hom(T_Y^{j} A, T_Y^{i} A') \not= 0$ for $j \ll 0$.  Since we have assumed that $T_Y^{j} A \in \AA[0]$, this would give a map from $\AA[0]$ to $\AA[-1]$ and we have obtained a contradiction.

The proof of $(b) \Rightarrow (4)$ is similar.
\end{proof}

\begin{corollary}
Let $\AA$ be a hereditary category with Serre duality.  A minimal 1-spherical object $Y \in \AA$ is semi-simple if and only if the twist functors $T_Y, T_Y^*: \Db \AA \to \Db \AA$ restrict to autoequivalences $\AA[0] \to \AA[0]$.
\end{corollary}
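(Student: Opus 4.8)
The plan is to deduce both implications directly from Proposition \ref{proposition:Summary}, which already characterizes semi-simplicity of $H^0 Y$ via the twist-closure conditions $(3)$ and $(4)$ together with the sufficient hypotheses $(a)$ and $(b)$. Throughout I would use that $T_Y$ and $T_Y^*$ are mutually quasi-inverse autoequivalences of $\Db \AA$ (Theorem \ref{theorem:TwistDerived}); consequently, to prove that they restrict to autoequivalences of $\AA[0]$ it suffices to show that each of them sends $\AA[0]$ into $\AA[0]$, since their restrictions are then automatically mutually quasi-inverse functors, hence equivalences.

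For the forward implication, assume $Y$ is semi-simple, so that conditions $(3)$ and $(4)$ of Proposition \ref{proposition:Summary} hold. I would fix an indecomposable $A \in \AA[0]$ and split into cases according to whether $\Hom(A,Y)$ and $\Hom(Y,A)$ vanish. If both are nonzero, then $A$ lies in the tube of $Y$ by Lemma \ref{lemma:WhenInTube}, and Lemma \ref{lemma:ClosedUnderTwist} keeps both $T_Y A$ and $T_Y^* A$ inside that tube, hence in $\AA[0]$. If exactly one of them is nonzero, conditions $(3)$ or $(4)$ give $T_Y^{i} A \in \AA[0]$ for all $i$, in particular for $i = \pm 1$. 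The remaining case $\Hom(A,Y)=0=\Hom(Y,A)$ needs a short computation: using heredity together with $\bS Y \cong Y[1]$ and Serre duality, one checks $\Hom(A,Y[1]) \cong \Hom(Y,A)^* = 0$ and $\Hom(Y,A[1]) \cong \Hom(A,Y)^* = 0$, so that $\RHom(Y,A) = 0 = \RHom(A,Y)$ and the two defining triangles collapse to $T_Y A \cong A \cong T_Y^* A$. Thus $T_Y$ and $T_Y^*$ preserve $\AA[0]$ on indecomposables, hence by additivity everywhere, and the reduction above finishes this direction.

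For the converse I would argue by contraposition. Assume $Y$ is not semi-simple; then, exactly as in the proof of $(2) \Rightarrow (1)$ inside Proposition \ref{proposition:Summary}, some indecomposable (peripheral) summand $Y'$ of $H^0 Y$ fails to be simple in $\AA$, and an indecomposable proper nonzero subobject $A \hookrightarrow Y'$ cannot lie in the tube of $Y$, since $Y'$ is simple in the abelian category generated by that tube (Proposition \ref{proposition:SphericalsInTubes}). This $A$ satisfies $\Hom(A,Y) \neq 0$, and being outside the tube, Lemma \ref{lemma:WhenInTube} forces $\Hom(Y,A) = 0$. If $T_Y$ and $T_Y^*$ did restrict to autoequivalences of $\AA[0]$, we would have $T_Y^{i} A \in \AA[0]$ for all $i \in \bZ$, so $A$ would verify hypothesis $(a)$ of Proposition \ref{proposition:Summary}; but $(a)$ implies condition $(1)$, namely that $H^0 Y$ is semi-simple, contradicting our assumption.

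I expect the only genuinely delicate point to be the ``no maps either way'' case of the forward direction, where one must verify that $A$ is literally fixed by both twist functors rather than merely shifted: the Serre-duality identities above are elementary but indispensable, since without them one could not exclude a cohomological shift of $A$ out of $\AA[0]$. Everything else is a repackaging of Proposition \ref{proposition:Summary} and its auxiliary lemmas.
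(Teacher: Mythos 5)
Your proof is correct and takes exactly the route the paper intends: the corollary is stated immediately after Proposition \ref{proposition:Summary} with no separate proof, and your argument is precisely that deduction, with the implicit details filled in (the tube case via Lemmas \ref{lemma:WhenInTube} and \ref{lemma:ClosedUnderTwist}, the doubly-orthogonal case where $\RHom(A,Y)=\RHom(Y,A)=0$ forces $T_Y A\cong A\cong T_Y^* A$, and the converse via hypothesis $(a)$ of the proposition). The only remark worth making is that the converse can be obtained even more directly—if the twists restrict to autoequivalences of $\AA[0]$ then condition $(3)$ of Proposition \ref{proposition:Summary} holds outright for every relevant $A$, giving $(1)$ without any contraposition or construction of a subobject of $Y'$—but your version is equally valid.
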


The following propositions will help us to find simple tubes.

\begin{proposition}\label{proposition:PerpendicularOfSimples}
Let $\SS$ be a set of simple tubes in $\AA$.  The embedding $\SS^\perp \to \AA$ maps a simple tube in $\SS^\perp$ to a simple tube in $\AA$.
\end{proposition}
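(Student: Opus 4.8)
The plan is to reduce the statement to a question about the minimal $1$-spherical object attached to the tube, and then to transfer semi-simplicity from $\SS^\perp$ to $\AA$ by means of Proposition \ref{proposition:Summary}.

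First I would fix a simple tube $\KK$ in $\SS^\perp$. As $\KK$ is simple, its peripheral objects are simple in $\SS^\perp$, and by Proposition \ref{proposition:SphericalsInTubes} their direct sum $Y$ is the associated minimal $1$-spherical object; thus $Y$ is semi-simple in $\SS^\perp$. Since each tube is $\t$-stable we have $\SS=\t\SS$, and a short Serre-duality computation (compare Proposition \ref{proposition:PerpendicularOnSpherical}) gives $\SS^\perp={}^\perp\SS$ together with $\bS(\Db(\SS^\perp))=\Db(\SS^\perp)$; hence the Serre functor of $\Db\AA$ restricts to that of $\Db(\SS^\perp)$ and $\t_{\SS^\perp}=\t_\AA|_{\SS^\perp}$. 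Because the embedding $\Db(\SS^\perp)\hookrightarrow\Db\AA$ is fully faithful and preserves these data, $Y$ is again a minimal $1$-spherical object of $\Db\AA$; and since $\SS^\perp$ is closed under extensions and the embedding is exact, the abelian subcategory generated by $Y$ is the same in $\SS^\perp$ and in $\AA$, so the tube $\TT$ of $Y$ in $\AA$ coincides with $\KK$ and has the $Y_i$ as its peripheral objects. By Proposition \ref{proposition:Summary} it then remains to prove that $Y$ is semi-simple in $\AA$, for then $\TT$ is a simple tube.

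To obtain semi-simplicity in $\AA$ I would verify one of the sufficient conditions $(a)$, $(b)$ of Proposition \ref{proposition:Summary}, i.e. exhibit an indecomposable $A\in\AA[0]$ with (say) $\Hom(A,Y)\neq0$, $\Hom(Y,A)=0$ and $T^i_YA\in\AA[0]$ for all $i\in\bZ$. The easy case is when such an $A$ can be found inside $\SS^\perp$: the twist functor $T_Y$ on $\Db\AA$ restricts to the twist functor on the triangulated subcategory $\Db(\SS^\perp)$ (the defining triangle of $T_Y$ stays in $\Db(\SS^\perp)$, as $Y\in\Db(\SS^\perp)$), and semi-simplicity of $Y$ in $\SS^\perp$ gives, via Proposition \ref{proposition:Summary} applied inside $\SS^\perp$, that $T^i_YA\in(\SS^\perp)[0]\subseteq\AA[0]$ for all $i$; full faithfulness preserves the two Hom-conditions, so $(a)$ holds in $\AA$.

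The remaining case---which I expect to be the main obstacle---is when no such witness lies in $\SS^\perp$. Then Lemma \ref{lemma:WhenInTube} forces $\KK$ to be completely orthogonal to the rest of $\SS^\perp$, so, as $\AA$ is indecomposable and $\SS\neq\varnothing$, any witness must be an object $A\notin\SS^\perp$ that interacts with the removed tubes $\SS$. Here the key points are that $T_Y$ and $T^*_Y$ fix $\SS$ pointwise (because $Y\in\SS^\perp={}^\perp\SS$, so $\RHom(Y,S)=0=\RHom(S,Y)$ for $S\in\SS$) and that $\SS^\perp$ is two-sided perpendicular to the \emph{simple} objects of $\SS$. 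Using this I would transfer the ``one-sidedness'' of the simple tube $\KK$ (no nonzero maps leave, resp. enter, $\KK$ within $\SS^\perp$) to the statement that $\TT$ also lies at one end of $\AA$, equivalently that the evaluation map $A\to\Hom(A,Y)^\ast\otimes Y$ is epic for a suitable $A$. Concretely, combining Corollary \ref{corollary:TubesDirecting}, Theorem \ref{theorem:DirectingTubes} and Lemma \ref{lemma:ClosedUnderTwist} with the identity $\Ext^1(S_\alpha,-)\cong\Hom(-,\t S_\alpha)^\ast$ should exclude indecomposables outside $\TT$ receiving a nonzero map from $Y$; and since $T_Y$ preserves the nonzero Hom-groups between $A$ and $\SS$, a degree count in the spirit of Lemma \ref{key} should then force $T^i_YA\in\AA[0]$ for all $i$. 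The delicate part is precisely this last transfer of one-sidedness across the inclusion $\SS^\perp\hookrightarrow\AA$, where the objects controlling the twist orbit no longer lie in $\SS^\perp$; I would expect to carry it out by a careful path/convexity argument for the tubes together with the two-sided perpendicularity to the simples of $\SS$.
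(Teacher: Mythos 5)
Your reduction (the tube $\KK$ maps to a tube $\TT$ in $\AA$ with the same minimal $1$-spherical object $Y$, and by Proposition \ref{proposition:Summary} everything hinges on semi-simplicity of $Y$ in $\AA$) and your ``easy case'' (a witness inside $\SS^\perp$, where the twist triangles live in $\Db(\SS^\perp)$ and simplicity of $\KK$ in $\SS^\perp$ controls the orbit) agree with the paper's setup. The problem is the remaining case, which you yourself flag as the main obstacle: there your proposal stops at ``should'' and ``I would expect,'' and the plan you sketch is structurally circular. You propose first to ``exclude indecomposables outside $\TT$ receiving a nonzero map from $Y$''; but in the presence of an indecomposable $A \notin \TT$ with $\Hom(A,Y)\neq 0$, that exclusion \emph{is} condition $(2)$ of Proposition \ref{proposition:Summary}, hence is equivalent to the simplicity of $\TT$ in $\AA$ that you are trying to prove. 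It cannot serve as a stepping stone toward verifying $(a)$ --- it is the whole problem, and no argument for it is given.

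The missing idea is to argue by contradiction, which hands you a concrete object to destroy instead of a witness to construct. Suppose $\TT$ is not simple in $\AA$; then the equivalence $(1)\Leftrightarrow(3)$ (or $(4)$) of Proposition \ref{proposition:Summary} yields an indecomposable $A\in\AA[0]$ with $\Hom(A,Y)\neq0$, $\Hom(Y,A)=0$ whose orbit escapes, i.e.\ $T_Y^iA\in\AA[-1]$ for some $i$ (Lemma \ref{lemma:ClosedUnderTwist}). Your easy case shows $A\notin\SS^\perp$, so some tube $\TT_S\in\SS$ satisfies $\Hom(\TT_S,A)\neq0$ or $\Hom(A,\TT_S)\neq0$, and both Hom-spaces are unchanged by every power of $T_Y$ because $Y\in\SS^\perp={}^\perp\SS$ kills the third terms of the twist triangles. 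In the first case one gets a nonzero map from $\TT_S\subseteq\AA[0]$ to $T_Y^iA\in\AA[-1]$, which is impossible; this is the only place a ``degree count'' suffices. In the second case the degree count proves nothing --- a nonzero map from $\AA[-1]$ to $\AA[0]$ is just an $\Ext^1$ and is perfectly allowed --- and one must instead apply Serre duality: $0\neq\Hom(T_Y^iA,X)\cong\Hom(X,T_Y^iA[1])^*$, where $X$ is the minimal $1$-spherical object of $\TT_S$, so the indecomposable $T_Y^iA[1]\in\AA[0]$, which lies outside $\TT_S$, receives a nonzero map from $X$; together with $A$, which maps nonzero into $\TT_S$, this violates condition $(2)$ for $\TT_S$, i.e.\ contradicts the simplicity of $\TT_S$. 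This second case is where the hypothesis that $\SS$ consists of \emph{simple} tubes enters essentially, and the Kronecker example immediately after the proposition shows the statement is false without it. Your sketch of the hard case invokes only $\t$-stability of $\SS$ (twist-invariance and two-sided perpendicularity) plus degree counts, none of which distinguishes simple tubes from arbitrary ones, so it cannot be completed as written.
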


\begin{proof}
Note that $\SS^\perp = {}^\perp \SS$.  Proposition \ref{proposition:PerpendicularOnSpherical} implies that the embedding $\SS^\perp \to \AA$ maps a tube $\TT$ in $\SS^\perp$ to a tube $\TT$ in $\AA$.  We thus need only that check that $\TT$ is simple in $\AA$ if it is simple in $\SS^\perp$.

Let $Y$ be a minimal 1-spherical object of the tube $\TT$ (if $\TT$ has rank $r$ and $E$ is a peripheral object in $\TT$, then $Y \cong \oplus_{i=1}^r \tau^i E$).  Seeking a contradiction, assume that $\TT$ is not simple in $\AA$.  According to Proposition \ref{proposition:Summary}, we may assume that there is an indecomposable object $A \in \AA$ such that $\Hom(A,Y) \not= 0$ and $\Hom(Y,A) = 0$, and such that $T^{i}_Y (A[0]) \not\in \AA[0]$, for some $i \in \bZ$.

Since $\TT$ is a simple tube in $\SS^\perp$, this object $A$ cannot lie in $\SS^\perp$.  There is thus a tube $\TT_S \in \SS$ such that either $\Hom(\TT_S,A) \not= 0$ or $\Hom(A, \TT_S) \not= 0$.  We will start by considering the former.

Since $Y \in \SS^\perp$, we know that $\Hom(\TT_S, T^{j}_Y A) \not= 0$ for all $j \in \bZ$.  However, Proposition \ref{proposition:Summary} shows that for some $i \in \bZ$, we have $T^{i}_Y A \not\in \AA[0]$ but rather $T^{i}_Y A \in \AA[-1]$ (Lemma \ref{lemma:ClosedUnderTwist}).  This gives a nonzero map from $\TT_S \subseteq \AA[0]$ to $T^{i}_Y A \in \AA[-1]$, which is impossible.

Assume now that $\Hom(A, \TT_S) \not= 0$.  Similarly, we find an $i \in \bZ$ such that $\Hom(T^{i}_Y A, \TT_S) \not= 0$ and where $T^{i}_Y A \in \AA[-1]$.  Let $X$ be a minimal 1-spherical object of the tube $\TT_S$ (if $\TT_S$ has rank $s$ and $S$ is a peripheral object in $\TT_S$, then $X \cong \oplus_{i=1}^s \tau^i S$). By Serre duality, we have
$$0 \not= \Hom(T^{i}_Y A, X) \cong \Hom(\tau^{-1} X, T^{i}_Y A [1])^* \cong \Hom(X, T^{i}_Y A [1])^*.$$
This contradicts our assumption that $\TT_S \in \SS$ is a simple tube (see Proposition \ref{proposition:Summary}).  We conclude that $\TT$ is a simple tube in $\AA$.
\end{proof}

The previous proposition does not hold if $\SS$ does not consist of simple tubes.

\begin{example}
Let $Q$ be the Kronecker quiver and let $\rep_k Q$ be the category of finite-dimensional $k$-representations of $Q$.  A description of this category can be found in \cite[Section VIII.7]{ARS}.  It follows from \cite[Propositions 7.1 and 7.4]{ARS} that all indecomposable regular objects lie in tubes, but none of these tubes are simple.  Let $\SS$ be such a tube.  It follows from \cite[Theorem 7.5]{ARS} that the category $\SS^\perp$ is a direct sum of tubes, and hence all these tubes are simple in $\SS^\perp$.  However, the embedding $\SS^\perp \to \rep_k Q$ maps none of these tubes to simple tubes in $\rep_k Q$.
\end{example}

\begin{proposition}\label{proposition:OrthogonalAndPathMeansSimple}
Let $\TT_1$ and $\TT_2$ be two tubes in $\AA$.  Assume that $\TT_2 \in \TT_1^\perp$.  If there is a path from $\TT_1$ to $\TT_2$, then $\TT_1$ and $\TT_2$ are simple tubes.
\end{proposition}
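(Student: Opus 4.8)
The plan is to verify, for each of the two tubes, the \emph{sufficient} criterion for simplicity furnished by conditions $(a)$ and $(b)$ of Proposition~\ref{proposition:Summary}: it suffices to exhibit a single witness object whose entire twist orbit stays inside $\AA[0]$. The indecomposable $Z$ interpolating the path will serve as the common witness for both tubes, and the orthogonality hypothesis is exactly what keeps its orbit from leaving $\AA[0]$.

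First I would unpack the hypotheses. Write $Y_1,Y_2$ for the minimal $1$-spherical objects (the sums of peripheral objects) of $\TT_1,\TT_2$. The assumption $\TT_2\in\TT_1^\perp$ means $\Hom(\TT_1,\TT_2)=0$ and $\Ext^1(\TT_1,\TT_2)=0$; since tubes are $\t$-stable and $\Ext^1(A,B)\cong\Hom(B,\t A)^*$ by Serre duality, this also gives $\Hom(\TT_2,\TT_1)=0$, so the two tubes are completely $\Hom$-orthogonal in both directions, and moreover $T_2\in Y_1^\perp$ and $T_1\in Y_2^\perp$ for all $T_1\in\TT_1$, $T_2\in\TT_2$. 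Applying Theorem~\ref{theorem:Paths} to the endpoints of the given path produces an oriented path of length at most two; lengths $0$ and $1$ are excluded by orthogonality, so I obtain indecomposables $T_1\in\TT_1$, $T_2\in\TT_2$ and $Z$ with $\Hom(T_1,Z)\neq0\neq\Hom(Z,T_2)$, where $Z$ lies in neither tube (otherwise one of these maps would witness $\Hom(\TT_1,\TT_2)\neq0$). Passing to peripheral sub/quotient objects of the images of these two maps (every nonzero object of a tube has a quasi-simple sub and quotient) upgrades them to $\Hom(Y_1,Z)\neq0$ and $\Hom(Z,Y_2)\neq0$; combined with Lemma~\ref{lemma:WhenInTube} and $Z\notin\TT_1\cup\TT_2$ this forces $\Hom(Z,Y_1)=0$ and $\Hom(Y_2,Z)=0$.

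The core of the argument is to show that $T_{Y_1}^{i}Z\in\AA[0]$ for all $i\in\bZ$, and symmetrically $T_{Y_2}^{i}Z\in\AA[0]$. Here the orthogonality pays off: since $T_2\in Y_1^\perp$, the defining triangle of the twist gives $T_{Y_1}T_2\cong T_2$, so $T_2$ is fixed by the whole group generated by $T_{Y_1}$. Applying the autoequivalence $T_{Y_1}^{i}$ to the map $Z\to T_2$ then yields $\Hom(T_{Y_1}^{i}Z,T_2)\neq0$ for every $i$, which (as $T_2\in\AA[0]$ and $\AA$ is hereditary) pins $T_{Y_1}^{i}Z$ into $\AA[0]\cup\AA[-1]$. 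On the other hand, because $T_{Y_1}Y_1\cong Y_1$ (Remark~\ref{remark:TwistWithSelf}) one has $\Hom(Y_1,T_{Y_1}^{i}Z)\cong\Hom(Y_1,Z)\neq0$ for all $i$, so whenever $T_{Y_1}^{i}Z\in\AA[0]$ Lemma~\ref{lemma:ClosedUnderTwist} places $T_{Y_1}^{i-1}Z$ in $\AA[0]$ and $T_{Y_1}^{i+1}Z$ in $\AA[0]\cup\AA[1]$. Intersecting the two constraints and inducting outward from $Z\in\AA[0]$ shows the entire orbit lies in $\AA[0]$. Condition $(b)$ of Proposition~\ref{proposition:Summary} (witness $A=Z$, using $\Hom(Y_1,Z)\neq0$ and $\Hom(Z,Y_1)=0$) then gives that $H^0Y_1$ is semi-simple, i.e. $\TT_1$ is a simple tube. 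The same reasoning with roles reversed — $T_1$ fixed by $T_{Y_2}$, the map $T_1\to Z$, and condition $(a)$ — shows $\TT_2$ is simple.

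The main obstacle is exactly this orbit-containment step, and the idea that dissolves it is that orthogonality turns the endpoint in the \emph{opposite} tube into a fixed point of the relevant twist functor, providing a stationary ``probe'' object to which (or from which) every member of the orbit admits a nonzero map; this persistent nonzero map, together with the one-step control of Lemma~\ref{lemma:ClosedUnderTwist}, rigidifies the cohomological degree. A minor point to handle carefully is the bookkeeping of which half of Lemma~\ref{lemma:ClosedUnderTwist} applies on each side (the hypothesis $\Hom(A,Y)\neq0$ versus $\Hom(Y,A)\neq0$), and correspondingly whether one invokes condition $(a)$ or $(b)$ of Proposition~\ref{proposition:Summary}.
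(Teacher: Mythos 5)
Your proof is correct, and it takes a genuinely different route from the paper's, although both hinge on the same key observation that perpendicularity makes the endpoint in the opposite tube a fixed point of the relevant twist functor. The paper does not normalize the path: it takes a path $T_1 \to A_1 \to A_2 \to \cdots \to T_2$ of \emph{minimal length} and argues by contradiction. Assuming $\TT_1$ is not simple, Proposition \ref{proposition:Summary} gives an index $i>0$ with $T^i_{Y_1}A_1 \notin \AA[0]$; at the largest $j$ with $T^j_{Y_1}A_1 \in \AA[0]$, the defining triangle of the twist yields an epimorphism $\Hom(Y_1,A_1)\otimes Y_1 \twoheadrightarrow T^j_{Y_1}A_1$, and composing with the twisted path map into $A_2$ (which is fixed by $T_{Y_1}$ because minimality forces $\Hom(\TT_1,A_2)=0$) produces a nonzero map $Y_1 \to A_2$, contradicting minimality of the path. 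You instead normalize to a length-two path via Theorem \ref{theorem:Paths} and then argue \emph{directly}: the twist-fixed probe $T_2$ bounds the cohomological degree of $T^i_{Y_1}Z$ on one side, Lemma \ref{lemma:ClosedUnderTwist} bounds it on the other, and the sandwich keeps the whole orbit in $\AA[0]$, so that the sufficient conditions $(a)$/$(b)$ of Proposition \ref{proposition:Summary} apply positively. Your version buys symmetry --- it proves simplicity of \emph{both} tubes explicitly, whereas the paper only writes out the half concerning $\TT_1$ and leaves the dual argument implicit --- and it replaces the contradiction bookkeeping by a transparent induction; the paper's version needs no appeal to Theorem \ref{theorem:Paths} and is shorter. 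One small wording point in yours: the passage from $\Hom(T_1,Z)\neq 0$ to $\Hom(Y_1,Z)\neq 0$ is not literally about peripheral sub/quotients of the \emph{image} (the image of a map out of a non-simple tube need not lie in the tube); the clean justification is induction on quasi-length along the filtration of $T_1$ by peripheral subquotients, which is evidently what you intend and is harmless.
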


\begin{proof}
Let $Y_1$ and $Y_2$ be the minimal 1-spherical objects corresponding to the tubes $\TT_1$ and $\TT_2$, respectively.

Let $T_1 \to A_1 \to A_2 \to \cdots \to A_n \to T_2$ be a path of minimal length between an indecomposable object $T_1 \in \TT_1$ and $T_2 \in \TT_2$.  This implies that $\Hom(\TT_1, A_2) = 0$ and thus that $T^{i}_{Y_1} A_2 \cong A_2$ for all $i \in \bZ$.  Seeking a contradiction, assume that $\TT_1$ is not simple.  According to Proposition \ref{proposition:Summary}, we know that $T^{i}_{Y_1} A_1 \not\in \AA[0]$ for some $i \in \bZ$ and by Lemma \ref{lemma:ClosedUnderTwist} we may assume that $i > 0$. Let $j \geq 0$ be the largest integer such that $T^{j}_{Y_1} A_1 \in \ind \AA[0]$.  Then there is an epimorphism
$$\Hom(Y_1,A_1) \otimes Y_1 \to T^{j}_{Y_1} A_1$$
and hence, using that $T^{j}_{Y_1} A_2 \cong A_2$, also a nonzero morphism $Y_1 \to A_2$, contradicting the minimality of the path.
\end{proof}

If we assume that $\AA$ is indecomposable, we have the following, stronger, proposition.

\begin{proposition}\label{proposition:OrthogonalToSimpleTubeMeansSimpleTube}
Assume that $\AA$ is indecomposable.  Let $\TT_1$ be a simple tube in $\AA$.  Any tube $\TT_2$ in $\TT_1^\perp$ is a simple tube in $\AA$.
\end{proposition}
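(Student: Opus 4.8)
The plan is to show $\TT_2$ is simple by contradicting the failure criterion of Proposition \ref{proposition:Summary}, using indecomposability of $\AA$ to bind $\TT_2$ to the given simple tube $\TT_1$. First I would record the rigidity coming from Serre duality. Since $\t\TT_1=\TT_1$, Proposition \ref{proposition:PerpendicularOnSpherical} gives $\t(\TT_1^\perp)=\TT_1^\perp$, and for $X\in\TT_1^\perp$ one computes $\Hom(X,\TT_1)\cong\Ext^1(\TT_1,\t X)^*=0$; together with $\Ext^1(\TT_2,\TT_1)\cong\Hom(\TT_1,\TT_2)^*=0$ this yields $\TT_1\in\TT_2^\perp$ as well, so there are \emph{no} nonzero morphisms in $\Db\AA$ between $\TT_1$ and any object of $\TT_1^\perp$ (at any shift). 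Next, because $\TT_1$ is simple, Proposition \ref{proposition:Summary}(2) forbids $\TT_1$ from simultaneously having incoming and outgoing maps to the outside; as $\AA$ is indecomposable $\TT_1$ is not isolated, so after possibly passing to the opposite category I may assume $\TT_1$ lies ``at the end'': $\Hom(\TT_1,N)=0$ for every indecomposable $N\notin\TT_1$.

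For the reduction it suffices to prove that $\TT_2$ is simple \emph{inside} $\TT_1^\perp$, since $\TT_1^\perp$ is again hereditary with Serre duality and Proposition \ref{proposition:PerpendicularOfSimples} (with $\SS=\{\TT_1\}$) then transports simplicity back to $\AA$. So I assume $\TT_2$ is not simple and seek a contradiction. Applying Proposition \ref{proposition:Summary} inside $\TT_1^\perp$ (and using that $\Db(\TT_1^\perp)$ is stable under $T_{Y_2}$ because $Y_2\in\TT_1^\perp$) produces an indecomposable $A\in\TT_1^\perp$ with $\Hom(A,Y_2)\ne0$, $\Hom(Y_2,A)=0$, and $T^{i}_{Y_2}A\in\AA[-1]$ for some $i<0$; Corollary \ref{corollary:TubesDirecting} moreover gives $\Hom(A,Y_2[z])\ne0$ exactly for $z=0$.

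Now I would invoke indecomposability. By the first paragraph $A$ has no morphisms with $\TT_1$, so any unoriented path from $A$ to $\TT_1$ has length $\ge 2$; Theorem \ref{theorem:Paths} then yields a length-two path $A-M-T_1$ with $T_1\in\TT_1$ and $M\notin\TT_1\cup\TT_1^\perp$. The ``end'' normalization orients the $T_1$-edge as $M\to T_1$, and since $M\notin\TT_2$ Lemma \ref{lemma:WhenInTube} shows $M$ meets $\TT_2$ on at most one side. If $\Ext^1(Y_2,M)\ne0$, then Lemma \ref{lemma:Ringel} gives a path $\TT_2\to M\to T_1$, whence $\TT_2$ is simple by Proposition \ref{proposition:OrthogonalAndPathMeansSimple} (using $\TT_1\in\TT_2^\perp$) --- a contradiction. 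Otherwise $M\in{}^\perp\TT_2$ or $\Hom(M,Y_2)\ne0$. In the favourable subcase $M\in{}^\perp\TT_2$ with the first edge oriented $M\to A$, applying $\Hom(M,-)$ to the defining triangles of $T_{Y_2}$ annihilates the term $\RHom(Y_2,-)\otimes Y_2$, so $\Hom(M,T^{j}_{Y_2}A)\cong\Hom(M,A)\ne0$ for all $j$; taking $j=i$ produces a nonzero map from $\AA[0]$ into $\AA[-1]$, which is impossible.

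The hard part will be exactly the remaining configurations: the edge oriented $A\to M$ with $M\in{}^\perp\TT_2$, and the ``source'' case $\Hom(M,Y_2)\ne0$ where $M$ maps into \emph{both} tubes. These survive the naive preservation argument, because the morphism it preserves lands in the harmless range $\Ext^1$ rather than in the forbidden $\Hom(\AA[0],\AA[-1])$; the difficulty is entirely due to the witness $A$ lying in $\TT_1^\perp$ and hence being \emph{completely} perpendicular to $\TT_1$, so that the clean contradiction of Proposition \ref{proposition:PerpendicularOfSimples} cannot be applied directly and must be routed through $M$. To eliminate these cases I expect to need the quantitative twist-orbit estimate of Lemma \ref{key}, pairing $A$ with its dual witness $B$ (satisfying $\Hom(Y_2,B)\ne0$, $\Hom(B,Y_2)=0$ and $T^{i_1}_{Y_2}B\in\AA[1]$) so that the nonzero morphisms it manufactures for $i\ll 0$ occur in the forbidden cohomological degree; tracking that degree with Lemma \ref{lemma:ClosedUnderTwist} is where the main bookkeeping lies.
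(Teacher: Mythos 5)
There is a genuine gap, and it is located exactly where you say the ``hard part'' is --- but the situation is worse than your summary suggests, because the one nontrivial case you claim to dispatch is dispatched incorrectly. If $\Ext^1(Y_2,M)\neq 0$, Lemma \ref{lemma:Ringel} (applied with $Z=Y_2$, $X=M$) produces a path \emph{from $M$ to $Y_2$}, not a path from $\TT_2$ to $M$; combined with the edge $M\to T_1$ this only exhibits $M$ as a common source for the two tubes and yields no path between them, so Proposition \ref{proposition:OrthogonalAndPathMeansSimple} does not apply. Moreover, since $\tau Y_2\cong Y_2$, Serre duality gives $\Ext^1(Y_2,M)\cong\Hom(M,Y_2)^*$, so this ``dispatched'' configuration is literally the same as the ``source'' configuration $\Hom(M,Y_2)\neq 0$ that you concede is open. (The case that \emph{can} be settled by your path argument is the opposite one, $\Hom(Y_2,M)\neq 0$, equivalently $\Ext^1(M,Y_2)\neq 0$, which gives a path $Y_2\to\cdots\to M\to T_1$ from $\TT_2$ to $\TT_1$.) So after correcting the bookkeeping, essentially all of the substance of the proposition remains unproved: your preliminary reductions (two-sided perpendicularity, the reduction through Proposition \ref{proposition:PerpendicularOfSimples}, the extraction of a twist witness) are fine, but they do not touch the core difficulty.

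The idea you are missing is the mechanism the paper's proof is built on, and it eliminates precisely your ``maps into both tubes'' configuration. Apply Theorem \ref{theorem:Paths} to the two tubes themselves, not to an abstract witness: after the oriented-path cases are absorbed by Proposition \ref{proposition:OrthogonalAndPathMeansSimple} (and a dualization), one is left with an indecomposable $A$ with $\Hom(A,\TT_1)\neq 0$ and $\Hom(A,\TT_2)\neq 0$; by Lemma \ref{lemma:WhenInTube} one also has $\Hom(Y_2,A)=0$, so this $A$ is itself a legitimate witness for the assumed non-simplicity of $\TT_2$, i.e.\ writing $A_j=T^j_{Y_2}A$, there is an $i$ with $A_i\in\AA[0]$ and $A_{i-1}[1]\in\AA[0]$. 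Now use that $Y_2\in Y_1^\perp={}^\perp Y_1$: applying $\Hom(-,Y_1)$ to the twist triangles $A_{j-1}\to A_j\to\Hom(A_j,Y_2)^*\otimes Y_2\to A_{j-1}[1]$ kills every $Y_2$-term, hence $\Hom(A_{i-1},Y_1)\cong\Hom(A,Y_1)\neq 0$. The exit triangle is a short exact sequence $0\to A_i\to\Hom(A_i,Y_2)^*\otimes Y_2\to A_{i-1}[1]\to 0$ in $\AA$ (Corollary \ref{corollary:TubesDirecting} guarantees $\RHom(A_i,Y_2)$ is concentrated in degree zero), and Serre duality with $\bS Y_1\cong Y_1[1]$ converts $\Hom(A_{i-1},Y_1)\neq 0$ into $\Hom(Y_1,A_{i-1}[1])\neq 0$, where $A_{i-1}[1]\in\AA[0]$ is indecomposable and cannot lie in $\TT_1$ (it receives a nonzero map from $Y_2$). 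Together with $\Hom(A,Y_1)\neq 0$ and $A\notin\TT_1$, this violates Proposition \ref{proposition:Summary}(2) for the simple tube $\TT_1$. This transport-along-the-orbit-plus-Serre-duality step is the missing argument; once you have it, no pairing with a dual witness and no appeal to Lemma \ref{key} is needed, and your reduction through $\TT_1^\perp$ becomes superfluous.
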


\begin{proof}
Since $\AA$ is indecomposable, we may assume that there is an unoriented path between $\TT_1$ and $\TT_2$.  By Theorem \ref{theorem:Paths}, we may assume that this path has length at most two.  If there is an oriented path, then the statement follows from Proposition \ref{proposition:OrthogonalAndPathMeansSimple}.  Without loss of generality, we may then assume that there is an indecomposable object $A \in \AA$ with $\Hom(A, \TT_1) \not= 0$ and $\Hom(A, \TT_2) \not= 0$.

Let $Y_1$ and $Y_2$ be the minimal 1-spherical objects associated to the tubes $\TT_1$ and $\TT_2$, respectively.  Seeking a contradiction, assume that $\TT_2$ is not simple.  To ease notation, we denote $T^i_{Y_2} A \in \Db \AA$ by $A_i$. Following Proposition \ref{proposition:Summary}, there is an $i \in \bZ$ such that $A_i \in \AA[0]$ and $A_{i-1}[1] \in \AA[0]$.  Thus, there is a short exact sequence:
$$0 \to A_i \to \Hom(A_i,Y_2)^* \otimes Y_2 \to A_{i-1}[1] \to 0.$$

Since $Y_2 \in Y_1^\perp = {}^\perp Y_1$, we know that $\Hom(A_{i-1}, Y_1) \cong \Hom(A_i, Y_1)$.  This shows that $\Hom(Y_1,A_{i-1}[1]) \cong \Hom(A_{i-1}, Y_1)^* \not= 0$, which contradicts the assumption that $Y_1$ lies in a simple tube (see Proposition \ref{proposition:Summary}).
\end{proof}

\begin{proposition}\label{proposition:SimpleTubesPerpendicular}
Let $\TT_1$ and $\TT_2$ be two distinct simple tubes.  Then $\Hom(\TT_1, \TT_2) = \Ext^1(\TT_1,\TT_2) = 0$.
\end{proposition}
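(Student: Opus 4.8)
The plan is to first reduce the vanishing of $\Ext^1$ to the vanishing of $\Hom$ by means of Serre duality, and then to establish the $\Hom$-vanishing directly from the fact that the objects of a simple tube have very restricted composition factors. Write $Y_1$ and $Y_2$ for the minimal $1$-spherical objects associated to $\TT_1$ and $\TT_2$, so that the indecomposable summands of $Y_i$ are exactly the peripheral objects of $\TT_i$, and recall that these are simple objects of $\AA$ precisely because the tubes are simple.

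For the reduction, I would use that a tube is a stable component, so $\t \TT_1 = \TT_1$ and $\t$ restricts to an autoequivalence of $\AA$ on the (non-projective) objects of the tube. For indecomposable $T_1 \in \TT_1$ and $T_2 \in \TT_2$, the Serre duality isomorphism (equivalently, the Auslander--Reiten formula, using $\bS \cong \t[1]$) gives
$$\Ext^1(T_1,T_2) \cong \Hom(T_2, \t T_1)^*.$$
Since $\t T_1$ again runs through $\TT_1$, this shows that $\Ext^1(\TT_1,\TT_2) = 0$ if and only if $\Hom(\TT_2,\TT_1) = 0$. Hence the whole statement follows once one proves that $\Hom(\TT,\TT') = 0$ for any two \emph{distinct} simple tubes $\TT,\TT'$: applying this with $(\TT,\TT') = (\TT_1,\TT_2)$ gives $\Hom(\TT_1,\TT_2)=0$, and with $(\TT,\TT')=(\TT_2,\TT_1)$ gives $\Ext^1(\TT_1,\TT_2)=0$.

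The core statement I would prove as follows. The key input is Proposition \ref{proposition:SphericalsInTubes} and the remark following it: every object $T$ of a simple tube $\TT$ admits a composition series in $\AA$ whose factors are summands of the associated $1$-spherical object, that is, peripheral simple objects of $\TT$. Suppose, seeking a contradiction, that there is a nonzero map $f : T_1 \to T_2$ with $T_1 \in \TT_1$ and $T_2 \in \TT_2$ indecomposable, and let $S$ be a simple quotient of $\im f$. Since $\im f$ is a subobject of $T_2$, the simple object $S$ is a composition factor of $T_2$, hence a peripheral simple of $\TT_2$. On the other hand, composing $T_1 \twoheadrightarrow \im f \twoheadrightarrow S$ exhibits $S$ as a simple quotient of $T_1$, so $S$ is also a composition factor of $T_1$ and therefore a peripheral simple of $\TT_1$. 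Thus the indecomposable object $S$ lies in both $\TT_1$ and $\TT_2$; as distinct tubes are disjoint connected components of the Auslander--Reiten quiver, this forces $\TT_1 = \TT_2$, a contradiction.

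The argument is genuinely short, so I do not anticipate a serious obstacle; the points that require care are bookkeeping rather than substance. The first is making sure the chosen simple object $S$ is correctly recognised as a peripheral object of \emph{each} tube, which is exactly where the composition-factor description of simple tubes from Proposition \ref{proposition:SphericalsInTubes} is needed (this is also the step that uses simplicity of the tubes, not merely that they are tubes). The second is verifying that the Serre-duality reduction stays inside $\AA$, i.e.\ that $\t T_1$ is an object of $\TT_1\subseteq\AA$ rather than a shifted complex; this holds because stable tubes consist of objects that are neither projective nor injective, on which $\t$ restricts to an autoequivalence of $\AA$.
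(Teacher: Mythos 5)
Your proof is correct and follows essentially the same route as the paper: both arguments rest on the facts that the peripheral objects of a simple tube are simple in $\AA$, that distinct tubes share no indecomposable objects, and on the Serre duality formula $\Ext^1(X,Y) \cong \Hom(Y,\t X)^*$ together with the $\t$-invariance of tubes to convert the $\Ext^1$-vanishing into a $\Hom$-vanishing. The only difference is one of bookkeeping: the paper verifies the vanishing for the associated semisimple $1$-spherical objects $X_1, X_2$ and leaves the passage to arbitrary objects of the tubes implicit, whereas your composition-factor (Jordan--H\"older) argument treats arbitrary $T_1 \in \TT_1$, $T_2 \in \TT_2$ directly, which is a detail worth spelling out, as you do.
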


\begin{proof}
Let $X_1$ and $X_2$ be the minimal 1-spherical objects associated to the tubes $\TT_1$ and $\TT_2$, respectively.  Then $X_1$ and $X_2$ are semi-simple and have no nontrivial common direct summands.  This shows that $\Hom(X_1,X_2) = \Hom(X_2,X_1) = 0$.  Using that $\tau X_1 \cong X_1$ and $\tau X_2 \cong X_2$, we see that $\Ext^1(X_1,X_2) \cong \Hom(X_2,X_1)^* = 0$.
\end{proof}

\subsection{\texorpdfstring{$t$-Structures}{t-Structures} induced by tubes}\label{subsection:tstructures}

We recall the definition of a $t$-structure from \cite{BeilinsonBernsteinDeligne82}.

\begin{definition}\label{definition:t}
A \emph{$t$-structure} on a triangulated category $\CC$ is a pair $(D^{\geq 0}, D^{\leq 0})$ of non-zero full subcategories of $\CC$ satisfying the following conditions, where we denote $D^{\leq n} = D^{\leq 0} [-n]$ and $D^{\geq n} = D^{\geq 0} [-n]$:
\begin{enumerate}
\item $D^{\leq 0} \subseteq D^{\leq 1}$ and $D^{\geq 1} \subseteq D^{\geq 0}$
\item $\Hom(D^{\leq 0}, D^{\geq 1}) = 0$
\item $\forall Y \in \CC$, there exists a triangle $X \to Y \to Z \to X[1]$ with $X \in D^{\leq 0}$ and $Z \in D^{\geq 1}$.
\end{enumerate}
Let $D^{[n,m]} = D^{\geq n} \cap D^{\leq m}$ .  We will say the $t$-structure $(D^{\geq 0}, D^{\leq 0})$ is \emph{bounded} if and only if every object of $\CC$ is contained in some $D^{[n,m]}$.
\end{definition}

It is shown in \cite{BeilinsonBernsteinDeligne82} that the \emph{heart} $\HH = D^{\leq 0} \cap D^{\geq 0}$ is an abelian category.  We will use the following proposition from \cite[Proposition 4.2]{BergVanRoosmalen11} (see also \cite[Theorem 1]{Ringel05}).

\begin{proposition}\label{proposition:HereditaryTilting}
Let $\AA$ be an abelian category and let $(D^{\geq 0}, D^{\leq 0})$ be a bounded $t$-structure on $\Db \AA$.  If all the triangles $\tri X Y Z$ with $X \in D^{\leq 0}$ and $Z \in D^{\geq 1}$ split, then $D^{\leq 0} \cap D^{\geq 0} = \HH$ is hereditary and $\Db \AA \cong \Db \HH$ as triangulated categories.
\end{proposition}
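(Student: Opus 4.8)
The plan is to pass from the splitting hypothesis to a Hom-vanishing statement, use it to decompose every object of $\Db \AA$ into its cohomology, deduce heredity of $\HH$, and finally realize the equivalence.

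First I would reformulate the hypothesis. A triangle $\tri X Y Z$ splits precisely when its connecting morphism $Z \to X[1]$ vanishes, and conversely every morphism $Z \to X[1]$ with $Z \in D^{\geq 1}$ and $X \in D^{\leq 0}$ occurs as such a connecting morphism (complete it to a triangle and rotate). Thus the hypothesis is equivalent to
$$\Hom_{\Db \AA}(Z, X[1]) = 0 \qquad \text{for all } X \in D^{\leq 0}, \ Z \in D^{\geq 1}.$$
Using this I would show that every $Y \in \Db \AA$ decomposes as $Y \cong \bigoplus_n H^n(Y)[-n]$ with $H^n(Y) \in \HH$, where $H^n$ are the cohomology functors of the $t$-structure. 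Since the $t$-structure is bounded, $Y$ lies in some $D^{[a,b]}$, and I would induct on $b-a$: after shifting by $b-1$, the truncation triangle $\tau_{\leq b-1} Y \to Y \to \tau_{\geq b} Y \to (\tau_{\leq b-1} Y)[1]$ becomes a triangle of the type above, hence splits and peels off the top cohomology $\tau_{\geq b} Y \cong H^b(Y)[-b]$.

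Next I would establish the key vanishing $\Hom_{\Db \AA}(A, B[i]) = 0$ for all $A, B \in \HH$ and $i \geq 2$. Shifting both arguments by $-(i-1)$ gives $\Hom(A, B[i]) \cong \Hom(A[-(i-1)], B[1])$; here $A[-(i-1)] \in \HH[-(i-1)] = D^{\leq i-1} \cap D^{\geq i-1} \subseteq D^{\geq 1}$ while $B \in \HH \subseteq D^{\leq 0}$, so the reformulated hypothesis forces the group to vanish. To conclude that $\HH$ is hereditary I would use the standard facts that short exact sequences in the heart are exactly the triangles of $\Db \AA$ with all three terms in $\HH$ and that $\Ext^1_\HH(A,B) \cong \Hom_{\Db \AA}(A,B[1])$: applying $\Hom_{\Db \AA}(A,-)$ to the triangle $B' \to B \to B'' \to B'[1]$ attached to a short exact sequence $0 \to B' \to B \to B'' \to 0$ yields the exact piece $\Hom(A,B[1]) \to \Hom(A,B''[1]) \to \Hom(A,B'[2])$, whose last term is zero by the vanishing just proved. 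Hence $\Ext^1_\HH(A,-)$ is right exact, and dually $\Ext^1_\HH(-,B)$ is right exact, so $\HH$ is hereditary by \cite[Lemma A.1]{ReVdB02}.

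Finally I would produce the equivalence through the realization functor $R \colon \Db \HH \to \Db \AA$ of the heart, which exists because $\Db \AA$ admits a filtered enhancement \cite{BeilinsonBernsteinDeligne82}. It is exact, is the identity on $\HH$, is $t$-exact for the standard $t$-structure on $\Db \HH$ and the given one on $\Db \AA$, and on morphisms induces the canonical comparison maps $\Ext^n_\HH(A,B) \to \Hom_{\Db \AA}(A,B[n])$. These are isomorphisms for $n = 0, 1$, while for $n \geq 2$ both sides vanish---$\Ext^n_\HH = 0$ because $\HH$ is hereditary, and $\Hom_{\Db \AA}(A,B[n]) = 0$ by the key vanishing. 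Because every object of $\Db \HH$ splits into shifted heart objects (heredity) and every object of $\Db \AA$ splits likewise (first step), exactness and additivity of $R$ reduce full faithfulness to these graded isomorphisms, and essential surjectivity is immediate from the decomposition in $\Db \AA$; thus $R$ is an equivalence. The main obstacle is precisely this last step: matching objects and Hom-spaces is forced by the earlier steps, but upgrading that to an honest exact \emph{functor} requires the realization machinery (a filtered or dg enhancement of $\Db \AA$). In the hereditary case one could instead bypass it by identifying both $\Db \HH$ and $\Db \AA$ with the explicit category of $\bZ$-graded objects of $\HH$ whose morphisms are governed by $\Hom_\HH$ and $\Ext^1_\HH$, the vanishing of the Yoneda product $\Ext^1_\HH \times \Ext^1_\HH \to \Ext^2_\HH = 0$ guaranteeing that the two composition laws coincide.
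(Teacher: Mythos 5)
Your proposal is correct, but note that the paper itself contains no proof of this proposition to compare against: it is imported verbatim from \cite[Proposition 4.2]{BergVanRoosmalen11} (with a pointer to \cite[Theorem 1]{Ringel05}). Judged on its own merits, your main route is complete and sound: the splitting hypothesis is indeed equivalent to $\Hom(Z,X[1])=0$ for $Z \in D^{\geq 1}$, $X \in D^{\leq 0}$ (every such map occurs as a connecting morphism); shifting the truncation triangle by $[b-1]$ places it in the hypothesis' range, so induction on the width of $D^{[a,b]}$ gives $Y \cong \bigoplus_n H^n(Y)[-n]$; the computation $\HH[-(i-1)] \subseteq D^{\geq 1}$ for $i \geq 2$ gives the key vanishing $\Hom_{\Db \AA}(A,B[i])=0$; combined with the standard isomorphism $\Ext^1_\HH(A,B) \cong \Hom_{\Db \AA}(A,B[1])$ for hearts and \cite[Lemma A.1]{ReVdB02}, this yields heredity; and the realization functor of \cite{BeilinsonBernsteinDeligne82} (available since $\Db \AA$ is a derived category, hence carries a filtered enhancement) is fully faithful because the comparison maps $\Ext^n_\HH(A,B) \to \Hom_{\Db \AA}(A,B[n])$ are isomorphisms in every degree -- isomorphisms for $n \leq 1$, both sides zero for $n \geq 2$ and for $n<0$ -- and essentially surjective by the decomposition of objects of $\Db \AA$. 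This is the expected strategy and, as far as one can tell, matches the cited sources. One caution about your closing ``bypass'': identifying both sides with an explicit $\bZ$-graded category governed by $\Hom_\HH$ and $\Ext^1_\HH$ only produces an equivalence of additive ($k$-linear) categories; upgrading such a comparison to a \emph{triangle} equivalence is precisely the nontrivial content of \cite{Ringel05}, so the realization-functor argument should remain the actual proof rather than the alternative.
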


We will often work with the hereditary category $\HH(Y)$ given in the following construction.

\begin{construction}\label{construction:Heart}
Let $\AA$ be an indecomposable hereditary category and and let $Y$ be a minimal 1-spherical object in $\AA$.  Assume furthermore that $\AA$ is not equivalent to a tube.

Since $\AA$ is not a tube, we know that there is an indecomposable object $B \in \AA$ such that $\Hom_{\Db \AA}(B[0],Y[l]) \not= 0$, for some $l \in \bZ$.  Up to renaming, we may assume that we have chosen $B \in \Db \AA$ such that $\Hom_{\Db \AA}(B,Y[0]) \not= 0$.

We will consider the following $t$-structure: an object $A \in \Db \AA$ lies in $D^{\leq 0}$ if and only if for each indecomposable direct summand $A'$ of $A$, there is a path from $T_Y^i B$ to $A'$, for $i \ll 0$.  Put differently:
$$\ind D^{\leq 0} = \{A \in \ind \Db \AA \mid \mbox{there is a path from $T_Y^i B$ to $A$, for $i \ll 0$}\}.$$

We then have the following description for $D^{\geq 1}$: an object $A \in \Db \AA$ lies in $D^{\geq 1}$ if and only if for each indecomposable direct summand $A'$ of $A$, there is no path from $T_Y^i B$ to $A'$, for any $i \in \bZ$.

It follows from Lemma \ref{key} that this $t$-structure does not depend on the choice of the indecomposable object $B$.  We denote by $\HH(Y) = D^{\leq 0} \cap D^{\geq 1}$ the heart of this $t$-structure.

In the case that $\AA$ is equivalent to a tube, we will write $\HH(Y)$ for $\AA$.
\end{construction}

\begin{remark}
This $t$-structure was also considered in \cite[\S 4.1]{vanRoosmalen08}.
\end{remark}

\begin{proposition}\label{proposition:SemiSimpleInHeart}
Let $\AA$ be a connected abelian category and let $Y$ be a minimal 1-spherical object in $\AA$.  The category $\HH(Y)$ is hereditary and derived equivalent to $\AA$.  Furthermore, $Y \in \HH$ is semi-simple.
\end{proposition}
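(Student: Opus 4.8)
The plan is to realize $\HH(Y)$ as the heart of the $t$-structure from Construction \ref{construction:Heart} and to invoke Proposition \ref{proposition:HereditaryTilting}; the semi-simplicity of $Y$ will then fall out of the twist-functor criterion recorded in the corollary to Proposition \ref{proposition:Summary}. When $\AA$ is a tube the statement is immediate, since there $\HH(Y) = \AA$ and the minimal $1$-spherical object is the sum of the peripheral (hence simple) objects; so I assume throughout that $\AA$ is not a tube, and I call an indecomposable $A \in \ind \Db \AA$ \emph{reachable} if there is a path from $T_Y^i B$ to $A$ for $i \ll 0$.

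The conceptual core is a dichotomy: every indecomposable is either reachable or admits no path from $T_Y^i B$ for \emph{any} $i \in \bZ$. I would first note that each orbit object $T_Y^i B$ satisfies the hypothesis of Lemma \ref{key}, because $T_Y Y \cong Y$ (Remark \ref{remark:TwistWithSelf}) gives $\Hom(T_Y^i B, Y[z]) \cong \Hom(B, Y[z])$, which is nonzero exactly for $z = 0$. Hence Lemma \ref{key} produces, for any fixed $i_0$, a path from $T_Y^i B$ to $T_Y^{i_0} B$ for $i \ll 0$; concatenating shows that reachability is equivalent to ``there is a path from $T_Y^i B$ to $A$ for \emph{some} $i$''. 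Thus the defining conditions of $D^{\leq 0}$ and $D^{\geq 1}$ are genuinely complementary on indecomposables, so every object splits as $X_{\leq} \oplus X_{\geq}$ with $X_\leq \in D^{\leq 0}$ and $X_\geq \in D^{\geq 1}$. This single observation yields axiom (3) of Definition \ref{definition:t} and, simultaneously, shows that every truncation triangle $\tri{X_\leq}{X}{X_\geq}$ splits. Axiom (2), $\Hom(D^{\leq 0}, D^{\geq 1}) = 0$, is then immediate: a nonzero map from a reachable object to $C$ would be a length-one path exhibiting $C$ as reachable.

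It remains to verify closure under shift (axiom (1)) and boundedness. For shift-closure I would show $B[1]$ is reachable: since $\Hom(B, T_Y^m B) \neq 0$ for $m \gg 0$ by Lemma \ref{key}, Lemma \ref{lemma:Ringel} applied to $\Hom(B[1], (T_Y^m B)[1]) \neq 0$ produces a path from $T_Y^m B$ to $B[1]$; applying $T_Y^{i_0}$ and concatenating with the $[1]$-shift of a path witnessing reachability of $A$ shows $A[1]$ is reachable whenever $A$ is, and taking complements gives $D^{\geq 1}[-1] \subseteq D^{\geq 1}$. Boundedness will follow from the fact that, by heredity, a nonzero morphism $\Hom(P[a], Q[b]) = \Ext^{b-a}(P,Q)$ forces $b - a \in \{0,1\}$, so degrees are non-decreasing along paths; this bounds reachable objects below, while shift-closure together with Theorem \ref{theorem:Paths}, Lemma \ref{lemma:Ringel2} and Lemma \ref{lemma:Ringel3} (applicable since $\AA$ is indecomposable, hence $\Db \AA$ is a block) bounds them above. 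With all axioms checked and all truncation triangles split, Proposition \ref{proposition:HereditaryTilting} shows $\HH(Y)$ is hereditary and $\Db \AA \cong \Db \HH(Y)$.

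For the final assertion, the key point is that $T_Y$ preserves the $t$-structure: applying the autoequivalence $T_Y$ to a witnessing path carries $T_Y^i B \to A$ to $T_Y^{i+1} B \to T_Y A$, so $T_Y$ and $T_Y^* \cong T_Y^{-1}$ preserve reachability and hence both $D^{\leq 0}$ and $D^{\geq 1}$; consequently they restrict to autoequivalences of the heart $\HH(Y)$. Since $Y$ is reachable (the chosen nonzero map $B \to Y$ is a length-one path, so $Y \in D^{\leq 0}$) while $Y[-1]$ is unreachable, $Y$ lies in $\HH(Y)$ and is a minimal $1$-spherical object there; the corollary to Proposition \ref{proposition:Summary}, applied to the hereditary category $\HH(Y)$, then forces $Y$ to be semi-simple. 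I expect the main obstacle to be the full $t$-structure verification — in particular confirming boundedness and that $Y[-1]$ is genuinely unreachable (equivalently $Y \in D^{\geq 0}$) — since orthogonality, splitting, and semi-simplicity all follow readily once the path-theoretic dichotomy coming from Lemma \ref{key} is in place.
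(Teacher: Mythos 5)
Your strategy is the same as the paper's --- realize $\HH(Y)$ via Construction \ref{construction:Heart}, apply Proposition \ref{proposition:HereditaryTilting}, and deduce semi-simplicity of $Y$ from the fact that the twist functors $T_Y^{\pm 1}$ preserve the heart --- and your verification of the $t$-structure axioms (the dichotomy from Lemma \ref{key}, orthogonality, shift-closure via Lemmas \ref{lemma:Ringel} and \ref{lemma:Ringel2}, degree bounds for boundedness) is sound and in fact more detailed than the paper, which asserts boundedness directly from connectedness.

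However, there is a genuine gap at exactly the step you defer: you never prove that $Y[-1]$ is unreachable, i.e.\ that $Y \in D^{\geq 0}$ and hence $Y \in \HH(Y)$. This cannot be left as a routine check: it is the containment that makes the final claim meaningful, and it is precisely the point where the paper supplies a specific argument, based on Serre duality plus convexity of tubes. Since $\bS Y \cong Y[1]$, Serre duality gives $\Hom(Y[-1],B) \cong \Hom(B,Y)^* \neq 0$. If $Y[-1]$ were reachable, a path from some $T_Y^i B$ to $Y[-1]$ would, after applying the autoequivalence $T_Y^{-i}$ (which fixes $Y[-1]$ by Remark \ref{remark:TwistWithSelf}), yield a path from $B$ to $Y[-1]$; concatenated with the nonzero map $Y[-1] \to B$ this is a cycle through the tube containing $Y[-1]$, and Theorem \ref{theorem:DirectingTubes} then forces $B$ to lie in that (shifted) tube, contradicting the choice of $B$ in Construction \ref{construction:Heart}. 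Note that your degree-monotonicity argument cannot close this gap: the sources $T_Y^i B$ may themselves lie in $\AA[-1]$ (by Lemma \ref{lemma:ClosedUnderTwist} this happens, for suitable $i<0$, exactly when $Y$ fails to be semi-simple in $\AA$), so degree considerations alone do not exclude a path ending in degree $-1$. The same convexity argument is also what underlies the paper's assertion that $T_Y^i B \in \HH$ for all $i$ (equivalently, that $B[-1]$ is unreachable), since any path from $T_Y^j B$ to $B[-1]$ extends along the nonzero map $B[-1] \to Y[-1]$ to a path ending at $Y[-1]$.
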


\begin{proof}
If $\AA$ is generated by the minimal 1-spherical object $Y$ (thus $\AA$ is a tube), then the statement is easy.  So, assume that $\AA$ is not a single tube.

Since $\AA$ is connected, the $t$-structure from Construction \ref{construction:Heart} is bounded, and it follows from Proposition \ref{proposition:HereditaryTilting} that $\HH$ is hereditary and derived equivalent to $\AA$.  We know that $\Hom(Y[-1],B) \not= 0$ so that  Theorem \ref{theorem:DirectingTubes} shows that there cannot be a path from $B$ to $Y[-1]$.  We conclude that $Y \in \HH$.

Since $T^i_Y B \in \HH$ for all $i \in \bZ$, Proposition \ref{proposition:Summary} shows that $Y$ is semi-simple in $\HH$.
\end{proof}

\begin{remark}
It was shown in \cite[Theorem 4.2]{vanRoosmalen12} that for every tube $\TT$ in $\AA$, there is a $t$-structure in $\Db \AA$ with hereditary heart $\HH$ ($\HH$ is derived equivalent to $\AA$) such that $\TT$ is a simple tube in $\HH$.  The $t$-structure considered in that proof is different from the one we consider in this article (i.e. the one given in Construction \ref{construction:Heart}).
\end{remark}
\section{Hereditary categories without exceptional objects}\label{section:NoExceptionals}

Let $\AA$ be an abelian hereditary category with Serre duality.  We will consider the case where $\AA$ has no exceptional objects.  Our main result will be Theorem \ref{theorem:NoExceptionals}, giving a classification up to derived equivalence.

Since $\AA$ has no exceptional objects, $\AA$ has no nonzero projective or injective objects. Recall from Proposition \ref{proposition:SerreDuality} that $\tau: \Db \AA \to \Db \AA$ then restricts to an autoequivalence $\tau: \AA \to \AA$.

\subsection{Choosing a tilt \texorpdfstring{$\HH$}{H}}\label{subsection:Tilt}

Let $\AA$ be a hereditary category with Serre duality and assume that $\AA$ does not have exceptional objects.  By Theorem \ref{theorem:LowExt}, we know there is a 1-spherical object $X$.  With this 1-spherical object, we associate a $t$-structure  with heart $\HH = \HH(X)$ as in Construction \ref{construction:Heart}.

The heart $\HH$ is a hereditary abelian category, derived equivalent to $\AA$ (see Proposition \ref{proposition:SemiSimpleInHeart}).  We want to show that $\HH$ has a set of sincere orthogonal simple tubes, meaning that there is a set $\SS \subseteq \HH$ of mutually perpendicular simple 1-spherical objects such that for every $E \in \HH$, we have $\Hom(E,\SS) \not= 0$.

The following lemma shows that $\HH$ has ``enough'' 1-spherical objects for our purposes.

\begin{lemma}\label{lemma:SS}
Let $\HH = \HH(X)$ as above.  There is a set $\SS \subset \HH$ (containing $X[0]$) of mutually orthogonal 1-spherical objects, such that for all $A \in \Ob \HH$ either $\Hom(A,\SS) \not= 0$ or $\Hom(\SS,A) \not= 0$.
\end{lemma}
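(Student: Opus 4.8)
The plan is to take $\SS$ to be a \emph{maximal} set of mutually orthogonal $1$-spherical objects of $\HH$ containing $X[0]$, and to show that maximality forces the required sincerity. First I would record that $X[0]$ is a $1$-spherical object of $\HH$: by Proposition \ref{proposition:SemiSimpleInHeart} it is (semi-)simple in $\HH$ with $\End_\HH X \cong k$, and $\t_\HH X \cong X$ since $\bS X \cong X[1]$. Because any $1$-spherical object $S$ satisfies $\t S \cong S$, Serre duality gives $\Ext^1(S,S') \cong \Hom(S',\t S)^* = \Hom(S',S)^*$, so for such objects ``orthogonal'' (meaning $\Hom(S,S') = \Hom(S',S) = 0$) automatically entails $\Ext^1$-orthogonality. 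The collection of mutually orthogonal sets of $1$-spherical objects of $\HH$ containing $X[0]$ is closed under unions of chains, so Zorn's Lemma furnishes a maximal such $\SS$; note $\t\SS = \SS$.

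Now suppose, for contradiction, that $\SS$ is not sincere, so there is a nonzero indecomposable $A \in \HH$ with $\Hom(A,\SS) = 0 = \Hom(\SS,A)$. Using $\t S \cong S$ and Serre duality once more, $\Ext^1(S,A) \cong \Hom(A,\t S)^* = \Hom(A,S)^* = 0$, so $A$ lies in the right perpendicular category $\SS^\perp$; in particular $\SS^\perp \neq 0$. I would then assemble the hypotheses needed to run Theorem \ref{theorem:LowExt} inside $\SS^\perp$: it is abelian and hereditary with exact embedding $\SS^\perp \to \HH$ by \cite{GeigleLenzing91} (recalled above); it is Ext-finite as a full subcategory of $\HH$; and since $\t\SS = \SS$ and $\HH$ has no nonzero projectives (an indecomposable projective in a hereditary category would be exceptional), Proposition \ref{proposition:PerpendicularOnSpherical} shows $\t(\SS^\perp) = \SS^\perp$ and that $\bS$ restricts to a Serre functor on $\Db(\SS^\perp)$, so $\SS^\perp$ has Serre duality with $\t_{\SS^\perp}$ the restriction of $\t_\HH$. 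Finally, $\SS^\perp$ has no exceptional objects: the embedding $\Db(\SS^\perp) \to \Db\HH$ is fully faithful, so an exceptional object of $\SS^\perp$ would be exceptional in $\HH$, hence in $\AA$ (exceptionality is a derived invariant for hereditary categories), contrary to hypothesis.

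With these hypotheses in place, Theorem \ref{theorem:LowExt} applied to $\SS^\perp$ yields an object that is exceptional or $1$-spherical; the exceptional case is excluded, so I obtain a $1$-spherical object $W \in \SS^\perp$. Since $\t_{\SS^\perp}$ is the restriction of $\t_\HH$ and $\End_\HH W = \End_{\SS^\perp} W \cong k$, the object $W$ is $1$-spherical in $\HH$ as well. Because $\SS = \t\SS$, Serre duality gives $\SS^\perp = {}^\perp\SS$ (via $\Ext^i(S,-) \cong \Ext^{1-i}(-,\t S)^* = \Ext^{1-i}(-,S)^*$), so $W \in \SS^\perp$ forces $\Hom(\SS,W) = \Hom(W,\SS) = 0$; thus $W$ is orthogonal to every element of $\SS$, and $W \notin \SS$ since $\End W \cong k \neq 0$. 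Hence $\SS \cup \{W\}$ is a strictly larger set of mutually orthogonal $1$-spherical objects containing $X[0]$, contradicting maximality. Therefore $\SS$ is sincere, which is the assertion of the lemma.

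I expect the main obstacle to be the bookkeeping for the perpendicular category: verifying that $\SS^\perp$ genuinely inherits Serre duality (so Theorem \ref{theorem:LowExt} applies) and contains no exceptional objects, together with the two-sided identity $\SS^\perp = {}^\perp\SS$ that is what actually lets the new $1$-spherical object $W$ be adjoined to $\SS$.
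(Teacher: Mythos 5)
Your proof is correct and follows essentially the same route as the paper: Zorn's Lemma produces a maximal orthogonal set of $1$-spherical objects, and Theorem \ref{theorem:LowExt} applied to the (nonzero, hereditary, Serre-dual, exceptional-free) perpendicular category contradicts maximality if sincerity fails. The paper's own proof is terser—it invokes Theorem \ref{theorem:LowExt} for $X[0]^\perp$ and dismisses the sincerity of the maximal set as obvious—so your write-up simply supplies the details (in particular $\SS^\perp={}^\perp\SS$ and the absence of exceptional objects in $\SS^\perp$) that the paper leaves implicit.
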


\begin{proof}
If $\HH$ is a tube, then $\HH$ is generated by a single 1-spherical object, and the statement is easy.

Using Theorem \ref{theorem:LowExt}, we infer that the perpendicular category $X[0]^\perp$ in $\HH$ also contains a 1-spherical object.  An easy application of Zorn's Lemma yields that there is a maximal set $\SS$ consisting of orthogonal 1-spherical objects.  Obviously, for this set, we have that either $\Hom(A,\SS) \not= 0$ or $\Hom(\SS,A) \not= 0$.
\end{proof}

\begin{proposition}\label{proposition:SincereWhenNoExceptionals}
Let $\AA$ be an abelian hereditary category with Serre duality and without any exceptional objects.  There is a $t$-structure with hereditary heart $\HH$, derived equivalent to $\AA$, and a set $\SS$ of indecomposable 1-spherical objects such that
\begin{enumerate}
\item $\Hom(S_1, S_2) = 0 = \Ext^1(S_1,S_2)$, for every two nonisomorphic $S_1,S_2 \in \SS$
\item each $S \in \SS$ is simple in $\HH$, and
\item for all $A \in \HH$, there is an $S \in \SS$ such that $\Hom(A, S) \not= 0$.
\end{enumerate}
\end{proposition}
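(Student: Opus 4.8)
The plan is to obtain $\HH$ by tilting $\AA$ at a single $1$-spherical object and to take $\SS$ to be a maximal orthogonal family of such objects. Conditions (1)--(3) are local to the connected components of $\AA$ and $\Hom$ vanishes across components, so I may assume $\AA$ is indecomposable; by Lemma \ref{lemma:Ringel3} this means $\Db\AA$ is a block, and the same holds for any heart derived equivalent to $\AA$. Since $\AA$ has no exceptional objects, Theorem \ref{theorem:LowExt} provides a $1$-spherical object $X$, and every tube of $\AA$ is then homogeneous: a peripheral object $E$ of a tube of rank at least two has $\Ext^1(E,E)\cong\Hom(E,\t E)^{*}=0$ and would be exceptional. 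I set $\HH=\HH(X)$ as in Construction \ref{construction:Heart}; by Proposition \ref{proposition:SemiSimpleInHeart}, $\HH$ is hereditary, derived equivalent to $\AA$, and $X$ is semi-simple --- hence, being indecomposable, simple --- in $\HH$, so its homogeneous tube $\TT_X$ is a simple tube of $\HH$.

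Next I would invoke Lemma \ref{lemma:SS} to get a maximal family $\SS\ni X$ of mutually orthogonal $1$-spherical objects such that every $A\in\HH$ satisfies $\Hom(A,\SS)\neq0$ or $\Hom(\SS,A)\neq0$. Property (1) is immediate: for distinct $S_1,S_2\in\SS$, orthogonality and $\t S_1\cong S_1$ give $\Ext^1(S_1,S_2)\cong\Hom(S_2,S_1)^{*}=0$. For (2), fix $S\in\SS$ with $S\neq X$; orthogonality and Serre duality give $\Hom(X,S)=0$ and $\Ext^1(X,S)\cong\Hom(S,X)^{*}=0$, so $S\in X^{\perp}$ and its homogeneous tube lies in $\TT_X^{\perp}=X^{\perp}$. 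As $\TT_X$ is a simple tube of the indecomposable category $\HH$, Proposition \ref{proposition:OrthogonalToSimpleTubeMeansSimpleTube} forces this tube to be simple as well, so $S$ is simple in $\HH$; Proposition \ref{proposition:SimpleTubesPerpendicular} then also yields (1) for the whole family.

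The substance is (3), where I must sharpen the dichotomy of Lemma \ref{lemma:SS} to the single statement $\Hom(A,\SS)\neq0$. The mechanism is to show that the tilt places each $\TT_S$ ``at the end'' of $\HH$, i.e.\ that each is a sink: for every indecomposable $A\in\HH$ and every $S\in\SS$, a nonzero map $S\to A$ forces $A\in\TT_S$. Granting this, whenever $\Hom(\SS,A)\neq0$ the object $A$ lies in a homogeneous tube $\TT_S$ and therefore surjects onto its simple top $S$, so $\Hom(A,S)\neq0$; thus both alternatives of Lemma \ref{lemma:SS} deliver $\Hom(A,\SS)\neq0$. For $S=X$ the sink property follows at once: the object $B$ of Construction \ref{construction:Heart} furnishes a nonzero map $B\to X$ from outside $\TT_X$, so by the directedness of simple tubes --- condition (2) of Proposition \ref{proposition:Summary} --- there is no nonzero map out of $\TT_X$ to an object not in $\TT_X$.

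The main obstacle is to propagate the sink property from $X$ to the remaining members of $\SS$, equivalently to exclude ``source'' tubes from $\HH(X)$. Here I would use connectedness together with perpendicularity: by Theorem \ref{theorem:Paths} any $\TT_S$ is joined to $\TT_X$ by an unoriented path of length at most two, and since $\Hom(\TT_S,\TT_X)=\Hom(\TT_X,\TT_S)=0$ by Proposition \ref{proposition:SimpleTubesPerpendicular}, a source $\TT_S$ would force the path to run through a middle object $C$ with $\Hom(S,C)\neq0$ and $\Hom(C,X)\neq0$. Since $T_X$ restricts to the identity on $X^{\perp}\ni S$ while $T_X^{i}C\in\HH$ for all $i$ (as $C$ maps into the sink $\TT_X$, Proposition \ref{proposition:Summary}), I would transport this configuration along the $T_X$-orbit and, exactly as in the proofs of Proposition \ref{proposition:Summary} and Proposition \ref{proposition:PerpendicularOfSimples}, squeeze out of Lemma \ref{key} a nonzero morphism $\HH[0]\to\HH[-1]$, which is impossible. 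An equivalent and perhaps cleaner packaging runs through the torsion class $\FF=\{A\in\HH\mid\Hom(A,\SS)=0\}$: maximality of $\SS$ with Theorem \ref{theorem:LowExt} yields $\SS^{\perp}=0$, which places $\FF$ inside $X^{\perp}$ and makes it stable under the twists $T_S$, and the sink analysis then collapses $\FF$ to $0$. I expect the decisive and most delicate point to be checking that this transport really produces the forbidden map into $\HH[-1]$.
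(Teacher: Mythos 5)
Your reduction to the indecomposable case, your choice of $\HH=\HH(X)$ and of $\SS$, and your proofs of (1) and (2) are correct; in fact deducing (2) from Proposition \ref{proposition:OrthogonalToSimpleTubeMeansSimpleTube} (after noting that the whole tube $\TT_S$ lies in $X^{\perp}$ because $X^{\perp}$ is extension-closed) is a genuine shortcut: the paper instead re-runs a twist-transport argument for each $S\in\SS$. The gap is in (3), at exactly the step you flagged as delicate. You want a contradiction from the configuration ``$S$ and $X$ simple with perpendicular homogeneous tubes, $\TT_X$ a sink, $\Hom(S,C)\neq 0$ and $\Hom(C,X)\neq 0$ for some indecomposable $C$ outside both tubes, and $\TT_S$ receiving no maps from outside,'' using only general facts (Theorem \ref{theorem:Paths}, Proposition \ref{proposition:SimpleTubesPerpendicular}, Proposition \ref{proposition:Summary}, Lemmas \ref{lemma:ClosedUnderTwist} and \ref{key}, and $T_X$ acting as the identity on $X^{\perp}$). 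But this configuration is \emph{consistent} with all of those facts, so no such contradiction can be extracted. Concretely, let $\bX$ be an elliptic curve, $P\in\bX$, and let $\HH'$ be the tilt of $\coh\bX$ at the torsion pair whose torsion class consists of the sheaves supported at $P$; every object of $\HH'$ is $F\oplus T[-1]$ with $F$ a sheaf without torsion at $P$ and $T$ supported at $P$, and $\HH'$ is an indecomposable Ext-finite hereditary category with Serre duality and no exceptional objects. Then $\SS'=\{k(Q)\}_{Q\neq P}\cup\{k(P)[-1]\}$ is a maximal family of pairwise orthogonal simple indecomposable $1$-spherical objects, $X:=k(Q_0)$ generates a sink tube, and yet $k(P)[-1]$ generates a \emph{source} tube: $\Hom_{\HH'}(k(P)[-1],C)\cong\Ext^{1}_{\coh\bX}(k(P),C)\cong\Hom(C,k(P))^{*}\neq 0$ for every vector bundle $C$, while no object outside that tube maps into it. Your forbidden configuration is realized with $S=k(P)[-1]$. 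The underlying misconception is that Proposition \ref{proposition:Summary} is only a dichotomy: once $S$ is simple, its conditions (3) and (4) guarantee that all the relevant twist orbits stay inside $\HH[0]$, so being a source is perfectly compatible with simplicity and no map $\HH[0]\to\HH[-1]$ is ever produced.

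What your argument never uses for $S\neq X$ --- and what any correct argument must use --- is the defining property of the particular heart $\HH(X)$: by Construction \ref{construction:Heart}, every indecomposable of $\HH$ lies in $D^{\leq 0}$, i.e.\ receives a path from $T_X^{i}B$ for $i\ll 0$. Apply this to $S$ itself: truncate such a path at its first vertex lying in $\TT_S$; the preceding vertex $A'$ lies outside $\TT_S$ and maps nonzero to an object of $\TT_S$, and composing with the epimorphism from that uniserial object onto its top $S$ gives $\Hom(A',S)\neq 0$. Since $S$ is simple, condition (2) of Proposition \ref{proposition:Summary} then forbids any nonzero map from $\TT_S$ to an object outside $\TT_S$; hence every tube of $\SS$ is a sink in $\HH(X)$, and your deduction of (3) from Lemma \ref{lemma:SS} goes through. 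This is exactly how the paper argues, and it also explains why the category $\HH'$ above is not of the form $\HH(X)$: there, $k(P)[-1]$ receives no path from the $T_X$-orbit of any admissible $B$.
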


\begin{proof}
If $\AA$ is generated by a single 1-spherical object $X$ (thus $\AA$ is a tube), then the statement is trivial.  Thus assume that $\AA$ is not generated by $X$.

Let $\HH$ and $\SS$ be as above, thus $X \in \AA$ is any 1-spherical object whose existence is given by Theorem \ref{theorem:LowExt}, $\SS$ is given by Lemma \ref{lemma:SS}, and $\HH$ is given in Proposition \ref{proposition:SemiSimpleInHeart}.  The first statement has been shown in Lemma \ref{lemma:SS}.

Next, we show that all objects in $\SS$ are simple.  It follows from Proposition \ref{proposition:SemiSimpleInHeart} that $X[0]$ is simple in $\HH$.  Let $S \in \SS$.  In particular $S \in \HH \subseteq \DD^{\leq 0}$ and we know that there is a path $B=B_0 \to B_1 \ldots \to B_n \to S$ in $\HH$ where $\Hom(B_0,X) \not= 0$ and we may assume that $B_0$ and $B_n$ do not lie in the tubes containing $X$ and $S$ respectively.  According to Proposition \ref{proposition:Summary}, it suffices to show that $T^i_S B_n \in \HH$ for all $i \in \bZ$.

By Lemma \ref{lemma:ClosedUnderTwist}, we know that $T_S^i B_0 \in \HH$ for all $i \geq 0$.  We thus need only to consider the case where $i < 0$.  Seeking a contradiction, let $i$ be the largest integer such that $T_S^i B_0 \not\in \HH$, thus Lemma \ref{lemma:ClosedUnderTwist} yields that $T_S^i B_0[1] \in \HH$.  Note that there is a path $T^i_S B_0 \to T^i_S B_1 \ldots \to T^i_S B_n \to S$ in $\Db \AA$.  Applying $\Hom_{\Db \AA}(-,X[0])$ to the triangle
$$\RHom(S,T_S^i B_0, S) \otimes S \to T_S^i B_0 \to T_S^{i+1} B_0 \to \RHom(S,T_S^i B_0, S) \otimes S[1]$$
we find that $\Hom(T_S^i B_0, X[0]) \not= 0$ (this uses that $\Hom(B_0,X[0]) \not= 0$ and that $S \in X[0]^\perp = {}^\perp X[0]$).  Lemma \ref{lemma:Ringel} implies that there is a path from $X$ to $T_S^i B_0[1]$.  Since $T_S^i B_0[1]$ does not lie in the tube containing $X[0]$, this implies that there is an indecomposable $Y \in \HH$, which also does not lie in the tube containing $X[0]$, such that $\Hom(X[0], Y) \not= 0$.  However, the object $X[0]$ is simple in $\HH$, and the existence of the objects $B_0, Y \in \HH$ contradicts Proposition \ref{proposition:Summary}.  We have obtained the required contradiction and may conclude that $S$ is simple in $\HH$.

It follows from Lemma \ref{lemma:SS} that $\Hom(\SS,A) \not= 0$ or $\Hom(A, \SS) \not= 0$, for each $A \in \HH$.  If $A$ lies in a tube generated by $\SS$, then $\Hom(A, \SS) \not= 0$.  We will now show $\Hom(A, \SS) \not= 0$ when $A$ does not lie in such a tube.

Let $S \in \SS$ and let $\TT$ be the tube generated by $S$.  We will assume that $S \not\cong X$.  By the construction of $\HH$, we know that there is an indecomposable object $B \in \HH$ and a path from $T^{i}_X B \in \HH$ to $S$ for $i \ll 0$.  In particular, there is an indecomposable object $A'$ (not contained in the tube $\TT$ generated by $S$) such that $\Hom(A',S) \not= 0$.  Since $S$ is simple, Proposition \ref{proposition:Summary} yields that $\Hom(S,A) = 0$ for all indecomposables $A \not\in \TT$.

This completes the proof.
\end{proof}

\begin{remark}\label{remark:TorsionIsFiniteLength}
Every object lying in one of the tubes generated by $\SS$ has finite length, while all other objects have infinite length.  Indeed, for each $A$ not lying in one of the tubes of $\SS$, there is a short exact sequence
$$0 \to T^*_S A \to A \to \Hom(A,S)^* \otimes S \to 0$$
in $\HH$ for some $S \in \SS$.  This shows that $A$ has infinite length.
\end{remark}

We may define a torsion theory on $\HH$ in the standard way:
\begin{eqnarray*}
\TT &=& \{A \in \Ob \HH \mid \mbox{$A$ has finite length}\} \\
\FF &=& \{A \in \Ob \HH \mid \mbox{every nonzero direct summand of $A$ has infinite length}\}.
\end{eqnarray*}
Note that $\tau \TT = \TT$ and thus also $\tau \FF = \FF$.  It then follows from Serre duality that $\Ext^1(\FF,\TT) = 0$ such that the objects of $\TT$ act as injectives with respect to $\FF$.  In particular, every object in $\HH$ is a direct sum of a torsion and a torsion-free object.

\begin{proposition}\label{proposition:WhenTorsion}
Let $(\TT, \FF)$ be the aforementioned torsion theory on $\HH$.  An object $A \in \HH$ is torsion if and only if $\chi(A,S) = 0$ for all $S \in \SS$.
\end{proposition}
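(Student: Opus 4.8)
The plan is to reduce both implications to the additivity of the Euler form $\chi(-,-)$ together with the Auslander--Reiten formula $\Ext^1(A,S) \cong \Hom(S,\t A)^*$, valid since $\HH$ is hereditary with Serre functor $\bS \cong \t[1]$. The basic input is the computation of $\chi$ on $\SS$: for $S,S' \in \SS$ the object $S'$ is $1$-spherical, so $\t S' \cong S'$ and the formula gives
\[
\chi(S',S) = \dim\Hom(S',S) - \dim\Hom(S,S').
\]
By property~(1) of Proposition~\ref{proposition:SincereWhenNoExceptionals} both terms vanish when $S \not\cong S'$, while both equal $1$ when $S \cong S'$ (as $\End S \cong k$); in either case $\chi(S',S) = 0$.

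For the forward implication, I would take $A$ torsion, i.e.\ of finite length. By Remark~\ref{remark:TorsionIsFiniteLength} such an $A$ lies in the union of the (homogeneous) tubes generated by $\SS$, hence is a successive extension of objects of $\SS$. Writing $[A] = \sum_i n_i [S_i]$ with $S_i \in \SS$ and using additivity of $\chi$ in the first argument, I obtain $\chi(A,S) = \sum_i n_i\, \chi(S_i,S) = 0$ for every $S \in \SS$.

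For the converse, suppose $\chi(A,S) = 0$ for all $S \in \SS$. Since $\Ext^1(\FF,\TT) = 0$, the torsion sequence splits and $A \cong A_t \oplus A_f$ with $A_t \in \TT$ and $A_f \in \FF$; the forward implication gives $\chi(A_t,S) = 0$, whence $\chi(A_f,S) = 0$ for all $S$. I then claim $A_f = 0$. Each indecomposable summand of $A_f$ has infinite length, so lies in no tube, and the vanishing established in the proof of Proposition~\ref{proposition:SincereWhenNoExceptionals} gives $\Hom(S,A_f) = 0$; since $\t\FF = \FF$, also $\Hom(S,\t A_f) = 0$, so the Auslander--Reiten formula yields $\Ext^1(A_f,S) = 0$. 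Therefore $\chi(A_f,S) = \dim\Hom(A_f,S) = 0$ for every $S \in \SS$. Were $A_f$ nonzero, property~(3) of Proposition~\ref{proposition:SincereWhenNoExceptionals} would supply some $S \in \SS$ with $\Hom(A_f,S) \neq 0$, a contradiction. Hence $A_f = 0$ and $A \cong A_t$ is torsion.

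The forward direction is routine additivity; the crux is the converse, and specifically the vanishing $\Ext^1(A_f,S) = 0$ for torsion-free $A_f$. This is where the full structure of $\SS$ enters: the identity $\Ext^1(A_f,S) \cong \Hom(S,\t A_f)^*$ converts an $\Ext$-vanishing into the $\Hom$-vanishing $\Hom(S,\FF) = 0$, which rests on $S$ being simple in a simple tube (Proposition~\ref{proposition:Summary}) together with the $\t$-invariance of $\FF$. Once this holds, $\chi$ collapses to $\dim\Hom(A_f,-)$ on $\SS$ and sincerity closes the argument.
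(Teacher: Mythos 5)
Your proof is correct and takes essentially the same route as the paper's: the forward direction is the vanishing of $\chi$ on the tubes generated by $\SS$ (which you phrase $K$-theoretically via composition series, where the paper uses the tube structure and orthogonality of distinct tubes), and the converse reduces to the torsion-free summand $A_f$, on which $\chi(A_f,S) = \dim\Hom(A_f,S)$ because $\Ext^1(\FF,\TT)=0$, so that sincerity of $\SS$ (Proposition \ref{proposition:SincereWhenNoExceptionals}(3)) forces $A_f = 0$. The only cosmetic difference is that you re-derive $\Ext^1(A_f,S)=0$ from the Auslander--Reiten formula and $\Hom(S,\FF)=0$, whereas the paper cites this vanishing directly from the setup of the torsion theory.
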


\begin{proof}
Assume that $A$ is torsion.  Without loss of generality, we may assume that $A$ is indecomposable.  By Remark \ref{remark:TorsionIsFiniteLength} we know that $A$ lies in a tube $\TT$ generated by some $S \in \SS$.  By Proposition \ref{proposition:SphericalsInTubes}, the tube is equivalent to the category of nilpotent representations of the one-loop quiver (thus the category of nilpotent representations of $k[t]$) and thus $\chi(A,S) = 0$.

For any $S' \in \SS$, nonisomorphic to $S$, we know that $\Hom(S',A) = 0$ and $\Hom(A,S') \cong \Ext^1(S',A)^*=0$ and thus also $\chi(S',A) = 0$.

Assume now that $A \in \HH$ is torsion-free.  Since $\SS$ is torsion, we know that $\Ext^1 (A,S) = 0$ so that $\chi(A,S) = \dim \Hom(A,S)$.  The statement then follows from Proposition \ref{proposition:SincereWhenNoExceptionals}.
\end{proof}

\subsection{Maps from torsion-free objects to torsion objects}

In this section, we will show that the category $\HH$ admits a torsion-free object $F$ such that $\dim_k \Hom(F,S)$ is bounded when $S$ ranges over $\SS$.  Our first result is Proposition \ref{proposition:BoundForTwo} where we give sufficient condition on this object $F$ to ensure the existence of such a bound.  Lemmas \ref{lemma:AtLeastTwo} and \ref{lemma:EllipticLemma} prove that such objects $F$ can be found in $\FF$.

For the following proposition, note that we do not require the object $F$ to be endo-simple.

\begin{proposition}\label{proposition:BoundForTwo}
Assume that $\HH$ has a torsion-free object $F \in \FF$ with $e=\dim \Ext^1(F,F) \geq 2$ and such that every nonzero $F \to \t F$ is a monomorphism.  For every $S \in \SS$, we have $\dim \Hom(F,S) \leq 2e-2$.
\end{proposition}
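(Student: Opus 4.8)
The plan is to locate a single well-chosen nonzero map $f\colon F\to\tau F$ whose cokernel $Q$ is a torsion object having $S$ in its socle, and then to read off the bound from a dimension count for $\Hom(F,Q)$. First I would record the dimensions that will drive the count. Since $F\in\FF$ is torsion-free and $S\in\SS$ is torsion, $\Ext^1(F,S)=0=\Ext^1(\tau F,S)$, so $\chi(F,S)=\dim\Hom(F,S)$; write $d$ for this number and assume $d\ge 1$, the case $d=0$ being trivial as $e\ge 2$. Because each $S\in\SS$ is $1$-spherical we have $\tau S\cong S$, whence $\dim\Hom(\tau F,S)=\dim\Hom(F,S)=d$. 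Serre duality gives the two identities $\dim\Hom(F,\tau F)=\dim\Ext^1(F,F)=e$ and $\dim\Ext^1(F,\tau F)=\dim\End F=:h$, where $h\ge 1$.

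The main point is the choice of $f$, and this is the crux of the argument. Precomposition defines a $k$-linear map $\Xi\colon\Hom(F,\tau F)\to\Hom_k(\Hom(\tau F,S),\Hom(F,S))$, $f\mapsto(g\mapsto g\circ f)$, whose target is a space of $d\times d$ matrices; by left-exactness of $\Hom(-,S)$ the kernel of $\Xi(f)=f^{*}$ is exactly $\Hom(\coker f,S)$. I want a nonzero $f$ for which $f^{*}$ fails to be injective. Here I would view $\det\circ\,\Xi$ as a homogeneous polynomial of degree $d$ on $\Hom(F,\tau F)\cong k^{e}$: using $e\ge 2$, its zero locus is a hypersurface of degree $d\ge 1$ in $\bP^{e-1}$ with $e-1\ge 1$, hence nonempty over the algebraically closed field $k$. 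This produces a nonzero $f\colon F\to\tau F$ with $\Hom(\coker f,S)\ne 0$, and by hypothesis this $f$ is a monomorphism. It is precisely at this step that both standing assumptions $e\ge 2$ and ``$k$ algebraically closed'' are used, and I expect this existence argument to be the genuine obstacle; the rest is bookkeeping.

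With such an $f$ fixed, consider the short exact sequence $0\to F\xrightarrow{f}\tau F\to Q\to 0$. I would first verify that $Q$ is torsion: for every $S'\in\SS$ we have $\chi(\tau F,S')=\dim\Hom(\tau F,S')=\dim\Hom(F,S')=\chi(F,S')$, again using $\tau S'\cong S'$ and $\Ext^1(\FF,\TT)=0$, so $\chi(Q,S')=0$ and Proposition \ref{proposition:WhenTorsion} applies. Thus $Q$ lies in the homogeneous tubes generated by $\SS$. Since $\Hom(Q,S)\ne 0$, the $S$-primary summand $Q_S$ of $Q$ is nonzero, and as $S$ is the unique simple object of its (homogeneous) tube, $\operatorname{soc}(Q_S)$ is a nonzero sum of copies of $S$; hence there is a monomorphism $S\hookrightarrow Q$.

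Finally I would carry out the dimension count. Applying $\Hom(F,-)$ to the sequence and using $\Ext^1(F,Q)\cong\Hom(Q,\tau F)^{*}=0$ (as $Q$ is torsion and $\tau F$ is torsion-free) yields the exact sequence $0\to\End F\to\Hom(F,\tau F)\to\Hom(F,Q)\to\Ext^1(F,F)\to\Ext^1(F,\tau F)\to 0$, where the first map is injective because $f$ is a monomorphism. Computing Euler characteristics with the dimensions $h,e,e,h$ recorded above gives $\dim\Hom(F,Q)=2e-2h$. Then left-exactness of $\Hom(F,-)$ applied to $S\hookrightarrow Q$ gives $d=\dim\Hom(F,S)\le\dim\Hom(F,Q)=2e-2h\le 2e-2$, since $h\ge 1$. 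I would emphasize that no endo-simplicity of $F$ is required: the only property of $h=\dim\End F$ that enters is $h\ge 1$.
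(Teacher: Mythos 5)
Your proof is correct and takes essentially the same route as the paper: both arguments produce a nonzero $f\colon F\to\tau F$ with $\Hom(\coker f,S)\neq 0$ by a determinantal argument over the algebraically closed field using $e\geq 2$ (the paper phrases it as B\'ezout applied to the non-invertibility hypersurface in $\End(\Hom(F,S))$ meeting the image of $f\mapsto(g\mapsto\alpha\circ\tau g\circ f)$, after a case distinction on injectivity of that map, while your degree-$d$ form on $\bP^{e-1}$ streamlines this to one case), and then conclude from $S\hookrightarrow\coker f$ together with the count $\dim\Hom(F,\coker f)=2e-2\dim\End F\leq 2e-2$. The only cosmetic difference in the final step is that you compute via the long exact sequence for $\Hom(F,-)$ where the paper uses additivity of the Euler form, i.e.\ $\chi(F,\coker f)=\chi(F,\tau F)-\chi(F,F)=-2\chi(F,F)$.
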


\begin{proof}
Consider the short exact sequence $0 \to F \stackrel{f}{\rightarrow} \t F \to C \to 0$ based on a nonzero map $f: F \to \t F$ (thus $C \cong \coker f$).  For any $S \in \SS$ we have
$$\chi(C, S) = \chi(\t X, S) - \chi(X, S) = 0$$
where we have used that $S \cong \t S$.  Proposition \ref{proposition:WhenTorsion} now yields that $C$ is torsion.

We will show that, for any $S \in \SS$, there is an $f: F \to \t F$ such that $\Hom(C, S) \not= 0$.  Since $C$ is torsion, this would imply that $S$ is also a subobject of $C$.  Using furthermore that $\Ext^1(F,C) = 0$ (since $C$ is torsion and $F$ is torsion-free), we obtain
\begin{align*}
\dim \Hom(F,S) &\leq \dim \Hom(F,C) \\
&= \chi(F,C) \\
&= \chi(F, \t F) - \chi(F,F) \\
&= -2\chi(F,F) \\
&= 2e-2 \dim \Hom(F,F) \\
& \leq 2e-2.
\end{align*}

We fix an $S \in \SS$, and we look for nonzero morphisms $f: F \to \t F$ and $g: \t F \to \t S$ such that $g \circ f = 0$.  This would imply that $\Hom(C,S) \not= 0$ (where $C \cong \coker f$).

To this end, we choose an isomorphism $\alpha: \t S \to S$, and we define a linear map
\begin{eqnarray*}
\varphi: \Hom(F,\tau F) &\to& \End_k(\Hom(F,S)) \\
f &\mapsto& \varphi_f 
\end{eqnarray*}
where $\varphi_f(g) = \alpha \circ \t g \circ f$, thus
$$\varphi_f (g) : F \stackrel{f}{\rightarrow} \t F \stackrel{\t g}{\rightarrow} \t S \stackrel{\alpha}{\rightarrow} S.$$

If $\varphi: f \to \varphi_f$ is not a monomorphism, then we are done.  Indeed, assume that $f$ is a nonzero element in $\Hom(F, \tau F)$ such that $\varphi_f = \varphi_0$, thus $\varphi_f(h) = 0$, for all $h: F \to S$.  Then we find $\t h \circ f = 0$ as requested.

Thus, assume that $\varphi: \Hom(F,\t F) \to \End(\Hom(F,S))$ is a monomorphism.  We will show that $\varphi_f$ is not an isomorphism for some $f \in \Hom(F,\t F) \setminus \{0\}$.   In the vector space $\End(\Hom(F,S))$, the set of non-invertible endomorphisms forms a hypersurface $H$ and $\im \varphi$ forms a subspace of $\End(\Hom(F,S))$ of dimension $\dim \Ext^1(F,F) \geq 2$.  In the associated projective space $\mathbb{P}(\End(\Hom(F,S))$, B\'ezout's theorem implies that the hypersurface $\mathbb{P}(H)$ intersects nontrivially with $\mathbb{P}(\im \varphi)$.  We infer that $\im \varphi$ contains non-invertible, and hence non-injective, nonzero endomorphisms of $\Hom(F,S)$.

In particular, let $f \in \Hom(F,\t F) \setminus \{0\}$ be such that $\varphi_f$ is not invertible.  Then there is a nonzero $h \in \Hom(F,S)$ such that $\varphi_f(h)=0$, and thus $\t h \circ f = 0$ as requested.
\end{proof}

The following lemma shows the existence of an object $F$ satisfying the conditions of Proposition \ref{proposition:BoundForTwo}, under the additional minimality condition on $\dim \Ext^1(F,F)$.

\begin{lemma}\label{lemma:AtLeastTwo}
Let $F \in \HH$ be a torsion-free object.  If $d=\dim \Ext^1(F,F)$ is minimal among all torsion-free objects and $d \geq 2$, then every nonzero map $F \to \t F$ is a monomorphism.
\end{lemma}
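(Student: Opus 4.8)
The plan is to run the argument of Lemma~\ref{MonoLemma} inside $\HH$, but to exploit the torsion theory $(\TT,\FF)$ so as to rule out a nonzero \emph{kernel} specifically, rather than merely concluding ``monomorphism or epimorphism''. Throughout write $d=\dim\Ext^1(F,F)$.

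First I would reduce to the case that $F$ is endo-simple. Since $d\geq 2$, a splitting $F\cong F_1\oplus F_2$ into nonzero summands (both torsion-free, as $\FF$ is closed under summands) would give $d\geq\dim\Ext^1(F_1,F_1)+\dim\Ext^1(F_2,F_2)\geq 2d$ by minimality, which is absurd; hence $F$ is indecomposable. Now Proposition~\ref{proposition:EndoSimpleExists} produces an endo-simple $Y$ that is both a subobject and a quotient of $F$, and $Y\in\FF$ since it is a subobject of $F$ (torsion-free classes are closed under subobjects). If both the inclusion $Y\hookrightarrow F$ and the projection $F\twoheadrightarrow Y$ were proper, Proposition~\ref{extraExtension} would force $\dim\Ext^1(Y,Y)\leq d-1$, contradicting minimality of $d$ over torsion-free objects; hence one of them is an isomorphism and $F\cong Y$ is endo-simple.

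Next I would assume for contradiction that some nonzero $f\colon F\to\t F$ is not a monomorphism, and set $K=\ker f$, $I=\im f$, $C=\coker f$, all nonzero. As $\t\FF=\FF$, we have $\t F\in\FF$, so $K$ (a subobject of $F$) and $I$ (a subobject of $\t F$) are torsion-free. Endo-simplicity gives $\Hom(F,K)=0$ (a map $F\to K\hookrightarrow F$ is a non-invertible scalar, hence $0$) and, since $\t F$ is also endo-simple, $\Hom(C,\t F)=0$ for the same reason. Applying $\Hom(F,-)$ to $0\to K\to F\to I\to 0$ and using $\Hom(F,K)=0$ gives $\dim\Ext^1(F,K)=d-1+\chi(F,I)$, and right-exactness of $\Ext^1(-,K)$ gives $\dim\Ext^1(K,K)\leq\dim\Ext^1(F,K)$. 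Since $K$ is nonzero torsion-free, minimality forces $\dim\Ext^1(K,K)\geq d$, whence $\chi(F,I)\geq 1$. Dually, applying $\Hom(-,\t F)$ to $0\to I\to\t F\to C\to 0$ and using $\Hom(C,\t F)=0$ together with Serre duality ($\chi(I,\t F)=-\chi(F,I)$ and $\chi(\t F,\t F)=\chi(F,F)=1-d$) gives $\dim\Ext^1(C,\t F)=d-1-\chi(F,I)\leq d-2$, and right-exactness of $\Ext^1(C,-)$ yields $\dim\Ext^1(C,C)\leq d-2$.

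The decisive point is that $C$ is generally not torsion-free, so minimality does not apply to it directly; here I would use two structural facts about $\HH$. Because $\Ext^1(\FF,\TT)=0$, every object splits as a torsion summand plus a torsion-free one, so $C\cong C_t\oplus C_f$ with $C_f\in\FF$. Moreover $\chi(C,S)=\chi(\t F,S)-\chi(I,S)=\chi(F,S)-\chi(I,S)=\chi(K,S)$ (using $\t S\cong S$), and since $K$ is nonzero torsion-free, Proposition~\ref{proposition:WhenTorsion} together with the sincerity in Proposition~\ref{proposition:SincereWhenNoExceptionals} gives $\chi(C,S)=\dim\Hom(K,S)>0$ for some $S\in\SS$; thus $C$ is not torsion and $C_f\neq 0$. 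But then $\dim\Ext^1(C_f,C_f)\leq\dim\Ext^1(C,C)\leq d-2<d$ contradicts the minimality of $d$ for the nonzero torsion-free object $C_f$. Hence $K=0$ and $f$ is a monomorphism. The main obstacle is exactly this handling of the cokernel: in Lemma~\ref{MonoLemma} minimality over \emph{all} objects lets one discard either kernel or cokernel, whereas here minimality is available only for torsion-free objects, so the argument must instead extract a nonzero torsion-free summand of $C$ with small self-extension, which is what the numerical characterization of torsion objects and the splitting coming from $\Ext^1(\FF,\TT)=0$ provide.
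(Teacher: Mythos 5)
Your proof is correct and follows essentially the same route as the paper's: the same two exact sequences and Euler-form computations, right-exactness of $\Ext^1(-,K)$ and $\Ext^1(C,-)$, minimality over nonzero torsion-free objects, the numerical torsion criterion of Proposition \ref{proposition:WhenTorsion} through the identity $\chi(C,S)=\chi(K,S)$, and the torsion/torsion-free splitting of $C$. The only difference is organizational: the paper argues via the dichotomy ``$\dim\Ext^1(K,K)<d$ or $\dim\Ext^1(C,C)<d$'' and in the second case deduces that $C$, hence $K$, is torsion, whereas you assume $K\neq 0$ and derive that $C$ is simultaneously small and non-torsion --- the contrapositive of the same argument.
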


\begin{proof}
The proof resembles that of Lemma \ref{MonoLemma}.  First note that by Proposition \ref{extraExtension}, we may assume that $F$ is endo-simple.  Let $f: F \to \t F$ be nonzero with kernel $K$, cokernel $C$, and image $I$.  Since $K$ and $I$ are subobjects of torsion-free objects, they are torsion-free as well.  We have the following exact sequences
$$0 \to K \to F \to I \to 0, \qquad 0 \to I \to \t F \to C \to 0.$$
By applying $\Hom(F,-)$ to the first sequence, and $\Hom(-,\t F)$ to the second, we find
\begin{eqnarray*}
\dim \Ext^1(F, K) &=& d-1+\chi(F, I) \\
\dim \Ext^1(C, \t F) &=& d-1 - \chi(F, I)
\end{eqnarray*}
Since $K$ is a subobject of $F$ and $\Ext^1(-,K)$ is right exact, we find $\dim \Ext^1(F,K) \geq \dim \Ext^1(K,K)$.  Likewise, we find $\dim \Ext^1(C, \t F) \geq \dim \Ext^1(C,C)$.  Combining these inequalities with the two equations above, we find that either $\dim \Ext^1(K,K) < d$ or $\dim \Ext^1(C,C) < d$.

In the former case, the minimality of $d$ implies that $K$ is zero, and hence $f$ is a monomorphism.  In the latter case, $C$ is necessarily torsion (possibly zero) such that $\chi(C,S) = 0$ for all $S \in \SS$ by Proposition \ref{proposition:WhenTorsion}.  From this, we find that $\chi(I,S) = \chi(\t F,S)$ (by the second short exact sequence) and that $\chi(\t F,S) = \chi(F,S)$ (since $\t S \cong S$).  We see that $\chi(I,S) = \chi(F,S)$, and hence $\chi(K,S)=0$ for all $S \in \SS$.  Since $K$ is torsion-free, Proposition \ref{proposition:WhenTorsion} implies that $K \cong 0$.  We have shown that $f$ is a monomorphism.
\end{proof}

As mentioned before, the case where $d=1$ in Lemma \ref{lemma:AtLeastTwo} will not allow us to apply Proposition \ref{proposition:BoundForTwo} directly; this case merits some extra attention.  It will follow from the classification in Theorem \ref{theorem:NoExceptionals} below that $\Db \HH \cong \Db \coh \bX$ for an elliptic curve $\bX$.  For now, we will have to consider the possibility that there is an indecomposable torsion-free object $F$ such that $\t F \not\cong F$.

\begin{lemma}\label{lemma:EllipticLemma}
Assume that there is a torsion-free object $G$ with $d = \dim \Ext^1(G,G) = 1$.  If there is an object $A$ such that $\t A \not\cong A$, then there is an indecomposable torsion-free object $F$ such that $\t F \not\cong F$, every nonzero morphism $F \to \t F$ is a monomorphism, and $\dim \Ext^1(F,F) \geq 2$.
\end{lemma}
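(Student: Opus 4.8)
The strategy is to reduce to a well-behaved $G$, locate a torsion-free object moved nontrivially by $\t$, and enlarge it so that it acquires a second self-extension while keeping all maps into its $\t$-twist injective. First, $G$ may be assumed indecomposable and endo-simple: a decomposition would, by additivity of $\Ext^1$ together with $\dim\Ext^1(G,G)=1$, yield an indecomposable summand $G'$ with $\Ext^1(G',G')=0$, i.e. an exceptional object, which $\AA$ lacks; and if $\End G\neq k$, then Proposition \ref{extraExtension} would give an endo-simple subquotient $Y$ with $\dim\Ext^1(Y,Y)\le 0$, again exceptional. Hence $\End G\cong k$, $\dim\Hom(G,\t G)=\dim\Ext^1(G,G)=1$, and $\chi(G,G)=0$.

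The crucial point is that in the present setting every torsion object is $\t$-fixed. By Proposition \ref{proposition:SincereWhenNoExceptionals} the set $\SS$ consists of \emph{indecomposable} $1$-spherical objects, each satisfying $\t S\cong S$ and hence generating a homogeneous tube; by Remark \ref{remark:TorsionIsFiniteLength} the torsion class $\TT$ is exactly the union of these tubes, and $\t$ fixes every object of a rank-one tube. Thus the hypothesised $A$ with $\t A\not\cong A$ is not torsion, and passing to an indecomposable summand on which $\t$ still acts nontrivially yields an indecomposable torsion-free object that is not $\t$-fixed. I would then choose $F$ minimising $e=\dim\Ext^1(F,F)$ among indecomposable torsion-free objects with $\t F\not\cong F$; by Proposition \ref{extraExtension} such an $F$ is endo-simple.

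Suppose first $e\ge 2$. To see that every nonzero $f\colon F\to\t F$ is a monomorphism I would adapt Lemma \ref{lemma:AtLeastTwo}: with $K=\ker f$, $I=\im f$, $C=\coker f$, applying $\Hom(F,-)$ and $\Hom(-,\t F)$ gives $\dim\Ext^1(F,K)=e-1+\chi(F,I)$ and $\dim\Ext^1(C,\t F)=e-1-\chi(F,I)$, so by right-exactness of $\Ext^1$ one of $\dim\Ext^1(K,K)$, $\dim\Ext^1(C,C)$ is $\le e-1<e$. A summand achieving this strict inequality is neither exceptional (none exist) nor, by minimality of $e$, torsion-free and $\t$-nonfixed; since torsion objects are $\t$-fixed, the torsion-free part of $K$ (resp. $C$) is forced to be $\t$-fixed. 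Because $\t S\cong S$ gives $\chi(F,S)=\chi(\t F,S)$, additivity yields $\chi(K,S)=\chi(C,S)$ for all $S\in\SS$, which together with Proposition \ref{proposition:WhenTorsion} I would use to force $K=0$, so that $f$ is monic. If instead $e=1$, I would replace $F$ by the middle term $F'$ of a non-split self-extension $0\to F\to F'\to F\to 0$: using that consecutive maps in the triangle $F'\to F\to F[1]$ compose to zero, one computes $\End F'\cong k[\epsilon]/(\epsilon^2)$ and $\dim\Ext^1(F',F')=2$, and $F'$ is indecomposable and torsion-free. Transporting the monomorphism property of maps $F\to\t F$ through the two-step filtration of $F'$ by a diagram chase then shows every nonzero $F'\to\t F'$ is monic; since $\dim\Ext^1(F',F')=2$, this in turn forces $\t F'\not\cong F'$, as otherwise a nilpotent endomorphism of $F'$ would be a nonzero non-injective self-map into $\t F'$.

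The main obstacle is the exclusion, in the case $e\ge 2$, of a nonzero $\t$-fixed torsion-free kernel $K\subseteq F$: the Euler-form bookkeeping only yields $\dim\Ext^1(K,K)<e$, which is consistent with $K$ being a $\t$-fixed ``line-bundle-type'' $1$-spherical object, so the delicate step is to combine the identity $\chi(K,S)=\chi(C,S)$ with the minimality of $e$ and the absence of exceptional objects to rule this out (and dually to exclude $C$). By contrast the reductions on $G$, the homogeneity of the tubes, and the self-extension computation are routine given the results already established.
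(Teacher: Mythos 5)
Your set-up agrees with the paper's: pick $F$ indecomposable, torsion-free, with $\t F \not\cong F$, minimising $e=\dim\Ext^1(F,F)$, and use the two exact sequences attached to a nonzero $f:F\to\t F$ to conclude that one of $\dim\Ext^1(K,K)$, $\dim\Ext^1(C,C)$ is strictly smaller than $e$. But the step you defer --- forcing $K=0$ from the identity $\chi(K,S)=\chi(C,S)$ together with Proposition \ref{proposition:WhenTorsion} --- is not a detail to be filled in later; it is the entire content of the lemma, and your closing paragraph concedes it is unresolved. That identity yields $K=0$ only when $C$ is torsion; when $C$ (or $K$) has $\t$-fixed torsion-free summands, which minimality of $e$ cannot exclude, both sides of the identity may be nonzero and no contradiction arises. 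The paper's proof at exactly this point is long and uses machinery absent from your proposal: it orders the summands $C_i$ of $C$ using convexity/directedness of tubes (Theorem \ref{theorem:DirectingTubes}) to obtain $\chi(K,C_1)=\chi(C,C_1)=0$ and, iterating, pairwise perpendicularity of the $C_i$; it extends $\{C_i\}$ to a sincere orthogonal family $\SS'$ of $1$-spherical objects via Lemma \ref{lemma:SS}; it rules out $\chi(K_j,S')<0$ by a path argument ($\Ext^1(K_j,S')\neq 0$ gives, by Lemma \ref{lemma:Ringel}, a path from $S'$ through $F$ to $C_1$, so Proposition \ref{proposition:OrthogonalAndPathMeansSimple} would place $S'$ in a simple tube, hence in $\TT$, contradicting that no path leaves $\TT$ into $\FF$); only then does Proposition \ref{proposition:WhenTorsion} (applied with respect to $\SS'$, not $\SS$) put $K$ inside tubes, and a final application of Proposition \ref{proposition:OrthogonalAndPathMeansSimple} puts $K$ and $C$ in $\TT$, whence $K=0$ as a torsion subobject of a torsion-free object.

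Two further points. First, your claim that the minimal $F$ is endo-simple does not follow from Proposition \ref{extraExtension}: the endo-simple subquotient $Y$ it produces is torsion-free with smaller self-extension, but it may satisfy $\t Y\cong Y$, so the minimality of $e$ (taken only over \emph{non-$\t$-fixed} torsion-free objects) is not contradicted. The paper explicitly warns that $F$ need not be endo-simple, and for this reason replaces your equalities by the strict inequalities $\dim\Ext^1(F,K)<e+\chi(F,I)$ and $\dim\Ext^1(C,\t F)<e-\chi(F,I)$, which still give the desired dichotomy; your argument should be repaired the same way. Second, your case $e=1$ is vacuous: when $e=1$, Proposition \ref{extraExtension} \emph{does} force $F$ to be endo-simple (otherwise an exceptional object appears), and then Lemma \ref{lemma:SmallD} together with $\t F\not\cong F$ already produces an exceptional object in $\HH$ --- a contradiction. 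So $e\geq 2$ holds automatically, and the self-extension construction of $F'$, whose key ``diagram chase'' you do not carry out, is unnecessary.
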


\begin{proof}
Let $F$ be an indecomposable torsion-free module such that $\t F \not\cong F$ (thus $F$ does not lie in a tube of $\HH$).  We will choose $F$ such that $e=\dim \Ext^1(F,F)$ is minimal among all the objects satisfying these properties.  Note that we do not require $F$ to be endo-simple.

We start by showing that $\dim \Ext^1(F,F) \geq 2$.  Seeking a contradiction, assume that $\dim \Ext^1(F,F) = 1$.  Proposition \ref{extraExtension} implies that $F$ is endo-simple (recall that $\HH$ does not have exceptional objects).  Lemma \ref{lemma:SmallD}, together with $F \not\cong \t F$, now shows that $\HH$ has an exceptional object.  This contradiction shows that $\dim \Ext^1(F,F) \geq 2$.

Let $f: F \to \t F$ be a nonzero morphism.  Again, we associate the following short exact sequences:
$$0 \to K \to F \to I \to 0, \qquad 0 \to I \to \t F \to C \to 0,$$
where $K \cong \ker f$, $I \cong \im f$, and $C \cong \coker f$.  If $f$ were an epimorphism (and hence $I \cong \t F$), then we can use $\t T \cong T$ (for all $T \in \TT$) to see that $\chi(K,\TT) = 0$.  Since $K$ is torsion-free, we know that $\chi(K,\TT) = 0$ implies that $K \cong 0$ (Proposition \ref{proposition:WhenTorsion}), contradicting $\t F \not\cong F$.

Thus, assume that $f$ is not an epimorphism and thus $C \not= 0$.  The short exact sequences above yield the following equalities:
\begin{eqnarray*}
\chi(F, K) - \chi(F,F) + \chi(F,I) &=& 0 \\
\chi(I, \t F) - \chi(\t F,\t F) + \chi(C, \t F) &=& 0.
\end{eqnarray*}
Using that $\dim \Hom(F,F) > \dim \Hom(F,K)$ and $\dim \Hom(\t F, \t F) > \dim \Hom(C, \t F)$, we find
\begin{eqnarray*}
\dim \Ext^1(F, K) &<& e +\chi(F, I) \\
\dim \Ext^1(C, \t F) = \dim \Hom(F,C) &<& e - \chi(F, I)
\end{eqnarray*}
We recall that $\HH$ is hereditary and thus that $\Ext^1(-,K)$ and $\Ext^1(C,-)$ are right exact.  Since $\dim \Ext^1(F,K) \geq \dim \Ext^1(K,K)$ and $\dim \Ext^1(C, \t F) \geq \dim \Ext^1(C,C)$, the above inequalities show that either $\dim \Ext^1(K,K) < e$ or $\dim \Ext^1(C,C) < e$.

We will assume the latter; the former is similar.  Due to the minimality of $e$, we know that $C \cong \t C$ and hence $\t$ permutes the indecomposable direct summands of $C$.  Theorem \ref{theorem:TubeCriterium} shows that every indecomposable direct summand of $C$ lies in a tube and since $\HH$ has no exceptional objects, we know that every tube is homogeneous.

Consider a direct sum decomposition $C \cong \oplus_i C_i$ into indecomposable objects.  By Theorem \ref{theorem:DirectingTubes}, we know that we can choose the labeling such that $\chi(C_j, C_i) \leq 0$ when $i \leq j$.  In particular, since $\chi(C_1,C_1) = 0$, we have $\chi(C,C_1) \leq 0$.  Also note that there is a path from every direct summand of $K$ to $C_i$.  Theorem \ref{theorem:DirectingTubes} shows that there is no path from $C_i$ to any direct summand of $K$ and Lemma \ref{lemma:Ringel} yields that $\Ext^1(K,C_i)=0$.  Hence, $\chi(K,C_i) \geq 0$.  From the above short exact sequences, we find
\begin{eqnarray*}
\chi(I,C_1) - \chi(F,C_1) + \chi(K,C_1) &=& 0 \\
\chi(I,C_1) - \chi(\t F,C_1) + \chi(C,C_1) &=& 0.
\end{eqnarray*}
Using that $\chi(\t F,C_1) = \chi(F,\t^{-1} C_1) = \chi(F,C_1)$, we can take the difference of these two equations and find that $\chi(K,C_1) - \chi(C,C_1) = 0$.  The signs of these terms now imply that
$$\chi(K,C_1) = \chi(C,C_1) = 0.$$

In particular, for all $j > 1$ we have that either either $C_j \cong C_1$ or $C_j \in C_1^\perp$, and we have that $\chi(C,C_2) \leq 0$.  We can continue this procedure to conclude that $C_i \in C_j^\perp$, for all $C_i \not\cong C_j$.  Moreover, we find that $\chi(K,C) = 0 = \chi(C,K)$.

Following Lemma \ref{lemma:SS}, we can extend the set $\{C_i\}_i$ to a set $\SS' \subseteq \HH$ of 1-spherical objects such that either $\Hom(A,\SS') \not= 0$ or $\Hom(\SS',A) \not= 0$ for any $A \in \HH$.  We claim that $\chi(K_j,S') = 0$ for all indecomposable direct summands $K_j$ of $K$ and all $S' \in \SS'$, so that it follows from Theorem \ref{theorem:DirectingTubes} that $K$ lies in the subcategory generated by $\SS'$.

For an $S' \in \SS'$, we easily find that $\chi(I,S') = \chi(\t F, S')$ and, using that $\chi(\t F, S') = \chi(F, \t^{-1} S') = \chi(F,S')$, thus also that $\chi(K,S') = 0$.

Seeking a contradiction, let $K_j$ be an indecomposable direct summand of $K$ such that $\chi(K_j, S') < 0$.  This would imply that $\dim \Ext^1(K_j, S') \not= 0$, and thus by Lemma \ref{lemma:Ringel} there is a path from $S'$ to $K_j$.  We can then concatenate this path with the given path from $K_j$ to $C_1$ giving a path from $S'$ to $C_1$ which passes $F$.  Proposition \ref{proposition:OrthogonalAndPathMeansSimple} then yields that the tubes containing $S'$ and $C_1$ are simple tubes.  However, this would imply that $S' \in \TT$, and we have established that there are no paths from $\TT$ to $\FF$ (recall that $F \in \FF$).  We may conclude that $\chi(K_j, S') \geq 0$ for all $S' \in \SS'$ and all indecomposable direct summands $K_j$ of $K$.  

It now follows from $\chi(K,S') = \sum_j \chi(K_j,S') = 0$ and $\chi(K_j, S') \geq 0$ that $\chi(K_j,S') = 0$.  Thus, by Proposition \ref{proposition:WhenTorsion}, we know that $K$ lies in the subcategory generated by $\SS'$.  In particular, $\t K \cong K$.  We know that there are paths from all indecomposable direct summands of $K$ to all direct summands of $C$, so that if $K$ is nonzero then Proposition \ref{proposition:OrthogonalAndPathMeansSimple} shows that $K,C \in \TT$.  Since $K$ is a subobject of $F$, this implies that $K \cong 0$, and we can conclude that $f$ is a monomorphism.
\end{proof}

\subsection{The quotient category \texorpdfstring{$\HH / \TT$}{H / T}}\label{subsection:Quotient}

In this subsection, we will describe the quotient category $\HH / \TT$.  We will use the notation introduced before.  Thus in particular, $\HH$ is a connected hereditary category with Serre duality without exceptional objects.  Let $\SS$ be the collection of simple objects and assume that every object in $\HH$ maps nonzero to at least one object in $\SS$ (see Proposition \ref{proposition:SincereWhenNoExceptionals} and Remark \ref{remark:TorsionIsFiniteLength}).  Let $\TT$ be the Serre subcategory generated by $\SS$, and let $\FF$ be the right $\Hom(-,-)$ orthogonal of $\TT$.  Note that by Serre duality, we have $\Ext^1(\FF,\TT) = 0$ so that every object of $\TT$ behaves as an injective object with respect to $\FF$.

\begin{lemma}\label{lemma:QuotientSemiSimple}
$\HH/ \TT$ is semi-simple in the sense that $\Ext^1(-,-)$ between any two objects is zero.
\end{lemma}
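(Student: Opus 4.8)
The plan is to prove the stronger statement that $\Ext^1_{\HH/\TT}(\pi C,\pi A)=0$ for all $A,C\in\FF$, where $\pi\colon\HH\to\HH/\TT$ is the quotient functor; since the torsion pair $(\TT,\FF)$ splits, every object of $\HH/\TT$ is isomorphic to some $\pi F$ with $F\in\FF$, so this suffices. First I would record the description of extensions in the Serre quotient: using the identification $\Db(\HH/\TT)\cong\Db(\HH)/\Db_\TT(\HH)$, where $\Db_\TT(\HH)$ consists of the complexes with cohomology in $\TT$, one obtains $\Ext^1_{\HH/\TT}(\pi C,\pi A)\cong\varinjlim_{C'}\Ext^1_\HH(C',A)$, the colimit taken over all subobjects $C'\subseteq C$ with $C/C'\in\TT$ (the torsion-freeness of $A$ removes the contribution from the other side, since $A$ has no torsion subobject).

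By Serre duality on $\HH$ we have $\Ext^1_\HH(C',A)\cong\Hom_\HH(A,\tau C')^*$, and since $\tau$ is an exact autoequivalence of $\HH$ (there are no projectives, by Proposition~\ref{proposition:SerreDuality}), the assignment $C'\mapsto\tau C'$ identifies the cotorsion subobjects of $C$ with those of $E:=\tau C\in\FF$. An inclusion $C''\subseteq C'$ induces $\tau C''\subseteq\tau C'$ and hence a surjection $\Hom_\HH(A,\tau C')^*\twoheadrightarrow\Hom_\HH(A,\tau C'')^*$; the transition maps of the colimit are therefore surjective and the $C''\subseteq C'$ are cofinal, so a single vanishing term $\Ext^1_\HH(C',A)=0$ forces the whole colimit to vanish. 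Thus the lemma reduces to the claim that for all $A,E\in\FF$ there is a subobject $D\subseteq E$ with $E/D\in\TT$ and $\Hom_\HH(A,D)=0$. I would prove this by induction on the finite number $\dim_k\Hom(A,E)$, the inductive step being to produce, whenever $\Hom(A,E)\neq0$, a torsion quotient $E\twoheadrightarrow T$ with $T\in\TT$ for which $\Hom(A,E)\to\Hom(A,T)$ is nonzero; its kernel $D$ then satisfies $E/D\in\TT$ and $\dim\Hom(A,D)<\dim\Hom(A,E)$, and one iterates.

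To construct such a detecting torsion quotient, choose a nonzero $\psi\colon A\to E$, put $I=\im\psi\in\FF$, and let $\bar I\subseteq E$ be its saturation, so that $\bar I/I\in\TT$ has finite length (Remark~\ref{remark:TorsionIsFiniteLength}) and $E/\bar I\in\FF$. By sincerity (Proposition~\ref{proposition:SincereWhenNoExceptionals}) the nonzero object $I$ admits an epimorphism $I\twoheadrightarrow S$ onto some simple $S\in\SS$. If $S$ can be chosen in a homogeneous tube not occurring in the finite support of $\bar I/I$, then $\Ext^1(\bar I/I,S)=0$ by the orthogonality of distinct simple tubes (Proposition~\ref{proposition:SimpleTubesPerpendicular}), so $I\twoheadrightarrow S$ extends to a morphism $\bar I\to S$; and because $E/\bar I\in\FF$ and $\Ext^1(\FF,\TT)=0$, it extends further to a morphism $E\to S$ that is nonzero on $I$. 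Since $A\twoheadrightarrow I$, the composite $A\to E\to S$ is then nonzero, giving the required quotient.

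The main obstacle is exactly the clause ``$S$ can be chosen outside the finite support of $\bar I/I$'': one must rule out that a nonzero torsion-free $I$ maps nontrivially into only finitely many tubes. I would control this by studying the support $\{S\in\SS:\Hom(I,S)\neq0\}$, which by the long exact sequence together with $\Ext^1(\FF,\TT)=0$ is additive over short exact sequences of torsion-free objects, and by combining sincerity with the connectedness of $\HH$ (Theorem~\ref{theorem:Paths}) and the Euler-form criterion of Proposition~\ref{proposition:WhenTorsion} to show that this support cannot be contained in any fixed finite set once $\FF\neq0$. Granting this, the induction terminates and yields the desired $D$, whence $\Ext^1_{\HH/\TT}(\pi C,\pi A)=0$ for all $A,C\in\FF$ and $\HH/\TT$ is semisimple.
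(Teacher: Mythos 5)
Your reduction to the claim ``for all $A,E\in\FF$ there is a cotorsion subobject $D\subseteq E$ with $\Hom(A,D)=0$'' is correct, and your induction scheme has essentially the same skeleton as the paper's proof (which minimizes $\dim\Ext^1(B',A)$ over cotorsion subobjects $B'$ instead of inducting on $\dim\Hom(A,E)$). But the gap you flag yourself is genuine, and it is not a removable technicality of the write-up: having factored $\psi\colon A\to E$ through $I=\im\psi$ and passed to the saturation $\bar I$ with $\bar I/I\in\TT$, you need a simple $S$ with $\Hom(I,S)\neq 0$ lying outside the finite set of tubes supporting $\bar I/I$, i.e.\ you need that a nonzero torsion-free object maps nontrivially into tubes outside any prescribed finite set. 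Nothing available at this point of the paper yields this: sincerity (Proposition~\ref{proposition:SincereWhenNoExceptionals}) produces one $S$ with no control over its tube, the Euler-form criterion of Proposition~\ref{proposition:WhenTorsion} only says the support is nonempty, and connectedness does not propagate support across tubes. The statement is true a posteriori (once $\Db\HH\cong\Db\coh\bX$ is known, torsion-free objects map onto skyscrapers at every point), but that classification is proved \emph{using} this lemma, so invoking it would be circular. As written, your induction can get stuck.

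The fix is to make the opposite choice of intermediate subobject, and this is precisely the mechanism in the paper's proof. Since $\Ext^1(\FF,\TT)=0$, the quotient $E/I$ splits as $T\oplus F$ with $T\in\TT$ and $F\in\FF$; instead of the saturation $\bar I$ (the preimage of $T$), take $E''\subseteq E$ to be the preimage of $F$, so that $E/E''\cong T\in\TT$ and $E''/I\cong F\in\FF$. Now for \emph{any} $S\in\SS$ with $\Hom(I,S)\neq 0$ (sincerity suffices), the obstruction to extending $I\to S$ along $I\hookrightarrow E''$ lies in $\Ext^1(E''/I,S)\subseteq\Ext^1(\FF,\TT)=0$, so there is a morphism $E''\to S$ nonzero on $I$; it is an epimorphism because $S$ is simple, its kernel $D_1$ is cotorsion in $E$ (as $\TT$ is closed under extensions), and $\dim\Hom(A,D_1)<\dim\Hom(A,E'')\leq\dim\Hom(A,E)$ because the composite $A\twoheadrightarrow I\hookrightarrow E''\to S$ is nonzero. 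No condition on the tube of $S$ is needed, and the induction terminates. In the paper this same move appears as the passage from $B'$ to $B''$ (chosen so that $B''/\im f$ is torsion-free, where $f\colon\t^{-1}A\to B'$ is Serre-dual to a nonzero class in $\Ext^1(B',A)$), with the subobject $T^*_S(B'')$ playing the role of your $D_1$ and minimality replacing your induction.
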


\begin{proof}
As in \cite[Corollary IV.1.4]{ReVdB02}.  We will repeat the argument for the benefit of the reader.  It is sufficient to show that for any $A,B \in \HH$ there is a subobject $B'$ of $B$ with $B / B' \in \TT$ such that $\Ext^1(B',A) = 0$.  Since every object in $\HH$ is a direct sum of an object in $\FF$ and an object in $\TT$, we may assume that $B \in \FF$.

Choose a $B' \subseteq B$ (with $B / B' \in \TT$) such that $\dim_k \Ext^1(B',A)$ is minimal.  Seeking a contradiction, we assume that $\dim_k \Ext^1(B',A) \not= 0$.

Since $\HH$ has no nonzero injective objects, we have $\Ext^1(B',A) \cong \Hom(\t^{-1} A, B')^*$.  Let $f: \t^{-1} A \to B'$ be any nonzero morphism.  Let $T$ be the torsion subobject of $B' / \im f$; since $\Ext^1(\FF, \TT) = 0$, $T$ is a direct summand of $B' / \im f$.  We have the following commutative diagram with exact rows and columns:
$$\xymatrix{
& & 0\ar[d] & 0\ar[d] \\
0 \ar[r] & \im f \ar[r] \ar@{=}[d]& B'' \ar[r] \ar[d] & {B''/ \im f} \ar[r] \ar[d] & 0 \\
0 \ar[r] & \im f \ar[r] & B' \ar[r] \ar[d] & {B'/ \im f} \ar[r] \ar[d] & 0 \\
& & T \ar@{=}[r] \ar[d] & T \ar[d] \\
& & 0 & 0
}$$

Note that $B'' \subseteq B' \subseteq B$ (hence $B'' \in \FF$) and that $B / B'' \in \TT$.  Furthermore, $B'' / \im f$ is torsion-free and, by the minimality of $\dim_k \Ext^1(B',A)$, we know that $\dim_k \Ext^1(B',A) = \dim_k \Ext^1(B'',A)$.

It follows from Proposition \ref{proposition:SincereWhenNoExceptionals} that there is a simple $S \in \TT$ such that $\Hom(\im f, S) \not= 0$.  Consider the short exact sequence
$$0 \to T_S^*(B'') \to B'' \to \Hom(B'',S)^* \otimes S \to 0.$$
Since $B'' / \im f \in \FF$ and $\Ext^1(\FF, \TT) = 0$, we know that any morphism $\im f \to S$ factors through the inclusion $\im f \to B''$ and thus as
$$\im f \to B'' \to \Hom(B'',S)^* \otimes S \to S.$$
Since $\Hom(\im f, S) \not= 0$, we know that $\im f \not\subseteq T_S^*(B'')$.  We may conclude that the monomorphism $\Hom(\t^{-1} A, T_S^*(B'')) \to \Hom(\t^{-1} A, B'')$ is not an epimorphism (since the image does not contain the map $f$) and thus
$$\dim \Hom(\t^{-1} A, T_S^*(B'')) \lneqq \dim \Hom(\t^{-1} A, B'') = \dim \Hom(\t^{-1} A, B').$$

However, it is easily checked that $B / T^*_S(B'') \in \TT$.  This contradicts the minimality of $\dim_k \Ext^1(B',A)$, and hence we know that $\Ext^1(B',A) = 0$.  This concludes the proof.
\end{proof}

We will write $\pi: \HH \to \HH / \TT$ for the quotient functor.  Let $F$ be an object in $\HH/ \TT$ and let $\tilde{F}$ be a lift of $F$ in $\HH$, thus $\pi(\tilde{F}) \cong F$. Since every object in $\HH$ is a direct sum of an object in $\TT$ and an object in $\FF$, we can choose the lift $\tilde{F}$ to be in $\FF$.  We put
$$v(F) = (\dim \Hom(\tilde{F}, S))_{S \in \SS}.$$
For lifts $\tilde{F},\tilde{G}$ of $F,G$ we have
$$\Hom_{\HH}(\tilde{F},\tilde{G}) \hookrightarrow \Hom_{\HH/ \TT} (F,G)$$
since the image of a map $\tilde{F} \to \t \tilde{G}$ is torsion-free.

Furthermore, for an object $F \in \HH / \TT$, we will write $\t F$ for $\pi (\t \tilde{F})$.  Since $\t \TT = \TT$, the object $\t F \in \HH / \TT$ is well-defined.

\begin{lemma}
The function $v$ is well-defined and additive on $\HH/ \TT$. Furthermore $v(\t F) = v(F)$. If $F \not\cong 0$ then $v(F) \not= 0$.
\end{lemma}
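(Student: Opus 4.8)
The plan is to reduce all four assertions to the Euler form via the identity
$$\dim\Hom(\tilde F,S)=\chi(\tilde F,S)\qquad(S\in\SS),$$
valid because $\tilde F\in\FF$ and $S\in\TT$ give $\Ext^1(\tilde F,S)=0$ (using $\Ext^1(\FF,\TT)=0$, while heredity kills higher terms). Thus $v(F)$ is really the vector $(\chi(\tilde F,S))_{S\in\SS}$, and the point is that $\chi(-,S)$ is additive on short exact sequences of $\HH$ and, by Proposition \ref{proposition:WhenTorsion}, annihilates every torsion object.

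Well-definedness is the heart of the matter. Let $\tilde F_1,\tilde F_2\in\FF$ be two torsion-free lifts of $F$, so $\pi\tilde F_1\cong\pi\tilde F_2$. Working in the Serre quotient $\HH/\TT$, a morphism $\pi\tilde F_1\to\pi\tilde F_2$ is represented by a map $f\colon\tilde F'\to\tilde F_2/B''$ with $\tilde F'\subseteq\tilde F_1$, $\tilde F_1/\tilde F'\in\TT$ and $B''\subseteq\tilde F_2$, $B''\in\TT$; since the torsion subobject of the torsion-free object $\tilde F_2$ vanishes, $B''=0$, so $f\colon\tilde F'\to\tilde F_2$ is an honest morphism of $\HH$. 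As $\tilde F'\subseteq\tilde F_1\in\FF$ we have $\tilde F'\in\FF$, and because $\pi f$ is an isomorphism, $\ker f$ is a torsion subobject of $\tilde F'$, hence $\ker f\in\FF\cap\TT=0$; thus $f$ is a monomorphism with $\coker f=:Q\in\TT$. Applying $\chi(-,S)$ to the two short exact sequences
$$0\to\tilde F'\to\tilde F_1\to \tilde F_1/\tilde F'\to 0,\qquad 0\to\tilde F'\to\tilde F_2\to Q\to 0$$
and using $\chi(\tilde F_1/\tilde F',S)=\chi(Q,S)=0$ (Proposition \ref{proposition:WhenTorsion}) yields $\chi(\tilde F_1,S)=\chi(\tilde F',S)=\chi(\tilde F_2,S)$ for every $S\in\SS$. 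Hence $v(F)$ is independent of the chosen torsion-free lift. (Equivalently, the right-exact sequence $K_0(\TT)\to K_0(\HH)\to K_0(\HH/\TT)\to 0$ shows the two lifts differ in $K_0(\HH)$ by a class supported on $\TT$, which $\chi(-,S)$ kills.)

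The remaining three assertions then follow quickly. For additivity, Lemma \ref{lemma:QuotientSemiSimple} shows $\HH/\TT$ is semisimple, so any short exact sequence splits; taking the torsion-free lift of $F'\oplus F''$ to be $\tilde F'\oplus\tilde F''$ and using $\Hom(\tilde F'\oplus\tilde F'',S)=\Hom(\tilde F',S)\oplus\Hom(\tilde F'',S)$ gives $v(F'\oplus F'')=v(F')+v(F'')$, which by well-definedness is precisely additivity. For the $\tau$-invariance, $\t\TT=\TT$ gives $\t\FF=\FF$, so $\t\tilde F\in\FF$ is a legitimate torsion-free lift of $\t F=\pi(\t\tilde F)$; since each $S\in\SS$ is $1$-spherical we have $\t S\cong S$, and $\t$ being an autoequivalence yields
$$\dim\Hom(\t\tilde F,S)=\dim\Hom(\t\tilde F,\t S)=\dim\Hom(\tilde F,S),$$
so $v(\t F)=v(F)$. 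Finally, if $F\not\cong 0$ then its torsion-free lift $\tilde F$ is a nonzero object of $\HH$, and Proposition \ref{proposition:SincereWhenNoExceptionals} produces an $S\in\SS$ with $\Hom(\tilde F,S)\neq 0$, whence $v(F)\neq 0$.

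The main obstacle is the well-definedness step, i.e.\ controlling how $\chi(\tilde F,S)$ changes under a change of torsion-free lift; once this is phrased through the Euler form and Proposition \ref{proposition:WhenTorsion}, everything else is bookkeeping.
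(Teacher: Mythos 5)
Your proof is correct and follows essentially the same route as the paper's: well-definedness by representing the isomorphism of two torsion-free lifts by an honest morphism of $\HH$ with torsion kernel and cokernel and then killing the torsion terms via $\chi(\TT,S)=0$ (Proposition \ref{proposition:WhenTorsion}), $\tau$-invariance from $\t S\cong S$, and nonvanishing from Proposition \ref{proposition:SincereWhenNoExceptionals}. The only point you treat that the paper leaves implicit is additivity, which you correctly reduce to direct sums using the semisimplicity of $\HH/\TT$ (Lemma \ref{lemma:QuotientSemiSimple}).
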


\begin{proof}
To show that $v$ is well-defined, let $\tilde{F}, \bar{F}$ be lifts of an object $F \in \HH/\TT$.  Consider an isomorphism $\pi(\tilde{F}) \cong \pi(\bar{F})$.  Using that $\bar{F}$ has no nonzero torsion subobjects, the existence of the isomorphism means that there is a subobject $G$ of $\tilde{F}$ such that $\tilde{F}/G$ is torsion, and a map $f: G \to \bar{F}$ such that both the kernel $\ker f$ and the cokernel $\coker f$ are torsion.  Using that $\chi(\TT,S) = 0$ for all $S \in \SS$, we find that
$$\dim \Hom(\tilde{F}, S) = \dim \Hom(G, S) = \dim \Hom(\bar{F}, S),$$
as required.

The equality $v(\t F) = v(F)$ follows from
$$\Hom(\tilde{F}, S) \cong \dim \Hom(\tilde{F}, \t^{-1} S) \cong \dim \Hom(\t \tilde{F},S),$$
where we have used that $\t^{-1} S \cong S$.

The last statement follows from Proposition \ref{proposition:SincereWhenNoExceptionals}. 
\end{proof}

\begin{lemma}\label{lemma:QuotientHasSimple}
Assume that there is an $E \in \HH/ \TT$ such that $v(E)$ is bounded.  Then $\HH/ \TT$ contains a simple object.
\end{lemma}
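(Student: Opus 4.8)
The plan is to prove that $\pi E$ has finite length in $\HH/\TT$. By Lemma~\ref{lemma:QuotientSemiSimple} every short exact sequence in $\HH/\TT$ splits, so a nonzero subobject is the same thing as a nonzero direct summand, and a nonzero object of finite length automatically contains a simple subobject. Hence it suffices to rule out an infinite strictly descending chain of subobjects of $\pi E$, i.e. to show the poset of subobjects of $\pi E$ is Artinian. So I would assume a chain $\pi E = F_0 \supsetneq F_1 \supsetneq \cdots$ with nonzero quotients $Q_i = F_{i-1}/F_i$ and aim for a contradiction.

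First I would transport this chain back to $\HH$. Choosing a torsion-free lift $\tilde E \in \FF$ of $\pi E$ and pulling the $F_i$ back along $\pi$, one obtains an honest descending chain $\tilde E = \tilde F_0 \supseteq \tilde F_1 \supseteq \cdots$ of torsion-free subobjects with torsion-free quotients $\tilde Q_i = \tilde F_{i-1}/\tilde F_i$ satisfying $\pi\tilde Q_i \cong Q_i \neq 0$. Since $\Ext^1(\FF,\TT)=0$, applying $\Hom(-,S)$ to $0 \to \tilde F_i \to \tilde F_{i-1} \to \tilde Q_i \to 0$ keeps the sequence short exact, so that $v(F_{i-1})_S = v(F_i)_S + v(Q_i)_S$ for every $S \in \SS$. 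Telescoping, and using that all entries of $v$ are non-negative, gives for each fixed $S$
$$\sum_{i\geq 1} v(Q_i)_S \;\leq\; v(E)_S \;\leq\; B,$$
where $B$ is the assumed bound on the entries of $v(E)$; in particular $\supp v(Q_i) \subseteq \supp v(E)$ for all $i$.

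It remains to contradict the infinitude of the chain, and this is where I expect the real difficulty to lie. Each $Q_i$ is nonzero, so by Proposition~\ref{proposition:SincereWhenNoExceptionals} it maps nonzero to at least one $S \in \SS$, i.e. $v(Q_i)_S \geq 1$ there; the danger is that the $Q_i$ \emph{spread out}, each contributing to a different tube, so that no single displayed inequality is ever violated (note that $\supp v(E)$ is typically infinite, as for $\coh\bX$, so a pigeonhole on the support alone does not suffice). The key fact I would establish is that some simple tube is \emph{faithful} on torsion-free objects: there is an $S_0 \in \SS$, or at worst a finite subset of $\SS$, such that every nonzero torsion-free object maps nonzero to $S_0$ (respectively to one of the chosen tubes). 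Granting this, every $Q_i$ contributes at least $1$ to $\sum_i v(Q_i)_{S_0} \leq B$ (respectively one of finitely many such bounded sums receives infinitely many contributions), which caps the number of proper steps of the chain and yields the contradiction. The existence of such a faithful tube is, I believe, the crux: I would try to derive it from the connectedness of $\HH$, the directedness of tubes (Theorem~\ref{theorem:DirectingTubes}), and the fact that the simple tubes sit ``at the end'' of $\HH$ as recorded in Proposition~\ref{proposition:SincereWhenNoExceptionals}, using Serre duality to upgrade a vanishing $\Hom(\tilde G,S)=0$ into full orthogonality of $\tilde G$ and the tube of $S$, and then arguing that a nonzero torsion-free object cannot be orthogonal to a generic tube.
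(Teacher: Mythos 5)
Your reduction is sound as far as it goes: pulling the chain back to saturated torsion-free subobjects of $\tilde{E}$, using $\Ext^1(\FF,\TT)=0$ to make $v$ additive on the resulting short exact sequences, and telescoping to get $\sum_i v_S(Q_i)\leq v_S(E)$ for every $S$ are all correct. The proof then rests entirely on your ``faithful tube'' claim, namely that a single $S_0\in\SS$ (or a finite subset of $\SS$) receives a nonzero map from \emph{every} nonzero torsion-free object, and this is a genuine gap, not a deferred verification. The only sincerity statement available at this stage is Proposition \ref{proposition:SincereWhenNoExceptionals}(3): every nonzero object maps nonzero to \emph{some} $S\in\SS$, with $S$ depending on the object. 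The uniform statement is true a posteriori (in $\coh\bX$ every vector bundle admits a nonzero map to every skyscraper sheaf), but within the paper it is only obtainable \emph{from} the classification: statements of this flavor, such as Corollary \ref{corollary:PerpendicularToSimpleTubeNoExceptionals}, are deduced from Theorem \ref{theorem:NoExceptionals}, whose proof requires the present lemma -- so invoking them here would be circular. Your Serre-duality upgrade is fine ($\Hom(G,S_0)=0$ for torsion-free $G$ does force $G\in S_0^\perp\cap{}^\perp S_0$), but nothing proved before the classification excludes a nonzero torsion-free object that is fully orthogonal to a given tube; connectedness plus Theorem \ref{theorem:Paths} only produces a length-two unoriented path through an unknown intermediate object $Z$ with $\Hom(Z,S_0)\neq 0$, from which no contradiction follows.

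The paper sidesteps the issue with a minimality argument worth comparing against your set-up. Choose $E$ with $v(E)$ bounded so that $\max_S v_S(E)$ is minimal, and with indecomposable lift $\tilde{E}$. If $E=E_1\oplus E_2$ nontrivially, the decomposition lifts (Gabriel) to a short exact sequence $0\to\tilde{E_1}\to\tilde{E}\to\tilde{E_2}\to 0$ in $\HH$, necessarily non-split; Serre duality converts non-splitness into $\Hom_\HH(\tilde{E_1},\t\tilde{E_2})\neq 0$, hence $\Hom_{\HH/\TT}(E_1,\t E_2)\neq 0$. By semi-simplicity (Lemma \ref{lemma:QuotientSemiSimple}) the image $F$ of such a map is a common direct summand of $E_1$ and of $\t E_2$, so whenever $v_S(F)\neq 0$ one gets $v_S(F)\leq\min\bigl(v_S(E_1),v_S(E_2)\bigr)<v_S(E_1)+v_S(E_2)=v_S(E)$, using $v(\t E_2)=v(E_2)$; since $v(F)\neq 0$, this strictly decreases $\max_S v_S$, contradicting minimality. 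The common-summand trick is exactly what substitutes for the uniform sincerity your argument needs: it localizes the problem at those $S$ where $F$ is already known to be visible from \emph{both} summands. To rescue your approach you would have to prove the faithful-tube claim independently, and I do not see how to do that without essentially reproving the classification.
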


\begin{proof}
Choose $E$ as in the statement of the lemma such that $\max_{S} v_S(E)$ is minimal. We assume, in addition, that $\tilde{E}$ is indecomposable.

Assume that $E \in \HH / \TT$ is not simple, i.e. $E = E_1 \oplus E_2$ in $\HH / \TT$. Then there are lifts $\tilde{E}, \tilde{E_1}, \tilde{E_2}$ and an exact sequence (see \cite[Corollaire 3.1.1]{Gabriel62})
$$0 \to \tilde{E_1} \to \tilde{E} \to \tilde{E_2} \to 0$$
which is not split since $\tilde{E}$ is indecomposable.  Hence $\Hom_\HH(\tilde{E_1}, \t \tilde{E_2}) \not= 0$ and thus
$\Hom_{\HH / \TT}(E_1, \t E_2) \not= 0$.

Choose a nonzero map $E_1 \to \t E_2$ and let $F$ be its image. By Lemma \ref{lemma:QuotientSemiSimple}, $F$ is a summand of both $E_1$ and $\t E_2$. For an $S \in \SS$ such that $v_S(F) \not= 0$, we have
$$v_S(F) \leq \min(v_S(E_1), v_S(\t E_2)) \leq \min(v_S(E_1), v_S(E_2)) < v_S(E_1) + v_S(E_2) = v_S(E)$$
where we have used that $v_S(F) \not= 0$ and thus also $v_S(E_1) \not= 0 \not= v_S(E_2)$. This contradicts the minimality of $E$.
\end{proof}

\subsection{Proof of classification}  We are now ready to prove Theorem \ref{theorem:NoExceptionals} below.  Let $\HH$ be the abelian category derived equivalent to $\AA$, given in \S\ref{subsection:Tilt}.

\begin{proposition}\label{proposition:NoExceptionals}
Assume that $\HH/ \TT$ contains a simple object.  Then $\Db \HH \cong \Db \coh \bX$ for a smooth projective curve $\bX$.
\end{proposition}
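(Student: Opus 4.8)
The plan is to turn the simple object of $\HH/\TT$ into a \emph{noetherian} object of $\HH$, to deduce from this that $\HH$ itself is noetherian, and then to invoke the classification of noetherian hereditary categories with Serre duality from \cite{ReVdB02}. First I would dispose of the trivial case: if $\HH$ were a single tube then $\TT=\HH$ and $\HH/\TT=0$, contradicting the hypothesis (and Lemma \ref{lemma:QuotientHasSimple}), so $\HH$ is not a tube. Let $E$ be a simple object of $\HH/\TT$ and let $\tilde E\in\FF$ be an indecomposable torsion-free lift. The key observation is that $\tilde E$ is noetherian. Indeed, since $\HH/\TT$ is semi-simple (Lemma \ref{lemma:QuotientSemiSimple}) and $E=\pi(\tilde E)$ is simple, every nonzero subobject $F\subseteq\tilde E$ is torsion-free, so $\pi(F)\neq 0$ and hence $\pi(F)=E$; consequently $\tilde E/F\in\TT$ has finite length. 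Given an ascending chain $F_1\subseteq F_2\subseteq\cdots\subseteq\tilde E$ of nonzero subobjects, the induced surjections $\tilde E/F_1\twoheadrightarrow\tilde E/F_2\twoheadrightarrow\cdots$ are maps of finite-length objects of non-increasing length, so the chain stabilises. Thus $\tilde E$ is noetherian.

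Next I would globalise this. The very same length argument shows that \emph{any} torsion-free object whose image in $\HH/\TT$ is simple is noetherian; since an extension of noetherian objects is noetherian, an induction on the length of the image then shows that every torsion-free object with finite-length image in $\HH/\TT$ is noetherian. As $\TT$ consists of finite-length (hence noetherian) objects and every object of $\HH$ splits as a torsion object plus a torsion-free object, it would follow that $\HH$ is noetherian once one knows that \emph{every} object of the semi-simple category $\HH/\TT$ has finite length.

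I expect this last point to be the main obstacle. It is a genuine rank-finiteness statement, not a counting argument: the invariant $v$ need not be finitely supported (on a curve a bundle maps nonzero to every skyscraper sheaf), so one cannot simply bound the number of simple summands of $\pi(G)$ by $\sum_S v_S(G)$. I would establish finite length of the objects of $\HH/\TT$ by showing that $\HH/\TT$ is a finite-length category with, up to isomorphism, a single simple object, feeding in the connectedness of $\HH$ (Lemma \ref{lemma:Ringel3}), the sincerity of $\SS$ (Proposition \ref{proposition:SincereWhenNoExceptionals}), the characterisation of torsion objects by $\chi(-,S)$ (Proposition \ref{proposition:WhenTorsion}), and Serre duality. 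Equivalently, one may argue that the Serre subcategory $\HH_{n}\subseteq\HH$ of noetherian objects is $\t$-stable and that, via Corollary \ref{corollary:SemiOrthogonal} together with the fact that $\Db\HH$ is a block (Lemma \ref{lemma:Ringel3}), $\Db\HH_{n}$ is a direct summand of $\Db\HH$; since $\tilde E\in\HH_{n}$ the complement vanishes and $\HH_{n}=\HH$, so $\HH$ is noetherian. This is the step where I anticipate the real work, as the summand route must avoid circular use of saturation.

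Finally, with $\HH$ noetherian, hereditary, Ext-finite and carrying Serre duality, I would apply the classification of \cite{ReVdB02}. The hypothesis that $\AA$ (equivalently $\HH$) has no exceptional objects forces every tube to be homogeneous and rules out the cases $\mod\Lambda$, the weighted projective lines, and the nontrivially ramified orders (all of which contain exceptional objects, for instance the structure sheaf or the peripheral simples of a tube of rank $>1$, cf.\ Theorem \ref{theorem:TubeCriterium}); the reduction above excludes the pure-tube case. What remains is $\HH\cong\coh\bX$ for a smooth projective curve $\bX$ (necessarily of genus at least one, since $\bP^{1}$ carries exceptional objects). Therefore $\Db\HH\cong\Db\coh\bX$, which is exactly the assertion of Theorem \ref{theorem:NoExceptionals} in this case.
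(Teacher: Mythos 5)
Your opening step --- choosing a torsion-free lift $\tilde E$ of the simple object and proving it noetherian via the surjections $\tilde E/F_1 \twoheadrightarrow \tilde E/F_2 \twoheadrightarrow \cdots$ between finite-length torsion quotients --- is exactly the paper's argument and is correct. The gap is everything after that. Your plan requires proving that $\HH$ itself is noetherian, which you reduce to the claim that every object of $\HH/\TT$ has finite length, and this claim is never established: you flag it yourself as ``the main obstacle'' and offer only two sketches. The first (show $\HH/\TT$ is a length category with a single simple object) is a restatement of what must be proved, not an argument; the second (split $\Db\HH_{n}$ off as a direct summand via Corollary \ref{corollary:SemiOrthogonal}) needs $\Db\HH_{n}$ to be saturated, and saturation is only available for categories that have already been identified --- precisely the circularity you concede. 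Note also that noetherianity of $\HH$ is strictly more than the proposition asserts and cannot be expected from formal manipulations alone: a general heart of a bounded $t$-structure on $\Db\coh\bX$ need not be noetherian, so any proof of it must exploit the specific construction of $\HH$, and no such argument appears in your proposal.

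The idea you are missing is to reverse the order of classification and splitting, which is what the paper does. The noetherian objects form a Serre subcategory $\NN \subseteq \HH$ with $\t\NN = \NN$, so $\NN$ is itself an Ext-finite \emph{noetherian} hereditary category with Serre duality, and it contains the indecomposable torsion-free object $\tilde E$. Apply the classification of \cite{ReVdB02} not to $\HH$ but to the connected component $\NN'$ of $\NN$ containing $\tilde E$: since $\NN'$ has no exceptional objects and is not a tube (as $\NN$ is closed under subobjects, a finite-length object of $\NN'$ has finite length in $\HH$, whereas $\tilde E$ is torsion-free of infinite length), it must satisfy $\NN' \cong \coh\bX$ for a smooth projective curve $\bX$. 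Only now does saturation enter, non-circularly: $\Db\coh\bX$ is saturated by \cite[Theorem C]{ReVdB02} (recalled in \S\ref{section:SerreDuality}), the embedding $\coh\bX \cong \NN' \to \HH$ is fully faithful and preserves $\Ext^1$ by Serre duality, so it induces a fully faithful exact functor $\Db\coh\bX \to \Db\HH$, giving a semi-orthogonal decomposition which is a genuine direct sum decomposition because $\coh\bX$ is stable under the Serre functor (Corollary \ref{corollary:SemiOrthogonal}); since $\Db\HH$ is a block (Lemma \ref{lemma:Ringel3}), the complement vanishes and $\Db\HH \cong \Db\coh\bX$. In short, your ``summand route'' is the right one, but it must be run on the already-classified component $\coh\bX$ rather than on the unidentified category $\HH_{n}$; then the question of whether all of $\HH$ is noetherian never has to be answered.
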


\begin{proof}
Let $\NN$ be the full subcategory of noetherian objects in $\HH$.   Since $\HH$ has no nonzero exceptional objects, neither does $\NN$.  In particular, $\HH$ does not have nonzero projective and injective objects and the Auslander-Reiten translate $\t: \Db \HH \to \Db \HH$ restricts to an autoequivalence $\t: \HH \to \HH$.  We have that $\t \NN = \NN$.

We claim that $\NN$ contains at least one indecomposable non-simple object, namely the object which becomes simple in $\HH/ \TT$ (see Lemma \ref{lemma:QuotientHasSimple}). Indeed, let $N \in \HH/\TT$ be a simple object and let $\tilde{N} \in \FF$ be a lift, thus $\pi\tilde{N} \cong N$.  Let $M_0 \subseteq M_1 \subseteq M_2 \subseteq \ldots$ be an increasing sequence of subobjects of $\tilde{N}$.  Since $N$ is simple in $\HH / \TT$ and $M_i \not\in \TT$, we know that $\pi(M_i) \cong N$.  Hence $\tilde{N} / M_i \in \TT$.  Since there are epimorphisms $\tilde{N} / M_i \to \tilde{N} / M_j$ for all $i \leq j$, and $\TT$ is a length category, we know that these epimorphisms become isomorphisms for $i,j \gg 0$.  This shows that the sequence $(M_i)_i$ of subobjects of $\tilde{N}$ stabilizes.  We have shown that $\tilde{N}$ is a noetherian object.

Let $\NN'$ be a connected component of $\NN$.  Clearly $\t \NN' = \NN'$.  It follows from the classification in \cite{ReVdB02} that $\NN' \cong \coh \bX$ where $\bX$ is a smooth projective curve.

We now have a fully faithful map $\coh \bX \to \HH$ which, due to Serre duality, also preserves $\Ext^1(-,-)$. It follows that the derived functor $F: \Db \coh X \to \Db \HH$ is fully faithful as well.  Since $\Db \coh \bX$ is saturated this map yields a semi-orthogonal decomposition of $\Db \HH$, and since $\Db \coh X$ is closed under the Serre functor this semi-orthogonal decomposition is a genuine decomposition (see Corollary \ref{corollary:SemiOrthogonal}). By the fact that $\Db \HH$ is indecomposable, we obtain $\Db \coh X = \Db \HH$.
\end{proof}

\theoremNoExceptionals*

\begin{proof}
Let $\SS$ be as in Proposition \ref{proposition:SincereWhenNoExceptionals}.  Let $\TT$ be the Serre subcategory generated by $\SS$.

If $\AA / \TT$ is zero, then $\AA \cong \TT$, and hence $\AA$ is a homogeneous tube.

Thus, assume that $\AA / \TT$ is nonzero.  In this case, let $E \in \FF$ be a nonzero torsion-free object in $\AA$ such that $d = \dim \Ext^1(E,E)$ is minimal among all nonzero torsion-free objects.

If $d \geq 2$, then it follows from Lemma \ref{lemma:AtLeastTwo} and Proposition \ref{proposition:BoundForTwo} that $v(E)$ is bounded, and hence $\AA/ \TT$ contains a simple object by Lemma \ref{lemma:QuotientHasSimple}.  The classification follows from Proposition \ref{proposition:NoExceptionals}.

If $d=1$, we consider two cases.  The first case is where $F \cong \t F$ for all indecomposable torsion-free objects.  In this case, it follows from \cite{vanRoosmalen08} that $\Db \AA \cong \Db \coh \bX$ where $\bX$ is an elliptic curve (the proof in \cite{vanRoosmalen08} only uses that $\bS X \cong X[1]$ for all $X \in \Db \AA$, not that $\bS \cong [1]$).

The second case we consider is where there is an object $F \in \FF$ such that $\t F \not\cong F$.  We can then use Lemma \ref{lemma:EllipticLemma} and Proposition \ref{proposition:BoundForTwo} to see that there is an object $F \in \FF$ such that $v(F)$ is bounded, and hence $\AA/ \TT$ contains a simple object by Lemma \ref{lemma:QuotientHasSimple}.  The classification follows from Proposition \ref{proposition:NoExceptionals}.
\end{proof}

\begin{remark}
It follows from the classification in Theorem \ref{theorem:NoExceptionals} that the final case in the proof cannot occur.  In particular, there are no categories satisfying the conditions in Lemma \ref{lemma:EllipticLemma}.
\end{remark}

\begin{remark}
As a corollary to Theorem \ref{theorem:NoExceptionals}, we can recover the classification of abelian 1-Calabi-Yau categories from \cite{vanRoosmalen08}.  However, that classification was used in the proof of Theorem \ref{theorem:NoExceptionals}.
\end{remark}

\begin{remark}\label{remark:WhatIsHH}
It now follows from the classification that the category $\HH$ we constructed in \S\ref{subsection:Tilt} is equivalent to either a homogeneous tube, or to the category $\coh \bX$ of coherent sheaves on a smooth projective curve $\bX$ of genus at least one.
\end{remark}

\begin{corollary}\label{corollary:PerpendicularToSimpleTubeNoExceptionals}
Let $\AA$ be as in Theorem \ref{theorem:NoExceptionals}.  Let $S$ be a 1-spherical object.  The category $S^\perp$ is a direct sum of homogeneous tubes.
\end{corollary}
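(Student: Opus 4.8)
The plan is to reduce the statement to Theorem \ref{theorem:NoExceptionals} applied to the connected components of $S^\perp$, and then to exclude the curve case by a saturation argument inside $\Db \AA$. First I would record the basic structure of $S^\perp$: since $S$ is $1$-spherical we have $\t S \cong S$, so $\{S\}$ is $\t$-stable, and Proposition \ref{proposition:PerpendicularOnSpherical} shows that $S^\perp$ carries Serre duality and that $\Db(S^\perp)$, identified with the full triangulated subcategory $\{X \in \Db \AA \mid \Hom(S,X[n])=0 \text{ for all } n\}$ of $\Db \AA$, is stable under $\bS$. By Geigle--Lenzing the category $S^\perp$ is hereditary, and because the embedding $\Db(S^\perp) \to \Db \AA$ is full we get $\Ext^i_{S^\perp}(E,E) \cong \Ext^i_\AA(E,E)$ for $E \in S^\perp$; hence an exceptional object of $S^\perp$ would be exceptional in $\AA$, so $S^\perp$ has no exceptional objects. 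Each connected component $\NN'$ of $S^\perp$ is then a nonzero indecomposable hereditary category with Serre duality and without exceptional objects, so Theorem \ref{theorem:NoExceptionals} applies: $\NN'$ is derived equivalent either to a homogeneous tube or to $\coh \bY$ for a smooth projective curve $\bY$ of genus at least one.

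The heart of the argument is to exclude the second possibility, and I expect this to be the main obstacle. Suppose a component $\NN'$ were derived equivalent to $\coh \bY$. Then $\Db \NN'$ is saturated, being derived equivalent to the saturated category $\coh \bY$. I would next verify that $\Db \NN'$ is stable under the Serre functor of $\Db \AA$: by Proposition \ref{proposition:PerpendicularOnSpherical} the autoequivalence $\bS$ restricts to the Serre functor of $\Db(S^\perp)$ and hence permutes its blocks, but for $0 \neq X \in \Db \NN'$ Serre duality gives $\Hom(X,\bS X) \cong \Hom(X,X)^* \neq 0$, which forces $\bS X$ into the same block, so $\bS(\Db \NN') = \Db \NN'$. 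Thus $\Db \NN' \subseteq \Db \AA$ is a saturated, $\bS$-stable, full triangulated subcategory, and Corollary \ref{corollary:SemiOrthogonal} yields a genuine decomposition $\Db \AA = \Db \NN' \oplus (\Db \NN')^\perp$.

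The contradiction is then extracted from indecomposability, and this is where the delicate point (the $\bS$-stability of a single block, established above) pays off: since $\AA$ is indecomposable, $\Db \AA$ is a block by Lemma \ref{lemma:Ringel3}, so one summand vanishes; as $\Db \NN' \neq 0$ this would give $\Db \NN' = \Db \AA$. But $\Db \NN' \subseteq \Db(S^\perp)$, and $\Db(S^\perp)$ is a proper subcategory of $\Db \AA$ since it cannot contain $S$ (indeed $\Hom(S,S) \cong k \neq 0$). This rules out the curve case, so every connected component of $S^\perp$ is derived equivalent to a homogeneous tube; being a connected $1$-Calabi--Yau hereditary category (the Serre functor being a shift is a derived invariant), each such component is in fact a homogeneous tube by the classification of abelian $1$-Calabi--Yau categories. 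Hence $S^\perp$ is a (possibly empty) direct sum of homogeneous tubes, as claimed; the degenerate case where $\AA$ is itself a homogeneous tube is consistent, since there $S$ is the unique simple object and $S^\perp = 0$.
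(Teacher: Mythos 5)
Your proof is correct, but it follows a genuinely different route from the paper's. The paper applies Theorem \ref{theorem:NoExceptionals} to $\AA$ itself and then argues concretely inside $\Db \coh \bX$: for genus at least two the only $1$-spherical objects correspond to simple sheaves $k(P)$, so $S^\perp$ becomes the torsion sheaves supported away from $P$, while for genus one it invokes Atiyah's classification \cite{Atiyah57} of sheaves on an elliptic curve. You instead apply Theorem \ref{theorem:NoExceptionals} to the connected components of $S^\perp$ --- after checking, via Proposition \ref{proposition:PerpendicularOnSpherical} and the fullness of $\Db(S^\perp) \subseteq \Db \AA$, that these inherit Serre duality and have no exceptional objects --- and then rule out curve components abstractly: such a component $\NN'$ would give a saturated, $\bS$-stable triangulated subcategory $\Db \NN' \subseteq \Db \AA$, hence a genuine direct summand by Corollary \ref{corollary:SemiOrthogonal}; since $\Db \AA$ is a block (Lemma \ref{lemma:Ringel3}), this forces $\Db \AA = \Db \NN' \subseteq \Db(S^\perp)$, which is impossible as $S \notin \Db(S^\perp)$. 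This redeploys, at the level of a component of $S^\perp$, exactly the splitting technique the paper uses in Proposition \ref{proposition:NoExceptionals}. What your route buys: it needs no geometric knowledge of $1$-spherical objects on curves, and your final step (the abelian $1$-Calabi--Yau classification of \cite{vanRoosmalen08}) honestly upgrades ``derived equivalent to a homogeneous tube'' to ``is a homogeneous tube,'' a point the paper's ``the statement follows easily'' leaves implicit; what the paper's route buys is brevity and an explicit identification of the possible objects $S$. Two hypotheses you use silently deserve a word in a final write-up: Proposition \ref{proposition:PerpendicularOnSpherical} requires that $\AA$ have no nonzero projective objects, which holds because $\AA$ has no exceptional objects (as noted at the start of \S\ref{section:NoExceptionals}); and the $\bS$-stability argument you give for a single block is also what justifies your earlier assertion that each connected component of $S^\perp$ has Serre duality, so logically it belongs before the invocation of Theorem \ref{theorem:NoExceptionals} on those components.
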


\begin{proof}
We only need to consider the case where $\Db \AA \cong \Db \coh \bX$ for a smooth projective curve $\bX$.  If the genus of $\bX$ is at least 2, then the only 1-spherical objects in $\Db \coh \bX$ correspond to simple sheaves in $\coh \bX$, and the statement follows easily.

When the genus is 1, then $\bX$ is an elliptic curve and the statement follows from the classification of coherent sheaves on $\coh \bX$ (\cite{Atiyah57}, see also \cite{BruningBurban07, vanRoosmalen08}).
\end{proof}

\begin{corollary}\label{corollary:OnesimpleMeansEnoughSimples}
Let $\AA$ be as in Theorem \ref{theorem:NoExceptionals}. Let $\SS$ be the set of all simple 1-spherical objects.  If $\SS$ is nonempty, then $\SS^\perp = {}^\perp \SS = 0$. 
\end{corollary}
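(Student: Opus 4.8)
The plan is to reduce to a single simple $1$-spherical object and then read off the answer from the structure of its perpendicular category. First I would record the self-duality $\SS^\perp = {}^\perp\SS$. Each $S\in\SS$ satisfies $\t S\cong S$, so Serre duality gives $\Ext^1(S,X)\cong\Hom(X,\t S)^*=\Hom(X,S)^*$ and $\Ext^1(X,S)\cong\Hom(S,\t X)^*\cong\Hom(\t^{-1}S,X)^*=\Hom(S,X)^*$. Since $\AA$ is hereditary, $\Ext^i$ vanishes for $i\geq 2$, and therefore both $\SS^\perp$ and ${}^\perp\SS$ coincide with $\{X\in\AA \mid \Hom(S,X)=\Hom(X,S)=0 \text{ for all } S\in\SS\}$. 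In particular it suffices to prove $\SS^\perp=0$.

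Next, fix one $S\in\SS$, which exists by hypothesis. As an indecomposable $1$-spherical object that is simple in $\AA$, $S$ generates a homogeneous simple tube $\TT_S$ with unique peripheral object $S$ (Proposition \ref{proposition:SphericalsInTubes}); moreover $S^\perp=\TT_S^\perp$, since every object of $\TT_S$ is an iterated self-extension of $S$ and hence $\Ext^i(S,-)=0$ forces $\Ext^i(T,-)=0$ for all $T\in\TT_S$. Because $S\in\SS$ we have $\SS^\perp\subseteq S^\perp$, so I would analyse $S^\perp$. By Corollary \ref{corollary:PerpendicularToSimpleTubeNoExceptionals}, $S^\perp$ is a direct sum of homogeneous tubes; and since $\AA$ is indecomposable and $\TT_S$ is a simple tube, Proposition \ref{proposition:OrthogonalToSimpleTubeMeansSimpleTube} shows that each of these tubes is in fact a \emph{simple} tube in $\AA$.

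Now I would force a contradiction. Suppose $X\in\SS^\perp$ is nonzero; as $\SS^\perp$ is closed under direct summands I may take $X$ indecomposable. Then $X\in S^\perp$ lies in one of the homogeneous simple tubes $\TT'$ above, whose peripheral object $S'$ is simple in $\AA$ and satisfies $\t S'\cong S'$ and $\End S'\cong k$. Thus $S'$ is a simple $1$-spherical object, i.e. $S'\in\SS$. But in a homogeneous tube every nonzero object contains the simple $S'$ in its socle, so $\Hom(S',X)\neq 0$; this contradicts $X\in\SS^\perp$, which demands $\Hom(S',X)=0$. Hence $\SS^\perp=0$, and by the first paragraph ${}^\perp\SS=0$ as well.

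The bookkeeping in the first paragraph is routine. The real content, and the step I expect to be the crux, is ensuring that the homogeneous tubes appearing in $S^\perp$ are \emph{simple} tubes in $\AA$: only then are their socles genuine simple objects of $\AA$, hence members of $\SS$, which is exactly what lets the global orthogonality hypothesis exclude every nonzero object. This is where indecomposability of $\AA$ is used, through Proposition \ref{proposition:OrthogonalToSimpleTubeMeansSimpleTube}.
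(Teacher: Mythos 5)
Your proof is correct and follows essentially the same route as the paper: reduce to a single $S\in\SS$, invoke Corollary \ref{corollary:PerpendicularToSimpleTubeNoExceptionals} to see that $S^\perp$ is a direct sum of homogeneous tubes, and use indecomposability of $\AA$ via Proposition \ref{proposition:OrthogonalToSimpleTubeMeansSimpleTube} to conclude these tubes are simple, so their peripheral objects lie in $\SS$ and no nonzero object can survive in $\SS^\perp$. The only difference is that you spell out the final contradiction (a nonzero $X\in\SS^\perp$ would receive a nonzero map from the socle simple of its tube) and the identification $\SS^\perp={}^\perp\SS$, both of which the paper leaves implicit.
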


\begin{proof}
Let $S$ be a simple 1-spherical object in $\AA$.  In each of the examples, we know that $S^\perp$ is a direct sum of tubes.  It follows from Proposition \ref{proposition:OrthogonalToSimpleTubeMeansSimpleTube} that each of those tubes is simple in $\AA$.  We then obtain $\SS^\perp = {}^\perp \SS = 0$ as requested.
\end{proof}
\section{Numerically finite categories}

Let $\AA$ be an indecomposable abelian hereditary category with Serre duality.  If $\AA$ has no exceptional objects, then $\AA$ is derived equivalent to one of the categories in Theorem \ref{theorem:NoExceptionals}.  We will now consider the case where $\AA$ may have exceptional objects, but where every exceptional sequence is finite.  This is the case, for example, when $\AA$ is numerically finite.

Let $\EE = (E_i)_{i=1, \ldots, n}$ be a (finite) maximal exceptional sequence in $\Db \AA$.  It follows from Proposition \ref{proposition:SequenceMeansObject} that we may assume that $\EE \subseteq \AA$ and that $\EE$ is a strong exceptional sequence, thus $E = \oplus_{i=1}^n E_i$ is a partial tilting object in $\AA$.  Throughout, we will assume that $\EE$ is such an exceptional sequence.

We will split our discussion into two cases.  The first case is where $\AA$ has a tilting object; these categories are understood by the classification in \cite{Happel01} (see Theorem \ref{theorem:Happel}).  In the second case, we will show that we have ``enough'' generalized 1-spherical objects (as in Proposition \ref{proposition:SincereWhenNoExceptionals}) and use this to find a derived equivalent category $\HH$ which we can show to be noetherian.  The categories are then understood via the classification in \cite{ReVdB02}.

Since the first case is easily dealt with, we will assume that $\AA$ does not have a tilting object.  In \S\ref{subsection:PerpendicularTubes} we will consider the case where $\EE^\perp$ is a direct sum of tubes.  Then in \S\ref{subsection:OnlyOneExceptional}, we will consider the case where $\EE$ consists of a single indecomposable object $E$ and we will show that $E$ lies in a \emph{simple} tube of rank two in $\AA$.  We will use these results in \S\ref{subsection:LargerSequence} to show that there are ``enough'' simple tubes (as in Lemma \ref{lemma:SS}).  Afterward, the proof of the classification is similar to the proof of Theorem \ref{theorem:NoExceptionals}.

\subsection{A decomposition theorem}  In this section, we will deal with numerically finite hereditary categories with Serre duality and with nonzero projective objects.  Our main result is Theorem \ref{theorem:Decomposition} below, which says that such a category is a direct sum of a subcategory without nonzero projective objects and a subcategory with enough projective objects.  The theorem is thus an analogue of \cite[Theorem II.4.2]{ReVdB02} for non-noetherian categories.

\begin{theorem}\label{theorem:Decomposition}
Let $\AA$ be an abelian hereditary category.  Let $\PP$ be the full subcategory of $\AA$ consisting of projective objects and let $\BB$ be the the abelian subcategory of $\AA$ generated by $\BB$.  The subcategory $\BB$ is a Serre subcategory of $\AA$ and every object $B \in \BB$ has a projective resolution of length at most one.

If furthermore $\AA$ is Ext-finite, satisfies Serre duality, and $\PP$ has only finitely many isomorphism classes of indecomposable objects, then
\begin{enumerate}
\item there is a finite-dimensional hereditary algebra $\Lambda$ such that $\BB \cong \mod \Lambda$,
\item every injective object lies in $\BB$,
\item every object in $\BB$ has an injective resolution of length at most one,
\item $\BB^\perp = {}^\perp \BB$ and $\AA \cong \BB \oplus \BB^\perp \cong \BB \oplus {}^\perp \BB$.
\end{enumerate}
\end{theorem}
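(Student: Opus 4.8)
The plan is to prove the two unconditional assertions first, then feed Serre duality into the finiteness hypotheses, and finally obtain the splitting from a pair of complementary torsion pairs.

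\emph{The general part.} Since $\AA$ is hereditary, every subobject of a projective is projective: if $K \hookrightarrow P$ with $P$ projective, then $\Ext^1(K,M)$ sits between $\Ext^1(P,M)=0$ and $\Ext^2(P/K,M)=0$. Hence the cokernel of any map of projectives admits a projective resolution of length at most one, and I would identify $\BB$ with the class of quotients of objects of $\add\PP$, equivalently the objects admitting a resolution $0\to P_1\to P_0\to B\to 0$. Closure under quotients is immediate; for a subobject $K\subseteq B$ I would pull back to its preimage $\tilde K\subseteq P_0$, which is projective, exhibiting $K$ as a quotient of $\tilde K$, so $K\in\BB$; closure under extensions is the horseshoe lemma. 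Thus $\BB$ is a Serre subcategory whose objects have projective dimension at most one.

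\emph{Part (1) and the dual setup.} Writing $P=\bigoplus_{i=1}^m P_i$ for a sum of representatives of the finitely many indecomposable projectives and $\Lambda=\End_\AA(P)$, I would check that $\Hom_\AA(P,-)\colon\BB\to\mod\Lambda$ is an equivalence: it is exact (as $P$ is projective), carries $\add P$ to the finitely generated projective $\Lambda$-modules, and every $B\in\BB$ has a presentation in $\add P$, so the functor is essentially surjective and fully faithful by the standard projective-generator argument. Applying it to $0\to P_1\to P_0\to B\to 0$ yields a length-one projective resolution of $\Hom(P,B)$, whence $\gldim\Lambda\le 1$, and Ext-finiteness makes $\Lambda$ finite dimensional. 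Dually, Proposition \ref{proposition:SerreDuality} gives a bijection between indecomposable projectives and indecomposable injectives, so there are also finitely many indecomposable injectives; let $\II$ be their additive closure and $\BB'$ the Serre subcategory of objects cogenerated by injectives, i.e. admitting an injective coresolution of length at most one (quotients of injectives are injective by the dual heredity argument).

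\emph{The crux $\BB=\BB'$.} Set $\GG=\{M:\Hom(P_i,M)=0\text{ for all }i\}$. Using Ext-finiteness and the finiteness of $\PP$ and $\II$, the projective trace $tM=\im\bigl(\bigoplus_i\Hom(P_i,M)\otimes P_i\to M\bigr)$ and the injective reject $rM=\ker\bigl(M\to\bigoplus_i\Hom(M,I_i)^*\otimes I_i\bigr)$ are genuine subobjects, producing torsion pairs $(\BB,\GG)$ and $(\GG,\BB')$. Serre duality identifies the middle class in both descriptions, since $\Hom(P_i,M)\cong\Hom(M,\bS P_i)^*=\Hom(M,I_i)^*$ gives $\GG=\{M:\Hom(M,I_i)=0\ \forall i\}$. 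Now $\BB\cap\GG=0=\BB'\cap\GG$ (an object of $\BB\cap\GG$ kills its own identity; an object of $\BB'\cap\GG$ embeds in an injective yet admits no nonzero map to one). Applying the first torsion pair to $Y\in\BB'$ gives $Y/tY\in\BB'\cap\GG=0$, so $\BB'\subseteq\BB$; applying the second to $X\in\BB$ gives $rX\in\BB\cap\GG=0$, so $\BB\subseteq\BB'$. Hence $\BB=\BB'$, which is precisely (2) and (3): every injective lies in $\BB'=\BB$, and every object of $\BB=\BB'$ has an injective coresolution of length at most one. I expect this sandwiching to be the main obstacle, since placing one injective at a time into $\BB$ runs in circles; the decisive point is that the projective and injective torsion radicals share the common class $\GG$, appearing as torsion-free for one pair and torsion for the other.

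\emph{The splitting (4).} With $\BB=\BB'$ the two torsion pairs are the mutually reverse pairs $(\BB,\GG)$ and $(\GG,\BB)$. For any $M$, the subobjects $tM\in\BB$ and $rM\in\GG$ meet in $\BB\cap\GG=0$, while $M/(tM+rM)$ is simultaneously a quotient of $M/tM\in\GG$ and of $M/rM\in\BB$, hence lies in $\BB\cap\GG=0$; therefore $M=tM\oplus rM$. Combined with $\Hom(\BB,\GG)=\Hom(\GG,\BB)=0$ this yields $\AA\cong\BB\oplus\GG$ with $\GG=\BB^\perp={}^\perp\BB$, proving (4). Alternatively, once injectives are known to lie in $\BB$ one has $\bS(\Db\BB)=\Db\BB$, and as $\Db\BB\cong\Db\mod\Lambda$ is saturated, Corollary \ref{corollary:SemiOrthogonal} gives the orthogonal decomposition $\Db\AA=\Db\BB\oplus(\Db\BB)^\perp$, which restricts to $\AA\cong\BB\oplus\BB^\perp$ on the hearts.
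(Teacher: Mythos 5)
Your proposal is correct, and for the conditional half of the theorem it takes a genuinely different route from the paper's. The unconditional part and item (1) coincide with the paper (identify $\BB$ with the quotients of objects of $\add \PP$, closure under subobjects via preimages in $P_0$, projectivization by $\Hom(P,-)$). The divergence is in (2)--(4): the paper proves (2) by showing the evaluation map $\Hom(P,I)\otimes P \to I$ is surjective --- its cokernel is an injective object receiving no nonzero maps from $P$, hence zero by the top/socle correspondence of Proposition \ref{proposition:SerreDuality} --- and then obtains (3) and the equality $\BB^\perp = {}^\perp\BB$ by direct generator/cogenerator arguments, leaving the actual splitting $\AA \cong \BB \oplus \BB^\perp$ essentially implicit. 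You instead run a torsion-theoretic sandwich: the torsion pairs $(\BB,\GG)$ and $(\GG,\BB')$ share the middle class $\GG$, whose two descriptions are glued by Serre duality, and $\BB\cap\GG = \BB'\cap\GG = 0$ forces $\BB = \BB'$, giving (2) and (3) in one stroke; (4) then falls out explicitly as $M = tM \oplus rM$. What each approach buys: yours makes the decomposition in (4) completely explicit (cleaner than the paper on this point, and your alternative via saturation and Corollary \ref{corollary:SemiOrthogonal} matches how the paper uses saturation elsewhere), but your identification $\GG = \{M : \Hom(M,I_i)=0 \ \forall i\}$ silently uses that the Serre functor carries each indecomposable projective to the corresponding indecomposable injective as stalk complexes, $\bS P_i \cong I_i$, which is not literally what Proposition \ref{proposition:SerreDuality} asserts. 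This is true and standard, but deserves a line: for $Y \in \AA$ and $n \neq 0$ one has $\Hom_{\Db \AA}(Y, (\bS P_i)[n]) \cong \Ext^{-n}_{\AA}(P_i,Y)^* = 0$, so $\bS P_i$ is an injective object of $\AA$ concentrated in degree zero; since $\bS$ is an equivalence, the $\bS P_i$ are pairwise nonisomorphic indecomposable injectives, and the finite bijection of Proposition \ref{proposition:SerreDuality} shows they exhaust all of them. The paper's route avoids needing this fact by using only the stated top/socle correspondence.
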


\begin{proof}
Let $\tilde{\PP}$ be the full subcategory of $\AA$ whose objects are quotient objects of projective objects.  Since $\AA$ is hereditary, the kernel of such an epimorphism $P_0 \twoheadrightarrow B$ is again projective and hence every object $B \in \tilde{\PP}$ fits into a short exact sequence
$$0 \to P_1 \to P_0 \to B \to 0$$
where $P_0,P_1 \in \PP$.  It is clear that $\tilde{\PP}$ is closed under extensions.  We will now check that $\tilde{\PP}$ is closed under subobjects.  Let $B'$ be a subobject of $B \in \tilde{\PP}$, and let $P_0 \to B$ be an epimorphism with $P_0 \in \PP$.  Let $P_0'$ be the pullback of the induced co-span, thus:
$$\xymatrix{{P'_0} \ar@{-->}[r] \ar@{-->}[d] & P_0 \ar[d] \\
{B'} \ar[r] & B.}$$
Since $B' \to B$ is a monomorphism, so is $P'_0 \to P_0$ and thus $P'_0$ is a projective object.  Since $P_0 \to B$ is an epimorphism, so is $P'_0 \to B'$ and thus $B' \in \tilde{\PP}$, as required.

We see that $\tilde{\PP}$ is an Serre subcategory of $\AA$ and thus $\tilde{\PP} = \BB$.

Assume now that $\AA$ is Ext-finite and that $\PP$ has only finitely many isomorphism classes of indecomposable objects.  Let $P$ be an additive generator for $\PP$, thus every object in $\PP$ is a direct summand of a direct sum of copies of $P$.  Since $P$ is a projective generator for $\BB$, we have $\BB \cong \mod \End(P)$.  In particular, $\End(P)$ is a hereditary algebra.  

Assume now that $\AA$ has Serre duality.  We will show that all injective objects of $\AA$ lie in $\BB$.  Let $\II$ be the category of injectives of $\AA$ and let $I$ be an additive generator for $\II$ (here we use Serre duality and the correspondence between projective objects and injective objects from Proposition \ref{proposition:SerreDuality} to show such an additive generator exists).  Consider the exact sequence
$$0 \to \ker f \to \Hom(P,I) \otimes P \stackrel{f}{\rightarrow} I \to \coker f \to 0$$
where $f$ is the evaluation morphism.  Since $\ker f$ is a subobject of a projective object, it itself is projective.  Likewise, $\coker f$ is injective.  Using the lifting property for projectives, we know that $\Hom(P, \coker f) = 0$.  It then follows from Proposition \ref{proposition:SerreDuality} that $\coker f = 0$.  We conclude that $I \in \tilde{\PP} = \BB$.

Let $\tilde{\II}$ be the full subcategory of $\AA$ whose objects are subobjects of objects in $\II$.  Dual to the case of $\tilde{\PP}$, we find that $\tilde{\II}$ is a Serre subcategory of $\AA$.  Since $I \in \tilde{\PP}$, we find
$$\tilde{\II} = \tilde{\PP} = \BB.$$
Moreover, $I$ is a tilting object for $\BB$.  Similar to the above considerations, for every object $B \in \BB$, there is a short exact sequence
$$0 \to B \to I_0 \to I_1 \to 0$$
where $I_0, I_1 \in \II$.

We will now show that $\BB^\perp = {}^\perp \BB$.  We start with $\BB^\perp \subseteq {}^\perp \BB$.  Let $C \in \BB^\perp$ and let $B \in \BB$ be any object.  For any nonzero morphism $g \in \Hom(C,B)$, the image $\im g$ lies in $\BB$.  If $\im g$ were nonzero, there would be a nonzero map $P \to \im g$.  Using the lifting property for projectives, we would find a nonzero map $P \to C$.  However, we have assumed that $C \in \BB^\perp$ and may thus conclude that $\im g = 0$.  For any nonzero extension $g \in \Ext^1(C,B)$, we have a nonzero map $C \to I_1$, where
$$0 \to B \to I_0 \to I_1 \to 0$$
is an injective resolution of $B$.  However, we have already excluded the existence of these maps.  We conclude that $\BB^\perp \subseteq {}^\perp \BB$.  The other inclusion is shown dually.
\end{proof}

\begin{remark}
\begin{enumerate}
\item The above theorem fails in general when $\PP$ has infinitely many isomorphism classes of indecomposable objects.  For example, the properties (1) through (4) fail for the categories given in \cite[Example 4.16]{BergVanRoosmalen11}.  In general, there will be no right adjoint to the embedding $\BB \to \AA$.
\item Without the finiteness conditions on $\PP$, one does not know whether $\BB$ has Serre duality even if $\AA$ satisfies Serre duality.
\end{enumerate}
\end{remark}

The next corollary explains why we are interested in categories without nonzero projective objects.

\begin{corollary}
Let $\AA$ be an indecomposable numerically finite abelian hereditary category with Serre duality.  If $\AA$ has a nonzero projective direct summand, then $\AA \cong \mod \Lambda$ for a finite-dimensional hereditary algebra $\Lambda$.
\end{corollary}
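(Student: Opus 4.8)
The plan is to reduce everything to Theorem \ref{theorem:Decomposition}. Since $\AA$ has Serre duality it is Ext-finite, and since it is hereditary it has global dimension at most one, so the Euler form and $\Num \AA$ are available. Write $\PP$ for the full subcategory of projective objects and $\BB$ for the abelian subcategory generated by $\PP$, as in Theorem \ref{theorem:Decomposition}. If I can show that $\PP$ has only finitely many isomorphism classes of indecomposable objects, then Theorem \ref{theorem:Decomposition} applies and yields a decomposition $\AA \cong \BB \oplus \BB^\perp$ together with an equivalence $\BB \cong \mod \Lambda$ for a finite-dimensional hereditary algebra $\Lambda$. The hypothesis that $\AA$ has a nonzero projective object forces $\PP \neq 0$, hence $\BB \neq 0$; since $\AA$ is indecomposable as an additive category, the decomposition $\AA \cong \BB \oplus \BB^\perp$ must be trivial, i.e. $\BB^\perp = 0$, and therefore $\AA \cong \BB \cong \mod \Lambda$. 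Thus the whole argument hinges on the finiteness of $\PP$, where numerical finiteness enters; this is the main obstacle.

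To establish that finiteness I would first record two elementary facts about projectives in a hereditary category. A subobject of a projective object is again projective: applying $\Hom(-,X)$ to a short exact sequence $0 \to A \to P \to B \to 0$ and using $\Ext^1(P,-) = 0$ and $\Ext^2(B,-) = 0$ gives $\Ext^1(A,-) = 0$. Consequently, any nonzero morphism $f : P \to Q$ between indecomposable projectives is a monomorphism: both $\ker f \subseteq P$ and $\im f \subseteq Q$ are projective, so the sequence $0 \to \ker f \to P \to \im f \to 0$ splits, and indecomposability of $P$ together with $f \neq 0$ forces $\ker f = 0$.

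Next I would use this to show that the relation ``$\Hom(P_i,P_j) \neq 0$'' is acyclic on nonisomorphic indecomposable projectives. Along a cycle $P_{i_1} \to P_{i_2} \to \cdots \to P_{i_k} \to P_{i_1}$ of nonzero maps, the composite is a monic endomorphism of the (nonzero, Hom-finite, indecomposable) object $P_{i_1}$. Such an endomorphism cannot lie in the radical of the local ring $\End P_{i_1}$: a radical element is nilpotent, and a nilpotent monomorphism of a nonzero object is impossible. Hence the composite is invertible, each map in the cycle is a split monomorphism between indecomposables, and all the $P_{i_j}$ coincide up to isomorphism. Therefore any finite set of pairwise nonisomorphic indecomposable projectives $P_1, \ldots, P_m$ can be ordered so that $\Hom(P_i,P_j) = 0$ for $i > j$. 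Since the $P_i$ are projective, $\chi(P_i,P_j) = \dim \Hom(P_i,P_j)$, and each $P_i$ is exceptional (indecomposable with $\Ext^1(P_i,P_i)=0$, so $\End P_i \cong k$); hence the Gram matrix $(\chi(P_i,P_j))$ is upper triangular with $1$'s on the diagonal, and so is invertible over $\bQ$. It follows that $[P_1], \ldots, [P_m]$ are linearly independent in $\Num \AA$, so $m \leq \rk \Num \AA$. As $\AA$ is numerically finite, $\rk \Num \AA < \infty$, and thus $\PP$ has only finitely many indecomposable objects.

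With this finiteness in hand, the conclusion follows by invoking Theorem \ref{theorem:Decomposition} and the indecomposability of $\AA$ exactly as in the first paragraph. The only subtle point is the cycle argument: I must ensure the composite around the cycle is genuinely a monomorphism (a composite of monomorphisms) and genuinely an endomorphism of a single $P_{i_1}$, and that the implication ``monic $\Rightarrow$ invertible'' is invoked only for endomorphisms of a fixed nonzero Hom-finite indecomposable object, where it is valid.
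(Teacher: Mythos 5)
Your proof is correct and takes essentially the route the paper intends: the corollary is stated there as an immediate consequence of Theorem \ref{theorem:Decomposition}, whose hypotheses are met because (as the paper asserts without proof, e.g.\ in Proposition \ref{proposition:TiltingNoProjectiveDirectSummands}) classes of nonisomorphic indecomposable projectives are linearly independent in $\Num \AA$, so numerical finiteness bounds their number, and indecomposability of $\AA$ then forces $\BB^\perp = 0$. Your acyclicity and triangular Gram matrix argument merely fills in the proof of that linear-independence assertion, which the paper leaves implicit, and it does so correctly.
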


We end this subsection with two corollaries of Theorem \ref{theorem:Decomposition} we will use later.

\begin{proposition}\label{proposition:TiltingNoProjectiveDirectSummands}
Let $\AA$ be an abelian Ext-finite hereditary category with a tilting object.  If $\AA$ has no nonzero projective-injective objects, then $\AA$ has a tilting object without projective direct summands.
\end{proposition}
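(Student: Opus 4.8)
The plan is to reduce the statement to the decomposition theorem (Theorem~\ref{theorem:Decomposition}) and then build the desired tilting object out of injectives. First, if $\AA$ has no nonzero projective objects there is nothing to prove, as any tilting object then trivially has no projective summands. So assume $\AA$ has a nonzero projective object. I would begin by recording that the hypotheses of Theorem~\ref{theorem:Decomposition} are in force: a tilting object $T$ yields a derived equivalence $\Db \AA \cong \Db \mod \End(T)$, where $\End(T)$ is a finite-dimensional algebra of finite global dimension; hence $\Db \AA$ carries a Serre functor (so $\AA$ satisfies Serre duality) and $K_0 \AA \cong K_0(\End T)$ is free of finite rank (so $\AA$ is numerically finite). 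The one remaining hypothesis is that the category $\PP$ of projective objects has only finitely many indecomposables.

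Granting this, Theorem~\ref{theorem:Decomposition} supplies a direct sum decomposition of abelian categories $\AA \cong \BB \oplus \BB^\perp$, where $\BB \cong \mod \Lambda$ for a finite-dimensional hereditary algebra $\Lambda$ and where $\BB$ contains every projective and every injective object of $\AA$. Since $\AA$ has no nonzero projective-injective objects, no indecomposable injective object is projective; and since all projectives lie in $\BB$, the summand $\BB^\perp$ has no nonzero projective objects at all. Because this is a direct sum of abelian categories, all $\Hom$- and $\Ext^1$-groups between $\BB$ and $\BB^\perp$ vanish, so writing the given tilting object as $T = T_1 \oplus T_2$ with $T_1 \in \BB$ and $T_2 \in \BB^\perp$, one checks directly that $T_2$ is a tilting object of $\BB^\perp$: the $\Ext^1$-vanishing is inherited from $T$, and the generation condition follows from that of $T$ together with the vanishing of the cross terms.

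It then remains only to replace $T_1$. By the proof of Theorem~\ref{theorem:Decomposition}, the injective cogenerator $I$ of $\BB$ is itself a tilting object of $\BB$, and its indecomposable summands are precisely the indecomposable injective objects of $\AA$. I claim $I \oplus T_2$ is the required tilting object of $\AA$. It is tilting because a direct sum of tilting objects of the two factors of a categorical direct sum is again tilting (the cross $\Ext^1$-terms vanish and the generation condition is verified component-wise). It has no projective summands: the summands of $I$ are injective, hence non-projective by the no-projective-injective hypothesis, while the summands of $T_2$ lie in $\BB^\perp$ and so are non-projective, since $\BB^\perp$ has no nonzero projectives.

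The construction itself is immediate once Theorem~\ref{theorem:Decomposition} applies, so the hard part will be verifying that $\PP$ has finitely many indecomposable objects. I would establish this via Serre duality, which gives a bijection between indecomposable projectives and indecomposable injectives with $\top P \cong \soc I$; it then suffices to see that the indecomposable projectives have pairwise distinct, indeed linearly independent, classes in the finite-rank group $\Num \AA$, which one extracts from the fact that in the Serre subcategory $\BB$ of objects admitting projective resolutions of length at most one each indecomposable projective has a simple top, with distinct projectives having non-isomorphic tops. A conceivable alternative route would be a direct mutation argument eliminating one projective summand at a time, but the exchange of a \emph{non-simple} projective summand is delicate (an almost complete tilting object need not have two complements), which is precisely why I prefer to route the argument through the decomposition theorem.
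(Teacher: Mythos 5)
Your proposal is correct and takes essentially the same route as the paper's own proof: both verify the hypotheses of Theorem \ref{theorem:Decomposition} (with finiteness of $\PP$ obtained from linear independence of the classes of indecomposable projectives in $\Num \AA$), split the given tilting object along the decomposition $\AA \cong \BB \oplus \BB^\perp$, and replace the $\BB$-component by the injective tilting object $I$ to get a tilting object of the form $I \oplus T_2$ with no projective summands. The only cosmetic difference is that the paper carries out the splitting of $T$ via the evaluation morphism $\Hom(P,T)\otimes P \to T$ rather than by appealing directly to the categorical direct sum.
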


\begin{proof}
Since $\AA$ has a tilting object, we know that $\AA$ is numerically finite and that $\Db \AA$ has a Serre functor (see for example \cite[Theorem 6.1]{Lenzing07}).  Since $\AA$ is numerically finite and nonisomorphic projective objects are linearly independent in $\Num \AA$, we know $\AA$ has only finitely many isomorphism classes of indecomposable projective objects.  We can thus apply Theorem \ref{theorem:Decomposition}.  Let $\BB$ be the Serre subcategory of $\AA$ generated by the projective objects.

Let $P$ and $I$ be additive generators for the category of projectives and injectives in $\AA$, respectively.  Let $T$ be a tilting object in $\AA$.  Consider the evaluation morphism $\varphi: \Hom(P,T) \otimes P \to T$ and the short exact sequence
$$0 \to \im \varphi \to T \to T' \to 0.$$
Applying the functor $\Hom(P,-)$ shows that $T' \in \BB^\perp = {}^\perp \BB$.  Since $\im \varphi \in \BB$ and $\Ext^1(T', \im \varphi) = 0$, we have $T \cong T' \oplus \im \varphi$.  One can check that $T'$ is a tilting object for $\BB^\perp$ by applying $\Hom(-,C)$ to the above short exact sequence and using that $T$ is a tilting object.

Since $I$ is a tilting object of $\BB$ and $T'$ is a tilting object of $\BB^\perp$, we know that $T' \oplus I$ is a tilting object for $\AA$.  Since neither $I$ nor $T'$ have projective direct summands, we can conclude the same about $T' \oplus I$.
\end{proof}

\begin{corollary}\label{corollary:NoProjectiveSummands}
Let $\AA$ be an abelian indecomposable Ext-finite hereditary category with a tilting object.  If $\AA$ is not equivalent to $\mod \Gamma$ where $\Gamma$ is the algebra of upper-triangular $n \times n$-matrices over $k$ (for some $n \geq 1$), then $\AA$ has a tilting object without projective direct summands.
\end{corollary}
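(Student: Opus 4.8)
The plan is to argue by a dichotomy on whether $\AA$ possesses a nonzero projective-injective object, and to show that the only way the projective summands cannot be removed is precisely the excluded case $\AA \cong \mod \Gamma$.

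First, if $\AA$ has no nonzero projective-injective object, then the conclusion is immediate from Proposition \ref{proposition:TiltingNoProjectiveDirectSummands}. So I would concentrate on the case where $\AA$ admits a nonzero indecomposable projective-injective object $M$. Since $M$ is in particular a nonzero projective, $\AA$ has nonzero projectives. Because $\AA$ has a tilting object it is numerically finite and $\Db \AA$ has a Serre functor (so $\AA$ satisfies Serre duality); as nonisomorphic indecomposable projectives are linearly independent in $\Num \AA$, there are only finitely many isomorphism classes of indecomposable projectives. Thus Theorem \ref{theorem:Decomposition} applies and gives $\AA \cong \BB \oplus \BB^\perp$, where $\BB$ is the Serre subcategory generated by the projectives and $\BB \cong \mod \Lambda$ for a finite-dimensional hereditary algebra $\Lambda$. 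Since $\AA$ is indecomposable and has nonzero projectives (so $\BB \neq 0$), we must have $\BB^\perp = 0$, hence $\AA \cong \mod \Lambda$; indecomposability of $\AA$ forces $\Lambda \cong kQ$ for a connected acyclic quiver $Q$.

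It then remains to identify $Q$. I would show that the existence of the indecomposable projective-injective $M \cong P_i \cong I_j$ forces $Q$ to be the linearly oriented $A_n$, so that $\Lambda \cong \Gamma$. Writing $(P_i)_v$ for the space with basis the paths $i \rightsquigarrow v$ and $(I_j)_v$ for the space with basis the paths $v \rightsquigarrow j$, equality of the two dimension vectors at $v = i$ (where $(P_i)_i$ is one-dimensional since $Q$ is acyclic) shows there is a \emph{unique} path $i \rightsquigarrow j$. For any vertex $v$ in the support there is a path $i \rightsquigarrow v$ (descendant of $i$) and a path $v \rightsquigarrow j$ (ancestor of $j$); two distinct paths $i \rightsquigarrow v$ would compose with a fixed $v \rightsquigarrow j$ into two distinct paths $i \rightsquigarrow j$, a contradiction, so $M$ is thin and every support vertex lies on the unique path $i \rightsquigarrow j$, which is therefore a linearly oriented $A_m$. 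Finally, connectedness of $Q$ together with the remark that every out-neighbor of a support vertex is again a descendant of $i$, and every in-neighbor is again an ancestor of $j$, and hence both remain in the support, shows $Q$ equals this chain. Thus $\AA \cong \mod \Gamma$, contradicting the hypothesis; so this case cannot occur, and the proof is complete.

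The reduction in the first two paragraphs is routine, assembling only Theorem \ref{theorem:Decomposition} and Proposition \ref{proposition:TiltingNoProjectiveDirectSummands}. I expect the only genuine content to be the combinatorial identification of $Q$ from a single projective-injective module, and the delicate point there is ruling out extra vertices and arrows lying outside the support, which the descendant/ancestor argument above handles. This fact is classical --- the linearly oriented $A_n$ is the unique connected hereditary type admitting a projective-injective --- so one could alternatively simply cite it rather than reprove it.
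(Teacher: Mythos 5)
Your proposal is correct and follows the same skeleton as the paper's proof: both use numerical finiteness (coming from the tilting object) to verify the hypotheses of Theorem \ref{theorem:Decomposition}, use indecomposability of $\AA$ to conclude $\AA \cong \mod \Lambda$ when nonzero projectives are present, and then invoke Proposition \ref{proposition:TiltingNoProjectiveDirectSummands} so that everything rests on the single fact that a connected hereditary module category admitting a nonzero projective-injective object is equivalent to $\mod \Gamma$. The only real divergence is how that fact is handled: the paper simply cites \cite[Propositions III.1.1 and III.1.3]{AuslanderReiten73}, whereas you reprove it from scratch. Your combinatorial argument is sound: $P_i \cong I_j$ forces a unique path $i \rightsquigarrow j$, hence thinness of the module and support equal to the set of vertices on that path; and the delicate point that no vertices or arrows of $Q$ can lie outside the support is correctly handled by observing that the support is closed under in- and out-neighbours, so connectedness of $Q$ finishes the identification with the linearly oriented $A_n$. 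What the citation buys is brevity; what your version buys is a self-contained argument independent of the reference, and, as you note yourself, either is acceptable.
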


\begin{proof}
As in the proof of Proposition \ref{proposition:TiltingNoProjectiveDirectSummands}, we know that $\AA$ satisfies all conditions from Theorem \ref{theorem:Decomposition}.  Since the case where $\AA$ has no nonzero projectives is trivial, we may assume that $\AA \cong \mod \Lambda$ for a finite-dimensional hereditary algebra $\Lambda$ (by Theorem \ref{theorem:Decomposition}).  

By Proposition \ref{proposition:TiltingNoProjectiveDirectSummands}, it suffices to show that $\AA$ has no nonzero projective-injective objects.  According to \cite[Propositions III.1.1 and III.1.3]{AuslanderReiten73}, $\AA$ can only have projective-injective objects if $\AA \cong \mod \Gamma$ where $\Gamma$ is the ring of upper-triangular $n \times n$-matrices over $k$ (for some $n \geq 1$).
\end{proof}

\begin{remark}\label{remark:NoProjectiveSummands}
The algebra $\Gamma$ of upper-triangular $n \times n$-matrices over $k$ mentioned in Corollary \ref{corollary:NoProjectiveSummands} is the path algebra of an $A_n$-quiver with linear orientation.  We note that $\Gamma$ is fractionally Calabi-Yau of dimension $\frac{n-1}{n+1}$ (see for example \cite[Examples 8.3(2)]{Keller05}) in the sense that $\bS^{n+1} \cong [n-1]$ in $\Db \mod \Gamma$.
\end{remark}

\subsection{\texorpdfstring{$\EE^\perp$}{The perpendicular} is a direct sum of tubes}\label{subsection:PerpendicularTubes}  Let $\AA$ be a numerically finite abelian hereditary category with Serre duality, and let $\EE$ be a maximal exceptional sequence in $\AA$.  The first case we will consider is where $\EE^\perp$ is a direct sum of tubes.

We start with the following lemma.

\begin{lemma}\label{lemma:WhatIfOnlyTubes}
Let $\BB$ be the abelian (and hereditary) subcategory of $\AA$ generated by $\EE$.  The natural embedding $\BB \to \AA$ induces an isomorphism $\Num \BB \to \Num \AA$.  Furthermore, $\Db \BB$ has a Serre functor $\bS_\BB$ and for each element $B \in \Db \BB$, we have $[\bS_\BB B] = [\bS B]$ in $\Num \Db \AA$.
\end{lemma}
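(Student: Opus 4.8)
The plan is to read off all three assertions from the decomposition of $\Db \AA$ induced by the exceptional sequence, while carefully tracking the Euler form. Since $\EE \subseteq \AA$ is a strong exceptional sequence, Proposition \ref{proposition:SequenceMeansObject} shows that $\BB$ is a hereditary category with a tilting object; by Theorem \ref{theorem:Happel} it is derived equivalent to $\mod \Lambda$ for a finite-dimensional hereditary algebra $\Lambda$ or to $\coh \bX$ for a weighted projective line $\bX$. In either case $\Db \BB$ has a Serre functor $\bS_\BB$ (as in the proof of Proposition \ref{proposition:TiltingNoProjectiveDirectSummands}, a tilting object forces Serre duality), which settles the second assertion. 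Moreover $\Db \BB$ is then saturated, so Corollary \ref{corollary:SemiOrthogonal} applies to the full triangulated subcategory $\Db \BB \subseteq \Db \AA$ and yields a semi-orthogonal decomposition $(\Db \BB, \CC)$ with $\CC = (\Db \BB)^\perp = \EE^\perp$. On Grothendieck groups this gives $K_0 \AA \cong K_0 \BB \oplus K_0 \CC$ (via Proposition \ref{proposition:GrothendieckGroupHappel}) together with the one-sided vanishing $\chi(\BB, \CC) = 0$.

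Next I would pin down $\CC = \EE^\perp$. Because $\EE$ is a maximal exceptional sequence, $\EE^\perp$ contains no exceptional object: any exceptional $E' \in \EE^\perp$ would produce the longer exceptional sequence $(E', E_1, \dots, E_n)$. Under the standing assumption that $\EE^\perp$ is a direct sum of tubes, this forces each such tube to be homogeneous, since a tube of rank $r \geq 2$ has peripheral objects $S$ with $\Ext^1(S,S) \cong \Hom(S, \t S)^* = 0$, which by \cite[Lemma 4.1]{HappelRingel82} would be exceptional. Consequently the Euler form vanishes identically on $K_0 \CC$ (a homogeneous simple satisfies $\chi(S,S) = 1 - 1 = 0$, and distinct tubes are orthogonal). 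Since each tube is $\t$-stable, $\CC$ is stable under $\bS = \t[1]$, so Serre duality upgrades $\chi(\BB, \CC) = 0$ to $\chi(\CC, \BB) = \chi(\BB, \bS \CC) = \chi(\BB, \CC) = 0$ as well. Combining this with $\chi|_{K_0\CC \times K_0\CC} = 0$ shows $K_0 \CC \subseteq \rad \chi_\AA$.

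With these orthogonality relations the first assertion is formal. The embedding $\BB \to \AA$ preserves the Euler form, so for $v \in K_0 \BB$ one has $v \in \rad \chi_\AA$ iff $\chi(v, K_0 \CC) = 0$ and $\chi(v, K_0 \BB) = 0$; the first holds automatically and the second says exactly $v \in \rad \chi_\BB$. Hence $K_0 \BB \cap \rad \chi_\AA = \rad \chi_\BB$, so $\Num \BB \to \Num \AA$ is injective. Surjectivity follows from $K_0 \AA = K_0 \BB \oplus K_0 \CC$ together with $K_0 \CC \subseteq \rad \chi_\AA$. Thus the embedding induces an isomorphism $\Num \BB \xrightarrow{\sim} \Num \AA$.

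For the last assertion I would exploit the nondegeneracy of $\chi$ on $\Num \AA$. Both $[\bS_\BB B]$ and $[\bS B]$ give elements of $\Num \Db \AA \cong \Num \AA$, and it suffices to check they pair equally against every $Y$, which I test on the two summands. For $Y \in \BB$, Serre duality in $\BB$ (with $\chi_\BB = \chi_\AA|_{\BB}$) gives $\chi(Y, \bS_\BB B) = \chi(B, Y) = \chi(Y, \bS B)$, the last step by Serre duality in $\AA$. For $Y \in \CC$ both sides vanish, since $\chi(\CC, \BB) = 0 = \chi(\BB, \CC)$ and $\bS_\BB B, B \in \BB$. As $K_0 \AA = K_0 \BB \oplus K_0 \CC$ this covers all $Y$, and nondegeneracy yields $[\bS_\BB B] = [\bS B]$. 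The main obstacle throughout is the second paragraph: proving that the tubes of $\EE^\perp$ are homogeneous (so that $\chi$ vanishes on $K_0 \CC$) and upgrading the one-sided orthogonality of the semi-orthogonal decomposition to the two-sided vanishing $\chi(\BB, \CC) = \chi(\CC, \BB) = 0$; once these are secured, all three claims fall out of the nondegeneracy of the Euler form.
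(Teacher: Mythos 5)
Your proof is correct, and on the second and third assertions it coincides with the paper's: the Serre functor on $\Db \BB$ comes from the tilting object supplied by Proposition \ref{proposition:SequenceMeansObject}, and $[\bS_\BB B]=[\bS B]$ follows by pairing both classes against generators and using nondegeneracy of $\chi$ on $\Num \AA$ (the paper tests against $\Num \Db \BB$ via the isomorphism from the first part; you test against the two summands $K_0\BB$ and $K_0\CC$ --- the same computation). Where you genuinely diverge is the first assertion. The paper obtains $\Num \AA / \Num \BB \cong \Num \EE^\perp$ by iterating Proposition \ref{proposition:NumAndPerpendicular} one exceptional object at a time, and then simply asserts $\Num \EE^\perp = 0$ because $\EE^\perp$ is a direct sum of tubes. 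You instead use saturation of $\Db \BB$ and Corollary \ref{corollary:SemiOrthogonal} to produce the semi-orthogonal decomposition in one step, and then do the radical bookkeeping by hand: $\chi(\BB,\CC)=0$ from semi-orthogonality, $\chi(\CC,\BB)=0$ from $\bS$-stability of $\CC$, and $\chi\equiv 0$ on $K_0\CC$ from homogeneity of the tubes. Your explicit homogeneity step (a tube of rank $r\geq 2$ has exceptional peripheral objects, contradicting maximality of $\EE$) in fact fills a gap the paper leaves silent: for a tube of rank $r$ one has $\Num \cong \bZ^{r-1}$, so the paper's claim ``$\Num \EE^\perp = 0$'' is only true because all tubes in $\EE^\perp$ are homogeneous, which is exactly what you prove. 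The trade-off is that the paper's route is shorter, since Proposition \ref{proposition:NumAndPerpendicular} already packages the splitting and the radical computation, whereas yours needs saturation of $\Db\BB$ (via Theorem \ref{theorem:Happel} and the results quoted from \cite{ReVdB02}) --- a heavier input, but one the paper itself deploys in the same way elsewhere, e.g.\ in Propositions \ref{proposition:NoExceptionals} and \ref{proposition:EnoughSimpleTubes}.
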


\begin{proof}
Let $\EE = (E_1, E_2, \ldots, E_n)$.  Since $\EE$ is a full exceptional sequence in $\BB$, we know that $\Num \BB \cong \oplus_{i=1}^n \bZ [E_i]$.  It follows from Proposition \ref{proposition:NumAndPerpendicular} that $\Num \AA / \Num \BB \cong \Num \EE^\perp$.  Since we have assumed that $\EE^\perp$ is direct sum of tubes, we know that $\Num \EE^\perp = 0$.  This implies that the embedding $\Num \BB \to \Num \AA$ is an isomorphism.

Since $\BB$ has a tilting object, $\Db \BB$ has a Serre functor (see for example \cite[Theorem 6.1]{Lenzing07}).  For an object $B \in \Db \BB$, we have $\chi(-,[\bS_\BB B]) = \chi([B],-) = \chi(-,[\bS B])$ as functions $\Num \Db \BB \to \bZ$.  Since $\chi(-,-)$ is nondegenerate, this shows that $[\bS_\BB B] = [\bS B]$ in $\Num \Db \BB$ and thus also in $\Num \Db \AA$.
\end{proof}

This implies the following proposition.

\begin{proposition}\label{proposition:WhatIfTubes}\label{Proposition:WhatIfOnlyTubes}
Assume that $\AA$ has no nonzero projective objects and assume that $\EE^\perp$ is a direct sum of tubes.  Then $\AA$ is a direct sum of categories, each of which is equivalent to either
\begin{enumerate}
\item a tube, or
\item a hereditary category with a tilting object.
\end{enumerate}
\end{proposition}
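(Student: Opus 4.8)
The plan is to realise $\Db\AA$ as an honest (orthogonal) direct sum of the derived category of $\BB$ and the derived category of $\EE^\perp$, and then to descend this decomposition to the abelian level using heredity. Write $\BB$ for the abelian subcategory of $\AA$ generated by $\EE$. By Proposition \ref{proposition:SequenceMeansObject}, $\BB$ is hereditary and has a tilting object, so by Theorem \ref{theorem:Happel} it is derived equivalent either to $\mod\Lambda$ for a finite-dimensional hereditary algebra $\Lambda$ or to $\coh\bX$ for a weighted projective line $\bX$; in both cases the bounded derived category is saturated (for the weighted projective line one uses its equivalence with $\coh\OO$ for a hereditary $\OO_{\bP^1}$-order, which is on the list of saturated categories). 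Identifying $\Db\BB$ with the triangulated subcategory of $\Db\AA$ generated by $\EE$, I would therefore take $\CC := \Db\BB \subseteq \Db\AA$, which is saturated, and observe that $\CC^\perp = \Db(\EE^\perp)$: indeed $\BB$ is generated by $\EE$, and the essential image of $\Db(\EE^\perp)\to\Db\AA$ consists of the $X$ with $\Hom(\EE[0],X[n])=0$ for all $n$. By hypothesis $\EE^\perp$ is a direct sum of tubes, so $\CC^\perp$ is a direct sum of derived categories of tubes. Since $\AA$ is abelian, Ext-finite and hereditary, Corollary \ref{corollary:SemiOrthogonal} yields that $(\CC,\CC^\perp)$ is a semi-orthogonal decomposition; in particular $\CC = {}^\perp(\CC^\perp)$.

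The next step is to upgrade this to a genuine direct sum, for which Corollary \ref{corollary:SemiOrthogonal} asks for $\bS(\CC)=\CC$. First I would note that $\DD := \CC^\perp = \Db(\EE^\perp)$ is $\bS$-stable: as $\AA$ has no nonzero projectives, $\t$ is an autoequivalence of $\AA$, and each tube is a $\t$-stable component of the Auslander-Reiten quiver, so $\t(\EE^\perp)=\EE^\perp$ and hence $\bS=\t[1]$ preserves $\Db(\EE^\perp)$. Then, for $Z\in\CC={}^\perp\DD$ and $D\in\DD$, the fact that $\bS$ is an autoequivalence with $\bS^{-1}D\in\DD$ gives $\Hom(\bS Z, D[n])\cong\Hom(Z,(\bS^{-1}D)[n])=0$ for all $n$, so $\bS Z\in{}^\perp\DD=\CC$; applying the same argument to $\bS^{-1}$ gives $\bS(\CC)=\CC$. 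Corollary \ref{corollary:SemiOrthogonal} then delivers the decomposition $\Db\AA = \Db\BB \oplus \Db(\EE^\perp)$.

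Finally I would descend to $\AA$. Every $A\in\AA$ splits as $A\cong A_1\oplus A_2$ with $A_1\in\Db\BB$ and $A_2\in\Db(\EE^\perp)$; applying the cohomology functors $H^j$ of the standard $t$-structure of $\AA$ and using $H^j(A)=0$ for $j\neq 0$ forces $H^j(A_1)=H^j(A_2)=0$ for $j\neq0$, so $A_1,A_2\in\AA$. As there are no nonzero morphisms or extensions between the two summands, this gives $\AA=(\AA\cap\Db\BB)\oplus(\AA\cap\Db(\EE^\perp))$. Heredity then identifies these intersections: for $X\in\Db\BB$ one has $X\cong\bigoplus_k H^k(X)[-k]$ with $H^k(X)\in\BB\subseteq\AA$, so $X$ lies in degree $0$ of $\AA$ exactly when $X\in\BB$, whence $\AA\cap\Db\BB=\BB$, and the identical argument gives $\AA\cap\Db(\EE^\perp)=\EE^\perp$. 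Thus $\AA\cong\BB\oplus\EE^\perp$. Splitting $\BB$ into its connected components (each hereditary, with a tilting object obtained by restricting that of $\BB$) and $\EE^\perp$ into its constituent tubes produces the asserted decomposition. I expect the middle step to be the main obstacle: passing from a semi-orthogonal to a genuine orthogonal decomposition is precisely where the $\t$-stability of the tubes comprising $\EE^\perp$ is indispensable, since without it $\bS$ need not preserve $\Db\BB$ and the two pieces would fail to split off as summands of $\AA$.
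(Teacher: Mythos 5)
Your argument breaks down at the step where you claim $\t(\EE^\perp)=\EE^\perp$, and the gap is fatal because the statement you derive from it --- $\Db\AA \cong \Db\BB \oplus \Db(\EE^\perp)$, hence $\AA\cong\BB\oplus\EE^\perp$ --- is false in general. The hypothesis says that $\EE^\perp$ is a direct sum of tubes \emph{as a category in its own right}: these are stable components for the Auslander-Reiten translate of $\EE^\perp$ (which exists by Proposition \ref{proposition:NumAndPerpendicular}), not $\t_\AA$-stable subcategories of $\AA$. In fact, Serre duality in $\AA$ gives $\Hom(\EE,\t_\AA A)\cong\Ext^1(A,\EE)^*$ and $\Ext^1(\EE,\t_\AA A)\cong\Hom(A,\EE)^*$, so $\t_\AA A\in\EE^\perp$ if and only if $A\in{}^\perp\EE$; thus $\t_\AA(\EE^\perp)=\EE^\perp$ would force ${}^\perp\EE=\EE^\perp$, which there is no reason to expect since $\t\EE\neq\EE$ (contrast Proposition \ref{proposition:PerpendicularOnSpherical}, whose hypothesis $\SS=\t\SS$ is exactly what an exceptional sequence fails to satisfy). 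A concrete counterexample is the paper's own Example \ref{example:NotSimple}: let $\AA$ be a tube of rank $2$ and $\EE=(E)$ with $E$ peripheral. Then $\EE$ is a maximal exceptional sequence, $\AA$ has no nonzero projectives, and $E^\perp$ is a single homogeneous tube whose simple object $S$ sits in $0\to\t E\to S\to E\to 0$. Here $\t_\AA S\notin E^\perp$ (its socle is $E$, so $\Hom(E,\t_\AA S)\neq 0$), the semi-orthogonal decomposition is not a genuine one (e.g.\ $\Ext^1(S,E)\cong\Hom(E,\t S)^*\neq 0$, a nonzero map from $\Db(E^\perp)$ to $\Db\BB$), and $\AA$ is connected, so it is certainly not $\BB\oplus E^\perp=\add E\oplus E^\perp$.

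This obstruction is precisely what the paper's proof is organized around. Note that in the counterexample $\BB\cong\mod k$, i.e.\ $\mod\Gamma$ for $\Gamma$ the algebra of upper-triangular $1\times 1$ matrices. The paper treats the connected components $\BB'$ of $\BB$ in two classes. If $\BB'\not\cong\mod\Gamma$ ($\Gamma$ upper-triangular), one chooses a tilting object of $\BB'$ with no projective direct summands (Corollary \ref{corollary:NoProjectiveSummands}); it is a spanning class, and the compatibility of Serre functors required by Corollary \ref{corollary:SpanningClass} is verified not by a stability argument like yours but numerically, via Lemma \ref{lemma:WhatIfOnlyTubes} together with Proposition \ref{Proposition:ExceptionalInK}; such components do split off as direct summands of $\AA$. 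If $\BB'\cong\mod\Gamma$, the splitting genuinely fails; instead, since $\Db\mod\Gamma$ is fractionally Calabi-Yau, $\t$ acts periodically on $\Num\BB'$, so the objects of $\BB'$ have finite $\t_\AA$-orbits in $\AA$ and lie in tubes of $\AA$ by Theorem \ref{theorem:TubeCriterium}. Any correct proof must account for this second class, which your proposed decomposition $\AA\cong\BB\oplus\EE^\perp$ cannot see.
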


\begin{proof}
Let $\BB \subseteq \AA$ be the abelian subcategory generated by $\EE$, and let $\BB'$ be a connected component of $\BB$.  First assume that $\BB' \not\cong \mod \Gamma$ where $\Gamma$ is the algebra of upper-triangular $n \times n$-matrices over $k$.  We claim that $\BB'$ is a direct summand of $\AA$.

By Corollary \ref{corollary:NoProjectiveSummands}, we know that there is a tilting object $T$ in $\BB'$ such that no direct summand of $T$ is projective in $\BB'$.  We see that $T$ itself is a spanning class for $\BB'$ in the sense of \cite{Bridgeland99} (see Definition \ref{definition:SpanningClass}).

Let $i:\BB' \to \AA$ be the embedding.  Note that since $\BB'$ is a connected component of $\BB$, that the Serre functor on $\Db \BB$ restricts to a Serre functor on $\BB'$, or for any object $B' \in \BB'$ without projective direct summands, we have $\tau_\BB B' \cong \tau_{\BB'} B'$.  Lemma \ref{lemma:WhatIfOnlyTubes} implies that $[i \circ \bS_{\BB'} (T[0])] = [\bS_\AA \circ i (T[0])]$, or equivalently that $[i \circ \tau_{\BB'} (T)] \cong [\tau_\AA \circ i (T)]$ (we use here that $\AA$ has no projective objects and no direct summands of $T$ are projective in $\BB'$).  We can now apply Proposition \ref{Proposition:ExceptionalInK} to see that $i \circ \bS_{\BB'} (T[0]) \cong \bS_\AA \circ i (T[0])$.  It follows from Corollary \ref{corollary:SpanningClass} that $i:\BB' \to \AA$ induces an equivalence between $\BB'$ and a direct summand of $\AA$.

Assume next that $\BB'$ is not a direct summand of $\AA$ (thus in particular $\BB' \cong \mod \Gamma$ where $\Gamma$ is the algebra of upper-triangular $n \times n$-matrices over $k$).  In this case, the action of $\t$ on $\Num(\BB')$ is periodic (since $\Db \mod \Gamma$ is fractionally Calabi-Yau, see for example \ref{remark:NoProjectiveSummands}).  Since the action of $\t$ agrees on $\Kred(\BB')$ and $\Kred(\AA)$, and since $\AA$ has no projective objects, we know that every object in the essential image of $i$ has a finite $\t$ orbit.  It follows from Theorem \ref{theorem:TubeCriterium} that every object lies in a tube.  These categories are understood via \cite[Theorem 1.1 and Remark 6.19]{vanRoosmalen12}.
\end{proof}

\begin{corollary}\label{corollary:WhatIfTubes}
Let $\AA$ be indecomposable and assume that $\EE^\perp$ is a direct sum of tubes.  Then $\AA$ is equivalent to either
\begin{enumerate}
\item a tube, or
\item a hereditary category with a tilting object.
\end{enumerate}
\end{corollary}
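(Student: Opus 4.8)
The plan is to reduce the corollary to Proposition \ref{Proposition:WhatIfOnlyTubes} by splitting into two cases according to whether $\AA$ has a nonzero projective object. The split is forced on us because that proposition is only stated under the hypothesis that $\AA$ has no nonzero projectives, whereas the corollary makes no such assumption; all the real work is already contained in the proposition and in Theorem \ref{theorem:Decomposition}, so the task is essentially bookkeeping.

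First I would dispose of the case in which $\AA$ has a nonzero projective object. Since $\AA$ is numerically finite and nonisomorphic indecomposable projectives have linearly independent classes in $\Num \AA$, the subcategory $\PP$ of projective objects has only finitely many isomorphism classes of indecomposables (exactly as argued in the proof of Proposition \ref{proposition:TiltingNoProjectiveDirectSummands}). Hence the ``furthermore'' part of Theorem \ref{theorem:Decomposition} applies and yields a decomposition $\AA \cong \BB \oplus \BB^\perp$, where $\BB$ is the abelian subcategory generated by the projectives and $\BB \cong \mod \Lambda$ for a finite-dimensional hereditary algebra $\Lambda$. Because $\AA$ is indecomposable and $\BB \neq 0$ (it contains a nonzero projective), we must have $\BB^\perp = 0$, so $\AA \cong \mod \Lambda$. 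As $\Lambda$ itself is a tilting object of $\mod \Lambda$, this places $\AA$ in case (2).

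Next, when $\AA$ has no nonzero projective objects, Proposition \ref{Proposition:WhatIfOnlyTubes} applies directly (its two hypotheses, no nonzero projectives and $\EE^\perp$ a direct sum of tubes, both hold) and exhibits $\AA$ as a direct sum of categories, each equivalent to a tube or to a hereditary category with a tilting object. Invoking the indecomposability of $\AA$ as an additive category, such a direct sum can have at most one nonzero summand, and since $\AA \neq 0$ it has exactly one; therefore $\AA$ is itself equivalent to a single tube or to a single hereditary category with a tilting object, giving case (1) or (2).

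I do not expect a genuine obstacle, as the substance is carried by the two results cited. The only points requiring care are the verification that numerical finiteness forces finitely many indecomposable projectives, so that Theorem \ref{theorem:Decomposition} is applicable in the projective case, and the observation that indecomposability collapses both the decomposition $\BB \oplus \BB^\perp$ and the direct sum produced by Proposition \ref{Proposition:WhatIfOnlyTubes} down to a single nonzero factor.
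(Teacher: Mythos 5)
Your proposal is correct and follows essentially the same route as the paper: the paper's proof likewise invokes Theorem \ref{theorem:Decomposition} to reduce to the dichotomy ``$\AA$ has a tilting object (when projectives exist) or $\AA$ has no nonzero projectives,'' and then applies Proposition \ref{Proposition:WhatIfOnlyTubes}. The paper states this in two sentences, leaving implicit exactly the bookkeeping you spell out (numerical finiteness forcing finitely many indecomposable projectives, and indecomposability collapsing the direct sums to a single summand), so your write-up is simply a more detailed version of the same argument.
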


\begin{proof}
By Theorem \ref{theorem:Decomposition}, we may assume that either $\AA$ has a tilting object, or that $\AA$ does not have any projective objects.  The category $\AA$ is thus equivalent to one of the categories in Proposition \ref{proposition:WhatIfTubes}.
\end{proof}

\begin{corollary}
Assume that $\AA$ is indecomposable.  If $\EE^\perp$ contains a connected component which is a homogeneous tube, then $\AA$ is a tube.
\end{corollary}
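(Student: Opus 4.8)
The plan is to extract the conclusion from the dichotomy of Corollary~\ref{corollary:WhatIfTubes} after discarding one of its two alternatives. Since $\AA$ is indecomposable and, under the running hypothesis of this subsection, $\EE^\perp$ is a direct sum of tubes, Corollary~\ref{corollary:WhatIfTubes} shows that $\AA$ is equivalent either to a tube or to a hereditary category with a tilting object. The first alternative is exactly what we want, so it suffices to exclude the second. The role of the hypothesis is modest: a homogeneous tube is a nonzero category, so $\EE^\perp \neq 0$. In fact the homogeneity is used only for this, since maximality of $\EE$ means that $\EE^\perp$ has no exceptional objects, whereas a tube of rank $r \geq 2$ always contains exceptional objects (a peripheral simple $S$ has $\t S \not\cong S$, so $\Ext^1(S,S) \cong \Hom(S,\t S)^* = 0$ while $\End S \cong k$); thus every tube occurring in $\EE^\perp$ is forced to be homogeneous anyway.

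So suppose, for a contradiction, that $\AA$ has a tilting object. The key input is the perpendicular calculus for exceptional objects: in a hereditary category with a tilting object the perpendicular $\EE^\perp$ again has a tilting object (see \cite{GeigleLenzing91, Lenzing07}; alternatively, complete $\EE$ to a full exceptional sequence of $\Db\AA$, observe that the complementary terms form a full exceptional sequence of $\Db\EE^\perp$, and invoke Proposition~\ref{proposition:SequenceMeansObject}). As $\EE^\perp \neq 0$, such a tilting object $T$ is nonzero; let $T_0$ be an indecomposable summand. Then $\Ext^1(T_0,T_0)=0$, and since $\EE^\perp$ is hereditary and $T_0$ is indecomposable, \cite[Lemma 4.1]{HappelRingel82} yields $\End T_0 \cong k$, so $T_0$ is exceptional in $\EE^\perp$. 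Through the fully faithful, $\Ext$-preserving embedding $\Db\EE^\perp \to \Db\AA$ the object $T_0$ is exceptional in $\AA$ as well, and it lies in $\EE^\perp$; this contradicts the maximality of $\EE$. Hence $\AA$ cannot have a tilting object, and Corollary~\ref{corollary:WhatIfTubes} leaves only the possibility that $\AA$ is a tube.

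The one genuinely external ingredient is the assertion that $\EE^\perp$ inherits a tilting object from $\AA$; the remainder is a single contradiction against maximality, and I expect this inheritance step to be the only place requiring care. It is worth recording that homogeneity of the tube is inessential beyond guaranteeing $\EE^\perp \neq 0$: the argument in fact shows that whenever $\AA$ has a tilting object a maximal exceptional sequence must be full, i.e.\ $\EE^\perp = 0$. One may also phrase the final contradiction differently, using the observation above that $\EE^\perp$ is a nonzero direct sum of homogeneous tubes: such a category admits no nonzero partial tilting object, because in a homogeneous tube every nonzero $M$ satisfies $\t M \cong M$ and hence $\Ext^1(M,M) \cong \Hom(M,M)^* \neq 0$, which is incompatible with the tilting object furnished by the perpendicular calculus.
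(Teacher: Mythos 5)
Your proof is correct and takes essentially the same route as the paper, whose entire proof reads ``Directly from Corollary \ref{corollary:WhatIfTubes}'': invoke the tube/tilting dichotomy of that corollary and discard the tilting alternative. The exclusion argument you supply --- a tilting object on $\AA$ would, via the perpendicular calculus of \cite{GeigleLenzing91, Lenzing07}, place a nonzero tilting object (hence exceptional objects) inside $\EE^\perp$, contradicting maximality of $\EE$ --- is exactly the step the paper leaves implicit, and your side remark that homogeneity of the tube is needed only to guarantee $\EE^\perp \neq 0$ is likewise correct.
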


\begin{proof}
Directly from Corollary \ref{corollary:WhatIfTubes}.
\end{proof}

\subsection{\texorpdfstring{$\EE$}{E} consists of a single object}\label{subsection:OnlyOneExceptional} Assume now that $\AA$ is indecomposable, and that there is a maximal exceptional sequence $\EE$ consisting of a single object $E$.  The situation where $E^\perp$ contains only tubes is understood by Proposition \ref{Proposition:WhatIfOnlyTubes}, so we can assume that $E^\perp$ contains at least one component which is not a tube.  By Theorem \ref{theorem:NoExceptionals}, this component $\CC$ is derived equivalent to $\coh \bX$ for a smooth projective curve $\bX$.  We want to investigate the relation between the simple tubes in $\AA$ and the simple tubes in $E^\perp$ (see Proposition \ref{proposition:EnoughOneSphericals} below).

Since $E \not\in E^\perp$, we will instead consider $M \cong T^*_{\bS E} E$, thus $M$ is the image of $E$ under the right adjoint $T^*_{\bS E}:\Db \AA \to \Db E^\perp$ to the embedding $\Db E^\perp \to \Db \AA$.   We will use the description given in Lemma \ref{lemma:MiddleTermPerpendicular} below.  We will use $M$ as a means to convey information from $E^\perp$ (using the classification in Theorem \ref{theorem:NoExceptionals}) to $\AA$.  Perhaps a surprising result is that $E$ lies in a \emph{simple} tube of rank 2 in $\AA$ (see Proposition \ref{proposition:LiesInASimpleTube} below), and consequently, $M$ will lie in a homogeneous simple tube in $E^\perp$.

\begin{lemma}\label{lemma:MiddleTermPerpendicular}
Let $E \in \Ob \AA$ be an exceptional object and assume that $E^\perp$ has no exceptional objects.  If $E$ is not projective then the following statements hold:
\begin{enumerate}
\item $M$ is the middle term of the Auslander-Reiten sequence $0 \to \tau E \to M \to E \to 0$ in $\AA$,
\item $\dim \Hom(E, \t^2 E) = \dim \Ext^1(M,M) \not= 0$,
\item $\Hom(\t E, E) = 0$,
\item $\chi(M,M) \leq 0$, and
\item $M$ is indecomposable (thus $E$ is peripheral in $\AA$).
\end{enumerate}
\end{lemma}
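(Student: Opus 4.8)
The plan is to reduce the indecomposability of $M$ to the single numerical fact that $\dim_k \Hom(M,E) = 1$, and then to exploit that $M$ is the middle term of an almost split sequence. By part (1) we have the almost split sequence $0 \to \tau E \to M \stackrel{p}{\to} E \to 0$, so that $p \colon M \to E$ is minimal right almost split.

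First I would apply the contravariant functor $\Hom(-,E)$ to this sequence. Since $E$ is exceptional we have $\Hom(E,E) \cong k$, and by part (3) we have $\Hom(\tau E, E) = 0$; the long exact sequence therefore begins
$$0 \to \Hom(E,E) \to \Hom(M,E) \to \Hom(\tau E, E) = 0,$$
which forces $\Hom(M,E) \cong \Hom(E,E) \cong k$, so that $\dim_k \Hom(M,E) = 1$. Next I would decompose $M \cong \bigoplus_i M_i^{n_i}$ into pairwise nonisomorphic indecomposables $M_i$. Because $p$ is minimal right almost split, each component map $M_i \to E$ is an irreducible morphism and in particular nonzero, so $\dim_k \Hom(M_i,E) \geq 1$ for every $i$. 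Comparing this with $\Hom(M,E) \cong \bigoplus_i \Hom(M_i,E)^{n_i}$ yields $\sum_i n_i \leq \dim_k \Hom(M,E) = 1$, so there is exactly one summand, occurring with multiplicity one, and hence $M$ is indecomposable. As $E$ is exceptional (hence indecomposable) and nonprojective by hypothesis, the indecomposability of the middle term of its almost split sequence is exactly the assertion that $E$ is peripheral.

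I expect the step requiring the most care to be the identification of the component maps $M_i \to E$ as irreducible, and therefore nonzero: this is where the \emph{minimality} of the right almost split map $p$ is essential, since without right minimality a superfluous direct summand of $M$ could map to $E$ by the zero map and break the counting argument. The remainder is a routine application of the long exact $\Hom$-sequence together with the exceptionality of $E$ and part (3); note that neither part (2) nor part (4) is needed for this particular claim.
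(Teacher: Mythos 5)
Your argument for assertion (5) is correct, and it is essentially the mirror image of the paper's: the paper applies $\Hom(\t E,-)$ to the Auslander-Reiten sequence and uses that the minimal \emph{left} almost split map $\t E \to M$ hits every indecomposable summand of $M$, obtaining $\dim \Hom(\t E,M)=1$; you apply $\Hom(-,E)$ and use the minimal \emph{right} almost split map $M \to E$, obtaining $\dim\Hom(M,E)=1$. Both computations rest on $\Hom(\t E,E)=0$ together with the exceptionality of $E$ (respectively of $\t E$), and both conclude with the same count of indecomposable summands. As far as part (5) alone is concerned, there is nothing to object to.

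The gap is that the statement to be proved is the whole lemma, and your proposal proves only part (5) while explicitly \emph{assuming} parts (1) and (3) as inputs. Part (3) is not a formality: in the paper it is precisely where the hypothesis that $E^\perp$ contains no exceptional objects does real work. (Sketch of the paper's argument: a nonzero $f:\t E \to E$ can be neither an epimorphism nor a monomorphism, since either would produce a nonzero composite $\t E \to \t^2 E$, contradicting $\Hom(\t E,\t^2 E) \cong \Ext^1(E,E)^* = 0$; one then checks $\im f \in E^\perp$, so $\Ext^1(\im f,\im f)\not= 0$ by hypothesis, and a nonzero map $\im f \to \t\,\im f$ yields a nonzero endomorphism of the exceptional object $\t E$ factoring through $\im f$, which forces $f$ to be a split epimorphism after all, a contradiction.) Likewise parts (1), (2) and (4) are unaddressed; in particular the identification of $M \cong T^*_{\bS E}E$ with the middle term of the almost split sequence, and the equality $\dim\Hom(E,\t^2 E) = \dim\Ext^1(M,M) \not= 0$, are exactly what the paper needs later to prove $E \cong \t^2 E$. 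Since your proof of (5) cites (1) and (3), the proposal as it stands is incomplete rather than wrong: it becomes a full proof only after the arguments for (1)--(4) are supplied.
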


\begin{proof}
Since $\Hom(E,\bS E) \cong \Hom(E,E)^*$, there is (up to isomorphism) a unique nonsplit triangle $\tau E \to M \to E \to \bS E$ in $\Db \AA$, which is an Auslander-Reiten triangle.  It is easily checked from this that $M \cong T_{\bS E}^* E$.  This triangle restricts to an Auslander-Reiten sequence $0 \to \tau E \to M \to E \to 0$, proving the first statement.
 
From this Auslander-Reiten sequence, we obtain:
\begin{align*}
\dim \Hom(M,\t M) &= \dim \Hom(\t E, \t M) + \dim \Hom(E, \t M) \\
&= \dim \Hom(E, M) + \dim \Ext^1(M, E).
\end{align*}

Since $E$ is exceptional, we know that $\Hom(E, \t E) = 0$ and $\dim \Hom(E,E) = 1$.  Applying the functor $\Hom(E, -)$ to the Auslander-Reiten sequence $0 \to \tau E \to M \to E \to 0$ thus gives $\dim \Hom(E, M) = 0$.

Finally, applying $\Hom(-,E)$ gives
$$\dim \Ext^1(M,E) = \dim \Ext^1(\t E, E) + \dim\Ext^1(E,E) = \dim \Hom(E, \t^2 E).$$
Combining these equalities yields $\dim \Hom(E, \t^2 E) = \dim \Ext^1(M,M)$, as requested.  Furthermore, we have assumed that $E^\perp$ has no exceptional objects, so that $M \in E^\perp$ implies that $\Ext^1(M,M) \not= 0$.

We will now prove that $\Hom(\t E, E) = 0$.  Seeking a contradiction, let $f: \t E \to E$ be a nonzero morphism.  Such a nonzero morphism $\t E \to E$ cannot be an epimorphism, since then there would be a nonzero composition $\t E \twoheadrightarrow E \to \t^2 E$, contradicting $\Hom(\t E, \t^2 E) \cong \Ext^1(E,E)^* = 0$.  Analogously, a monomorphism $\t E \hookrightarrow E$ would lead to a nonzero composition $\t E \hookrightarrow E \to \t^2 E$, again contradicting $\Ext^1(E,E) \not= 0$.

For a nonzero morphism $f: \t E \to E$, one checks easily that $\im f \in E^\perp$ so that $\Ext^1(\im f, \im f) \not= 0$ since $E^\perp$ has no exceptional objects.  There is thus a nonzero morphism $\im f \to \t \im f$ which we can use to find a nonzero morphism $\t E \twoheadrightarrow \im f \to \t \im f \hookrightarrow \t E$.  Since $\t E$ is exceptional, this composition is an isomorphism and hence $\t \im f \to \t E$ is a (split) epimorphism and thus an isomorphism.  Hence, $f$ is a epimorphism, but we have already established that $\Hom(\t E, E)$ has no epimorphisms.  We conclude that $\Hom(\t E,E) = 0$.

Applying $\Hom(\t E,-)$ to the Auslander-Reiten sequence $0 \to \t E \to M \to E \to 0$ shows that $\dim \Hom(\t E, M) = 1$ and hence $M$ is indecomposable.
\end{proof}

We will now look closer at the Cartan matrix of $\AA$.

\begin{lemma}\label{lemma:CartanReduced}
One can extend $[E] \in \Num \AA$ to a basis of $\Num \AA$ such that the Cartan matrix is block diagonal.  Moreover, the block in the upper left corner is either $(1)$, or
$$\begin{pmatrix}
1 & 0 & 0 \\
a & 0 & -1 \\
b & 1 & 1-g
\end{pmatrix}$$
for integers $a,b,g$, and where $g \geq 1$ where the first basis element of $\Num \AA$ is given by $[E]$.
\end{lemma}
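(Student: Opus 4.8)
The plan is to read the Cartan matrix directly off the decomposition $\Num \AA \cong \bZ[E] \oplus \Num(E^\perp)$ of Proposition \ref{proposition:NumAndPerpendicular}, taking $[E]$ as the first basis vector. Because $E^\perp$ is a right perpendicular subcategory, $\chi(E,X)=0$ for every $X \in E^\perp$, so in any basis $[E],v_1,\dots,v_m$ with $v_i \in \Num(E^\perp)$ the first \emph{row} of the Cartan matrix is $(1,0,\dots,0)$. The whole problem thus reduces to understanding the first \emph{column}, i.e.\ the linear functional $\ell(v)=\chi(v,E)$ on $\Num(E^\perp)$, together with the restriction of the Euler form to $\Num(E^\perp)$ itself. (Throughout I may assume $E$ is not projective, since otherwise $\AA \cong \mod\Lambda$ and this case is excluded by the standing assumptions; this is what lets me invoke Lemma \ref{lemma:MiddleTermPerpendicular}.)

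For the form on $\Num(E^\perp)$ I would invoke Theorem \ref{theorem:NoExceptionals}: as $\EE=\{E\}$ is maximal, $E^\perp$ has no exceptional objects, so each of its connected components is derived equivalent either to a homogeneous tube, which contributes $0$ to the numerical Grothendieck group, or to $\coh \bX$ for a smooth projective curve $\bX$ of genus $g\geq 1$, which contributes a rank-two summand with Cartan matrix $\begin{pmatrix} 0 & -1 \\ 1 & 1-g\end{pmatrix}$ in the basis $[k(P)],[\OO_\bX]$ of Example \ref{example:CartanCoxeterCurve}. Since distinct components admit no nonzero Homs or Exts, $\Num(E^\perp)$ is the orthogonal direct sum of these curve blocks, and numerical finiteness guarantees there are only finitely many of them.

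The crux is to localize the coupling $\ell$, and here I would use the object $M = T^*_{\bS E}E$ of Lemma \ref{lemma:MiddleTermPerpendicular}. Since $T^*_{\bS E}$ is right adjoint to the inclusion $\Db(E^\perp)\to \Db\AA$, adjunction yields $\Hom_{\Db\AA}(X,E[n]) \cong \Hom_{\Db(E^\perp)}(X,M[n])$ for every $X\in\Db(E^\perp)$ and every $n$, whence $\chi(v,E)=\chi_{E^\perp}(v,[M])$ for all $v\in\Num(E^\perp)$; that is, $\ell = \chi_{E^\perp}(-,[M])$. By Lemma \ref{lemma:MiddleTermPerpendicular} the object $M$ is indecomposable, so it lies in a single component $\CC$, and orthogonality of components then forces $\ell$ to vanish on every component other than $\CC$. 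Consequently all components except $\CC$ are orthogonal to $[E]$ on both sides and peel off as their own diagonal blocks, while the block containing $[E]$ is $\begin{pmatrix} 1 & 0 \\ \ell|_{\Num\CC} & A_\CC \end{pmatrix}$.

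It then remains to split into cases according to $\CC$. If $\CC$ is a tube then $[M]=0$ in $\Num(E^\perp)$, so $\ell=0$ and the $E$-block is $(1)$. If $\CC$ is a curve then $M\neq 0$ gives $[M]\neq 0$, and since the Euler form on $\Num\CC$ is nondegenerate this makes $\ell|_{\Num\CC}$ genuinely nonzero; writing $a=\chi([k(P)],E)$ and $b=\chi([\OO_\bX],E)$ and inserting the curve block, the $E$-block is exactly $\begin{pmatrix} 1 & 0 & 0 \\ a & 0 & -1 \\ b & 1 & 1-g\end{pmatrix}$ with $g\geq 1$, as claimed. I expect the main obstacle to be the identification $\ell=\chi_{E^\perp}(-,[M])$ and the resulting localization of the coupling onto the single component containing $M$; once this is established, the block-diagonal form and the two possible shapes of the $E$-block follow formally.
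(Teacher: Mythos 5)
Your proposal is correct and follows essentially the same route as the paper's proof: decompose $\Num \AA \cong \bZ[E] \oplus \Num(E^\perp)$ via Proposition \ref{proposition:NumAndPerpendicular}, describe $\Num(E^\perp)$ componentwise using Theorem \ref{theorem:NoExceptionals} and the Riemann-Roch basis of Example \ref{example:CartanCoxeterCurve}, identify the coupling $\chi(-,E)$ on $\Num(E^\perp)$ with $\chi(-,[M])$, and use the indecomposability of $M$ from Lemma \ref{lemma:MiddleTermPerpendicular} to localize that coupling to the single component containing $M$, with the same tube/curve case split. The only differences are harmless: you derive $\chi(-,E)=\chi_{E^\perp}(-,[M])$ by adjunction, whereas the paper uses the class identity $[E]=[M]-[\tau E]$ from the Auslander-Reiten sequence together with $\chi(-,[\tau E])=-\chi([E],-)=0$ on $E^\perp$; and your aside that $M \neq 0$ forces $[M] \neq 0$ in $\Num \CC$ (hence a nonzero first column in the curve block) is neither needed for the statement nor fully justified, since the lemma permits $a=b=0$.
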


\begin{proof}
First, note that $E^\perp$ satisfies the conditions of Theorem \ref{theorem:NoExceptionals} so that every connected component $\BB$ of $E^\perp$ is derived equivalent to either a homogeneous tube or the category of coherent sheaves on a smooth projective variety $\bX$ of genus $g \geq 1$.  The numerical Grothendieck group $\Num \BB$ is zero in the former case, and isomorphic to $\bZ^2$ in the latter case.

We consider the case where $\Db \BB \cong \Db \coh \bX$.  We can choose a basis in $\Num \BB$ such that the corresponding Cartan matrix is given by
$$\begin{pmatrix}
0 & -1 \\
1 & 1-g
\end{pmatrix}.$$
Indeed, it follows from the Riemann-Roch theorem that the above matrix is the Cartan matrix with respect to the basis $[k(P)]$ and $[\OO_\bX]$, where $k(P)$ is the simple sheaf supported on a closed point $P \in \bX$ and $\OO_\bX$ is the structure sheaf (see Example \ref{example:CartanCoxeterCurve}).  We see that this gives a basis of $\Num E^\perp$ such that the Cartan matrix is block diagonal, where all blocks are given by matrices of the form described above.

Following Proposition \ref{proposition:NumAndPerpendicular}, we know that $\Num \AA \cong \bZ[E] \oplus \Num E^\perp$.  This extends the basis of $\Num E^\perp$ to a basis for $\Num \AA$.  We will assume that $[E]$ is the first of these basis elements, thus the upper left entry in the Cartan matrix of $\AA$ is $\chi(E,E)$.

If $\BB'$ is a connected component of $E^\perp$ not containing $M$, then we have
$$\Hom(\BB',M) = \Hom(M, \BB') =  \Ext^1(\BB',M) = \Ext^1(M,\BB') = 0.$$
Using the Auslander-Reiten sequence from Lemma \ref{lemma:MiddleTermPerpendicular}, we find that the above equalities hold after replacing $M$ by $E$.  Thus, for a connected component $\BB'$ of $E^\perp$ not containing $M$, we find $\chi(\BB',E) = 0$ and $\chi(E,\BB') = 0$.

We have the following possibilities.  If the connected component $\BB$ is a homogeneous tube, then $\chi(-,[M])=0$ on $\Num \BB$.  Using the Auslander-Reiten sequence from Lemma \ref{lemma:MiddleTermPerpendicular} and $\chi(-,[\t E]) = -\chi([E],-)=0$, we find $\chi(-,[E])=0$ in $\Num \BB$.  In this case, the Cartan matrix of $\AA$ is given by the Cartan matrix of $E^\perp$ with an additional 1 on the diagonal.  If $M$ lies in a component of the form $\coh \bX$ for a smooth projective curve $\bX$, then we obtain the Cartan matrix of $\AA$ from the Cartan matrix of $E^\perp$ by replacing the corresponding block by
$$\begin{pmatrix}
1 & 0 & 0 \\
a & 0 & -1 \\
b & 1 & 1-g
\end{pmatrix}$$
\end{proof}

We can use this form of the Cartan matrix to prove the following proposition.

\begin{proposition}\label{proposition:Eis2periodic}
We have $E \cong \t^2 E$.
\end{proposition}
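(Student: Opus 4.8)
The plan is to reduce the statement to a numerical identity and read that off from the Cartan matrix of Lemma \ref{lemma:CartanReduced}. Since $\AA$ has no nonzero projectives, $\t$ is an autoequivalence of $\AA$, so $\t^2 E$ is again exceptional; by Proposition \ref{Proposition:ExceptionalInK} an exceptional object is determined by its class in $\Num \AA$, so it suffices to prove $\Phi^2[E] = [E]$, where $\Phi$ is the Coxeter transformation, represented by $C = -A^{-1}A^T$ (Proposition \ref{proposition:CartanCoxeter}). As the Cartan matrix $A$ is block diagonal (Lemma \ref{lemma:CartanReduced}), so is $C$, and I only need to examine the block containing $[E]$. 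If this block is $(1)$ — the case where $M$ lies in a homogeneous tube — then the corresponding block of $C$ is $(-1)$ and $\Phi^2[E] = [E]$ is immediate.

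It remains to treat the $3\times 3$ block $A_1 = \begin{pmatrix} 1 & 0 & 0 \\ a & 0 & -1 \\ b & 1 & 1-g\end{pmatrix}$ with basis $[E],[k(P)],[\OO_\bX]$. Here $\det A_1 = 1$, so $C_1$ is integral, and with $p = a(1-g)+b$ a direct computation gives
$$C_1 = -A_1^{-1}A_1^T = \begin{pmatrix} -1 & -a & -b \\ p & pa+1 & pb-2(1-g) \\ -a & -a^2 & 1-ab\end{pmatrix},$$
whence $C_1 e_1 = (-1,p,-a)^T = [\t E]$ and
$$C_1^2 e_1 = \bigl(1-a^2(1-g),\ a(1-g)(ap+2),\ -a^3(1-g)\bigr)^T.$$
Thus $\Phi^2[E] = [E]$ holds precisely when $a^2(1-g) = 0$, i.e. when $g = 1$ or $a = 0$. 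I would then reinterpret this through $M$. From $[M] = [E]+[\t E] = (0,p,-a)^T$ one computes $\chi(M,M) = a^2(1-g)$, while expanding $\chi(M,M)$ along $[M]=[E]+[\t E]$ and using $\End E\cong k$, $\Hom(E,\t E)=0$ and $\Hom(\t E,E)=0$ (Lemma \ref{lemma:MiddleTermPerpendicular}) gives $\chi(M,M) = 1 - \dim\Hom(E,\t^2 E)$. Comparing with $\dim \Ext^1(M,M) = \dim\Hom(E,\t^2E)$ from Lemma \ref{lemma:MiddleTermPerpendicular} forces $\dim\End M = 1$, so $M$ is endo-simple, and the desired identity becomes exactly $\chi(M,M) = 0$.

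Since $M$ is endo-simple and lies in $E^\perp$, which has no exceptional objects, $M$ is a non-exceptional endo-simple indecomposable, so $\Ext^1(M,M)\neq 0$ and $\chi(M,M)\leq 0$ (as Lemma \ref{lemma:MiddleTermPerpendicular} records). The crux — and the step I expect to be the main obstacle — is upgrading this inequality to $\chi(M,M) = 0$; equivalently, showing $M$ generates a homogeneous tube in $E^\perp$ rather than being a positive-rank object of a genus $\geq 2$ component $\coh\bX$ of $E^\perp$ (the only alternative, by Theorem \ref{theorem:NoExceptionals}). Here I would invoke the maximality of $\EE$. Assume $\chi(M,M) < 0$. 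The eigenvalues of $C_1$ are $-1,\lambda,\lambda^{-1}$ (the spectrum is inversion-invariant and $\det C_1 = -1$), with $\lambda+\lambda^{-1} = \chi(M,M)+2 \leq 1$. If $\lambda$ is a root of unity then $\Phi$ has finite order on this sublattice, so $\Phi^n[E]=[E]$ for some $n$, and Proposition \ref{Proposition:ExceptionalInK} gives $\t^n E\cong E$; by Theorem \ref{theorem:TubeCriterium} the object $E$ then lies in a tube, of rank at least $3$ (rank $1$ contradicts $\Hom(E,\t E)=0$ and rank $2$ would give $\chi(M,M)=0$), and two non-adjacent peripheral objects of such a tube form an exceptional pair, contradicting the maximality of $\EE$. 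The genuinely delicate case is $\lambda$ not a root of unity, where $E$ has an infinite $\t$-orbit of pairwise non-isomorphic exceptional objects; I expect to exclude it by extracting from this orbit an exceptional object lying in $E^\perp$ (equivalently in ${}^\perp E = \tm(E^\perp)$), once more contradicting maximality, and making this last exclusion rigorous is where the main work lies.
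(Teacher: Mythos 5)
Your reductions are correct as far as they go: $\t^2E$ is exceptional, so Proposition \ref{Proposition:ExceptionalInK} reduces the claim to $\Phi^2[E]=[E]$; your matrix computations check out, as do the identities $\chi(M,M)=a^2(1-g)$, $\dim\End M=1$, and the equivalence $\Phi^2[E]=[E]\Leftrightarrow\chi(M,M)=0$. But the proof does not close, and the step you flag as ``where the main work lies'' is precisely the content of the proposition. Two problems. First, even the root-of-unity case is incomplete as written: ``$\lambda$ a root of unity $\Rightarrow\Phi$ has finite order on the sublattice'' needs the block of $C$ to be semisimple, which you have not shown; for $\chi(M,M)=-4$ (so $\lambda=-1$) a nontrivial Jordan block would make the orbit of $[E]$ infinite. (This is patchable: split into ``orbit of $[E]$ finite/infinite''; your finite-orbit argument via tubes of rank $\geq 3$ and exceptional pairs is correct and genuinely uses maximality of $\EE$.) Second, and fatally, the infinite-orbit case is left open, and the proposed remedy --- extracting from the orbit an exceptional object lying in $E^\perp$ --- has nothing supporting it: maximality of $\EE=(E)$ only says $E^\perp$ (equivalently ${}^\perp E$) contains no exceptional objects, and no constraint forces any $\t^nE$ to land there. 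Worse, this case cannot be killed by the kind of data you have assembled: an endo-simple, non-exceptional, indecomposable $M$ with $\chi(M,M)=a^2(1-g)<0$ is exactly the numerical shadow of a line bundle on a curve of genus $g\geq 2$, so all your lattice-theoretic and endo-simplicity constraints are simultaneously satisfiable there; some categorical input beyond them is unavoidable.

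The paper sidesteps the entire issue by proving a weaker numerical statement and compensating categorically. It never establishes $[\t^2E]=[E]$ at the lattice level; instead it computes $\chi(\t^2E,E)=u^2+u+1$ with $u=a^2(1-g)$, which is positive for \emph{every} value of $a$ and $g$, so $\Hom(\t^2E,E)\neq 0$ unconditionally and no case analysis on the spectrum of $C$ is needed. Combined with $\Hom(E,\t^2E)\neq 0$ and $\Ext^1(E,\t^2E)\cong\Hom(\t E,E)^*=0$ (Lemma \ref{lemma:MiddleTermPerpendicular}), the Happel--Ringel lemma \cite[Lemma 4.1]{HappelRingel82} makes one of the nonzero maps between $E$ and $\t^2E$ a monomorphism or epimorphism, and composing the two maps gives a nonzero endomorphism of the exceptional object $E$ (or of $\t^2E$), hence an isomorphism $E\cong\t^2E$. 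The identity you are trying to prove first, $\chi(M,M)=0$ (equivalently, $M$ is $1$-spherical), is then deduced \emph{afterwards} as Corollary \ref{corollary:M1Spherical}; your plan needs it as an input, which is exactly why it stalls.
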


\begin{proof}
We will first show that $\Hom(\t^2 E, E) \not= 0$ and that $\Hom(E,\t^2 E) \not= 0$.  The latter has been shown in Lemma \ref{lemma:MiddleTermPerpendicular}.

To show that $\Hom(E, \t^{-2} E) \not= 0$, we will look at the Cartan and the Coxeter matrix of $\AA$.  It follows from Lemma \ref{lemma:CartanReduced} that the Cartan matrix $A$ is block diagonal and it follows from Proposition \ref{proposition:CartanCoxeter} that the Coxeter matrix $C = -A^{-1} A^T$ follows this decomposition.  Since we are interested in $E$ and $\t^2 E$, we need only to consider the block of $A$ on the row and column corresponding to $[E]$.

If this block is $(1)$, then the corresponding block of the Coxeter matrix is $(-1)$ and we find $[\t^2 E] = [E]$, so that $\chi(\t^2 E,E) = 1$ and thus $\Hom(E, \t^{-2} E) \not= 0$.  Note that in this case, we can obtain $E \cong \t^2 E$ from Proposition \ref{Proposition:ExceptionalInK}.

The other case is where the block is given by
$$\begin{pmatrix}
1 & 0 & 0 \\
a & 0 & -1 \\
b & 1 & 1-g
\end{pmatrix}$$
The corresponding block of the Coxeter matrix $C = -A^{-1} A^T$ is
$$\begin{pmatrix}
-1 & -a & -b \\
a(1-g)+b & a^2(1-g)+ab+1 & (ab-2)(1-g)+b^2 \\
-a & -a^2 & 1-ab
\end{pmatrix}.$$
For the upper left block of the matrix $C^2$, we find:
$$\begin{pmatrix}
1-a^2(1-g) & * & * \\
a^3(1-g)^2+a(ab+2)(1-g) & * & * \\
-a^3(1-g) & * & *
\end{pmatrix}$$
Since $A$ and $C$ are given with respect to a basis of $\Num \AA$ of which $[E]$ is the first element, we know that $\chi(\t^2 E,E)$ is the upper left entry of the matrix $(C^2)^{T}A$.  We find $\chi(\t^2 E,E) = a^4(1-g)^2 + a^2(1-g)+1 > 0$.  Hence, $\Hom(E, \t^{-2} E) \not= 0$.

We now know that $\Hom(\t^2 E, E) \not= 0$ and $\Hom(E,\t^2 E) \not= 0$ and will use this to prove that $E \cong \t^2 E$.  Recall that we have shown in Lemma \ref{lemma:MiddleTermPerpendicular} that $\Hom(E, \t^{-1} E) = 0$, and thus also that $\Ext^1(E, \t^2 E) = 0$.  It follows from \cite[Lemma 4.1]{HappelRingel82} (see also \cite[Proposition 5.1]{Lenzing07}) that every nonzero morphism $E \to \t^{2} E$ is either an epimorphism or a monomorphism.

Let $f: E \to \t^{2} E$ be an epimorphism so that there is a nonzero composition $E \twoheadrightarrow \t^{2} E \to E$.  Since $E$ is exceptional, and thus every nonzero morphism is invertible, this implies that $E \cong \t^2 E$.  Likewise, if $f: E \to \t^{2} E$ is a monomorphism, then the composition $E \to \t^{-2} E \hookrightarrow E$ is nonzero and we again find that $E \cong \t^2 E$.
\end{proof}

\begin{corollary}\label{corollary:TubeRank2}
The object $E \in \AA$ lies in a tube of rank 2.
\end{corollary}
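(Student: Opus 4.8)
The plan is to deduce this corollary quickly from Proposition \ref{proposition:Eis2periodic} together with the tube criterion of Theorem \ref{theorem:TubeCriterium}. Since $E$ is exceptional it is in particular indecomposable, and in the setting of this subsection $\AA$ has no nonzero projective objects, so $\t$ restricts to an autoequivalence of $\AA$ and $E$ lies in a stable component of the Auslander-Reiten quiver. Proposition \ref{proposition:Eis2periodic} gives $\t^2 E \cong E$, so Theorem \ref{theorem:TubeCriterium} immediately tells us that this component is a tube, and hence that $E$ lies in a tube. It then remains only to compute the rank of this tube.

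Next I would argue that the rank is exactly $2$. Recall from the description in Proposition \ref{proposition:SphericalsInTubes} that a tube of rank $r$ is equivalent to the category of finite-dimensional nilpotent representations of a cyclically oriented $\tilde{A}_{r-1}$-quiver; in such a category every indecomposable object has minimal $\t$-period equal to $r$, since the quasi-simples, and likewise the indecomposables of any fixed quasi-length, form a single $\t$-orbit of size $r$. Hence the rank of the tube containing $E$ is the least positive integer $r$ with $\t^r E \cong E$, and from $\t^2 E \cong E$ we obtain $r \in \{1,2\}$. To exclude $r = 1$, suppose $\t E \cong E$. Then Serre duality gives
$$\Ext^1(E,E) \cong \Hom(E, \t E)^* \cong \Hom(E,E)^* \cong k \neq 0,$$
contradicting the assumption that $E$ is exceptional. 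Therefore $\t E \not\cong E$, so $r \neq 1$ and the tube has rank $2$.

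The genuine content of the corollary lies entirely in the preceding Proposition \ref{proposition:Eis2periodic}; once the $2$-periodicity of $E$ is known, the argument above is essentially bookkeeping, and I do not anticipate a real obstacle. The only point requiring a little care is the identification of the rank of a tube with the minimal $\t$-period of its indecomposables, which is where I would be most attentive so as to avoid an off-by-one confusion between a homogeneous tube (rank $1$) and a tube of rank $2$; this identification is exactly what rules out the homogeneous case once exceptionality has forced $\t E \not\cong E$.
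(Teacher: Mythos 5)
Your proposal is correct and follows essentially the same route as the paper, whose entire proof is the one-line citation of Proposition \ref{proposition:Eis2periodic} and Theorem \ref{theorem:TubeCriterium}. The only content you add is what the paper leaves implicit — identifying the rank with the minimal $\t$-period and excluding rank $1$ via $\Ext^1(E,E) \cong \Hom(E,\t E)^*$ — and both of these steps are carried out correctly.
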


\begin{proof}
This follows from Proposition \ref{proposition:Eis2periodic} and Theorem \ref{theorem:TubeCriterium}.\end{proof}

\begin{corollary}\label{corollary:M1Spherical}
The object $M$ is 1-spherical in $E^\perp$.
\end{corollary}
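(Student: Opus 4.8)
The plan is to verify directly the two defining properties of a $1$-spherical object of $E^\perp$, namely $\End_{E^\perp}(M) \cong k$ and $M \cong \t_{E^\perp} M$, where $\t_{E^\perp}$ is the Auslander--Reiten translation of $E^\perp$ (which has Serre duality by Proposition~\ref{proposition:NumAndPerpendicular}). Since the induced functor $\Db(E^\perp) \to \Db \AA$ is fully faithful, the spaces $\End(M)$ and $\Ext^1(M,M)$ may be computed in $\AA$; only $\t_{E^\perp}$ is genuinely intrinsic to $E^\perp$.

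First I would show that $M$ is endo-simple. Applying $\Hom(-,M)$ to the Auslander--Reiten sequence $0 \to \t E \to M \to E \to 0$ of Lemma~\ref{lemma:MiddleTermPerpendicular} gives the exact sequence
$$0 \to \Hom(E,M) \to \End(M) \to \Hom(\t E, M).$$
By Lemma~\ref{lemma:MiddleTermPerpendicular} we have $\Hom(E,M) = 0$ and $\dim \Hom(\t E, M) = 1$, so $\dim \End(M) \leq 1$; as $M \neq 0$ this forces $\End_{E^\perp}(M) \cong k$.

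Next I would pin down $\dim \Ext^1(M,M)$. Lemma~\ref{lemma:MiddleTermPerpendicular} gives $\dim \Ext^1(M,M) = \dim \Hom(E, \t^2 E)$, while Proposition~\ref{proposition:Eis2periodic} gives $E \cong \t^2 E$, so this equals $\dim \End(E) = 1$ since $E$ is exceptional. Thus $M$ is an endo-simple object of $E^\perp$ with $\dim \Ext^1_{E^\perp}(M,M) = 1$. At this point I would invoke Lemma~\ref{lemma:SmallD}, applied inside $E^\perp$: were $M$ not $1$-spherical (i.e. $\t_{E^\perp} M \not\cong M$), the category $E^\perp$ would contain an exceptional object, contradicting the standing hypothesis of this subsection that $E^\perp$ has none. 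Hence $\t_{E^\perp} M \cong M$, and together with $\End_{E^\perp}(M) \cong k$ this shows that $M$ is $1$-spherical in $E^\perp$.

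The one point requiring care --- and the only place the argument is not purely formal --- is that the relevant translate is $\t_{E^\perp}$, \emph{not} $\t_\AA$: in $\AA$ the object $M$ lies in a rank-$2$ tube (Corollary~\ref{corollary:TubeRank2}), so $\t_\AA M \not\cong M$, and a naive computation in $\AA$ would give the wrong answer. The proof sidesteps this by applying Lemma~\ref{lemma:SmallD} entirely within $E^\perp$, whose Serre functor, and hence whose $\t$, differs from that of $\AA$; the lemma's hypotheses ($E^\perp$ hereditary, Ext-finite, with Serre duality and without exceptional objects) are precisely what the setup of this subsection supplies.
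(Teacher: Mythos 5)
Your proof is correct, and its concluding step takes a genuinely different route from the paper's. Both arguments share the central computation $\dim\Ext^1(M,M) = \dim\Hom(E,\t^2E) = 1$ via Lemma \ref{lemma:MiddleTermPerpendicular}(2) and Proposition \ref{proposition:Eis2periodic}. But where you then verify $\End(M)\cong k$ directly from the Auslander--Reiten sequence and invoke Lemma \ref{lemma:SmallD} \emph{inside} $E^\perp$ (which has no exceptional objects by maximality of $\EE=(E)$) to force $\t_{E^\perp}M\cong M$, the paper instead finishes by appealing to the classification: $M$ lies in a connected component of $E^\perp$ derived equivalent to a homogeneous tube or to $\coh\bX$ with $g\geq 1$ (Theorem \ref{theorem:NoExceptionals}), and in those categories an indecomposable object with one-dimensional self-extension space is automatically 1-spherical. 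Your route is more elementary and self-contained: it needs only the Section 3 lemma, whereas the paper's needs Theorem \ref{theorem:NoExceptionals} together with an implicit case analysis (homogeneous tube; elliptic curve; genus $\geq 2$, where $\dim\Ext^1=1$ rules out vector bundles and non-simple torsion sheaves), and the paper never makes the $\End(M)\cong k$ verification explicit. The paper's version is shorter in context only because Theorem \ref{theorem:NoExceptionals} is already the standing tool of \S\ref{subsection:OnlyOneExceptional}. Two small citation points: the facts $\Hom(E,M)=0$ and $\dim\Hom(\t E,M)=1$ that you attribute to Lemma \ref{lemma:MiddleTermPerpendicular} are established inside its \emph{proof} rather than stated in it (they are easily rederived from the AR sequence, $\Ext^1(E,E)=0$, and Lemma \ref{lemma:MiddleTermPerpendicular}(3), so this is harmless); and Lemma \ref{lemma:SmallD} sits under a subsection preamble fixing an object of minimal self-extension dimension, but its proof never uses that object, so your application to $E^\perp$ is legitimate. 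Finally, your closing remark correctly isolates the subtlety both proofs must respect: 1-sphericality is measured against $\t_{E^\perp}$, not $\t_\AA$ (indeed $\t_\AA M\not\cong M$, as $M$ lies in the rank-2 tube of Corollary \ref{corollary:TubeRank2}).
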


\begin{proof}
We know from Proposition \ref{lemma:MiddleTermPerpendicular} that $ \dim \Hom(E, \t^2 E) = \dim \Ext^1(M,M)$ and from Proposition \ref{proposition:Eis2periodic} that $\dim \Hom(E, \t^2 E) = 1$.  Furthermore, $M$ lies in a category derived equivalent to one of the categories described in Theorem \ref{theorem:NoExceptionals}.  We conclude that $M$ is 1-spherical.
\end{proof}

\begin{proposition}\label{proposition:LiesInASimpleTube}
The object $E \in \AA$ lies in a simple tube of rank 2.
\end{proposition}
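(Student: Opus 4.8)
The plan is to deduce this from Proposition~\ref{proposition:Summary}. By Corollary~\ref{corollary:TubeRank2} and Proposition~\ref{proposition:Eis2periodic}, the object $E$ is peripheral in a tube $\KK$ of rank $2$ whose two peripheral objects are $E$ and $\t E$, and whose associated minimal $1$-spherical object is $Y = E \oplus \t E$. I would first record that $Y$ really is a minimal $1$-spherical object of $\Db \AA$: one has $\t Y \cong \t E \oplus \t^2 E \cong Y$, and $\End Y \cong k \times k$ is semi-simple because $\Hom(E,\t E) = 0$ ($E$ is exceptional) and $\Hom(\t E, E) = 0$ (Lemma~\ref{lemma:MiddleTermPerpendicular}). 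Since $\t \colon \AA \to \AA$ is an exact autoequivalence, it preserves simplicity, so $\KK$ is a simple tube if and only if $E$ is a simple object of $\AA$, equivalently (Proposition~\ref{proposition:Summary}) if and only if $Y$ is semi-simple. Thus the whole problem reduces to showing $Y$ is semi-simple.

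The key device is the Auslander-Reiten sequence $0 \to \t E \to M \to E \to 0$ of Lemma~\ref{lemma:MiddleTermPerpendicular}, which I would use to transport information between $\KK$ and $E^\perp$. Applying $\Hom_\AA(-,B)$ to this sequence for $B \in E^\perp$ and using $\Ext^i_\AA(E,B) = 0$ for all $i$ yields natural isomorphisms
$$\Ext^i_\AA(\t E, B) \cong \Ext^i_\AA(M, B) = \Ext^i_{E^\perp}(M, B) \qquad (i \geq 0),$$
the last equality because $\Db(E^\perp) \to \Db \AA$ is fully faithful. Hence all maps and extensions emanating from the peripheral object $\t E$ into $E^\perp$ are computed by the $1$-spherical object $M$ inside $E^\perp$ (Corollary~\ref{corollary:M1Spherical}); in particular $Y^\perp = E^\perp \cap (\t E)^\perp$ coincides with the perpendicular of $M$ formed inside $E^\perp$, and $M$ generates a homogeneous tube $\TT_M$ of $E^\perp$ to which Lemma~\ref{lemma:WhenInTube} and Corollary~\ref{corollary:TubesDirecting} apply.

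With this bridge I would argue by contradiction. If $Y$ is not semi-simple, then condition (3) of Proposition~\ref{proposition:Summary} fails, so there is an indecomposable $A \in \AA[0]$ with $\Hom(A,Y) \neq 0$, $\Hom(Y,A) = 0$, and $T^i_Y A \in \AA[-1]$ for some $i < 0$ (Lemma~\ref{lemma:ClosedUnderTwist}). Using the path theorem (Theorem~\ref{theorem:Paths}) and the convexity of tubes (Theorem~\ref{theorem:DirectingTubes}), the aim is to reduce this failure to a nonzero map leaving the tube, that is a map from $\t E$ to some indecomposable $B \in E^\perp$ outside $\TT_M$; feeding the resulting $\Hom_{E^\perp}(M,B) \cong \Hom_\AA(\t E, B) \neq 0$ into Lemma~\ref{key} applied inside $E^\perp$ to the homogeneous tube $\TT_M$ then produces, after sufficiently many twists, a nonzero morphism from $\AA[0]$ to $\AA[-1]$, which is impossible. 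The hard part will be that $E^\perp$ is not $\t$-stable, so the two peripheral objects $E$ and $\t E$ are not symmetric from the viewpoint of $E^\perp$: only maps \emph{out of} $\t E$ are directly governed by $M$ through the isomorphisms above. The delicate step is therefore to orient the offending configuration so that it is witnessed by such outgoing maps, and to control the functor $T_Y$ on the objects $A$ that do not already lie in $E^\perp$ (by reflecting them into $E^\perp$ through the right adjoint $T^*_{\bS E}$ and tracking the resulting triangle, in the spirit of Proposition~\ref{proposition:OrthogonalToSimpleTubeMeansSimpleTube}). Once this contradiction is reached, $Y$ is semi-simple and $E$ lies in a simple tube of rank $2$; as an immediate consequence $M$, the reflection of $E$, lies in a homogeneous \emph{simple} tube of $E^\perp$.
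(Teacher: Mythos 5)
Your reduction is sound and agrees with the paper's starting point: $E$ and $\t E$ are the two peripheral objects of a rank-2 tube, $Y = E \oplus \t E$ is its minimal $1$-spherical object, and by Proposition \ref{proposition:Summary} everything comes down to ruling out indecomposables outside the tube mapping into and out of it; your bridge isomorphism $\Ext^i_\AA(\t E, B) \cong \Ext^i_{E^\perp}(M,B)$ for $B \in E^\perp$ is also correct. The gap is in the contradiction step, which you only gesture at, and the route you indicate cannot be completed as stated. First, the configuration you propose to feed into Lemma \ref{key} violates its hypotheses: from $\Hom_{E^\perp}(M,B) \neq 0$ one gets, by Serre duality in $E^\perp$ (where $\bS M \cong M[1]$), that $\Hom(B, M[1]) \neq 0$, so the condition ``$\Hom(B,M[z]) \neq 0$ only for $z=0$'' fails for exactly this $B$. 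Second, and more seriously, any use of Lemma \ref{key} inside $E^\perp$ to manufacture a forbidden morphism from $\AA[0]$ to $\AA[-1]$ needs the twists $T_M^{i}$ to keep the relevant objects of $E^\perp$ in degree $0$; by Proposition \ref{proposition:Summary}, applied in $E^\perp$, this control is equivalent to $M$ lying in a \emph{simple} tube of $E^\perp$ --- which is Corollary \ref{corollary:MIsSimple}, a consequence of the very proposition you are proving. Your sketch is therefore circular precisely at its crucial point, and your own remark that the ``delicate step'' remains to be done is where the actual proof lives.

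The paper avoids this by splitting into cases according to the connected component $\BB$ of $E^\perp$ containing $M$. When $\BB$ is a homogeneous tube, an argument of your flavour does work: maps to or from $M$ computed in $E^\perp$ cannot leave the component $\BB$, so reflecting an offending object by $T_E$, respectively $T^*_{\t E}$, lands it in $\BB$ and hence in the rank-2 tube, and Proposition \ref{proposition:Summary} finishes. But when $\BB$ is derived equivalent to $\coh \bX$ for a smooth projective curve of genus $g \geq 1$ --- exactly the case your uniform twist argument would have to handle --- the paper switches tools entirely and uses numerical finiteness: it writes the Cartan matrix in the block form of Lemma \ref{lemma:CartanReduced}, pins down the unknown entries via $[M]=[E]+[\t E]$, computes the square of the Coxeter matrix, and shows that for a would-be offending pair of indecomposables outside the tube the Euler form $\chi(-,\t^{2n}(-))$ tends to $-\infty$; this forces a nonzero $\Ext^1$ for $n \gg 0$, hence (Lemma \ref{lemma:Ringel}) a path closing up into a cycle through the tube, contradicting convexity (Theorem \ref{theorem:DirectingTubes}). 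Your proposal contains no substitute for this numerical input, and that is the genuine missing idea.
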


\begin{proof}
We know from Corollary \ref{corollary:TubeRank2} that $E$ lies in a tube $\TT$ of rank 2.  We will show that $\TT$ is a simple tube in $\AA$. 

Assume first that $M$ lies in a connected component of $E^\perp$ which is a (homogeneous) tube.  We will show that the category $\AA$ is decomposable and one of the components is given by the tube $\TT$.  Seeking a contradiction, let $A \in \AA$ be any indecomposable object not lying in the tube $\TT$, such that either $\Hom(A,M) \not= 0$ or $\Hom(M,A) \not= 0$.  We will begin by considering the former.

Since the tube is a connected component in $E^\perp$, we know that $A \not\in E^\perp$.  We apply the twist functor $T_E: \Db \AA \to \Db \AA$ to find a nonzero object $T_E A \in E^\perp$, fitting in a triangle
$$\RHom(E,A) \otimes_{k} E \to A \to T_E A \to \RHom(E,A) \otimes_{k} E[1].$$
Since $M \in E^\perp$, we see that $\Hom(T_E A,M) \not= 0$.  This implies that $T_E A$ lies in the tube $\TT$, and hence so does $A$.

We now turn our attention to the latter case, namely the case where $\Hom(A,M) \not= 0$.  We then know that $\tau A \not\in E^\perp = {}^\perp (\tau E)$.  We apply the twist functor $T^*_{\t E}: \Db \AA \to \Db \AA$ to find a nonzero object $T^*_{\t E} A \in {}^\perp (\tau E)$, fitting in a triangle
$$T^*_{\t E} A \to A \to \RHom(A,E)^* \otimes_{k} E \to T^*_{\t E} A[1].$$
Since $M \in E^\perp = {}^\perp (\tau E)$, we see that $\Hom(M,A) \cong \Hom(M, T^*_{\t E} A)$ and hence nonzero.  This implies that $T^*_{\t E} A$ lies in the same tube as $M$ in $E^\perp$ and according to the above triangle, also in $\AA$.  Thus, both $T^*_{\t E} A$ and $A$ lie in $\TT$.

In both cases, it follows from Proposition \ref{proposition:Summary} that $E$ lies in a simple tube in $\AA$.

Assume thus that $M$ lies in a connected component $\BB$ which is derived equivalent to $\coh \bX$ for a smooth projective curve.  It follows from Lemma \ref{lemma:CartanReduced} that the Cartan matrix of $\AA$ is a block diagonal matrix such that with respect to a suitably chosen basis of $\Num \AA$ (the first element of this basis is $[E]$), the upper left block is given by
$$A=\begin{pmatrix}
1 & 0 & 0 \\
a & 0 & -1 \\
b & 1 & 1-g
\end{pmatrix},$$
and the corresponding part of the Coxeter matrix $C = -A^{-1}A^T$ of $\AA$ is
$$C=\begin{pmatrix}
-1 & -a & -b \\
a(1-g)+b & a^2(1-g)+ab+1 & (ab-2)(1-g)+b^2 \\
-a & -a^2 & 1-ab
\end{pmatrix}.$$
We still have some liberty in choosing this basis.  We know from Corollary \ref{corollary:M1Spherical} that $M$ is 1-spherical in $E^\perp$.  We can thus choose $M$ as the object $X$ in the construction of the category $\HH$ so that $M$ is simple in $\HH$ (see Proposition \ref{proposition:SincereWhenNoExceptionals}).  Using the equivalence $\HH \cong \coh \bX$ from the proof of Theorem \ref{theorem:NoExceptionals} (see Remark \ref{remark:WhatIsHH}), we see that the basis $[M], [\OO_\bX]$ of $\Num (\coh \bX) \cong \Num \HH \cong \Num \BB$ can be chosen to obtain the above matrices.  With respect to this basis, we find 
$$[\t E] = -1 [E] + (a(1-g)+b)[M] - a[\OO_\bY].$$
Using that $[M] = [E] + [\t E]$ then yields $a=0$ and $b=1$.

We are now ready to show that $E$ lies in a simple tube.  We will use Proposition \ref{proposition:Summary}.  Seeking a contradiction, let $X,Y \in \AA$ be indecomposable objects, not lying in the tube containing $E$, such that $\Hom(X,E \oplus \t E) \not= 0$ and $\Hom(E \oplus \t E, Y) \not= 0$.  We write
\begin{align*}
[X] &= x_1 [E] + x_2 [M] + x_3 [\OO_\bY] \\
[Y] &= y_1 [E] + y_2 [M] + y_3 [\OO_\bY]
\end{align*}
Since $\Hom(X,E \oplus \t E) \not= 0$, Corollary \ref{corollary:TubesDirecting} shows that $\Ext^1(X,E \oplus \t E)= 0$ so that $\chi(X,M) > 0$.  Using the Cartan matrix, we find that $\chi(X,M) = x_3$, and thus $x_3 > 0$.  Similarly, one shows that $y_3 < 0$.

We will be looking at $\t^{2n} Y$ for $n \gg 0$.  The square of the Coxeter matrix is
$$C^2 = \begin{pmatrix}
1 & 0 & 0 \\
0 & 1 & 4g-3 \\
0 & 0 & 1
\end{pmatrix}$$
so that $[\t^{2n} Y] = [Y] + nx_3(4g-3)[M]$ and $\chi(X,\t^{2n} Y) = \chi(X,Y) + n(4g-3)x_3y_3$.  Thus, for some $n \gg 0$, we know that $\chi(X, \t^{2n} Y) < 0$ and hence $\Ext^1(X, \t^{2n} Y) \not= 0$.  Using Lemma \ref{lemma:Ringel}, we know that there is a path from $\t^{2n} Y$ to $X$.  Since $\t^2 E \cong E$, we find $\Hom(E \oplus \t E, \t^{2n} Y) \not= 0$.  This gives a path $\t^{2n} Y \to X \to E \oplus \t E \to \t^{2n} Y$, which contradicts Theorem \ref{theorem:DirectingTubes}.  Proposition \ref{proposition:Summary} now yields that $E \in \AA$ lies in a simple tube.
\end{proof}

\begin{corollary}\label{corollary:MIsSimple}
The object $M$ is simple in $E^\perp$.
\end{corollary}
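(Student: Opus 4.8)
The plan is to read the result off from the uniserial structure of a rank-two tube. By Proposition~\ref{proposition:LiesInASimpleTube} the object $E$ lies in a \emph{simple} tube $\TT$ of rank $2$ in $\AA$; since $\t^2 E \cong E$ (Proposition~\ref{proposition:Eis2periodic}) and $E$ is peripheral (Lemma~\ref{lemma:MiddleTermPerpendicular}), the two peripheral objects of $\TT$ are exactly $E$ and $\t E$, and both are \emph{simple} objects of $\AA$. By Lemma~\ref{lemma:MiddleTermPerpendicular} the object $M$ is the indecomposable middle term of the Auslander--Reiten sequence $0 \to \t E \to M \to E \to 0$ in $\AA$. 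As $E$ and $\t E$ are simple in $\AA$, this presents $M$ as an indecomposable object of length $2$; its socle is therefore simple (otherwise $M$ would be semisimple, contradicting indecomposability), so the unique proper nonzero subobject of $M$ in $\AA$ is $\t E$.

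The decisive point is that $\t E \notin E^\perp$. Indeed, by Serre duality $\Ext^1(E, \t E) \cong \Hom(E, \t E[1]) = \Hom(E, \bS E) \cong \Hom(E,E)^* \cong k \neq 0$, so $\t E$ violates the defining vanishing condition for membership in $E^\perp$.

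It then remains to transport the subobject computation from $\AA$ to $E^\perp$. Recall that $M \in E^\perp$ (it is the image of $E$ under the right adjoint $T^*_{\bS E}$), that the embedding $E^\perp \to \AA$ is exact and fully faithful, and that $E^\perp$ is closed under isomorphism. Consequently a subobject of $M$ in $E^\perp$ is carried to a subobject of $M$ in $\AA$, hence to one of $0$, $\t E$, or $M$; and its domain, being an object of $E^\perp$, cannot be isomorphic to $\t E$. Thus $0$ and $M$ are the only subobjects of $M$ in $E^\perp$, i.e.\ $M$ is simple in $E^\perp$.

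I expect the one genuinely careful step to be this final transfer: one must check that the lattice of subobjects of $M$ computed in $E^\perp$ injects into the lattice computed in $\AA$. This is exactly what exactness of $E^\perp \to \AA$ provides (kernels, and hence monomorphisms, are preserved and reflected for objects of $E^\perp$), and together with repleteness of $E^\perp$ it rules out the middle subobject $\t E$, leaving the conclusion immediate. Alternatively, one could invoke Corollary~\ref{corollary:M1Spherical} to place $M$ as the peripheral object of a homogeneous tube of $E^\perp$ and then argue that this tube is simple; but the subobject argument above is shorter and entirely self-contained.
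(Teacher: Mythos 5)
Your proof is correct and follows essentially the same route as the paper: the paper states this corollary without a separate proof, as an immediate consequence of Proposition \ref{proposition:LiesInASimpleTube} and Lemma \ref{lemma:MiddleTermPerpendicular}, which is precisely the implication you spell out. Your subobject-transfer argument --- using exactness and full faithfulness of the embedding $E^\perp \to \AA$, the length-two structure of $M$ forced by the simplicity of the rank-two tube, and the observation that $\tau E \notin E^\perp$ because $\Ext^1(E,\tau E) \cong \Hom(E,E)^* \neq 0$ --- is a correct and clean way of making that implicit deduction explicit.
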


The following proposition is one of the main reasons why we pay special attention to simple tubes.  Note that when we do not require $S$ to be simple in $E^\perp$, we cannot conclude that $S$ is 1-spherical in $\AA$ (see Example \ref{example:NotSimple} below).

\begin{proposition}\label{proposition:EnoughOneSphericals}
Let $S$ be a simple 1-spherical object in $E^\perp$.  If $S \not\cong M$, then $S$ is a simple 1-spherical object in $\AA$.
\end{proposition}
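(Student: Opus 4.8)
The plan is to deduce everything from the single fact that $E\in S^\perp$, i.e. $\Ext^i_\AA(S,E)=0$ for all $i$. Granting this, endo-simplicity of $S$ in $\AA$ is automatic: the embedding $E^\perp\to\AA$, and hence $\Db(E^\perp)\to\Db\AA$, is fully faithful, so $\End_\AA S=\End_{E^\perp}S\cong k$. It will then remain to prove $\t_\AA S\cong S$ (so that $S$ is $1$-spherical in $\AA$) and that $S$ is simple in $\AA$.

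First I would establish $E\in S^\perp$. By Corollaries~\ref{corollary:M1Spherical} and~\ref{corollary:MIsSimple} the object $M=T^*_{\bS E}E$ is a simple $1$-spherical object of $E^\perp$, and $S\not\cong M$ by hypothesis; thus $S$ and $M$ are the peripheral objects of two distinct homogeneous simple tubes of $E^\perp$, so Proposition~\ref{proposition:SimpleTubesPerpendicular} gives $\Hom_{E^\perp}(S,M)=\Ext^1_{E^\perp}(S,M)=0$, which full faithfulness upgrades to $\Hom_\AA(S,M)=\Ext^1_\AA(S,M)=0$. Applying $\Hom_\AA(S,-)$ to the Auslander--Reiten sequence $0\to\t E\to M\to E\to 0$ of Lemma~\ref{lemma:MiddleTermPerpendicular} then yields $\Ext^1_\AA(S,E)=0$ together with an isomorphism $\Hom_\AA(S,E)\cong\Ext^1_\AA(S,\t E)$. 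Serre duality in $\AA$ identifies $\Ext^1_\AA(S,\t E)\cong\Hom_\AA(\t E,\t S)^*\cong\Hom_\AA(E,S)^*$, which vanishes since $S\in E^\perp$. Hence $\Hom_\AA(S,E)=0$ as well, and $E\in S^\perp$.

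The key—and most delicate—step is the transfer of $1$-sphericity, the difficulty being that $E^\perp$ is \emph{not} stable under $\t_\AA$ (indeed $\t E\not\cong E$), so the Serre functors $\t_{E^\perp}$ and $\t_\AA$ need not agree on $S$ a priori. Here I would use $E\in S^\perp$ as follows. Serre duality gives $\Hom_\AA(E,(\bS_\AA S)[n])\cong\Ext^{-n}_\AA(S,E)^*=0$ for every $n$, so by the description of the essential image of $\Db(E^\perp)\hookrightarrow\Db\AA$ the object $\bS_\AA S$ lies in $\Db(E^\perp)$; equivalently $\t_\AA S$ lies in $\Db(E^\perp)$. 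A short Yoneda argument then shows $\bS_{E^\perp}S\cong\bS_\AA S$: for $Y\in\Db(E^\perp)$ one has $\Hom_{E^\perp}(Y,\bS_{E^\perp}S)\cong\Hom_\AA(S,Y)^*\cong\Hom_\AA(Y,\bS_\AA S)=\Hom_{E^\perp}(Y,\bS_\AA S)$, the last equality by full faithfulness. Therefore $S\cong\t_{E^\perp}S\cong\t_\AA S$, so $S$ is $1$-spherical in $\AA$ and generates a homogeneous tube $\TT_S$ there.

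Finally I would prove $S$ is simple in $\AA$. Using $\t_\AA S\cong S$ just obtained, $\Ext^i_\AA(\t E,S)\cong\Ext^i_\AA(\t E,\t S)\cong\Ext^i_\AA(E,S)=0$, so together with $S\in E^\perp$ we get $S\in\{E,\t E\}^\perp$. Since $E$ lies in a simple tube $\TT$ of rank two whose peripheral objects are exactly $E$ and $\t E$ (Proposition~\ref{proposition:LiesInASimpleTube}), the perpendicular $\TT^\perp$ coincides with $\{E,\t E\}^\perp$ and, being extension-closed, contains all of $\TT_S$. As $\AA$ is indecomposable, Proposition~\ref{proposition:OrthogonalToSimpleTubeMeansSimpleTube} then forces $\TT_S$ to be a simple tube in $\AA$, i.e. $S$ is simple in $\AA$. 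The crux throughout is the vanishing $E\in S^\perp$, which simultaneously identifies the two Serre functors on $S$ and places $\TT_S$ inside $\TT^\perp$.
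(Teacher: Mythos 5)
Your proof is correct, but both of its key steps run differently from the paper's, so it is worth comparing the two. After the common starting point (using Proposition \ref{proposition:SimpleTubesPerpendicular} to separate the tubes of $S$ and $M$ in $E^\perp$), the paper stays entirely on the right-perpendicular side: it identifies $M^\perp \cap E^\perp$ with $B^\perp$ for $B = E \oplus \t E$ via the Auslander--Reiten sequence of Lemma \ref{lemma:MiddleTermPerpendicular}, notes that $\t B \cong B$ by Proposition \ref{proposition:Eis2periodic}, so that $B^\perp$ is $\t$-stable and carries the restricted Serre functor (Proposition \ref{proposition:PerpendicularOnSpherical}); this is how it concludes $\t S \cong S$ in $\AA$, and simplicity then follows from Proposition \ref{proposition:PerpendicularOfSimples} applied to the embedding $B^\perp \to \AA$. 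You instead first establish the left-sided vanishing $E \in S^\perp$ (same AR sequence plus Serre duality), use it to show that $\bS_\AA S$ remains in the essential image of $\Db(E^\perp) \to \Db \AA$, and then identify $\bS_{E^\perp} S \cong \bS_\AA S$ by a Yoneda argument; this is a pointwise version of the same Serre-functor comparison, carried out in $E^\perp$ rather than in $B^\perp$, and it has the notable feature of not using the $2$-periodicity $\t^2 E \cong E$ for the sphericity transfer at all. For simplicity you then place the tube $\TT_S$ of $S$ inside $\TT^\perp = B^\perp$, where $\TT$ is the rank-two simple tube containing $E$ (Proposition \ref{proposition:LiesInASimpleTube}), and invoke Proposition \ref{proposition:OrthogonalToSimpleTubeMeansSimpleTube} (which requires the standing indecomposability of $\AA$) where the paper invokes Proposition \ref{proposition:PerpendicularOfSimples}. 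The trade-off: your route makes fully explicit the step the paper compresses into the single sentence ``Since $M^\perp \cap E^\perp = B^\perp$, we know that $S \cong \t S$ in $\AA$,'' while the paper's route is shorter because, once $S$ is placed in the $\t$-stable category $B^\perp$, both the sphericity and the simplicity transfers are immediate applications of ready-made propositions.
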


\begin{proof}
We know by Proposition \ref{proposition:LiesInASimpleTube} that $E$ lies in a simple tube in $\AA$, thus $M$ lies in a simple tube in $E^\perp$.  If $S$ is a simple 1-spherical object in $E^\perp$ and $S \not\cong M$, then Proposition \ref{proposition:SimpleTubesPerpendicular} implies that $S \in M^\perp \cap E^\perp$.  Let $B = E \oplus \t E$.  By Proposition \ref{proposition:LiesInASimpleTube}, we know that $\t B \cong B$.  Since $M^\perp \cap E^\perp = B^\perp$, we know that $S \cong \t S$ in $\AA$.  Since $S$ is simple, we know that $\Hom(S,S) \cong k$ and hence $S$ is 1-spherical in $\AA$.

It now follows from Proposition \ref{proposition:PerpendicularOfSimples} that $S$ is simple in $\AA$ since $S$ is simple in $B^\perp$.
\end{proof}

\begin{example}\label{example:NotSimple}
Let $\AA$ be a tube of rank 2, and let $E$ be any of the peripheral objects.  The category $E^\perp$ is a simple (and homogeneous) tube, but the simple object $S$ in $E^\perp$ is not simple in $\AA$.  Indeed, there is a short exact sequence $0 \to \t E \to S \to E \to 0$.
\end{example}

\subsection{\texorpdfstring{$\EE$}{E} does not consist of a single object}\label{subsection:LargerSequence} We will now consider the case where $\AA$ is indecomposable and contains a maximal strong exceptional sequence $\EE = (E_1,E_2, \ldots, E_n)$ which does not necessarily consist of a single object.  To avoid trivial cases, we will assume that $\EE \not= \emptyset$ (the sequence is not empty) and that $\EE^\perp \not= 0$ (thus $\AA$ does not have a tilting object).

Our goal in this subsection to prove Proposition \ref{proposition:EnoughSimpleTubes}, namely that each $E_i$ lies in a simple tube in $\AA$; this is a more general version of Proposition \ref{proposition:LiesInASimpleTube} above.

In what follows, let $\BB_i = {}^\perp (E_1,E_2, \ldots, E_{i-1}) \cap (E_{i+1}, E_{i+2}, \dots, E_n)^{\perp} \subseteq \AA$.  Note that $E_i \in \BB_i$, and that $\BB_i \cap E_i^\perp$ has no exceptional objects.  Thus the category $\BB_i$ is of the form as discussed in \S\ref{subsection:OnlyOneExceptional}.

However, recall that we assume that $\AA$ does not have any nonzero projective objects, but we do not know whether $\BB_i$ has any nonzero projective objects.

\begin{proposition}\label{proposition:EveryComponentHasSimpleTube}
Let $\AA$ be an indecomposable numerically finite abelian hereditary category with Serre duality.  Let $\EE$ be a maximal exceptional sequence in $\AA$ and assume that $\EE \not= \emptyset$ and $\EE^\perp \not= 0$.  Every connected component in $\EE^\perp \subseteq \AA$ has a simple tube (the tube is simple in $\EE^\perp$).
\end{proposition}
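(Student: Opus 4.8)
The plan is to reduce immediately to a single connected component and then transport a simple torsion tube from the classifying model $\coh\bX$ across the derived equivalence furnished by Theorem \ref{theorem:NoExceptionals}. First I would record the general features of $\EE^\perp$: because $\EE$ is maximal, $\EE^\perp$ has no exceptional objects (any exceptional $F\in\EE^\perp$ would extend $\EE$), and iterating Proposition \ref{proposition:NumAndPerpendicular} shows that $\EE^\perp$ is hereditary with Serre duality. The absence of exceptional objects forces the absence of nonzero projectives and injectives, so by Proposition \ref{proposition:SerreDuality} the translate $\tau$ restricts to an autoequivalence of $\EE^\perp$. A connected component $\CC$ of $\EE^\perp$ is indecomposable and inherits all these properties, hence satisfies the hypotheses of Theorem \ref{theorem:NoExceptionals}; since $\Hom$ vanishes between distinct components, ``simple tube in $\CC$'' coincides with ``simple tube in $\EE^\perp$''. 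It therefore suffices to show that each such $\CC$ contains a simple tube.

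By Theorem \ref{theorem:NoExceptionals}, $\CC$ is derived equivalent either to a homogeneous tube or to $\coh\bX$ for a smooth projective curve $\bX$ of genus at least one. In the first case $\CC$ is itself a homogeneous tube (up to shift the only hereditary heart of its derived category), which is by definition a simple tube, so there is nothing to prove. I would therefore concentrate on the case $\Db\CC\cong\Db\coh\bX$. Here I would invoke Proposition \ref{proposition:SincereWhenNoExceptionals} together with Remark \ref{remark:WhatIsHH} to realize $\coh\bX$ as an \emph{honest} tilt $\HH$ of $\CC$, so that $\CC$ and $\HH\cong\coh\bX$ are two hereditary hearts in the common category $\Db\CC=\Db\HH$.

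Next I would transport tubes. In $\HH\cong\coh\bX$ the torsion sheaves supported at a closed point $P$ form a homogeneous simple tube $\TT_P$ with peripheral object $k(P)$; these form a one-parameter family, and each is ``at the end'' of $\coh\bX$ in the sense of Proposition \ref{proposition:Summary} (no nonzero maps leave the tube). Each $\TT_P$ is $\tau$-stable, and $\tau$ preserves the heart $\CC$; moreover in a homogeneous tube the two irreducible maps between adjacent layers run in opposite directions, so the $\CC$-cohomological degree cannot jump along $\TT_P$. Heredity of $\CC$ and indecomposability then place all of $\TT_P$ in a single shift $\CC[m_P]$, so $\TT_P[-m_P]$ is a genuine homogeneous tube $\TT_P'$ in $\CC$ with peripheral object $Y_P$. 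Since the sheaves $k(P)$ form a bounded family carrying a single numerical class and the $\CC$-t-structure is bounded, the shifts $m_P$ take only finitely many values, and some value $m$ is attained for a cofinite family of points $P$.

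The decisive step is to show that for a suitable such $P$ the tube $\TT_P'$ is simple in $\CC$, i.e. that $Y_P$ is simple. I would verify this through the criterion of Proposition \ref{proposition:Summary}: $\TT_P'$ is simple as soon as there is no indecomposable $A\in\CC$ outside $\TT_P'$ admitting simultaneously a nonzero map into $Y_P$ and (via Serre duality, a nonzero $\Ext^1$ into) $Y_P$ from outside the tube. In the model $\HH$ this obstruction is controlled by the finitely many cohomology sheaves distinguishing $\CC$ from $\coh\bX$, so each fixed indecomposable of $\CC$ can spoil only finitely many of the transported tubes; intersecting over the cofinite family with $m_P=m$ should leave a tube with no incoming or no outgoing external maps, which is exactly simplicity. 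I expect this last step — making precise the genericity argument and showing that the ``end/beginning'' property of $\TT_P$ in $\coh\bX$ survives the tilt for all but finitely many $P$ — to be the main obstacle. Everything preceding it is bookkeeping with perpendicular categories, $\tau$-stability, and the combinatorics of homogeneous tubes.
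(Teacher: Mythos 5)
Your reductions (maximality of $\EE$ gives no exceptionals in $\EE^\perp$, Serre duality via Proposition \ref{proposition:NumAndPerpendicular}, passage to a component $\CC$, and the fact that a tube of $\Db\CC$ lands in a single shift of $\CC$) are all fine, but the proof breaks irreparably at exactly the step you flag as the "main obstacle", and it cannot be repaired within your framework: your argument uses only intrinsic properties of $\CC$ (connected, hereditary, Serre duality, no exceptional objects, derived equivalent to $\coh \bX$, even numerically finite), and the intrinsic statement is \emph{false}. Concretely, let $\bX$ be an elliptic curve, $\theta$ an irrational number, and let $\TT_\theta$ (resp.\ $\FF_\theta$) be the full subcategory of sheaves all of whose Harder--Narasimhan slopes are $>\theta$ (resp.\ $<\theta$). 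Since $\tau \cong \id$, we have $\Ext^1(\FF_\theta,\TT_\theta) \cong \Hom(\TT_\theta,\FF_\theta)^* = 0$, so by Proposition \ref{proposition:HereditaryTilting} the tilt $\HH_\theta$ of $\coh\bX$ at this torsion pair is a connected hereditary heart with Serre duality, no exceptional objects, and $\Db \HH_\theta \cong \Db \coh\bX$. Yet $\HH_\theta$ contains no simple tube: taking $L$ the line bundle of degree $\lceil\theta\rceil$, for \emph{every} closed point $P$ the sheaf sequence $0 \to L(-P) \to L \to k(P) \to 0$ has $L \in \TT_\theta$ and $L(-P) \in \FF_\theta$, hence becomes a short exact sequence $0 \to L \to k(P) \to L(-P)[1] \to 0$ in $\HH_\theta$; the pair $(L,\,L(-P)[1])$ then violates criterion (2) of Proposition \ref{proposition:Summary} for every transported torsion tube simultaneously (and the tubes of stable bundles are handled the same way after a Fourier--Mukai transform). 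This single object $L$ spoils \emph{all} tubes at once, so your claim that "each fixed indecomposable of $\CC$ can spoil only finitely many of the transported tubes" is false; and even if it were true, the union over the infinitely many indecomposables of $\CC$ of finite spoiled sets need not miss any tube, so the intersection argument would still not close.

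What is missing is precisely the ingredient your proof never touches: the interaction of $\CC$ with the exceptional sequence $\EE$ inside the indecomposable ambient category $\AA$ (note that $\EE \neq \emptyset$ and the indecomposability of $\AA$ play no role in your argument, which is why it proves too much). The paper's proof runs through this interaction: since $\AA$ is indecomposable and $\EE \neq \emptyset$, there is some $E_i$ with $\Hom(\CC,E_i) \neq 0$ or $\Ext^1(\CC,E_i) \neq 0$; one then works in $\BB_i = {}^\perp(E_1,\ldots,E_{i-1}) \cap (E_{i+1},\ldots,E_n)^\perp$, uses Theorem \ref{theorem:Decomposition} to see that $E_i$ is not projective in $\BB_i$, and takes the middle term $M_i$ of the Auslander--Reiten sequence $0 \to \tau_i E_i \to M_i \to E_i \to 0$ in $\BB_i$. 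Lemma \ref{lemma:MiddleTermPerpendicular} shows $M_i$ is indecomposable and lies in $\CC$, and Corollary \ref{corollary:MIsSimple} --- which rests on Proposition \ref{proposition:LiesInASimpleTube}, i.e.\ on the Cartan--Coxeter matrix computation showing that $E_i$ sits in a simple tube of rank two --- says that the tube containing $M_i$ is simple. So the simple tube is manufactured from the exceptional object adjacent to $\CC$, not transported from the geometry of $\coh\bX$; this is exactly the structure that is absent in the counterexample $\HH_\theta$ above.
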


\begin{proof}
Let $\CC$ be a connected component of $\EE^\perp \subseteq \AA$.  Recall that $\AA$ has no propjective objects so that Proposition \ref{proposition:SerreDuality} implies that $\t: \Db \AA \to \Db \AA$ restricts to an autoequivalence $\tau: \AA \to \AA$ (Proposition \ref{proposition:SerreDuality}).  Since $\CC$ is a connected component of $\EE^\perp$, the embedding $\CC \to \EE^\perp$ has a left and a right adjoint.  Since the embedding $\EE^\perp \to \AA$ has a left and a right adjoint, we may infer the same about the embedding $\CC \to \AA$.

If $\CC \subseteq {}^\perp \EE$, then $\t \CC \in \EE^\perp$ so that $\CC$ is a connected component of $\AA$, and thus $\CC = \AA$ since $\AA$ is indecomposable.  In particular, $\EE = \emptyset$.  Since we have assumed this is not the case, we know that $\CC \not\subseteq {}^\perp \EE$, hence there is an $E_i \in \EE$ such that $\Hom(\CC,E_i) \not= 0$ or $\Ext^1(\CC,E_i) \not= 0$.  Let $i$ be the smallest such number.

We will work in the category $\BB_i = {}^\perp (E_1,E_2, \ldots, E_{i-1}) \cap (E_{i+1}, E_{i+2}, \dots, E_n)^{\perp}$.  We know that $E_i \in \BB_i$ and $\CC \subseteq \BB_i$ (by the minimality of $i$).  By Theorem \ref{theorem:Decomposition} we know that if $E_i$ is projective in $\BB_i$ that the additive category generated by $E_i$ is a connected component of $\BB_i$.  However, since we have $\Hom(\CC,E_i) \not= 0$ or $\Ext^1(\CC,E_i) \not= 0$, this is not the case.  Hence, $E_i$ not projective in $\BB_i$.

As before, let $M_i$ be the image of $E_i$ under the right adjoint to the embedding $\BB_i \cap E_i^\perp \to \BB_i$, thus $M_i$ is the middle term of an almost split sequence $0 \to \tau_i E_i \to M_i \to E_i \to 0$ where $\tau_i$ is the Auslander-Reiten translate in $\BB_i$.  Lemma \ref{lemma:MiddleTermPerpendicular} then shows that the object $M_i$ is indecomposable and has to lie in $\CC$.  Corollary \ref{corollary:MIsSimple} yields that $\CC$ has a simple tube.
\end{proof}

\begin{lemma}\label{lemma:GoodSimpleTubes}
Let $\AA$ be an indecomposable numerically finite abelian hereditary category with Serre duality.  Let $\EE$ be a maximal exceptional sequence in $\AA$ and assume that $\EE \not= \emptyset$ and $\EE^\perp \not= 0$.  There is a set $\SS' \in \AA$ of simple 1-spherical objects in $\AA$ such that $\SS' \subseteq \EE^\perp$ and such that ${}^\perp \SS' \cap \EE^\perp$ consists only of tubes.
\end{lemma}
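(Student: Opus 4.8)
The plan is to take for $\SS'$ essentially all the simple $1$-spherical objects of $\EE^\perp$ that happen to be $1$-spherical in $\AA$, and to show that only finitely many per connected component are lost. First I would record the structure of $\EE^\perp$: since $\EE$ is maximal, $\EE^\perp$ is a hereditary category with Serre duality and no exceptional objects (Proposition \ref{proposition:NumAndPerpendicular}), so by Theorem \ref{theorem:NoExceptionals} each connected component $\CC$ is either a homogeneous tube or derived equivalent to $\coh \bX$ for a smooth projective curve $\bX$ of genus $\geq 1$; in the latter case the only simple $1$-spherical objects of $\CC$ are the simple sheaves $k(P)$ (as in Corollary \ref{corollary:PerpendicularToSimpleTubeNoExceptionals}), all sharing a single class $\sigma_\CC \in \Num \AA$. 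Proposition \ref{proposition:EveryComponentHasSimpleTube} moreover puts a simple tube, containing the object $M_i$ associated to some $E_i$, inside each component.

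The key reduction is a criterion for a simple $1$-spherical object $S \in \EE^\perp$ to be simple $1$-spherical in $\AA$. Endo-simplicity is automatic from full faithfulness of $\EE^\perp \hookrightarrow \AA$. For the translate, I would use that $\Db \EE^\perp \hookrightarrow \Db \AA$ is admissible, so its Serre functor is $\bS_{\EE^\perp} = R\,\bS_\AA\, i$ with $R$ the right adjoint and $i$ the inclusion; combining $\bS_{\EE^\perp} S \cong S[1]$ with Serre duality in $\AA$ shows that $\bS_\AA S \in \EE^\perp$ if and only if $S \in {}^\perp\EE$, and in that case $R$ fixes $\bS_\AA S$, forcing $\bS_\AA S \cong \bS_{\EE^\perp} S \cong S[1]$, i.e. $\t_\AA S \cong S$. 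Thus $S$ is $1$-spherical in $\AA$ exactly when $S \in {}^\perp\EE$. For simplicity in $\AA$ I would invoke Proposition \ref{proposition:Summary}: as $S$ is simple in a curve component there is a witness $A' \in \EE^\perp$ with $\Hom(A',S) \neq 0$ and $\Hom(S,A') = 0$ (for instance a line bundle of that component); since the twist triangle for $T_S$ on an object of $\EE^\perp$ is computed by $\RHom_{\EE^\perp} = \RHom_\AA$, one gets $T_S^i A' \in \EE^\perp[0] \subseteq \AA[0]$ for all $i$, and Proposition \ref{proposition:Summary}(a) then yields that $S$ is simple in $\AA$.

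The heart of the argument — and the step I expect to be hardest — is to show that in every curve component $\CC$ all but finitely many $k(P)$ lie in ${}^\perp\EE$; equivalently, that $\chi(\sigma_\CC, [E_j]) = 0$ for every $j$. Together with the easy finiteness that $\Hom(k(P),E_j) \neq 0$ for only finitely many $P$ (coming from Ext-finiteness, since such a map exhibits $k(P)$ as a simple subobject of $E_j$ lying in $\CC$), the vanishing of $\chi(\sigma_\CC,[E_j])$ forces $\Ext^1(k(P),E_j) = 0$ for cofinitely many $P$ as well, hence $k(P) \in {}^\perp\EE$ there. One inclusion is free: $\chi([E_j],\sigma_\CC) = 0$ for all $j$ because $\sigma_\CC$ is the class of an object of $\EE^\perp$. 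The reverse orthogonality is the subtle point, because $\bS_\AA$ interchanges $\EE^\perp$ and ${}^\perp\EE$ rather than preserving either, so it does not follow formally. My plan is to extract it from the block structure of the Cartan matrix: running the analysis of \S\ref{subsection:OnlyOneExceptional} inside each $\BB_i$ — where $E_i \cong \t^2 E_i$, $[M_i] = [E_i] + [\t E_i]$, and the normalization $a=0$, $b=1$ of Lemma \ref{lemma:CartanReduced} and Proposition \ref{proposition:LiesInASimpleTube} force $\chi(M_i,E_i) = \chi(E_i,M_i) = 0$ — and then propagating across components using that distinct components of $\EE^\perp$ are mutually orthogonal and that each $E_j$ is tied numerically to the curve component containing its $M_j$ via $[M_j] = [E_j] + [\t_{\BB_j} E_j]$. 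The outcome I am aiming for is that the Cartan matrix of $\AA$ is block diagonal with one block per component of $\EE^\perp$, each curve block of the shape of Lemma \ref{lemma:CartanReduced}, which delivers $\chi(\sigma_\CC, [E_j]) = 0$ for all $j$.

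Granting this, I would finish by setting $\SS' = \{\,k(P) : P \text{ in a curve component of } \EE^\perp,\ k(P) \in {}^\perp\EE\,\}$, which by the two preceding paragraphs is a set of simple $1$-spherical objects of $\AA$ contained in $\EE^\perp$. Since each component sits as a direct summand, ${}^\perp\SS' \cap \EE^\perp$ splits over the components: on a curve component it is $\{X \in \coh\bX : \Hom(X,k(P)) = 0 = \Ext^1(X,k(P)) \text{ for all but finitely many } P\}$, which is exactly the torsion sheaves supported at the finitely many omitted points, a direct sum of homogeneous tubes; on a homogeneous-tube component, where $\SS'$ contributes nothing, the whole component survives and is itself a tube. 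Hence ${}^\perp\SS' \cap \EE^\perp$ consists only of tubes, as required.
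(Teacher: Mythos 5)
Your overall skeleton is the same as the paper's (take $\SS'$ to be the simple $1$-spherical objects of $\EE^\perp$ lying in ${}^\perp\EE$, show only finitely many are lost, then read off that ${}^\perp\SS'\cap\EE^\perp$ is a sum of tubes), and two of your sub-arguments are sound and arguably cleaner than the paper's: the adjunction computation $\bS_{\EE^\perp}=R\,\bS_\AA\,i$ showing that a simple $1$-spherical $S\in\EE^\perp$ is $1$-spherical in $\AA$ if and only if $S\in{}^\perp\EE$, and the verification of simplicity in $\AA$ via condition $(a)$ of Proposition \ref{proposition:Summary}, using a witness $A'\in\CC$ whose $T_S$-orbit stays in $\EE^\perp[0]$.

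However, the step you yourself flag as the heart of the argument is a genuine gap, not just an omission. First, your auxiliary finiteness claim --- that $\Hom(k(P),E_j)\neq 0$ for only finitely many $P$ because such a map "exhibits $k(P)$ as a simple subobject of $E_j$" --- presupposes that $k(P)$ is simple in $\AA$, which is exactly what cannot be assumed: a simple object of a perpendicular category need not be simple in the ambient category (Example \ref{example:NotSimple}). Second, the route to $\chi(\sigma_\CC,[E_j])=0$ via block-diagonality of the Cartan matrix of $\AA$ is only announced ("the outcome I am aiming for"), and it faces two concrete obstructions: Lemma \ref{lemma:CartanReduced} is proved only when the maximal exceptional sequence has a single term, and for $n>1$ the categories $\BB_i\cap E_i^\perp$ in which that lemma (and Proposition \ref{proposition:LiesInASimpleTube}) operate do \emph{not} have the same connected components as $\EE^\perp$ (note $\EE^\perp\not\subseteq{}^\perp(E_1,\ldots,E_{i-1})$, since right- and left-perpendicularity are different conditions), so "propagating across components" is precisely the unproven content; moreover, block-diagonality with one block per component would in particular force $\chi(E_i,E_j)=0$ for $E_i,E_j$ attached to different components, which essentially amounts to the $E_j$'s lying in pairwise perpendicular simple tubes --- that is Proposition \ref{proposition:EnoughSimpleTubes}, which the paper proves \emph{after}, and by means of, this lemma, so your plan runs a real risk of circularity. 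The paper avoids all numerical computation here by a different mechanism, which is the idea you are missing: if $S\in\SS$ fails to lie in ${}^\perp\EE$, take the \emph{minimal} index $i$ with $\Hom(S,E_i)\neq 0$ or $\Ext^1(S,E_i)\neq 0$ and work in $\BB_i$; then $E_i$ is not projective in $\BB_i$ (else Theorem \ref{theorem:Decomposition} would split off $\add E_i$, contradicting non-perpendicularity), the middle term $M_i$ of its Auslander--Reiten sequence is indecomposable and lies in a simple tube (Lemma \ref{lemma:MiddleTermPerpendicular}, Corollary \ref{corollary:MIsSimple}), and the dichotomy of Proposition \ref{proposition:EnoughOneSphericals} combined with the orthogonality of distinct simple tubes (Proposition \ref{proposition:SimpleTubesPerpendicular}) forces $S\cong M_i$. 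Hence at most $n$ objects are removed, with no Cartan-matrix analysis and no Hom-finiteness count over the points $P$.
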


\begin{proof}
Let $\SS \subseteq \ind \EE^\perp$ be a set of representatives of isomorphism classes of simple 1-spherical objects in $\EE^\perp$.  Proposition \ref{proposition:EveryComponentHasSimpleTube} shows that each connected component of $\EE^\perp$ has a simple tube, and Corollary \ref{corollary:OnesimpleMeansEnoughSimples} shows that ${}^\perp \SS \cap \EE^\perp = 0$.

Let $\SS' = \SS \cap {}^\perp \EE = \SS \cap \EE^\perp$.  We claim that $\SS \setminus \SS'$ has only finitely many indecomposables.  Indeed, let $S \in \SS \setminus \SS'$.  This means that there is an $E_i \in \EE$ such that $\Hom(S,E_i) \not= 0$ or $\Ext^1(S,E_i) \not= 0$.  We will choose a minimal $i$ with this property and work in the category $\BB_i = {}^\perp (E_1,E_2, \ldots, E_{i-1}) \cap (E_{i+1}, E_{i+2}, \dots, E_n)^{\perp}$.

We claim that $E_i$ is not projective in $\BB_i$.  Indeed, assume that $E_i$ is projective in $\BB_i$.  By Theorem \ref{theorem:Decomposition}, this implies that the additive category generated by $E_i$ is a connected component of $\BB_i$.  This contradicts that $\Hom(S,E_i) \not= 0$ or $\Ext^1(S,E_i) \not= 0$.  We may conclude that $E_i$ is not projective in $\BB_i$.

As before, let $M_i$ be the middle term in the Auslander-Reiten sequence
$$0 \to \t_i E_i \to M_i \to E_i \to 0$$
in $\BB_i$, where $\t_i$ is the Auslander-Reiten translate in $\BB_i$.  As in Lemma \ref{lemma:MiddleTermPerpendicular}, we know that $M_i$ is indecomposable, and Corollary \ref{corollary:MIsSimple} shows that $M_i \in \BB_i \cap E_i^\perp$ lies in a simple tube.  It follows from $\Hom(S,E_i) \not= 0$ or $\Ext^1(S,E_i) \not= 0$ that $\Hom(S,M_i) \not= 0$ or $\Ext^1(S,M_i) \not= 0$, thus $M_i \not\in S^\perp$.

It follows from Proposition \ref{proposition:EnoughOneSphericals} that either $S$ is a simple 1-spherical object in $\BB_i \cap E_i^\perp$, or that $S \cong M_i$.  Since we know that $M_i \not\in S^\perp$, Proposition \ref{proposition:SimpleTubesPerpendicular} implies that $S \cong M_i$.

We conclude that at most $n$ objects will be removed when going from $\SS$ to $\SS' = \SS \cap \EE^\perp$.  By Theorem \ref{theorem:NoExceptionals}, we know that the only connected components of $\EE^\perp$ are derived equivalent to tubes or to $\coh \bX$ for a smooth projective curve $\bX$.  By proposition \ref{proposition:EveryComponentHasSimpleTube}, each of these components has a simple 1-spherical object in $\EE^\perp$.  The statement then follows from Corollary \ref{corollary:PerpendicularToSimpleTubeNoExceptionals} (here we use that $\Db \coh \bX$ has infinitely many nonisomorphic 1-spherical objects).
\end{proof}

\begin{proposition}\label{proposition:EnoughSimpleTubes}
Let $\AA$ be indecomposable. If $\AA$ does not have a tilting object, then $\AA$ has a set $\SS$ of pairwise perpendicular simple tubes such that $\SS^\perp = 0$.  Moreover, each object $E_i$ in $\EE$ lies in a tube of $\SS$.
\end{proposition}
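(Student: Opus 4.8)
The plan is to reduce the statement to two tasks: first, that each $E_i$ lies in a simple tube of $\AA$; and second, that the minimal $1$-spherical objects of these tubes, together with the set $\SS'$ produced by Lemma \ref{lemma:GoodSimpleTubes}, assemble into the required set $\SS$. I begin by recording that $\EE^\perp$ must contain a connected component which is \emph{not} a tube. Indeed, since $\EE$ is maximal, $\EE^\perp$ has no exceptional objects, so any tube in it is homogeneous; if $\EE^\perp$ were a direct sum of such tubes, then Corollary \ref{corollary:WhatIfTubes} would force the indecomposable $\AA$ to be either a tube or a category with a tilting object, and as $\AA$ has no tilting object it would be a tube, contradicting $\EE \neq \emptyset$. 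By Theorem \ref{theorem:NoExceptionals} the non-tube component is derived equivalent to $\coh \bX$ for a curve of genus at least one, which carries infinitely many nonisomorphic simple $1$-spherical objects; as Lemma \ref{lemma:GoodSimpleTubes} discards only finitely many of them, the resulting set $\SS'$ is nonempty.

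For the first task I would pass to the perpendicular category $\AA' = \SS'^\perp = {}^\perp \SS'$, which by Proposition \ref{proposition:PerpendicularOnSpherical} is hereditary, has Serre duality and no nonzero projectives, and on which $\bS$ restricts. A short computation using Serre duality and $\t S' \cong S'$, namely $\Hom(S',E_i) \cong \Ext^1(E_i,S')^*$ and $\Ext^1(S',E_i) \cong \Hom(E_i,S')^*$, both of which vanish because $S' \in \EE^\perp$, shows $E_i \in \AA'$ for every $i$; thus $\EE$ is a maximal exceptional sequence of $\AA'$, and $\EE^\perp \cap \AA' = {}^\perp \SS' \cap \EE^\perp$ is a direct sum of homogeneous tubes by Lemma \ref{lemma:GoodSimpleTubes}. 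Proposition \ref{Proposition:WhatIfOnlyTubes} then decomposes $\AA'$ into summands, each a tube or a hereditary category with a tilting object. The crux is to exclude summands of the second kind: if $\BB'$ were such a summand, then $\Db \BB'$ is saturated (being derived equivalent to some $\mod \Lambda$ or to $\coh$ of a weighted projective line) and $\bS$-stable, so Corollary \ref{corollary:SemiOrthogonal} exhibits $\Db \BB'$ as a genuine direct summand of $\Db \AA$; but $\Db \AA$ is a block by Lemma \ref{lemma:Ringel3}, while $\Db \BB' \subseteq \Db \AA' \subsetneq \Db \AA$ properly because $\SS' \neq \emptyset$, forcing $\BB' = 0$. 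Hence $\AA'$ is a direct sum of tubes, each $E_i$ lies in one tube-component $\TT'_i$ which, being a direct summand, is a simple tube of $\AA'$, and Proposition \ref{proposition:PerpendicularOfSimples} carries $\TT'_i$ to a simple tube $\TT_i$ of $\AA$ containing $E_i$.

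For the second task I would let $\SS$ consist of the minimal $1$-spherical objects of the distinct simple tubes among those generated by $\SS'$ and the tubes $\TT_1,\dots,\TT_n$; these are pairwise perpendicular by Proposition \ref{proposition:SimpleTubesPerpendicular}, and each $E_i$ lies in $\TT_i \in \SS$ by construction. Since $\SS \supseteq \SS'$ we have $\SS^\perp \subseteq \AA'$, so it suffices to show that every connected component of $\AA'$ already contains some $E_i$: an object of $\AA'$ orthogonal to the minimal $1$-spherical object of its own tube-component is then zero by Lemma \ref{lemma:WhenInTube}, whence $\SS^\perp = 0$. Suppose a component $\TT'$ contained no $E_i$. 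Then $\TT'$ is orthogonal to every $E_i$, so $\TT' \subseteq \EE^\perp \cap \AA'$ is a homogeneous tube, generated by a simple $1$-spherical object $Z$ of $\EE^\perp$ with $Z \notin \SS'$. By the proof of Lemma \ref{lemma:GoodSimpleTubes} the objects of $\SS \setminus \SS'$ are precisely the $M_i$, so $Z \cong M_{i_0}$ for some $i_0$; but the Auslander--Reiten sequence of Lemma \ref{lemma:MiddleTermPerpendicular} yields a nonzero map $M_{i_0} \to E_{i_0}$, placing $E_{i_0}$ in the same component of $\AA'$ as $Z$, a contradiction.

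The main obstacle is the middle step, namely showing that $\SS'^\perp$ has no summand with a tilting object. This is exactly where the hypotheses that $\AA$ be indecomposable and free of tilting objects enter essentially, through the interplay of saturation, the semi-orthogonal decomposition of Corollary \ref{corollary:SemiOrthogonal}, and the block structure of $\Db \AA$; the remaining verifications (that $E_i \in \AA'$, that the tube-components are simple, and that no component of $\AA'$ avoids all the $E_i$) are comparatively routine.
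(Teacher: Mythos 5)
Your overall strategy is the same as the paper's: take $\SS'$ from Lemma \ref{lemma:GoodSimpleTubes}, apply Proposition \ref{Proposition:WhatIfOnlyTubes} to ${}^\perp\SS'$, rule out the summands with a tilting object using saturation and compatibility with the Serre functor, and transport simplicity of the resulting tubes back to $\AA$ via Proposition \ref{proposition:PerpendicularOfSimples}. Most of your verifications are sound, and your closing argument (that every component of ${}^\perp\SS'$ contains some $E_i$, via $Z \cong M_{i_0}$ and the Auslander--Reiten sequence of Lemma \ref{lemma:MiddleTermPerpendicular}) is a nice explicit substitute for the paper's ``it is now easy to see''.

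However, there is a genuine error in your opening step, and it propagates. You claim that $\EE^\perp$ must contain a component which is not a tube, arguing that otherwise $\AA$ would be a tube, ``contradicting $\EE \neq \emptyset$''. This is false: a tube of rank $r \geq 2$ does have exceptional objects (its peripheral objects, for instance), is indecomposable, has no tilting object, and satisfies $\EE \neq \emptyset$ and $\EE^\perp \neq 0$ --- for a maximal $\EE$ in such a tube, $\EE^\perp$ is a homogeneous tube. So this $\AA$ satisfies every hypothesis of the proposition, yet $\EE^\perp$ is a direct sum of homogeneous tubes and, exactly as in Example \ref{example:NotSimple}, $\SS' = \emptyset$. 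This matters because your ``crux'' step excludes a tilting summand $\BB'$ of ${}^\perp\SS'$ only through the properness $\Db \BB' \subsetneq \Db \AA$, which you derive from $\SS' \neq \emptyset$; when $\SS' = \emptyset$ that argument collapses, and the block property of $\Db \AA$ alone cannot distinguish $\Db \BB' = 0$ from $\Db \BB' = \Db \AA$. The paper avoids the issue entirely: a tilting summand $\BB'$, being a direct summand of the indecomposable category $\AA$, would have to equal $\AA$ itself, contradicting the standing hypothesis that $\AA$ has no tilting object --- no nonemptiness of $\SS'$ is ever needed. The repair is easy (either dispose of the tube case separately, where the proposition is trivial since $\AA$ is itself a simple tube, or replace your properness argument by the paper's appeal to indecomposability plus the no-tilting-object hypothesis), but as written your proof does not cover all categories to which the proposition applies.
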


\begin{proof}
Let $\SS'$ be as in the statement of Lemma \ref{lemma:GoodSimpleTubes}.  We can apply Proposition \ref{Proposition:WhatIfOnlyTubes} to the category ${}^\perp \SS'$ to see that ${}^\perp \SS'$ is equivalent to a direct sum of tubes and hereditary categories with a tilting object.  However, the embedding ${}^\perp \SS' \to \AA$ commutes with the Serre functor (Proposition \ref{proposition:PerpendicularOnSpherical}) and, since a hereditary category with a tilting object is saturated, each such component will be a direct summand of $\AA$.  Since we assume that $\AA$ is indecomposable and does not have a tilting object, we know that ${}^\perp \SS'$ is a direct sum of tubes.  We know that each $E_i$ lies in ${}^\perp \SS'$, and hence each object in $\EE$ lies in a tube of ${}^\perp \SS'$.  Furthermore, $\SS'$ consists of 1-spherical objects and since the embedding ${}^\perp \SS' \to \AA$ commutes with the Serre functor, we may conclude that each $E_i$ lies in a tube of $\AA$.

Since each of the tubes in ${}^\perp \SS'$ is simple in ${}^\perp \SS'$, Proposition \ref{proposition:PerpendicularOfSimples}  implies that these are simple in $\AA$.  It is now easy to see that the tubes in $\SS'$ together with the tubes in ${}^\perp \SS'$ form the set of orthogonal simple tubes from the statement of the proposition.
\end{proof}

\subsection{A torsion theory when \texorpdfstring{$\AA$}{A} does not have a tilting object}   In this subsection, we will consider the case where $\AA$ does not have a tilting object.  This setting resembles that of \S\ref{section:NoExceptionals}, and we will follow the steps of the proof of Theorem \ref{theorem:NoExceptionals} closely.  Since the case where $\AA$ is a single tube is easily dealt with, we will exclude this case.

Proposition \ref{proposition:EnoughSimpleTubes} implies that $\AA$ has at least one simple tube.  Let $S$ be the generalized 1-spherical object of that tube (see Proposition \ref{proposition:SphericalsInTubes}).

Using these objects, we can define a $t$-structure with hereditary heart $\HH = \HH(S)$ as in Construction \ref{construction:Heart} (see Proposition \ref{proposition:SemiSimpleInHeart}).

The following proposition is analogous to Proposition \ref{proposition:SincereWhenNoExceptionals}.

\begin{proposition}\label{proposition:SincereWhenExceptionals}
Let $\AA$ be an abelian hereditary category with Serre duality and without a tilting object.  There is a $t$-structure with hereditary heart $\HH$, derived equivalent to $\AA$, and a set $\SS$ of minimal 1-spherical objects such that
\begin{enumerate}
\item each $S \in \SS$ is semi-simple in $\HH$,
\item $\Hom(S_1, S_2) = 0 = \Ext^1(S_1,S_2)$, for every two nonisomorphic $S_1,S_2 \in \SS$,
\item for all $A \in \HH$, there is an $S \in \SS$ such that $\Hom(A, S) \not= 0$, and
\item all exceptional objects of $\HH$ are contained in the abelian subcategory generated by $\SS$.
\end{enumerate}
\end{proposition}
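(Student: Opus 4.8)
The plan is to follow the proof of Proposition \ref{proposition:SincereWhenNoExceptionals} almost verbatim for items (1)--(3), replacing ``simple'' by ``semi-simple'' throughout and feeding in the stronger input now available, namely the set $\SS$ of pairwise perpendicular simple tubes with $\SS^\perp = 0$ produced by Proposition \ref{proposition:EnoughSimpleTubes}; item (4) is the genuinely new point. First I would dispose of the excluded case where $\AA$ is a single tube, fix one tube of $\SS$ with generalized $1$-spherical object $S_0$, and set $\HH = \HH(S_0)$ as in Construction \ref{construction:Heart}. By Proposition \ref{proposition:SemiSimpleInHeart} the heart $\HH$ is hereditary, derived equivalent to $\AA$, and $S_0$ is semi-simple in $\HH$, so its tube is simple in $\HH$; by Lemma \ref{lemma:Ringel3} the category $\HH$ is again indecomposable. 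For (2), the spaces $\Hom(S_1, S_2[i])$ between distinct members of $\SS$ are computed in the common derived category and vanish for all $i$ by Proposition \ref{proposition:SimpleTubesPerpendicular} together with heredity, so in particular $\Hom_\HH(S_1, S_2) = \Ext^1_\HH(S_1, S_2) = 0$. For (1), the same proposition gives that the whole tube of any $S \in \SS$ is completely orthogonal to the tube of $S_0$ in $\Db\AA$, hence lies in $\TT_{S_0}^\perp$ inside $\HH$; since this tube is again a tube in $\HH$ (it is $\t$-periodic, Theorem \ref{theorem:TubeCriterium}), Proposition \ref{proposition:OrthogonalToSimpleTubeMeansSimpleTube} applied in the indecomposable category $\HH$ shows it is simple in $\HH$, i.e. $S$ is semi-simple in $\HH$.

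For (3) I would repeat the last two paragraphs of the proof of Proposition \ref{proposition:SincereWhenNoExceptionals}. Each tube of $\SS$ lies ``at the end'' of $\HH$ in the sense of Proposition \ref{proposition:Summary}: using the path from $T_{S_0}^{i}B$ furnished by Construction \ref{construction:Heart} one shows $\Hom(S, A) = 0$ for every indecomposable $A \in \HH$ outside the tube of $S$. Then, for an arbitrary nonzero indecomposable $A$, if $A$ lies in a tube of $\SS$ it maps onto its simple top and hence onto some $S \in \SS$; otherwise $\SS^\perp = 0$ forces $\Ext^\bullet(S, A) \neq 0$ for some $S$, and since $\Hom(S, A) = 0$ we get $\Ext^1(S, A) \neq 0$, whence $\Hom(A, S) \cong \Ext^1(S, A)^* \neq 0$ by Serre duality and $\t S \cong S$. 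This yields (3).

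The main work is (4). The key preliminary observation is that $\SS$ consists of \emph{all} simple tubes of $\AA$: were $\TT_0$ a simple tube not among them, its generalized $1$-spherical object would be orthogonal to every member of $\SS$ by Proposition \ref{proposition:SimpleTubesPerpendicular}, hence lie in $\SS^\perp = 0$, a contradiction. Next I would show that every exceptional object of $\AA$ lies in a simple tube: given such an object, extend it to a maximal exceptional sequence (perpendicular induction terminates by numerical finiteness), arrange the sequence inside $\AA$ via Proposition \ref{proposition:SequenceMeansObject} as in the setup of this section, and apply Proposition \ref{proposition:EnoughSimpleTubes}, whose conclusion is that every term of a maximal exceptional sequence lies in a tube of $\SS$. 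Finally I transfer this to $\HH$: an exceptional $F \in \HH$ is exceptional in $\Db\AA$, so some shift $F[a]$ is an exceptional object of $\AA$ and therefore lies in a tube $\TT_0 \in \SS$; but by item (1) the tube $\TT_0$ already sits inside $\HH$, so $F[a] \in \HH$ together with $F \in \HH$ forces $a = 0$, giving $F \in \TT_0 \subseteq \langle \SS \rangle$.

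I expect the delicate point to be item (4), and within it the transfer step: I must treat ``lies in a simple tube'' as a property of objects of the common derived category $\Db\AA = \Db\HH$, so that it is insensitive to the choice of heart and to shifts, and I must use that the identification of $\SS$ with the set of all simple tubes is the same whether read in $\AA$ or in $\HH$ --- this is exactly what item (1) secures. A secondary subtlety is that Proposition \ref{proposition:EnoughSimpleTubes} is applied to a maximal exceptional sequence containing the given $F$ rather than to the fixed $\EE$; this is legitimate, since that proposition uses only indecomposability, numerical finiteness, and the absence of a tilting object, all of which are standing hypotheses on $\AA$. The remaining items (1)--(3) are routine adaptations of Proposition \ref{proposition:SincereWhenNoExceptionals} and should present no real difficulty.
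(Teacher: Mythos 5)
The gap is in your item (1), at exactly the point you later lean on for item (4). You claim that because the tube of each $S \in \SS$ is perpendicular to the tube of $S_0$ in $\Db \AA$, it ``lies in $\TT_{S_0}^\perp$ inside $\HH$''. Perpendicularity is a statement about Hom-spaces in $\Db\AA$ and is invariant under shifting $S$; it cannot detect which shift of a tube lies in the heart $\HH = \HH(S_0)$, since membership in $\HH$ is governed by reachability by paths from $T_{S_0}^i B$ (Construction \ref{construction:Heart}), not by orthogonality. Here is a counterexample satisfying all hypotheses of the proposition. Let $\OO$ be a hereditary order on an elliptic curve, ramified at one point $Q$ (so $\coh \OO$ has exceptional objects but no tilting object), let $\TT_1 \subseteq \coh\OO$ be the torsion sheaves supported at one unramified point $P_1$, and let $\FF_1 = \{A \mid \Hom(\TT_1,A)=0\}$. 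Since $\Ext^1(\FF_1,\TT_1) \cong \Hom(\TT_1, \t \FF_1)^* = 0$, tilting at this split torsion pair gives a hereditary heart $\AA$ with objects $F \oplus T[-1]$, $F \in \FF_1$, $T \in \TT_1$ (Proposition \ref{proposition:HereditaryTilting}); this $\AA$ is indecomposable, numerically finite, has Serre duality and no tilting object, and \emph{all} its tubes are simple and pairwise perpendicular, so by your own preliminary observation all of them belong to $\SS$. Taking $S_0$ to be the tube coming from $P_1$, one computes that every object of $\coh\OO$ placed in degree $-1$ is reachable from $T^i_{S_0}B$ while degree $-2$ is not, so $\HH(S_0)$ is $\coh\OO$ sitting in degree $-1$; the tubes of $\AA$ at all points other than $P_1$ sit in degree $0$, hence are perpendicular to $\TT_{S_0}$ but do \emph{not} lie in $\HH$ --- only their shifts by $[-1]$ do. (Choosing $S_0$ at another point does not help: then it is the $P_1$-tube of $\AA$ that misses $\HH$.) So (1) is false as stated, (3) inherits the problem, and the transfer step of (4) actually proves something wrong in this example: the exceptional objects of $\HH$ are the shifts $E[-1]$ of the exceptional objects $E$ of $\AA$, so the shift $a$ in your argument is $-1$, not $0$.

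The defect can be repaired, but only by giving up the transport idea: each tube of $\AA$ does lie in \emph{some} shift of $\HH$ (every indecomposable of $\Db\AA = \Db\HH$ is a shift of an object of the hereditary heart $\HH$, and convexity of tubes, Theorem \ref{theorem:DirectingTubes}, makes that shift constant along a tube), so you could replace each $S \in \SS$ by its correct, tube-dependent shift, reprove simplicity in $\HH$ via Proposition \ref{proposition:OrthogonalToSimpleTubeMeansSimpleTube}, and redo (3) and (4) with these shifts --- but at that point you are essentially re-deriving the set inside $\HH$. That is what the paper does, and it is shorter: since $\HH$ inherits all standing hypotheses (hereditary, Serre duality, indecomposable, numerically finite, no tilting object), the paper applies Proposition \ref{proposition:EnoughSimpleTubes} \emph{directly to $\HH$} and takes $\SS$ to consist of simple tubes of $\HH$ itself, so that (1) and (2) are automatic and (3) follows as in Proposition \ref{proposition:SincereWhenNoExceptionals}; for (4) it extends a given exceptional object $E_1'$ of $\HH$ to a maximal partial tilting object of $\HH$ via Proposition \ref{proposition:SequenceMeansObject} and invokes Proposition \ref{proposition:EnoughSimpleTubes} once more. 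Your reduction of (4) to ``every exceptional object lies in a simple tube'' is sound in spirit, but it must be carried out inside $\HH$, never across the derived equivalence.
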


\begin{proof}
It follows from Proposition \ref{proposition:SemiSimpleInHeart} that there is at least one semi-simple minimal 1-spherical $S$ in $\HH$.  All the simple tubes in $\HH$ are perpendicular to $S$ (see Proposition \ref{proposition:SimpleTubesPerpendicular}).  It follows from Proposition \ref{proposition:EnoughSimpleTubes} that ${}^\perp \SS = \SS^\perp = 0$.  In order to show that $\SS$ satisfies all conditions in the statement of the proposition, we need to show that every exceptional object in $\HH$ lies in a tube generated by $\SS$.

Let $E_1' \in \HH$ be any exceptional object.  By Proposition \ref{proposition:SequenceMeansObject}, there is a partial tilting object $E'$ in $\HH$ which contains $E_1'$ as a direct summand and such that $(E')^\perp$ does not have any exceptional objects.  It follows from Proposition \ref{proposition:EnoughSimpleTubes} that $E'_1$ lies in a tube generated $\SS$.  This finishes the proof.
\end{proof}

We can define a torsion theory on $\HH$ as in \S\ref{subsection:Tilt}:
\begin{eqnarray*}
\TT &=& \{A \in \Ob \HH \mid \mbox{$A$ has finite length}\} \\
\FF &=& \{A \in \Ob \HH \mid \mbox{every nonzero direct summand of $A$ has infinite length}\}.
\end{eqnarray*}
Note that $\tau \TT = \TT$ and thus also $\tau \FF = \FF$.  It then follows from Serre duality that $\Ext^1(\FF,\TT) = 0$ such that the objects of $\TT$ act as injectives with respect to $\FF$.  In particular, every object in $\HH$ is a direct sum of a torsion and a torsion-free object.

We will now consider the quotient category $\HH / \TT$.  As in Lemma \ref{lemma:QuotientSemiSimple}, we find that the quotient $\HH / \TT$ is semi-simple.

For an object $F \in \HH/ \TT$,  let $\tilde{F}$ be a lift of $F$ in $\HH$. Since every object in $\HH$ is a direct sum of an object in $\TT$ and an object in $\FF$, we can choose the lift $\tilde{F}$ to be in $\FF$.  We put
$$w(F) = (\dim \Hom(\tilde{F}, S))_{S \in \SS}.$$
where $\SS \subseteq \HH$ is a set of representatives of isomorphism classes of all (semi-simple) minimal 1-spherical objects in $\HH$.

As before, we have the following lemma (see Lemma \ref{lemma:QuotientHasSimple}).

\begin{lemma}\label{lemma:QuotientHasSimple2}
Assume that there is an object $E \in \HH/ \TT$ such that $w(E)$ is bounded.  Then $\HH/ \TT$ contains a simple object.
\end{lemma}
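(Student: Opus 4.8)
The plan is to reproduce, almost verbatim, the argument of Lemma \ref{lemma:QuotientHasSimple}, replacing the function $v$ by $w$ and letting $\SS$ be the set of (semi-simple) minimal $1$-spherical objects furnished by Proposition \ref{proposition:SincereWhenExceptionals}. First I would record the three formal properties of $w$ that drive the proof, each holding by exactly the argument given for $v$: that $w$ is well-defined and additive on $\HH/\TT$ (using $\chi(\TT,S)=0$), that $w(\t F)=w(F)$ for all $F$ (using $\t S\cong S$ for $S\in\SS$), and that $w(F)\not=0$ whenever $F\not=0$ (this last from Proposition \ref{proposition:SincereWhenExceptionals}(3)). I would also invoke the semi-simplicity of $\HH/\TT$ noted just above, so that the image of any nonzero morphism in $\HH/\TT$ is a direct summand of both its source and its target.

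Next I would set up the minimization. Among all nonzero $E\in\HH/\TT$ with $w(E)$ bounded — a nonempty collection by hypothesis — choose one minimizing $\max_S w_S(E)$. Decomposing a lift $\tilde E\in\FF$ into indecomposables and using additivity and positivity of $w$, every indecomposable summand again has bounded $w$ and a $\max_S$ no larger than that of $E$; by minimality these maxima agree, so I may assume $\tilde E$ is indecomposable. Now suppose for contradiction that $E$ is not simple, say $E\cong E_1\oplus E_2$ with $E_1,E_2\not=0$. By \cite[Corollaire 3.1.1]{Gabriel62} there is a short exact sequence $0\to\tilde{E_1}\to\tilde E\to\tilde{E_2}\to 0$ of lifts, which is non-split because $\tilde E$ is indecomposable. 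Serre duality then gives $\Hom_\HH(\tilde{E_1},\t\tilde{E_2})\not=0$ and hence $\Hom_{\HH/\TT}(E_1,\t E_2)\not=0$.

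Finally I would take the image $F$ of a nonzero map $E_1\to\t E_2$. By semi-simplicity $F$ is a direct summand of both $E_1$ and $\t E_2$, so $w_S(F)\leq\min(w_S(E_1),w_S(\t E_2))=\min(w_S(E_1),w_S(E_2))$ for every $S$, where I used $w(\t E_2)=w(E_2)$. For any $S$ with $w_S(F)\not=0$ both $w_S(E_1)$ and $w_S(E_2)$ are nonzero, whence $w_S(F)\leq\min(w_S(E_1),w_S(E_2))<w_S(E_1)+w_S(E_2)=w_S(E)$ by additivity. Since $F\not=0$ its largest component is attained at some such $S$, giving $\max_S w_S(F)<\max_S w_S(E)$ while $w(F)$ remains bounded, contradicting minimality. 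Therefore $E$ is simple. The only genuine content here is verifying the auxiliary properties of $w$ (well-definedness, additivity, $\t$-invariance, and nonvanishing) and the reduction to an indecomposable lift; these I expect to be the main — though entirely routine — obstacle, established word for word as the corresponding statements for $v$, after which the rest of the argument is purely formal.
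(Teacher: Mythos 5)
Your proof is correct and takes exactly the paper's approach: the paper proves this lemma by simply pointing back to Lemma \ref{lemma:QuotientHasSimple}, and your argument is that proof reproduced with $w$ in place of $v$, preceded by the same routine verifications (well-definedness via $\chi(\TT,\SS)=0$, additivity, $\t$-invariance via $\t S\cong S$, and nonvanishing via Proposition \ref{proposition:SincereWhenExceptionals}) that the paper establishes for $v$. Your explicit reduction to an indecomposable lift and the final strict-inequality check only spell out steps the paper leaves implicit; there is no gap.
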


\begin{proposition}\label{proposition:BoundedWhenExceptionals}
For every object $H \in \HH$, $w(H) = (\dim \Hom(H, S))_{S \in \SS}$ is bounded.
\end{proposition}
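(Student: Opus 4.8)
The plan is to reduce the statement to torsion-free objects and then translate it into the finite-rank group $\Num\HH$. First I would use that every $H \in \HH$ splits as $H \cong H_\TT \oplus H_\FF$ with $H_\TT \in \TT$ of finite length and $H_\FF \in \FF$ torsion-free, so that $\dim\Hom(H,S) = \dim\Hom(H_\TT,S) + \dim\Hom(H_\FF,S)$. The torsion contribution is easy: a finite-length object $H_\TT$ has its composition factors in only finitely many of the tubes generated by $\SS$, and since distinct simple tubes are orthogonal (Proposition \ref{proposition:SimpleTubesPerpendicular}) we get $\Hom(H_\TT,S) = 0$ for all but finitely many $S$, with a fixed finite dimension for the remaining ones. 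Thus it suffices to bound $\dim\Hom(F,S)$ for $F = H_\FF$ torsion-free.

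For torsion-free $F$ and $S \in \SS \subseteq \TT$ we have $\Ext^1(F,S) = 0$ because $\Ext^1(\FF,\TT)=0$, so $\dim\Hom(F,S) = \chi(F,S) = \chi([F],[S])$ depends only on the classes $[F],[S] \in \Num\HH$. Since $[F]$ is fixed, boundedness of $\dim\Hom(F,S)$ will follow once I show that the set $\{[S] : S \in \SS\} \subseteq \Num\HH$ is finite: then $\chi([F],-)$ takes only finitely many values on it. This finiteness is exactly where numerical finiteness of $\AA$ (hence of $\HH$) enters, and it is the main obstacle.

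To prove finiteness of $\{[S]\}$ I would separate the tubes of $\SS$ according to their rank. A tube of rank $\geq 2$ has an exceptional mouth object; since $\EE$ is maximal, $\EE^\perp$ contains no exceptional objects, so every tube of $\EE^\perp$ is homogeneous. Hence the rank-$\geq 2$ tubes of $\HH$ can only be those involving the finitely many $E_i$, and by Proposition \ref{proposition:EnoughSimpleTubes} each $E_i$ lies in a tube of $\SS$; this yields at most $n$ tubes of rank $\geq 2$, and so finitely many classes from them. For the homogeneous tubes I would invoke Theorem \ref{theorem:NoExceptionals}: $\EE^\perp$ has finitely many connected components, each derived equivalent either to a homogeneous tube or to $\coh\bX$ for a smooth projective curve $\bX$, and on any such curve component all skyscraper classes $[k(P)]$ coincide (Example \ref{example:CartanCoxeterCurve}). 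Thus the homogeneous tubes also supply only finitely many classes in $\Num\HH$.

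Assembling these, $\{[S] : S \in \SS\}$ is finite, so $\sup_{S\in\SS}\chi([F],[S])<\infty$ for the fixed class $[F]$; combined with the bounded torsion contribution this proves that $(\dim\Hom(H,S))_{S\in\SS}$ is bounded for every $H \in \HH$. The step demanding the most care is the finiteness bookkeeping: precisely tracking the tubes through the decomposition of $\AA$ furnished by ${}^\perp\SS'$ and $\SS'$ in Proposition \ref{proposition:EnoughSimpleTubes}, so as to guarantee that every rank-$\geq 2$ tube really contains one of the $E_i$, and that a homogeneous tube of $\SS$ carrying a nonzero class must descend from a curve component of $\EE^\perp$ rather than from a stray homogeneous component.
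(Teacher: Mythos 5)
Your proposal takes a genuinely different route from the paper's. The paper never passes to numerical classes of the elements of $\SS$: it first observes that only finitely many elements of $\SS$ fail to be $1$-spherical (the exceptional mouths of distinct orthogonal tubes give linearly independent classes in $\Num \HH$, via Proposition \ref{proposition:NumAndPerpendicular}), and then, writing $\BB$ for the abelian subcategory generated by $\EE$ (which lies in $\TT$ by Proposition \ref{proposition:SincereWhenExceptionals}), it uses the adjoints to the inclusion of $\BB$ to produce a short exact sequence $0 \to R(H) \to H \to L(H) \to 0$ with $L(H) \in \TT$ and $R(H) \in {}^\perp \BB$; boundedness for the $R(H)$-term is then imported from the already classified category ${}^\perp \BB$ (Theorem \ref{theorem:NoExceptionals}). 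Your torsion/torsion-free splitting, the identity $\dim \Hom(F,S) = \chi([F],[S])$ for torsion-free $F$, and your treatment of the torsion part and of the rank-$\geq 2$ tubes are essentially sound, and this reduction to finiteness of $\{[S] : S \in \SS\}$ in $\Num \HH$ is an attractive alternative.

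However, your justification of that finiteness has a genuine gap, located in the homogeneous tubes. First, the claim that $\EE^\perp$ has finitely many connected components is not justified: only the \emph{curve} components are finite in number (each contributes $\bZ^2$ to $\Num(\EE^\perp)$, which has finite rank by Proposition \ref{proposition:NumAndPerpendicular}), while a priori there could be infinitely many components that are homogeneous tubes. This part is harmless, since such components have vanishing numerical Grothendieck group and hence every tube of $\SS$ inside one has class $0$, but your argument as written depends on the unproved finiteness. Second, and more seriously: for a curve component $\CC$ with $\Db \CC \cong \Db \coh \bX$ of genus one, the tubes of $\SS$ lying in $\CC$ need not correspond to skyscrapers under this abstract equivalence. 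On an elliptic curve every stable sheaf is $1$-spherical, these realize every primitive class of $\Num(\coh \bX)$, and autoequivalences of $\Db \coh \bX$ move skyscraper tubes to tubes of arbitrary slope; so "all skyscraper classes $[k(P)]$ coincide" gives no control in this case (your appeal to skyscrapers is only valid for genus $\geq 2$, where the proof of Corollary \ref{corollary:PerpendicularToSimpleTubeNoExceptionals} shows the $1$-spherical objects are exactly the shifts of simple torsion sheaves). Genus-one components do occur, e.g.\ when $\AA$ is the category of sheaves over a ramified hereditary order on an elliptic curve. The repair needs an ingredient you did not invoke: the tubes in $\SS$ are pairwise orthogonal (Proposition \ref{proposition:SimpleTubesPerpendicular}), and on an elliptic curve two derived-orthogonal stable objects have $\chi = r_1 d_2 - r_2 d_1 = 0$, hence equal slopes, hence the same primitive numerical class up to sign; so each genus-one component also contributes only finitely many classes. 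With these two repairs your argument goes through.
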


\begin{proof}
Since the elements of $\SS$ are mutually perpendicular and $\HH$ is numerically finite, we know that only finitely many objects in $\SS$ are not 1-spherical objects.  Indeed, each element of $\SS$ which is not 1-sperical (but rather generalised 1-spherical) lies in a tube with exceptional objects.  Since the different tubes are perpendicular, these exceptional objects correspond to linearly independant elements in $\Num \HH$ (see \ref{proposition:NumAndPerpendicular}).  Let $\SS'$ be the subset $\SS$ consisting of all 1-spherical objects, thus $\SS'$ is a cofinite subset of $\SS$.

Let $\BB$ be the full abelian subcategory of $\AA$ generated by $\EE$.  We know by Proposition \ref{proposition:SincereWhenExceptionals} that all exceptional objects lie in $\TT$.  Since then $\BB$ is contained in the simple tubes of $\AA$, we know that $\BB$ is a Serre subcategory of $\AA$, and since $\BB$ is generated by an exceptional sequence, the embedding $\BB \to \AA$ has a left adjoint $L: \AA \to \BB$.  We infer that ${}^\perp \BB \to \AA$ admits a right adjoint $R: \AA \to \BB^\perp$ such that for each $H \in \HH$ there is a short exact sequence
$$0 \to R(H) \to H \to L(H) \to 0.$$
We know that ${}^\perp \BB$ is derived equivalent to a direct sum of categories listed in Theorem \ref{theorem:NoExceptionals}, and hence we know that $(\dim \Hom(R(H), S'))_{S' \in \SS'}$ is bounded.  Since $L(H) \in \TT$, we also know that $(\dim \Hom(L(H), S'))_{S' \in \SS'}$ is bounded.  We then know that $(\dim \Hom(H, S'))_{S' \in \SS'}$ is bounded.  Since $\SS \setminus \SS'$ is finite, we may conclude that $(\dim \Hom(H, S))_{S \in \SS}$ is bounded.
\end{proof}

\subsection{Classification} Let $\AA$ be a connected hereditary category with Serre duality, linear over an algebraically closed field $k$.  Let $\EE = (E_i)_{i=1 \ldots n}$ be a (finite) maximal exceptional sequence.  It follows from Proposition \ref{proposition:SequenceMeansObject} that we may assume that $\EE \subseteq \AA$ and that $\EE$ is a strong exceptional sequence.  In particular, $E = \oplus_{i=1}^n E_i$ is a partial tilting object in $\AA$.

We can now prove our main theorem.

\TheoremNumFinite*

\begin{proof} Without loss of generality, assume that $\AA$ is not derived equivalent to a tube.  If $\AA$ has a tilting object, then the classification follows from \cite{Happel01} (see Theorem \ref{theorem:Happel}).  Thus, assume that $\AA$ does not have a tilting object.  We can repeat the proof of Proposition \ref{proposition:NoExceptionals} to show that $\AA$ is noetherian.  Here, we combine Proposition \ref{proposition:BoundedWhenExceptionals} and Lemma \ref{lemma:QuotientHasSimple2} to find a simple object in $\HH / \TT$.  The classification then follows from \cite{ReVdB02}.
\end{proof}

\def\cprime{$'$}
\providecommand{\bysame}{\leavevmode\hbox to3em{\hrulefill}\thinspace}
\providecommand{\MR}{\relax\ifhmode\unskip\space\fi MR }
\providecommand{\MRhref}[2]{%
  \href{http://www.ams.org/mathscinet-getitem?mr=#1}{#2}
}
\providecommand{\href}[2]{#2}

\end{document}